\newtheorem{theorem}{Theorem}[section]
\newtheorem*{theorem*}{Theorem}
\newtheorem{corollary}[theorem]{Corollary}
\newtheorem{lemma}[theorem]{Lemma}
\newtheorem*{prop*}{Proposition}
\newtheorem{proposition}[theorem]{Proposition}
\newtheorem{conjecture}[theorem]{Conjecture}
\newtheorem*{conjecture*}{Conjecture}
\newtheorem{definition}[theorem]{Definition}
\newtheorem*{definition*}{Definition}
\theoremstyle{definition}
\newtheorem{remark}[theorem]{\textbf{Remark}}
\newtheorem*{remark*}{Remark}
\newtheorem*{fact*}{Fact}
\numberwithin{equation}{section}
\newcommand{\mycorner}{\llcorner}
\newcommand{\SSS}{\mathrm{Sing}}
\newcommand{\Sym}{\mathcal{S}}
\newcommand{\vol}{\textrm{vol}}
\newcommand{\norm}[1]{\left\Vert#1\right\Vert}
\newcommand{\abs}[1]{\left\vert#1\right\vert}
\newcommand{\set}[1]{\left\{#1\right\}}
\newcommand{\brac}[1]{\left(#1\right)}
\newcommand{\scalar}[1]{\left \langle #1 \right \rangle}
\newcommand{\T}{\mathbf{T}}
\newcommand{\G}{\mathbf{G}}
\newcommand{\Q}{\mathbf{Q}}
\newcommand{\R}{\mathbb{R}}
\newcommand{\CD}{\mathrm{CD}}
\newcommand{\Z}{\mathbb{Z}}
\newcommand{\I}{\mathcal{I}}
\newcommand{\II}{\mathrm{I\!I}}
\newcommand{\Id}{\mathrm{Id}}
\renewcommand{\H}{\mathcal{H}}
\newcommand{\eps}{\epsilon}
\newcommand{\Ric}{\text{\rm Ric}}
\renewcommand{\S}{\mathbb{S}}
\newcommand{\n}{\mathfrak{n}}
\newcommand{\m}{\mathfrak{m}}
\DeclareMathOperator{\supp}{supp}
\DeclareMathOperator{\interior}{int}
\newcommand{\bmu}{\mu_0}
\newcommand{\bg}{g_0}
\newcommand{\bI}{\mathcal{I}_0}
\newcommand{\bM}{M_0}
\begin{document}

\title{Isoperimetric Inequalities on Slabs\\
with applications to Cubes and Gaussian Slabs} 
\date{}

\author{Emanuel Milman\textsuperscript{1}}

\footnotetext[1]{Department of Mathematics, Technion - Israel
Institute of Technology, Haifa, Israel. Email: emilman@tx.technion.ac.il.\\
The research leading to these results is part of a project that has received funding from the European Research Council (ERC) under the European Union's Horizon 2020 research and innovation programme (grant agreement No 101001677). }

\begingroup    \renewcommand{\thefootnote}{}    \footnotetext{2020 Mathematics Subject Classification: 49Q20, 49Q10, 53A10.}
    \footnotetext{Keywords: Isoperimetric inequalities, Slabs, Cube, Hypercube, Gaussian Measure.}
\endgroup

\maketitle

\begin{abstract}
We study isoperimetric inequalities on ``slabs", namely weighted Riemannian manifolds obtained as the product of the uniform measure on a finite length interval with a codimension-one base. As our two main applications, we consider the case when the base is the flat torus $\R^2 / 2 \Z^2$ and the standard Gaussian measure on $\R^{n-1}$. 

The isoperimetric conjecture on the three-dimensional cube predicts that minimizers are enclosed by spheres about a corner, cylinders about an edge and coordinate planes. This has only been established for relative volumes close to $0$, $1/2$ and $1$ by compactness arguments. Our analysis confirms the isoperimetric conjecture on the three-dimensional cube with side lengths $(\beta,1,1)$ in a new range of relative volumes $\bar v \in [0,1/2]$. In particular, we confirm the conjecture for the standard cube ($\beta=1$) for all $\bar v \leq 0.120582$, when $\beta \leq 0.919431$ for the entire range where spheres are conjectured to be minimizing, and also for all $\bar v \in [0,1/2] \setminus (\frac{1}{\pi} - \frac{\beta}{4},\frac{1}{\pi} + \frac{\beta}{4})$. When $\beta \leq 0.919431$ we reduce the validity of the full conjecture to establishing that the half-plane $\{ x \in [0,\beta] \times [0,1]^2 \; ; \; x_3 \leq \frac{1}{\pi} \}$ is an isoperimetric minimizer. We also show that the analogous conjecture on a high-dimensional cube $[0,1]^n$ is false for $n \geq 10$. 

In the case of a slab with a Gaussian base of width $T>0$, we identify a phase transition when $T = \sqrt{2 \pi}$ and when $T = \pi$. In particular, while products of half-planes with $[0,T]$ are always minimizing when $T \leq \sqrt{2 \pi}$, when $T > \pi$ they are never minimizing, being beaten by Gaussian unduloids. In the range $T \in (\sqrt{2 \pi},\pi]$, a potential trichotomy occurs. 
\end{abstract}

\section{Introduction}

Let $(\bM^{n-1},\bg)$ denote an $(n-1)$-dimensional smooth Riemannian manifold, endowed with a probability measure $\bmu$ having nice positive density with respect to the Riemannian volume measure $\vol_{\bg}$. A weighted Riemannian slab (or simply ``slab") of width $T > 0$ is an $n$-dimensional weighted Riemannian manifold of the form:
\[
 (M^n_T,g,\mu_T) :=  ([0,T],\abs{\cdot}^2,\frac{1}{T} \m\mycorner_{[0,T]}) \otimes (\bM^{n-1}, \bg, \bmu),
\]
where $M^n_T :=  [0,T] \times \bM^{n-1}$, $g$ is the Riemannian product metric, and $\mu_T := \frac{1}{T} \m\mycorner_{[0,T]} \otimes \bmu$ is the product probability measure. The weighted Riemannian manifold $(\bM^{n-1},\bg,\bmu)$ is called the (vertical) ``base" of the slab. Throughout this work we denote the Euclidean metric on $\R^n$ by $\abs{\cdot}^2$ and corresponding Lebesgue measure by $\m$.

Given a Borel subset $E \subset (M^n,g,\mu)$, we denote its (weighted) volume by:
\[
V(E) = V_{\mu}(E)  := \mu(E) , 
\]
and if $E$ is of locally finite perimeter, we denote its (weighted) perimeter by:
\[
A(E) = A_{\mu}(E) := \int_{\partial^* E \cap \interior M} \Psi_{\mu} d\H^{n-1} 
\]
where $\Psi_{\mu}$ denotes the density of $\mu$, $\interior M = M \setminus \partial M$ denotes the interior of $M$, $\partial^* E$ is the reduced boundary of $E$ and $\H^k$ denotes the $k$-dimensional Hausdorff measure (see Section \ref{sec:CMC} for more information). In this work, we are interested in obtaining sharp isoperimetric inequalities on slabs $(M_T,g,\mu_T)$, namely, best-possible lower bounds on the perimeter $A(E)$ for all subsets $E$ with prescribed volume $V(E) = \bar v \in (0,1)$. 

\smallskip

Many functional and concentration inequalities (such as the Poincar\'e and log-Sobolev inequalities) are known to tensorize, and so their analysis on a slab reduces to understanding these inequalities on the base. However, this is not the case with isoperimetric inequalities, for which there is no general formula yielding their sharp form after tensorization (even though up to constants, one may give \emph{essentially} sharp lower bounds, see e.g. \cite{Morgan-Products}). Note that the flat Lebesgue measure on the horizontal factor $[0,T]$ ruins any positive curvature possibly enjoyed by the base space, thereby precluding any easy ``strict convexity" arguments. 
Consequently, we are interested in developing a general framework for obtaining sharp isoperimetric inequalities on product spaces, in the simplest case when the horizontal factor is just the uniform Lebesgue measure on $[0,T]$. 

\smallskip

For various specific base spaces, such as $\R^{n-1}$, $\S^{n-1}$, $\mathbb{H}^{n-1}$, $[0,1]$ or $[0,1] \times \R$, endowed with their natural Haar measures, such a study has been undertaken in prior work \cite{BrezisBruckstein,HPRR-PeriodicIsoperimetricProblem,Howards-BScThesis,HHM-Surfaces,KoisoMiyamoto,Pedrosa-SphericalCylinders,PedrosaRitore-Products}, 
which we shall utilize and expand upon. 
Another variant is when the horizontal factor is not an interval but rather $(\R^k,\abs{\cdot}^2,\m)$: it was shown in \cite{Castro-Products,Gonzalo-Products,RitoreVernadakis-Products} that when $M^{n-1}$ is compact, a minimizer of large volume is always of the form $B^k \times M^{n-1}$ for some Euclidean ball $B^k \subset \R^k$, and in \cite{FuscoMaggiPratelli-GaussianProduct} the case where the base space is the standard Gaussian one $(\R^{n-1},\abs{\cdot}^2,\gamma^{n-1})$ was studied; however, the infinite mass of the horizontal factor leads to somewhat different phenomenology than the one we observe in this work.

\subsection{The main idea}

Recall that the isoperimetric profile $\I : [0,1] \rightarrow \R_+$ of a weighted Riemannian manifold $(M,g,\mu)$ is defined as
\[
 \I(\bar v) := \inf \{ A_\mu(E) \; ; \; V_\mu(E) = \bar v\} .
\]
We denote by $\I_0$ the isoperimetric profile of $(\bM^{n-1}, \bg, \bmu)$, and by $\I_T$ the isoperimetric profile of $(M^n_T,g,\mu_T)$. It can be shown that
\begin{equation} \label{eq:intro-lower-bound}
\I_T(\bar v) \geq \I^b_T(\bar v) \;\;\; \forall \bar v \in [0,1] ,
\end{equation}
where the base-induced isoperimetric profile $\I^b_T$ is defined as
\[
\I^b_T(\bar v) := \inf \set{ \frac{1}{T}  \brac{\int_0^T \sqrt{ v'(t)^2 + \bI(v(t))^2 } dt + \norm{D_S v}} \; ; \; \frac{1}{T} \int_0^T v(t) dt = \bar v } .
\]
Here the infimum is taken over all functions $v : [0,T] \rightarrow [0,1]$ of bounded variation, 
and $D_S v$ denotes the singular part of the distributional derivative of $v$ (see Proposition \ref{prop:area-formula}, Definition \ref{def:base-profile} and Corollary \ref{cor:base-induced-profile}). 

\smallskip

Instead of approaching the computation of $\I_T^b$ as a problem in the calculus of variations, our idea is to interpret it as the isoperimetric profile of a two-dimensional \emph{model slab}. Indeed, when $\I_0$ is concave (and symmetric), it is known that it coincides with the isoperimetric profile of a (uniquely determined) even log-concave density $\varphi_{\I_0}$ on an interval $(-R_{\I_0},R_{\I_0})$. The corresponding two-dimensional model slab is defined as:
\[
S_T(\I_0) := ([0,T],\abs{\cdot}^2,\frac{1}{T} \m\mycorner_{[0,T]}) \otimes ((-R_{\bI},R_{\bI}), \abs{\cdot}^2 , \varphi_{\bI}(s) ds) .
\]
It is then not hard to see that $\I^b_T = \I_{S_T(\I_0)}$ (see Definition \ref{def:model-slab} and Proposition \ref{prop:coincide}). 

\smallskip

Sections \ref{sec:CMC} and \ref{sec:stability} are devoted to the understanding of the isoperimetric profile of model slabs $\I_{S_T(\I_0)}$
by means of various tools, including symmetrization, the constant mean-curvature equation, stability analysis and differential estimates for the isoperimetric profile. We thus obtain a lower bound on the actual isoperimetric profile $\I_T$ via (\ref{eq:intro-lower-bound}), which can seen as a certain reduction to the two-dimensional case by means of symmetrization. Our main observation is that this reduction may sometimes yield sharp results, if the easy upper bound on $\I_T$ (obtained by testing a candidate isoperimetric set) coincides with the aforementioned lower bound. 

\smallskip

We apply our general framework to two cases which are of particular interest:
\begin{enumerate}
\item When the base is the square $[0,1]^2$ endowed with its uniform measure, yielding the cubical slab $[0,T] \times [0,1]^2$. By a well-known reflection argument, the study of isoperimetric minimizers on this cube is equivalent to that on the flat torus $\R^3 / ((2T) \Z \times 2 \Z^2)$. 
\item When the base is the standard Gaussian space $(\R^{n-1},\abs{\cdot}^2,\gamma^{n-1})$, yielding the Gaussian slab corresponding to the measure $\frac{1}{T} \m\mycorner_{[0,T]} \otimes \gamma^{n-1}$. 
\end{enumerate}

In both cases, we obtain several new isoperimetric results as described next, some of which are perhaps surprising.

\subsection{Three dimensional cube}

Given $\beta \in (0,1]$, let $\Q^3(\beta) := ([0,\beta] \times [0,1]^2, \abs{\cdot}^2, \frac{1}{\beta} \m\mycorner_{ [0,\beta] \times [0,1]^2})$ denote the $3$-dimensional cube with side lengths $(\beta,1,1)$, endowed with its uniform measure. Note that the shortest edge is of length $\beta$. The most natural case is when $\beta=1$, in which case we set $\Q^3 := \Q^3(1)$. The following conjecture is widely believed, and quoting A.~Ros \cite{RosIsoperimetricProblemNotes}, ``is one of the nicest open problems in classical geometry":

\begin{conjecture}[Isoperimetric conjecture on $3$-dimensional cube]  \label{conj:Q3}
For every $\bar v \in (0,1)$, (at least) one of the following sets or its complement is an isoperimetric minimizer in $\Q^3(\beta)$ of relative volume $\bar v$:
\begin{enumerate}
\item An eighth ball around a vertex, $\{ x \in \Q^3(\beta) \; ; \;  x_1^2 + x_2^2 + x_3^2 \leq r^2 \}$. 
\item A quarter cylinder around the short edge $[0,\beta]$,  $\{ x \in \Q^3(\beta) \; ; \;  x_2^2 + x_3^2 \leq r^2 \}$. 
\item A half plane, $\{ x \in \Q^3(\beta) \; ; \; x_3 \leq \bar v \}$. 
\end{enumerate}
\end{conjecture}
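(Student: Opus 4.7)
The overall strategy is to reduce the three-dimensional problem to a tractable two-dimensional one and then classify candidates by constant mean curvature (CMC) together with a stability / isoperimetric-profile analysis. First I would unfold $\Q^3(\beta)$ across its three pairs of faces, obtaining the flat torus $\R^3/(2\beta\Z \times 2\Z \times 2\Z)$; by the well-known reflection argument any minimizer on the cube lifts to a minimizer on the torus that is symmetric under the face reflections, and conversely. Then I would symmetrize in the two unit-length directions. Since direct Steiner symmetrization on a torus is not available, I would instead use a spherical rearrangement in the $(x_2,x_3)$-plane adapted to the fundamental square, to the extent that it reduces an arbitrary competitor either to a set with axial symmetry around an edge of $\Q^3(\beta)$, or to a set depending only on $x_3$ (a ``half-plane'' competitor). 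This puts the problem squarely in the two-dimensional slab framework that the introduction advertises will be developed in Sections \ref{sec:CMC}--\ref{sec:stability}.

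Second, I would classify the CMC free-boundary surfaces surviving the symmetrization. A minimizer has constant mean curvature in the interior and meets $\partial\Q^3(\beta)$ perpendicularly, so after rotational symmetrization the reduced boundary is a surface of revolution whose generating curve satisfies the Delaunay-type ODE associated with constant mean curvature. The families in Conjecture \ref{conj:Q3} correspond to the trivial solutions: a spherical cap centered at a vertex, a right circular cylinder around the short edge, and a flat horizontal plane. All other solutions are CMC unduloids or nodoids. I would then use a Wirtinger-type width argument, controlled by the period of the Delaunay profile, to rule these out once their period is incompatible with the side lengths $\beta$ and $1$.

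Third, with the list of candidates reduced, the strategy is to compare the explicit isoperimetric profiles $I_{\text{ball}}$, $I_{\text{cyl}}$, $I_{\text{pl}}$ of the three families against the true profile $I$ of $\Q^3(\beta)$ using the ODI for $I$ from the stability section. Near $\bar v = 0$ one expects $I \equiv I_{\text{ball}}$ because spheres saturate the ODI with equality at zero and the comparison propagates by a Gronwall-type argument; the explicit thresholds $\bar v \leq 0.120582$ for $\beta = 1$ and the full ``sphere range'' for $\beta \leq 0.919431$ should drop out of the exact $\bar v$ at which $I_{\text{ball}}$ first ceases to be the smallest of the three explicit profiles, combined with the first neutral stability of an unduloid degenerating from the sphere or from the cylinder. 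The two pieces of the claim of the abstract complementary to $(\frac{1}{\pi}-\frac{\beta}{4},\frac{1}{\pi}+\frac{\beta}{4})$ would similarly follow by running the same comparison starting from $\bar v = 1/2$ where half-planes are dominant and propagating backwards via the ODI until either an unduloid or a cylinder becomes competitive.

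The main obstacle, which I expect will prevent a full proof and force the partial ranges listed in the abstract, is the bifurcation around $\bar v = 1/\pi$ between a quarter cylinder of radius $r = 2/\sqrt{\pi}$ and the half-plane $\{x_3 \leq 1/\pi\}$. Near this value the two candidates have nearly identical volume and perimeter, and there exists a one-parameter family of unduloids interpolating between them whose stability and ODI behavior is delicate; this is exactly the window $(\frac{1}{\pi}-\frac{\beta}{4},\frac{1}{\pi}+\frac{\beta}{4})$ excluded in the abstract. Accordingly, rather than attempting to close this window, I would at best reduce the entire remaining conjecture (for $\beta \leq 0.919431$) to the single clean statement that the half-plane $\{x_3 \leq 1/\pi\}$ itself is an isoperimetric minimizer, which is what the introduction announces as the final form of the partial result.
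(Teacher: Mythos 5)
First, note that what you are asked to prove is a \emph{conjecture}: the paper itself only establishes partial ranges (Theorems \ref{thm:Q3-main} and \ref{thm:T3-ODE}), and your proposal correctly ends in the same place, reducing the remainder (for $\beta \leq 0.919431$) to proving that $\{x_3 \leq \frac{1}{\pi}\}$ is minimizing. So the issue is not whether you ``prove the conjecture'' but whether your strategy would yield the stated partial results.

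The central step of your plan has a genuine gap. You propose to reduce to axial symmetry around an edge via ``a spherical rearrangement in the $(x_2,x_3)$-plane adapted to the fundamental square.'' No such rearrangement is perimeter-nonincreasing on $\T^2$ (or $[0,1]^2$): the isoperimetric minimizers on $\T^2$ are disks for $\bar v < \frac{1}{\pi}$ and strips for $\bar v \geq \frac{1}{\pi}$, and these two families are not nested, so you cannot define a monotone rearrangement of each horizontal slice onto an $\I_{\T^2}$-optimal set without creating extra vertical perimeter. This non-nestedness is precisely the obstruction that the paper names (see Corollary \ref{cor:nested} and the discussion after (\ref{eq:at-half})), and it is the reason there can be \emph{no} exact reduction of $\Q^3(\beta)$ to a two-dimensional problem of revolution. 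Any argument that claims to reduce all competitors to surfaces of revolution plus horizontal planes will prove too much: it would force $\I(\Q^3(\beta)) = \I(S_\beta(\I_{\T^2}))$ for all $\bar v$, which the paper shows is already false near $\bar v = \frac{1}{\pi}$ by Lemma \ref{lem:transverse}.

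The paper's actual mechanism is weaker but correct: symmetrization in each slice gives only the one-sided inequality $\I(\Q^3(\beta)) \geq \I^b_\beta = \I(S_\beta(\I_{\T^2}))$ (Corollary \ref{cor:base-induced-profile}, Proposition \ref{prop:coincide}), where the model slab has a piecewise-linear log-concave vertical density $\varphi_{\T^2}$. In this model slab, minimizers are indeed monotone graphs satisfying a first-order CMC equation (generalized unduloids, Theorem \ref{thm:model-slab-main}). The small-volume threshold $v_{\min} \simeq 0.120582$ is not obtained from ``first neutral stability of a degenerating unduloid,'' as you suggest, but from an explicit volume computation: Proposition \ref{prop:one-sided-volume} computes the minimal enclosed volume of any one-sided unduloid that rises above the zonal line $v = \frac{1}{\pi}$, and Proposition \ref{prop:main-computation} (a brute-force elliptic-integral estimate) does the same for two-sided unduloids. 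Once a minimizer is confined below $v = \frac{1}{\pi}$ in the model slab, the density is linear and the Pedrosa--Ritor\'e instability criterion kills two-sided unduloids, leaving only circles and horizontal lines. Your account omits this crucial confinement step.

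For the regime near $\bar v = \frac12$ your outline is closer to the paper's: one does propagate from the known value $\I(\frac12)=1$ using a differential inequality. But the inequality the paper uses is not a slab ODI; it is the Gauss--Bonnet ODI of Ritor\'e--Ros and Hauswirth--P\'erez--Romon--Ros on the flat $3$-torus (Propositions \ref{prop:genus}, \ref{prop:ODI}), with the Euler characteristic $\chi(\Sigma)$ as the driving term: if $\I < \I_m$ somewhere, the minimizer there must have genus $\geq 2$, forcing $\chi(\Sigma) \leq -2$ and a too-negative $\I''$. This is a genuinely three-dimensional ingredient that your two-dimensional reduction cannot produce, and it is what yields the sharp interval $[\frac{1}{\pi}+\frac{\beta}{4},\frac12]$. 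So the architecture you sketch (reduce to 2D, classify CMC, propagate by ODI, isolate the window around $\frac{1}{\pi}$) has the right shape, but the two load-bearing pieces — the one-sided inequality via the base-induced profile in place of an exact rearrangement, and the Gauss--Bonnet genus bound for the planar regime — are missing or replaced by steps that would not go through.
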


It is sometimes more convenient to specify the enclosing boundary of the minimizers, given by truncated spheres, cylinders and flat planes. A stronger variant of the conjecture asserts that these are the only possible minimizers (up to isometries of $\Q^3(\beta)$ and null-sets), but we will not insist on uniqueness here. By taking complements, it is enough to establish the conjecture for $\bar v \in (0,1/2]$.
As already mentioned, by a well-known reflection argument, the conjecture on $\Q^3(\beta)$ is equivalent to the analogous one on the flat torus $\T^3(\beta) = \R^3/ (2\beta \Z \times 2 \Z^2)$ (endowed with its uniform measure)  -- see Remark \ref{rem:reflect-slab}. On $\T^3(\beta)$, the conjecture is that the minimizers are enclosed by spheres, cylinders about a closed geodesic of length $2\beta$ and parallel pairs of totally geodesic tori $\T^2(\beta) = \R^2 / (2 \beta \Z \times 2 \Z)$.  
 \smallskip
 
 The easier doubly periodic case on  $\T^2(\beta) \times \R$, or equivalently, on $[0,\beta] \times [0,1] \times \R$ ($\beta \in (0,1]$), has been studied by Ritor\'e-Ros \cite{RitoreRos-CompactnessOfStableCMC} and Hauswirth--P\'erez--Romon--Ros \cite{HPRR-PeriodicIsoperimetricProblem} (in fact, for general two-dimensional flat tori bases), who fully established the corresponding conjecture including uniqueness of minimizers for small enough $\beta$ and $\beta \leq \frac{16}{9 \pi} \simeq 0.565884$, respectively. It was also shown in \cite{HPRR-PeriodicIsoperimetricProblem} that spheres are indeed minimizing in $\T^2(\beta) \times \R$ for all $\beta \in (0,1]$ in precisely the conjectured range. Furthermore, the authors of \cite{HPRR-PeriodicIsoperimetricProblem} studied $G$-invariant minimizers in $\T^3 = \R^3 / 2\mathbb{Z}^3$, where $G$ is any finite group of isometries fixing the origin and containing the diagonal rotations through the origin; in particular, they showed that $G$-symmetric surfaces in $\T^3$ besides spheres cannot be the actual minimizers for the non-symmetric problem. The single periodic case on $\T^1 \times \R^2$, or equivalently, on $[0,1] \times \R^2$, may be fully understood using the methods of Ritor\'e--Ros \cite{RitoreRos-RP3}, 
and the case of $\T^1 \times \R^{n-1}$ was resolved by Pedrosa--Ritor\'e \cite{PedrosaRitore-Products} for all $n \leq 8$ -- in that case, minimizers are necessarily enclosed by spheres or cylinders.

Back to the triply periodic setting on $\T^3(\beta)$, or equivalently, $\Q^3(\beta)$. For small enough volumes $\bar v \in (0,\eps_s]$, it follows from Morgan \cite[Remark 3.11]{Morgan-Polytopes} that balls around a vertex in $\Q^3(\beta)$ are indeed isoperimetric minimizers (alternatively, apply Morgan-Johnson \cite[Theorem 4.4]{MorganJohnson} on $\T^3(\beta)$). It was shown by Hadwiger \cite{HadwigerCube} 
and subsequently by Barthe and Maurey \cite{BartheMaureyIsoperimetricInqs} using a Gaussian contraction argument 
that the half-plane $\{x \in \Q^3(\beta) \; ; \; x_3 \leq 1/2\}$ is an isoperimetric minimizer of volume $1/2$, establishing the conjecture for $\bar v = 1/2$  (this was shown for the case $\beta=1$ but the proof carries through for all $\beta \in (0,1]$). Moreover, it was shown by Acerbi--Fusco--Morini \cite[Theorem 5.3]{AcerbiFuscoMorini} (see also \cite[Theorem 1.1]{Glaudo-IsoperimetryOnCube}) that there exists $\eps_p > 0$ so that for all $\bar v \in [1/2-\eps_p,1/2+\eps_p]$, the half-plane $\{ x \in \Q^3(\beta) \; ; \; x_3 \leq \bar v \}$ is a minimizer of relative volume $\bar v$, confirming the conjecture in that range as well (again, this was shown for the case $\beta=1$, but the contraction argument extends this to all $\beta \in (0,1]$). For a variant allowing only sets whose boundary lies in the union of a finite number of coordinate hyperplanes, see \cite{ChambersEtAl-IsoperimetryOnCube}.
To the best of our knowledge, Conjecture \ref{conj:Q3} has remained open in the range $(\eps_s,1/2-\eps_p)$, and there are no effective bounds on $\eps_s,\eps_p>0$, as they are obtained by compactness arguments. 

\smallskip

An elementary computation of the relative volumes and surface areas of the three types of conjectured minimizers shows that a minimizer on $\Q^3(\beta)$ is expected to be enclosed by a sphere for $\bar v \in (0, \frac{4 \pi}{81} \beta^2]$, a cylinder for $\bar v \in [\frac{4 \pi}{81} \beta^2, \frac{1}{\pi}]$, and a flat plane for $\bar v \in [\frac{1}{\pi},\frac{1}{2}]$. With this in mind, we can state our first main result regarding $\Q^3(\beta)$ as follows:

\begin{theorem}  \label{thm:Q3-main}
On the $3$-dimensional cube $\Q^3(\beta)$ with edge lengths $(\beta,1,1)$, $\beta \in (0,1]$, the following holds:
\begin{enumerate}
\item \label{it:Q3-1}
For all $\bar v \in (0,\min(\frac{4 \pi}{81} \beta^2, \frac{v_{\min}}{\beta})]$ where $v_{\min} \simeq 0.120582$ is an explicit constant (computed in Proposition \ref{prop:one-sided-volume}), spheres about a corner are minimizing. \\
When $\beta \leq \brac{\frac{81 v_{\min}}{4 \pi}}^{1/3} \simeq 0.919431$, this confirms the conjecture for the entire range $\bar v \in (0,\frac{4 \pi}{81} \beta^2]$ where spheres are expected to be minimizing. 
\item \label{it:Q3-2}
For all $\bar v \in [\frac{4 \pi}{81} \beta^2, \max( \frac{1}{\pi} - \frac{\beta}{4} , \min( \frac{4 \pi}{81} , \frac{v_{\min}}{\beta}))]$ (this interval is non-empty when $\beta \leq 0.919431$), cylinders about the short edge $[0,\beta]$ are minimizing (see Figure \ref{fig:cylinder-interval}).
\item \label{it:Q3-3}
For all $\bar v \in [\frac{1}{\pi} + \frac{\beta}{4} , \frac{1}{2}]$ (this interval is non-empty when $\beta \leq  2 - \frac{4}{\pi} \simeq 0.72676$), flat planes $[0,\beta]\times [0,1] \times \{\bar v\}$ are minimizing.
\end{enumerate}
In particular, for all $\beta \in (0,1]$, Conjecture \ref{conj:Q3} holds for all $\bar v \in (0,1/2] \setminus (\frac{1}{\pi} - \frac{\beta}{4} , \frac{1}{\pi} + \frac{\beta}{4})$.  
\end{theorem}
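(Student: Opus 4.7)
The plan is to exploit the slab structure of $\Q^3(\beta) = [0,\beta] \times [0,1]^2$, viewing $[0,\beta]$ as the vertical factor of width $\beta$ and $[0,1]^2$ as the base, and then to apply the general machinery developed in Sections~\ref{sec:CMC} and~\ref{sec:stability} (CMC equation, stability analysis, symmetrization, and the ODI for the isoperimetric profile). By the reflection argument described around Remark~\ref{rem:reflect-slab}, it suffices to work on the flat torus $\T^3(\beta) = \R^3/(2\beta\Z \times 2\Z \times 2\Z)$, where every isoperimetric minimizer is a smooth, closed, stable CMC surface and the boundary contact complications disappear. The three regimes (spheres, cylinders, flat planes) are treated separately, and the final ``in particular" claim is then a bookkeeping exercise, verifying that the three sub-intervals cover $(0,1/2] \setminus (\frac{1}{\pi}-\frac{\beta}{4}, \frac{1}{\pi}+\frac{\beta}{4})$ for every $\beta \in (0,1]$.

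For part~\eqref{it:Q3-1}, I would combine a ``one-sided volume" confinement with Euclidean corner symmetrization. The constant $v_{\min}$ of Proposition~\ref{prop:one-sided-volume} is calibrated so that once $\bar v \leq v_{\min}/\beta$, any minimizer $E$ must be disjoint from one of the two opposite faces in each coordinate direction; pushing $E$ to a corner via reflections and then applying Steiner symmetrization in the two base directions reduces the problem to the sharp Euclidean isoperimetric inequality on an octant, whose unique minimizer is the eighth-ball -- valid as long as its radius satisfies $r \leq \beta$, i.e.\ $\bar v \leq \frac{4\pi}{81}\beta^2$.

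For part~\eqref{it:Q3-2}, I would Steiner-symmetrize in the short-edge direction $x_1$; this is legitimate because the horizontal factor carries uniform Lebesgue measure, so this step decreases perimeter while preserving volume. The problem thus reduces to $x_1$-translation-invariant minimizers, i.e.\ cylinders over a planar set $F \subset [0,1]^2$, and the 2D isoperimetric problem on $[0,1]^2$ is solved by quarter discs about a corner below a critical volume, as in Hauswirth--P\'erez--Romon--Ros~\cite{HPRR-PeriodicIsoperimetricProblem}. Where the direct reduction does not reach the claimed upper endpoint, I would refine it via a slicing argument (Section~\ref{sec:CMC}) that compares an arbitrary competitor to the 2D model slice-by-slice in $x_1$, with the sharp margin $\frac{1}{\pi}-\frac{\beta}{4}$ emerging from a stability computation that balances the 2D minimality range against the slab-width penalty in the profile ODI.

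For part~\eqref{it:Q3-3}, the anchor point $\bar v = 1/2$ is provided by Hadwiger and Barthe--Maurey; I would then propagate minimality down to $\bar v \in [\frac{1}{\pi}+\frac{\beta}{4},\,1/2]$ by integrating the isoperimetric profile ODI from Section~\ref{sec:stability}. The relative perimeter of the flat plane is the constant $1$, while the only competing CMC candidate of the same relative volume is the quarter cylinder, of profile $\sqrt{\pi v}$; these coincide at $v = 1/\pi$, and the $\frac{\beta}{4}$ correction precisely quantifies how far above the crossing point one must move before the ODI comparison becomes strict. The hard part throughout will be the sharp endpoint analysis and the classification of candidate CMC surfaces that must be excluded (unduloid-like deformations of the cylinder, and piecewise-smooth CMC hypersurfaces with non-standard contact patterns on the boundary); the open gap $(\frac{1}{\pi}-\frac{\beta}{4},\, \frac{1}{\pi}+\frac{\beta}{4})$ that the theorem leaves unresolved is a direct manifestation of this difficulty, as it is exactly the range in which neither of the two competing profiles dominates by a large enough margin to close the ODI.
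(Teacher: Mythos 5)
Your sketch is not the paper's argument, and two of its three pillars rest on steps that do not actually work.

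The most serious problem is in your part~\eqref{it:Q3-2}. Steiner symmetrization of a set $E\subset[0,\beta]\times[0,1]^2$ in the short direction $x_1$ does \emph{not} produce an $x_1$-translation-invariant set: it replaces each fiber $\{y\}\times[0,\beta]$ by a centered interval of the same length, and the resulting boundary is in general a two-sided unduloid, not a cylinder. The lower bound one gets from this kind of one-directional symmetrization (Corollary~\ref{cor:base-induced-profile} and Proposition~\ref{prop:coincide}) is the \emph{base-induced} profile $\I^b_\beta=\I(S_\beta(\I_{\T^2}))$, which is \emph{strictly less than} $\I_{\T^2}$ in a neighborhood of $\bar v=1/\pi$, as the paper points out explicitly using Lemma~\ref{lem:transverse}. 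So the claimed reduction ``$\I(\T^3_\beta)\geq\I(\T^2)$'' is false, and the cylinder case cannot be closed this way. The paper's actual route is to analyze minimizers of the two-dimensional model slab $S_\beta(\I_{\T^2})$ — a slab over a \emph{one-dimensional} base with the piecewise-linear density $\varphi_{\T^2}$ of~\eqref{eq:varphi2}, not the square $[0,1]^2$ — and classify its monotone minimizers (vertical line, horizontal line, one-sided unduloid, two-sided unduloid). The key quantitative inputs are a lower bound $\lambda\geq 0.87$ on the Lagrange multiplier (from concavity of the profile plus the value $1$ at $\bar v=1/2$), the Appendix Proposition~\ref{prop:main-computation} forcing two-sided unduloids with $\lambda\geq 0.8$ and $\bar v\leq 4\pi/81$ to have $v_1\leq 1/\pi$ (hence lie in the linear-density zone, where Pedrosa--Ritor\'e's instability applies), and Proposition~\ref{prop:one-sided-volume} forcing one-sided unduloids with $\bar v<v_{\min}/\beta$ to be quarter circles. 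Your part~\eqref{it:Q3-1} likewise misconstrues the role of $v_{\min}$: it is not a threshold that makes a small-volume minimizer avoid an opposite face (a claim that is neither proved nor obviously true), and there is no reduction to the sharp Euclidean isoperimetric inequality on an octant; $v_{\min}$ comes entirely from the explicit one-sided unduloid computation in the 2D model slab.

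Your part~\eqref{it:Q3-3} is closer in spirit, but you omit the essential ingredient: the Ritor\'e--Ros genus classification of closed minimizing surfaces in flat $3$-manifolds (Proposition~\ref{prop:genus}) together with the Gauss--Bonnet ODI $\I^2\I''+\I(\I')^2\leq\frac{\pi}{2\beta}\chi(\Sigma)$ of Hauswirth--P\'erez--Romon--Ros (Proposition~\ref{prop:ODI}). The mechanism is not ``compare the plane's profile $1$ against the cylinder's $\sqrt{\pi v}$''; it is that if $\I(\bar v)<\I_m(\bar v)$ somewhere, the minimizer there must have $g(\Sigma)\geq 2$, so $\chi(\Sigma)\leq -2$, which makes the ODI strict and leads to a contradiction via a maximum-principle comparison with an explicit supersolution; the $\beta/4$ margin is what falls out when that supersolution $F_{v_p}(\bar v)=\sqrt{1-\frac{\pi}{\beta}(\bar v-v_p)^2}$ is tuned to be tangent to $\sqrt{\pi\bar v}$. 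Without the genus dichotomy, nothing rules out a lower-perimeter competitor that is neither a plane nor a cylinder.
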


\begin{figure}
\begin{center}
        \includegraphics[scale=0.5]{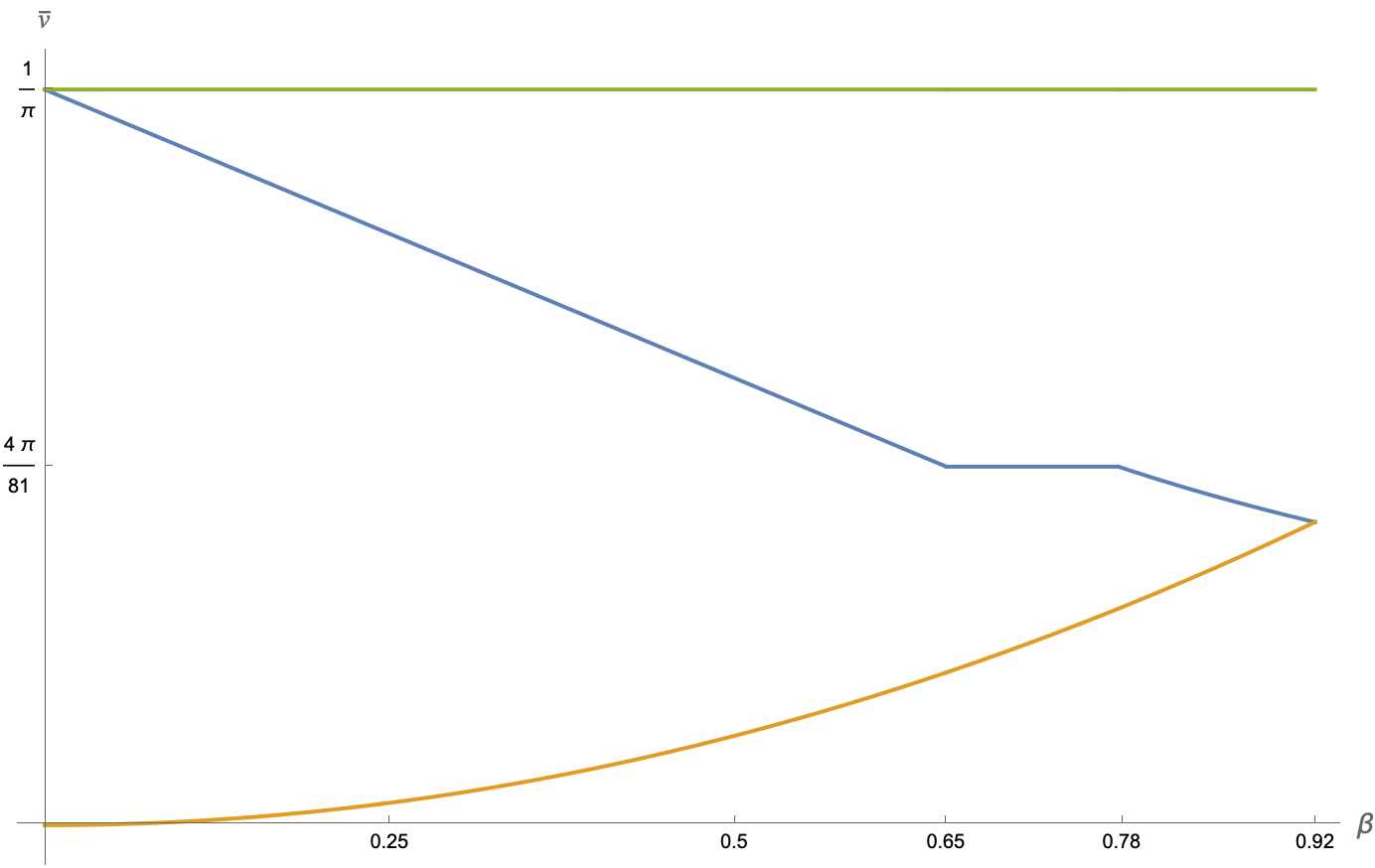}
     \end{center}
     \caption{
         \label{fig:cylinder-interval}
         Cylinders in $\Q^3(\beta)$ are conjectured to be minimizers for (weighted) volumes $\bar v$ between the yellow and green plots. We are able to show they are minimizers between the yellow and blue plots. 
     }
\end{figure}

Our proof yields a slightly smaller ``uncertainty" interval than $(\frac{1}{\pi} - \frac{\beta}{4} , \frac{1}{\pi} + \frac{\beta}{4})$ above, but we chose to state the simplest formulation. The above results suggest that $\bar v = \frac{1}{\pi}$ is the hardest case of Conjecture \ref{conj:Q3}. Indeed, according to Ros \cite{RosIsoperimetricProblemNotes}, there is a Lawson genus-2 surface in $\Q^3$ enclosing volume $\frac{1}{\pi}$ and having surface area $1.017$, just a little over the conjectured $1$. 
This is further corroborated by the following observation, which is implicitly contained in the work of Hauswirth--P\'erez--Romon--Ros \cite{HPRR-PeriodicIsoperimetricProblem}; it will be more convenient to formulate it on $\T^3(\beta)$. 

\begin{theorem}  \label{thm:T3-ODE}
On the $3$-dimensional torus $\T^3(\beta) = \R^3 / (2\beta \Z \times 2 \Z^2)$, $\beta \in (0,1]$, endowed with its uniform probability measure, the following holds:
\begin{enumerate}
\item \label{it:T3-ODE-1}
There exist $0 < v_s \leq \frac{4\pi}{81} \beta^2 \leq v_{c-} , v_{c+} \leq \frac{1}{\pi} \leq v_p < \frac{1}{2}$ so that if $E$ is an isoperimetric minimizer in $\T^3(\beta)$ of relative volume $\bar v \in (0,\frac{1}{2}]$ and $\Sigma = \overline{\partial^* E}$ then:
\begin{enumerate}
\item $\bar v \in (0,v_s) \Rightarrow $ $\Sigma$ is a sphere $\Rightarrow \bar v \in (0,v_s]$. 
\item $\bar v \in (v_{c-},v_{c+}) \Rightarrow$ $\Sigma$ is a cylinder about a shortest closed geodesic $\Rightarrow \bar v \in [v_{c-},v_{c+}]$.
\item $\bar v \in (v_p, \frac{1}{2}] \Rightarrow$ $\Sigma$ is the disjoint union of two parallel totally geodesic tori $\T^2(\beta)$ $\Rightarrow \bar v \in [v_p,\frac{1}{2}]$.
\end{enumerate}
\item \label{it:T3-ODE-2}
In particular, if Conjecture \ref{conj:Q3} holds at $\bar v = \frac{1}{\pi}$ and $\bar v = \frac{4 \pi}{81} \beta^2$, then it holds for all $\bar v \in (0,1)$. 
\end{enumerate}
\end{theorem}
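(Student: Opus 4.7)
The plan is to combine the Bavard--Pansu concavity of the isoperimetric profile on flat 3-manifolds with explicit formulas for the three candidate profiles. On $\T^3(\beta)$, direct computation gives $I_s(v) = (9\pi/(2\beta))^{1/3} v^{2/3}$ for spheres (valid for $v \in (0, \pi\beta^2/6]$), $I_c(v) = \sqrt{\pi v}$ for cylinders about a shortest closed geodesic (valid for $v \in (0, \pi/4]$), and $I_p(v) \equiv 1$ for a pair of parallel totally geodesic tori. The pivotal structural features are that $I_s^{3/2}(v) = (9\pi/(2\beta))^{1/2} v$ is \emph{affine} in $v$, $I_p^{3/2} \equiv 1$ is \emph{constant}, while $I_c^{3/2}(v) = (\pi v)^{3/4}$ is strictly concave. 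Let $V_X := \{v \in (0, 1/2] : I(v) = I_X(v)\}$ for $X \in \{s, c, p\}$; each is closed by continuity. The explicit comparisons $I_s \leq I_c$ iff $v \leq 4\pi\beta^2/81$ and $I_c \leq I_p$ iff $v \leq 1/\pi$ yield the geometric inclusions $V_s \subseteq (0, 4\pi\beta^2/81]$, $V_c \subseteq [4\pi\beta^2/81, 1/\pi]$, and $V_p \subseteq [1/\pi, 1/2]$.

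The main analytic input is the Bavard--Pansu ODI: on the flat $\T^3(\beta)$, the isoperimetric profile satisfies $I \cdot I'' + (I')^2/2 \leq 0$ at any $v$ of twice-differentiability, with equality iff the minimizer is totally umbilical; equivalently, $I^{3/2}$ is concave on $(0, 1/2]$. For Parts (1)(a) and (1)(c), I would argue: if $v_1 < v_2$ both lie in $V_s$, then $I^{3/2}$ is concave with $I^{3/2} \leq I_s^{3/2}$ (affine) and $I^{3/2}(v_i) = I_s^{3/2}(v_i)$; concavity forces $I^{3/2} \geq$ the chord over $[v_1, v_2]$, which coincides with $I_s^{3/2}$ on this segment, so $I = I_s$ on $[v_1, v_2]$ and hence $V_s$ is an interval. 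Combined with Morgan's result that spheres are minimizers for arbitrarily small $v$, this yields $V_s = (0, v_s]$ with $v_s \leq 4\pi\beta^2/81$. An identical argument applied to $I_p^{3/2} \equiv 1$, together with the Barthe--Maurey result that $1/2 \in V_p$, gives $V_p = [v_p, 1/2]$ with $v_p \geq 1/\pi$.

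For Part (1)(b), the Bavard--Pansu concavity of $I^{3/2}$ does not directly suffice since $I_c^{3/2}$ is strictly concave, not affine. Here I would exploit the \emph{sharper} ODI available at cylinder volumes: since cylinders have principal curvatures $(1/r, 0)$, they satisfy $|II|^2 = H^2$ (saturating the \emph{upper} end of the Cauchy--Schwarz range $H^2/2 \leq |II|^2$ for $2$-dimensional hypersurfaces in a flat $3$-manifold), yielding the stronger identity $I_c \cdot I_c'' = -(I_c')^2$, equivalently that $I_c^2(v) = \pi v$ is \emph{affine}. The analogue of the previous argument applied to $I^2$ (rather than $I^{3/2}$) would force $V_c = [v_{c-}, v_{c+}]$, provided one can establish that the minimizer at any intermediate $v$ satisfies $|II|^2 \geq H^2$ pointwise. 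This is where the work of Hauswirth--P\'erez--Romon--Ros enters: via classification of stable CMC surfaces in $\T^3(\beta)$ of the relevant topological type and barrier principles, it rules out non-cylindrical minimizers sitting between two cylindrical ones. Together with the geometric inclusion above, this yields $V_c = [v_{c-}, v_{c+}]$ with $4\pi\beta^2/81 \leq v_{c-} \leq v_{c+} \leq 1/\pi$.

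For Part (2), suppose Conjecture \ref{conj:Q3} holds at $v = 4\pi\beta^2/81$ and $v = 1/\pi$. At the former, $I_s(4\pi\beta^2/81) = I_c(4\pi\beta^2/81) = 2\pi\beta/9$, so the conjecture forces $4\pi\beta^2/81 \in V_s \cap V_c$; at the latter, $I_c(1/\pi) = I_p(1/\pi) = 1$, so $1/\pi \in V_c \cap V_p$. Combined with Part (1) this pins down $v_s = v_{c-} = 4\pi\beta^2/81$ and $v_{c+} = v_p = 1/\pi$, giving $V_s \cup V_c \cup V_p = (0, 4\pi\beta^2/81] \cup [4\pi\beta^2/81, 1/\pi] \cup [1/\pi, 1/2] = (0, 1/2]$, so the conjecture holds on the symmetric half and (by complementation) on all of $(0, 1)$. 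The main obstacle is the cylinder step in Part (1)(b): the Bavard--Pansu concavity of $I^{3/2}$ is not tight enough, and one must propagate the stronger cylindrical ODI across intermediate volumes by excluding non-cylindrical minimizers between two cylindrical ones -- the delicate classification-style input supplied by HPRR.
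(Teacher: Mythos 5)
Your observation that the conjectured profile saturates the Bavard--Pansu concavity in the spherical and planar regimes (since $\I_s^{3/2}$ is affine and $\I_p^{3/2}$ is constant) while failing to do so in the cylindrical regime is correct and insightful, and the resulting chord argument is a valid way to see that the sets $V_s$ and $V_p$ on which $\I$ equals $\I_s$ (resp.\ $\I_p$) are intervals. However, your proposal falls short of the theorem in two ways.

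First, even in the spherical and planar regimes, the $\I^{3/2}$-concavity argument only shows that $\I(\bar v) = \I_s(\bar v)$ (resp.\ $\I_p(\bar v)$); it does not show that \emph{every} minimizer at such $\bar v$ is enclosed by a round sphere (resp.\ two parallel tori), which is the content of the implication ``$\bar v \in (0,v_s) \Rightarrow \Sigma$ is a sphere''. The paper obtains this uniqueness-type conclusion as a byproduct of a finer, genus-dependent differential inequality (see below) combined with Ritor\'e--Ros's classification: any minimizing $\Sigma$ in a flat $3$-manifold with $g(\Sigma) \leq 1$ must be a round sphere, a round cylinder about a shortest closed geodesic, or a pair of parallel totally geodesic tori (Proposition~\ref{prop:genus}).

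Second, and more seriously, your handling of the cylinder regime (1)(b) is circular. You propose the ``sharper'' ODI coming from $|\II|^2 \geq H^2$, but Cauchy--Schwarz only gives $|\II|^2 \geq H^2/2$ for surfaces in a flat $3$-manifold; the stronger inequality $|\II|^2 \geq H^2$ holds precisely when one principal curvature vanishes identically --- i.e.\ when $\Sigma$ is ruled. Assuming it in order to conclude the minimizer is a cylinder begs the question. Your fallback, an appeal to an unspecified ``classification of stable CMC surfaces'' from HPRR, misidentifies the actual tool. The correct mechanism, used by the paper, is the identity from the Gauss equation and Gauss--Bonnet: for a CMC surface $\Sigma$ in a flat $3$-manifold, $\int_\Sigma |\II|^2 \, dA = A(\Sigma) H^2 - 4\pi\chi(\Sigma)$ is an \emph{exact} equality, not an inequality, which yields Proposition~\ref{prop:ODI}: $\I^2\I'' + \I(\I')^2 \leq \frac{\pi}{2\beta}\chi(\Sigma)$, with the conjectured profile satisfying $\I_m^2\I_m'' + \I_m(\I_m')^2 = \frac{\pi}{2\beta}\chi_i$ on each regime. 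Combined with Proposition~\ref{prop:genus} (which gives $\I(\bar v) < \I_m(\bar v) \Rightarrow g(\Sigma) \geq 2 \Rightarrow \chi(\Sigma) \leq -2$), a single maximum-principle argument (Proposition~\ref{prop:max-principle}) then covers all three regimes uniformly and characterizes the minimizers, including the cylindrical one. Your $\I^2$-affineness observation for cylinders is, in effect, the $\chi_i = 0$ case of the paper's ODI --- but the derivation must pass through Gauss--Bonnet and the genus bound, not through a pointwise curvature assumption. Your Part~(2) argument is fine once Part~(1) is established.
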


Note that there is no guarantee that $v_{c-} < v_{c+}$, so that the interval $(v_{c-},v_{c+})$ may be empty. In any case, in view of Theorem \ref{thm:Q3-main}, we immediately deduce:
\begin{corollary}
When $\beta \leq 0.919431$, $v_s = v_{c-} = \frac{4\pi}{81} \beta^2$, and Conjecture \ref{conj:Q3} for $\Q^3(\beta)$ holds in its entirety iff it holds at $\bar v = \frac{1}{\pi}$, namely iff the half-plane $\{ x \in \Q^3(\beta) \; ; \; x_3 \leq \frac{1}{\pi} \}$ is an isoperimetric minimizer. 
\end{corollary}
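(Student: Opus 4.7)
The plan is to deduce this corollary by directly combining Theorems \ref{thm:Q3-main} and \ref{thm:T3-ODE}. The key observation is that Theorem \ref{thm:T3-ODE}(1) gives both necessary and sufficient conditions for each type of model surface to be realized as an isoperimetric minimizer (so the thresholds $v_s$, $v_{c-}$, $v_{c+}$, $v_p$ are effectively ``iff'' thresholds), while Theorem \ref{thm:Q3-main} provides explicit intervals of sufficiency that will be strong enough to pin two of them down.

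First I would identify $v_s = \frac{4\pi}{81}\beta^2$. The upper bound $v_s \leq \frac{4\pi}{81}\beta^2$ is part of Theorem \ref{thm:T3-ODE}(1). For the reverse inequality, Theorem \ref{thm:Q3-main}(1) asserts that under $\beta \leq 0.919431$ one has $\min(\frac{4\pi}{81}\beta^2,v_{\min}/\beta) = \frac{4\pi}{81}\beta^2$, so that a sphere is an isoperimetric minimizer at $\bar v = \frac{4\pi}{81}\beta^2$. Applying the second implication in Theorem \ref{thm:T3-ODE}(1a), namely ``$\Sigma$ is a sphere $\Rightarrow \bar v \in (0,v_s]$'', then yields $v_s \geq \frac{4\pi}{81}\beta^2$, hence equality. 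The identification $v_{c-} = \frac{4\pi}{81}\beta^2$ is analogous: Theorem \ref{thm:T3-ODE}(1) provides $v_{c-} \geq \frac{4\pi}{81}\beta^2$, while Theorem \ref{thm:Q3-main}(2) exhibits a cylinder minimizer at $\bar v = \frac{4\pi}{81}\beta^2$ (the left endpoint of the stated interval, which is non-empty under $\beta \leq 0.919431$); the second implication in Theorem \ref{thm:T3-ODE}(1b) then forces $v_{c-} \leq \frac{4\pi}{81}\beta^2$.

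For the equivalence in the second assertion of the corollary, I would invoke Theorem \ref{thm:T3-ODE}(2), which reduces the full validity of Conjecture \ref{conj:Q3} to its validity at the two critical volumes $\bar v = \frac{4\pi}{81}\beta^2$ and $\bar v = \frac{1}{\pi}$. The former is already guaranteed by Theorem \ref{thm:Q3-main}(1) under the standing hypothesis $\beta \leq 0.919431$, so only the case $\bar v = \frac{1}{\pi}$ remains. At this transition volume, both a cylinder about the short edge and the flat half-plane $\{x_3 \leq \frac{1}{\pi}\}$ are the candidate minimizers (their enclosed volumes and surface areas coincide by the defining identity of $\frac{1}{\pi}$ as the cylinder/plane transition point), so establishing the conjecture at $\bar v = \frac{1}{\pi}$ is equivalent to showing that the half-plane $\{x \in \Q^3(\beta) \; ; \; x_3 \leq \frac{1}{\pi}\}$ is an isoperimetric minimizer.

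Because every ingredient has already been established in the cited theorems, there is no genuine technical obstacle in the argument: the proof amounts to a careful bookkeeping assembly, and the only care needed is to track the two-sided implications in Theorem \ref{thm:T3-ODE}(1) faithfully, so that the sufficiency statements in Theorem \ref{thm:Q3-main} can be converted into matching lower bounds on $v_s$ and upper bounds on $v_{c-}$.
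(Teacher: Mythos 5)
Your proposal is correct and spells out precisely the ``immediate deduction'' that the paper leaves to the reader (the paper states the corollary follows ``in view of Theorem~\ref{thm:Q3-main}'' without further argument), so it takes essentially the same approach. The bookkeeping is handled carefully: the two-sided implications in Theorem~\ref{thm:T3-ODE}(1) are used exactly as intended to convert the sufficiency intervals of Theorem~\ref{thm:Q3-main}(1)--(2) into the equalities $v_s = v_{c-} = \frac{4\pi}{81}\beta^2$, and Theorem~\ref{thm:T3-ODE}(2) then reduces everything to $\bar v = \frac{1}{\pi}$, where, as you note, the sphere has strictly larger perimeter and the cylinder and half-plane tie, so the conjecture at $\frac{1}{\pi}$ is equivalent to the half-plane being minimizing.
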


In our opinion, this last observation is of particular interest, since it reduces the task of establishing the conjecture when $\beta \leq 0.919431$ to a sharp extension of the range where a half-plane is known to be minimizing, which in some sense is more of a ``linear" problem. Cases (\ref{it:Q3-1}) and (\ref{it:Q3-2}) of  Theorem \ref{thm:Q3-main} when $\bar v \leq \min( \frac{4 \pi}{81},\frac{v_{\min}}{\beta})$  are established in Subsection \ref{subsec:Q3.2}, whereas cases (\ref{it:Q3-2}) and (\ref{it:Q3-3}) in the remaining range, as well as Theorem \ref{thm:T3-ODE}, are established in Subsection \ref{subsec:Q3.3}. Some additional results are described in Section \ref{sec:Q3}. The proof involves a rigorous numerical estimation of a certain function involving elliptic integrals.

\subsection{High-dimensional cube}

Let $\Q^n := ([0,1]^n, \abs{\cdot}^2, \m\mycorner_{[0,1]^n})$ denote the $n$-dimensional unit cube endowed with its uniform measure. In view of the isoperimetric conjecture on $\Q^3$, it is natural to make the following:

\begin{conjecture}[Isoperimetric conjecture on $n$-dimensional cube]  \label{conj:Qn}
For every $\bar v \in (0,1)$, there exists $k \in \{0,1,\ldots,n-1\}$ and $r \in (0,1]$ so that the $r$-tubular neighborhood of a $k$-dimensional face of $\Q^n$ or its complement is an isoperimetric minimizer of volume $\bar v$. 
\end{conjecture}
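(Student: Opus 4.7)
Since the abstract announces that Conjecture~\ref{conj:Qn} is \emph{false} for $n \geq 10$, my plan is to \emph{disprove} rather than prove it. The goal is to exhibit an explicit relative volume $\bar v$ and a competitor $E \subset \Q^n$ with $V(E) = \bar v$ whose perimeter is strictly smaller than $A_k(r)$ for every admissible $k \in \{0,\ldots,n-1\}$ and every $r$ with $V_k(r) = \bar v$, where $V_k(r)$ and $A_k(r)$ denote the volume and perimeter of the $r$-tubular neighborhood of a $k$-face. My first step would be to write down these explicit functions (involving ball volumes in dimension $n-k$) and to verify that, for $\bar v$ in a neighborhood of $1/2$, only $k = n-1$ can reach such volumes, so that the conjectural envelope $\min_{k,r}\{A_k(r) : V_k(r) = \bar v\}$ equals the constant $1$ coming from the half-space.

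The natural source of a counterexample is the Gaussian slab analysis developed earlier in the paper: when the slab width exceeds $\pi$, the flat half-plane is beaten by a Gaussian unduloid. I would try to transfer this phenomenon to the cube by regarding $\Q^n$ as a slab along its principal diagonal direction $u := (1,\ldots,1)/\sqrt n$, whose length is $\sqrt n$. Projecting the uniform measure on $\Q^n$ onto $u$ gives the (rescaled) Irwin--Hall density on $[0,\sqrt n]$, whose concentration properties mimic a Gaussian of variance $1/12$. In this picture $\Q^n$ is morally a Gaussian-like slab of width $\sqrt n$, and the critical threshold $T > \pi$ is crossed precisely when $\sqrt n > \pi$, i.e., when $n \geq 10$. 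An unduloid-type perturbation of the diagonal half-space $\{\sum_i x_i \leq n/2\}$ should then yield a subset of $\Q^n$ whose perimeter is strictly less than $1$ at volume $1/2$.

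The main obstacle is that this identification is not a clean reduction to the Gaussian slab theorem: the Irwin--Hall density is only approximately Gaussian, and the cross-sections of $\Q^n$ orthogonal to $u$ are polytopes rather than Euclidean spaces. My preferred route would therefore be a direct computation in $\Q^n$: construct a one-parameter family of perturbations $\Sigma_\epsilon$ of the diagonal hyperplane $\{\sum_i x_i = n/2\}$, Taylor-expand $V(\Sigma_\epsilon)$ and $A(\Sigma_\epsilon)$ to second order in $\epsilon$, and choose the perturbation frequency to exploit the instability associated with the length exceeding $\pi$. The threshold $n \geq 10$ should emerge as the sharp condition under which the second variation of the area functional is negative along some volume-preserving perturbation. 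The most delicate technical point will be the polyhedral boundary of $\Q^n$: the perturbation must respect the Neumann-type condition on $\partial \Q^n$, and the Jacobi eigenvalue computation must be carried out with the cube's faces as free boundary.
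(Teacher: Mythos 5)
Your overall plan---to disprove the conjecture by exhibiting a volume where no face tube (or complement) is a minimizer---is the right logical shape, but the specific mechanism you propose is both different from the paper's and, more importantly, cannot work.

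The central flaw: you propose to beat the half-space $\{\sum_i x_i \leq n/2\}$ at relative volume $1/2$ by an unduloid-type perturbation. But the paper (and the cited results of Hadwiger and Barthe--Maurey) establishes that coordinate half-planes \emph{are} isoperimetric minimizers in $\Q^n$ at volume $1/2$: the Gaussian contraction argument gives $\I(\Q^n)(1/2) \geq \sqrt{2\pi}\,\I_\gamma(1/2) = 1$, and testing a coordinate half-plane gives equality. So no set of volume $1/2$ in $\Q^n$ can have perimeter strictly less than $1$, and Conjecture~\ref{conj:Qn} \emph{holds} at $\bar v = 1/2$ for every $n$. The diagonal-slab/Irwin--Hall heuristic, besides the difficulties you already flag (the cross-sections are polytopes, the density is only approximately Gaussian), leads you to attack the conjecture at precisely the volume where it is provably true. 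The coincidence $\sqrt{10} > \pi$ is also not the source of the $n \geq 10$ threshold.

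What the paper actually does (following Pedrosa--Ritor\'e) is work at a very \emph{small} volume, $\bar v = v_s(n) = \omega_n/2^n$, where the tubular neighborhood of a vertex has radius exactly $1$. At that radius the vertex ball is tangent to the opposite faces of $\Q^n$ (equivalently, on $\T^n$ a geodesic ball of radius $1$ touches itself tangentially), and therefore cannot be an isoperimetric minimizer by regularity of minimizers. The computational content of the disproof is then a monotonicity check: for $n \geq 10$, the map $k \mapsto \I_m^{(k)}(v_s(n))$ is strictly decreasing on $\{1,\ldots,n\}$, i.e.~the vertex ball has strictly \emph{smaller} surface area than every other face tube of the same volume (and one needs only $\bar v \le 1/2$, so complements can be dismissed via concavity and Proposition~\ref{prop:lambda-I}). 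Since the only candidate that could possibly minimize is eliminated by tangency, the conjecture fails. This argument requires no second-variation computation, no Jacobi operator on a polyhedral boundary, and no Gaussian approximation---it is an elementary volume/area comparison plus a structural tangency observation. You should abandon the diagonal-unduloid route and instead work out the volume and area formulas $V_k(r) = \omega_k r^k/2^k$, $A_k(r) = k\omega_k r^{k-1}/2^k$ for tubular neighborhoods of $(n-k)$-faces, specialize to $r=1$ for $k=n$, and establish the monotonicity in $k$.
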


In the two-dimensional case of $\Q^2$, or equivalently, for the analogous conjecture on the flat torus $\T^2 = \R^2 / (2 \Z^2)$, this is well-known (e.g.~\cite{BrezisBruckstein}, \cite[Theorem 3.1]{Howards-BScThesis}, \cite[Section 7]{HHM-Surfaces}, \cite[Section 1.5]{RosIsoperimetricProblemNotes}), but the general $n \geq 3$ case poses a much greater challenge. As in the case $n=3$, the conjecture is known to hold when $\min(\bar v , 1 - \bar v) \in (0,\eps_s(n)] \cup [1/2-\eps_p(n),1/2]$: small balls around corners of $\Q^n$ are minimizers, as are half-planes $\{ x \in \Q^n \; ; \; x_n \leq \bar v \}$ when $\bar v \in [1/2-\eps_p(n),1/2+\eps_p(n)]$. See also \cite[Theorem 1.2]{Glaudo-IsoperimetryOnCube} for an interesting dimension-independent lower bound on the isoperimetric profile of $\Q^n$ which is strictly better than the one obtained from the Gaussian contraction argument. 

\medskip

However, in Section \ref{sec:high-dim-cube} we observe that the above conjecture cannot be true in full generality in high-dimension:

\begin{theorem} \label{thm:Qn}
The isoperimetric conjecture on the $n$-dimensional cube is false for all $n \geq 10$. 
\end{theorem}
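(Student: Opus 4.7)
The plan is to disprove Conjecture \ref{conj:Qn} by exhibiting an explicit set $E \subset \Q^n$ whose perimeter is strictly smaller than that of every $r$-tubular neighborhood of a $k$-dimensional face at the same volume.

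First, I would parametrize the conjectured candidates. Writing $m := n - k \in \{1,\ldots,n\}$ for the codimension, the $r$-tubular neighborhood of a $k$-face in $\Q^n$ (with $r \in (0,1]$) has volume $(\omega_m/2^m) r^m$ and perimeter $m(\omega_m/2^m)r^{m-1}$, where $\omega_m$ is the volume of the Euclidean unit $m$-ball. Eliminating $r$ yields the perimeter-as-function-of-volume
\[ f_m(v) \;:=\; m \, (\omega_m/2^m)^{1/m} \, v^{(m-1)/m}, \qquad v \in [0,\omega_m/2^m]. \]
The face-tube profile asserted by the conjecture is therefore $I^{\mathrm{tube}}(v) := \min\{ f_m(v) : 1\leq m\leq n,\ v \leq \omega_m/2^m\}$ (together with complements).

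The key observation is a jump discontinuity of $I^{\mathrm{tube}}$ at $v_0 := \omega_{10}/2^{10}$, the maximal volume attainable by a codimension-$10$ tube. From below, $I^{\mathrm{tube}}(v_0) = f_{10}(v_0) = 10 v_0$; just above $v_0$ the codimension-$10$ tube is no longer admissible and the minimum switches to $f_9(v)$. The jump condition $f_9(v_0) > f_{10}(v_0)$ rearranges algebraically to
\[ 2\,\omega_9/\omega_{10} \;>\; (10/9)^9, \qquad \text{equivalently} \qquad \frac{7680}{945\,\pi} \;>\; \frac{10^9}{9^9}. \]
The left side is approximately $2.58642$ and the right side approximately $2.58117$, so the inequality holds -- but only narrowly. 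I would also verify that the analogous inequality $2\omega_{m-1}/\omega_m > (m/(m-1))^{m-1}$ \emph{fails} for every $m \in \{2,\ldots,9\}$, so that $m=10$ is the first codimension at which such a jump arises; this is precisely what pins the threshold of the theorem to $n \geq 10$. Denote the positive jump size by $\Delta := f_9(v_0) - 10 v_0$.

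For the explicit counterexample, let $T \subset \Q^n$ be the $r=1$ tubular neighborhood of the $(n-10)$-face $[0,1]^{n-10} \times \{0\}^{10}$, so that $V(T) = v_0$ and $A(T) = 10 v_0$. Let $B_\delta$ be a corner ball of radius $\delta$ at the opposite vertex $(1,\ldots,1)$. A Cauchy--Schwarz computation gives $\mathrm{dist}((1,\ldots,1), T) = \sqrt{11 - 2\sqrt{10}} > 2$, so $T$ and $B_\delta$ are disjoint for any $\delta < 2$. Setting $E := T \cup B_\delta$,
\[ V(E) = v_0 + \omega_n \delta^n/2^n, \qquad A(E) = 10 v_0 + n\,\omega_n\, \delta^{n-1}/2^n. \]
For $\delta > 0$ small enough the deviation $A(E) - 10 v_0 = O(\delta^{n-1})$ is less than $\Delta/2$, while $V(E) \to v_0$, so by continuity of $f_9$ one obtains $A(E) < f_9(V(E))$. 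Evaluating at $v_0$ also shows $f_m(v_0) > f_9(v_0)$ for all $m \in \{1,\ldots,8\}$ (extending by continuity to a neighborhood), and every complement of a face-tube of volume $1 - V(E) \approx 1$ has perimeter at least $1 \gg A(E)$. Hence $E$ strictly beats every competitor permitted by Conjecture \ref{conj:Qn}.

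The main obstacle is the tightness of the key inequality: the margin is only about $0.2\%$. Since all quantities involved are explicit (e.g.\ $\omega_9 = 32\pi^4/945$, $\omega_{10} = \pi^5/120$), this reduces to verifying $7680 \cdot 9^9 > 945\,\pi\cdot 10^9$ rigorously, e.g.\ using an explicit upper bound such as $\pi < 3.14160$. The same tightness explains why the result cannot be pushed down to $n \leq 9$: there is no analogous jump at smaller codimensions, so the discontinuity-based counterexample is simply unavailable there.
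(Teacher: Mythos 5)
Your proof is correct in its essentials but follows a genuinely different route from the paper, so a comparison is in order. Both proofs turn on the razor-thin numerical inequality $2\omega_9/\omega_{10} > (10/9)^9$, and both ultimately boil down to showing $\I_m^{(m)}(\omega_{10}/2^{10}) > 10\,\omega_{10}/2^{10}$ for $m \in \{1,\dots,9\}$ (precisely the $n=10$ content of the paper's key lemma, whose Figure~\ref{fig:high-dim-10} you would reproduce). From there they diverge. The paper works at the $n$-dependent volume $v_s(n) = \omega_n/2^n$: it shows the unit corner-ball is the cheapest conjectured candidate at that volume, then invokes a regularity/tangency argument (or equivalently, self-tangency of the radius-$1$ ball on $\T^n$) to conclude the corner ball cannot actually be a minimizer. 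Because the critical volume depends on $n$, the paper needs to verify its decreasing-sequence lemma for $n=10,11$ explicitly and then run an inductive argument up through all $n \ge 11$. You instead fix the critical volume once and for all at $v_0 = \omega_{10}/2^{10}$, identify a jump discontinuity of the conjectured tube-profile there, and beat it with an \emph{explicit} competitor $E = T \cup B_\delta$: the radius-$1$ codimension-$10$ tube plus a small disjoint corner ball. Since $V(E) > v_0$ kills all tubes of codimension $\ge 10$ outright, only the $9$ fixed inequalities at $v_0$ are needed regardless of $n$, and the construction is entirely elementary -- no regularity theory, no induction, no torus. That is a real simplification.

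Two minor items are worth tightening. First, you assert $f_m(v_0) > f_9(v_0)$ for $m \in \{1,\dots,8\}$ but neither verify nor even sketch why; this is a finite check (eight explicit inequalities, one for each $m$), but as stated it is a gap, especially given how narrow the $m=9$ vs.\ $m=10$ margin already is -- it would be worth spelling out that the other inequalities are comfortably non-tight. Second, the parenthetical claim that the failure of $2\omega_{m-1}/\omega_m > (m/(m-1))^{m-1}$ for $m \le 9$ ``pins the threshold'' should be softened: it only explains why \emph{this} construction does not extend below $n=10$, not that the conjecture holds there (the theorem makes no claim for $n \le 9$), which you do note at the end but which could be confused with a sharpness claim.
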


This confirms a prediction of Ros \cite{RosIsoperimetricProblemNotes}, who writes regarding the isoperimetric conjecture on $\Q^3$: ``In higher dimensions the corresponding conjecture is probably wrong". 
The argument for demonstrating that the conjecture is false is the same as the one used by Pedrosa--Ritor\'e in \cite{PedrosaRitore-Products} for showing that cylinders and half-balls fail to be isoperimetric minimizers in a slab $\R^{n-1} \times [0,1]$ for certain volumes and large $n$ (in fact, in precisely the same range $n \geq 10$). The only difference is that our computation is somewhat heavier since we need to disqualify the tubular neighborhoods of all $k$-dimensional faces. 
\smallskip
The falsehood of the conjecture in general dimension suggests that obtaining a positive answer in low-dimension would involve fortunate numeric coincidences. And indeed, as already mentioned, our progress in dimension $n=3$ is based on several numerical computations. 

\subsection{Gaussian slabs}

Before concluding this work, we apply our general slab framework to a second natural base space given by $(\R^{n-1} , \abs{\cdot}^2, \gamma^{n-1})$, where $\gamma^{n-1}$ denotes the standard Gaussian measure on $\R^{n-1}$. Let $\G^n_T$ denote the slab of width $T > 0$ over the latter Gaussian base space, namely:
\[
\G^n_T := ([0,T],\abs{\cdot}^2,\frac{1}{T} \m\mycorner_{[0,T]})  \otimes (\R^{n-1} , \abs{\cdot}^2, \gamma^{n-1}) .
\]

We denote by $\I_T$ the isoperimetric profile of $\G^n_T$. 
 It was shown by Sudakov--Tsirelson \cite{SudakovTsirelson} and independently Borell \cite{Borell-GaussianIsoperimetry} that half-planes are isoperimetric minimizers in $(\R^{n-1} , \abs{\cdot}^2, \gamma^{n-1})$; thanks to the product structure of the Gaussian measure, it follows that its isoperimetric profile $\I_{\gamma^{n-1}}$ is dimension-independent, coinciding with the profile of the one-dimensional $(\R,\abs{\cdot}^2,\gamma^1)$, which we denote by $\I_\gamma$. Since half-lines are minimizers in $(\R,\abs{\cdot}^2,\gamma^1)$, we have $\I_\gamma = \varphi_{\gamma} \circ \Phi_{\gamma}^{-1}$, where $\varphi_{\gamma}$ denotes the standard Gaussian density on $\R$ and $\Phi_{\gamma}(s) = \int_{-\infty}^s \varphi_\gamma(x) dx$. It is easy to check that $\I_{\gamma} \I_{\gamma}'' = -1$ on $(0,1)$. In particular, $\I_\gamma : [0,1] \rightarrow \R_+$ is concave and symmetric about $1/2$, and general results (see Proposition \ref{prop:I-properties}) imply that the same holds for $\I_T$. 

\medskip

The following is trivial:
\begin{lemma}
For all $T \leq \sqrt{2 \pi}$, $\I_T = \I_\gamma$. In other words, horizontal half-planes $\{ x_n \leq \Phi_\gamma^{-1}(\bar v) \}$ are isoperimetric minimizers of (weighted) volume $\bar v$ in $\G^n_T$ for all $\bar v \in (0,1)$. 
\end{lemma}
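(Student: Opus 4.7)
The plan is to establish matching upper and lower bounds on $\I_T$, with the upper bound witnessed by the claimed minimizer and the lower bound obtained via a $1$-Lipschitz transport from standard Gauss space.

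First I would verify the upper bound $\I_T \leq \I_\gamma$ by direct computation with the horizontal half-plane $H_{\bar v} := \{x \in \G^n_T \; ; \; x_n \leq \Phi_\gamma^{-1}(\bar v)\}$. By Fubini, $\mu_T(H_{\bar v}) = \bar v$, and the weighted perimeter factors over the product structure as
\[
A_{\mu_T}(H_{\bar v}) = \brac{\frac{1}{T}\int_0^T dt}\brac{\int_{\R^{n-2}} d\gamma^{n-2}}\varphi_\gamma(\Phi_\gamma^{-1}(\bar v)) = \I_\gamma(\bar v).
\]

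For the lower bound $\I_T \geq \I_\gamma$, the key idea is to transport $(\R^n,\gamma^n)$ onto $(\G^n_T, \mu_T)$ by the smooth map
\[
\pi(x_1,x_2,\ldots,x_n) := (T\Phi_\gamma(x_1),x_2,\ldots,x_n).
\]
Because $x_1 \mapsto T\Phi_\gamma(x_1)$ is the monotone rearrangement from $(\R,\gamma^1)$ onto $([0,T],\frac{1}{T}\m\mycorner_{[0,T]})$, one has $\pi_*\gamma^n = \mu_T$. The differential of $\pi$ is $\mathrm{diag}(T\varphi_\gamma(x_1),1,\ldots,1)$, whose operator norm is $\max(T\varphi_\gamma(x_1),1)$; the hypothesis $T \leq \sqrt{2\pi}$ is equivalent to $T\varphi_\gamma(0) = T/\sqrt{2\pi} \leq 1$, so $\pi$ is $1$-Lipschitz. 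For any $E \subset \G^n_T$ of finite perimeter with $\mu_T(E) = \bar v$, I would set $E' := \pi^{-1}(E)$, use $\pi_*\gamma^n = \mu_T$ to get $\gamma^n(E') = \bar v$, and invoke the standard pull-back inequality $\abs{\nabla(\mathbf{1}_E \circ \pi)} \leq \abs{\nabla \mathbf{1}_E} \circ \pi$ (valid in the BV sense since $\pi$ is $1$-Lipschitz) together with the change of variables to conclude
\[
A_{\gamma^n}(E') \leq \int_{\R^n} (\abs{\nabla \mathbf{1}_E} \circ \pi) \, d\gamma^n = \int_{\G^n_T} \abs{\nabla \mathbf{1}_E} \, d\mu_T = A_{\mu_T}(E).
\]
The Sudakov--Tsirelson--Borell Gaussian isoperimetric inequality on $(\R^n,\gamma^n)$ gives $A_{\gamma^n}(E') \geq \I_\gamma(\gamma^n(E')) = \I_\gamma(\bar v)$, and chaining the estimates yields $A_{\mu_T}(E) \geq \I_\gamma(\bar v)$, completing the lower bound.

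There is no substantial obstacle here, in line with the statement being labeled ``trivial'' in the paper. The only conceptual point is the recognition that $T \leq \sqrt{2\pi}$ is precisely the threshold making the natural monotone transport from $(\R,\gamma^1)$ onto $([0,T],\frac{1}{T}\m\mycorner_{[0,T]})$ a $1$-Lipschitz map, so that Gaussian isoperimetry can be transported to the slab through the product map $\pi$, with equality attained by horizontal half-planes. The minor technical step I would need to carry out carefully is the BV pull-back identity, which follows from the chain rule $\nabla(f\circ\pi) = D\pi^T (\nabla f)\circ\pi$ for smooth approximations, the bound $\norm{D\pi}\leq 1$, and a standard density argument.
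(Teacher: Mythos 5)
The proposal is correct and takes essentially the same approach as the paper: the paper also obtains the upper bound by testing horizontal half-planes and the lower bound via the same product transport map $P(x) = (T\Phi_\gamma(x_1), x_2, \ldots, x_n)$, which is $1$-Lipschitz precisely when $T \leq \sqrt{2\pi}$, and then applies the standard Lipschitz transference principle for isoperimetric profiles. Your only deviation is to unpack that transference principle via a BV pull-back rather than citing it, which is a legitimate (if slightly informal, since $|\nabla \mathbf{1}_E|$ is a measure and its ``composition'' with $\pi$ should really go through smooth approximations and lower semicontinuity) way to prove the same cited fact.
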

\begin{proof}
Since $\max_{x \in \R} \varphi_{\gamma}(x) = 1/\sqrt{2 \pi}$, it is immediately seen that the horizontal measure $\frac{1}{T} \m\mycorner_{[0,T]}$ is the push-forward of $\gamma^1$ via the map $P_1 = T \cdot \Phi_{\gamma}$ having Lipschitz constant $L = T/\sqrt{2 \pi}$. Defining $P(x) = (P_1(x_1), x_2,\ldots,x_n)$, it follows that $\G^n_T$ is the push-forward of $(\R^n, \abs{\cdot}^2, \gamma^n)$ via the map $P$, which is $\max(1,L)$-Lipschitz. Consequently, a standard transference principle of isoperimetry under Lipschitz maps (see e.g. \cite[Section 5.3]{EMilman-RoleOfConvexity})
 implies that $\I_T \geq \frac{1}{\max(1,L)} \I_{\gamma^n} =  \frac{1}{\max(1,L)} \I_{\gamma}$. On the other hand, by testing horizontal half-planes, we clearly have $\I_T \leq \I_\gamma$. The assertion when $T \leq \sqrt{2 \pi}$ now follows since in that case $\max(1,L) = 1$.
\end{proof}

However, it is already unclear what to expect when $T > \sqrt{2 \pi}$. The contraction argument above shows that $\I_T \geq \frac{\sqrt{2\pi}}{T} \I_{\gamma}$ when $T \geq \sqrt{2 \pi}$. On the other hand, by inspecting horizontal and vertical half-spaces in $\G^n_T$, we clearly have $\I_T \leq \min(\I_\gamma,\frac{1}{T})$. Since  $I_\gamma(1/2) = \varphi_{\gamma}(0) = 1/\sqrt{2 \pi}$, we conclude that $\I_T(1/2) = \frac{1}{T}$ for all $T \geq \sqrt{2 \pi}$. A natural question is thus whether $\I_T = \min(\I_\gamma,\frac{1}{T})$ not only when $T \in (0,\sqrt{2 \pi}]$ but also beyond. We will see that this is \textbf{not} the case, at least when $T > \pi$.

\medskip

A variant of the above setting was studied by Fusco--Maggi--Pratelli \cite{FuscoMaggiPratelli-GaussianProduct}, who considered the isoperimetric problem on the product space $(\R^k , \abs{\cdot}^2, \m) \otimes (\R^{n-1} , \abs{\cdot}^2, \gamma^{n-1})$. Specializing to the case $k=1$, note that the horizontal factor is of infinite mass and two-sided, but we may use reflection as in Remark \ref{rem:reflect-slab} to translate to the case that the horizontal factor is the one-sided $([0,\infty),\abs{\cdot}^2, \m)$. With this in mind, an equivalent reformulation of a result from \cite{FuscoMaggiPratelli-GaussianProduct} is that there exists a critical mass $v_m > 0$ so that when $\bar v > v_m$, a minimizer is a vertical half-plane $\{x_1 \leq \bar v\}$, whereas when $\bar v \in (0, v_m)$, a minimizer will be a certain ``one-sided Gaussian unduloid", given (up to vertical rotation) by $\{ x_1 \leq \tau(x_n) \}$ where $\tau : (-\infty,f_1] \rightarrow [0,\infty)$ is an explicit function strictly decreasing from $\infty$ to $\tau(f_1) = 0$ for some $f_1 \in \R$.
In our finite-mass setting, when $T < \infty$, we observe somewhat different phenomenology -- ``one-sided unduloids" (meeting only one side of the slab) can never be minimizing, but ``Gaussian unduloids" (meeting both sides of the slab) corresponding to strictly decreasing $\tau : [f_0,f_1] \rightarrow [0,T]$ for some finite $f_1 = \tau^{-1}(0), f_0 = \tau^{-1}(T) \in \R$ will necessarily occur as soon as $T > \pi$. What happens in the intermediate range $T \in (\sqrt{2 \pi} , \pi]$ is not clear to us and remains an interesting avenue of investigation. As usual, we do not pursue the question of uniqueness of minimizers, and restrict (by taking complements) to the range $\bar v \in (0,1/2]$. We show: 
\begin{theorem} \label{thm:main-Gn}
For all $T > \sqrt{2 \pi}$, there exist $v_v \in (0,\frac{\sqrt{2 \pi}}{2 T}] \subset (0,1/2)$ and $v_h \in [0,v_v]$ (both depending on $T$) so that:
\begin{enumerate}
\item The following trichotomy holds on $\G^n_T$ for any $n \geq 2$:
\begin{enumerate}
\item For all $\bar v \in (0,v_h]$, a horizontal half-plane $\{ x_n \leq \Phi_\gamma^{-1}(\bar v) \}$ of (weighted) volume $\bar v$ is minimizing and $\I_T(\bar v) = \I_\gamma(\bar v)$. In particular, $\I_T \I_T'' = -1$ on $(0,v_h)$.
\item For all $\bar v \in (v_h, v_v)$, a Gaussian unduloid $\{ x_1 \leq \tau(x_n) \}$ of (weighted) volume $\bar v$ is minimizing, $\I_T \I_T'' < -1$ in the viscosity sense, and $\I_T(\bar v) < \min(\I_{\gamma}(\bar v) , \frac{1}{T})$. Here $\tau : [f_0 , f_1] \rightarrow [0,T]$ is the strictly decreasing function given by:
\[
\tau(f) = \int_{f}^{f_1} \frac{ds}{\sqrt{ (\I_\gamma/\ell)^2(\Phi_{\gamma}(s)) - 1}} 
\]
for appropriate (finite) parameters $f_0 < f_1$ (depending on $\bar v$), where $\ell(v) = \frac{v_1 - v}{v_1 - v_0} \I_\gamma(v_0) + \frac{v-v_0}{v_1-v_0} \I_\gamma(v_1)$ is the chord of the graph of the concave $I_\gamma$ between $v_0 = \Phi_\gamma(f_0)$ and $v_1 = \Phi_{\gamma}(f_1)$. See Figure \ref{fig:Gaussian-unduloids}.
\item For all $\bar v \in [v_v,1/2]$, a vertical half-plane $\{ x_1 \leq T \bar v \}$ of (weighted) volume $\bar v$ is minimizing and $I_T(\bar v) = \frac{1}{T}$. 
\end{enumerate}
Note that while the interval $(0, v_v)$ is always non-empty, the individual intervals $(0,v_h]$ or $(v_h, v_v)$ may be empty. Also note that the interval $[v_v,1/2]$ always has strictly positive length of at least $\frac{1}{2} ( 1 - \frac{\sqrt{2 \pi}}{T})$. 
\item Both parameters $v_h, v_v$ are non-increasing in $T$. \item If $T > \pi$ then no horizontal half-planes are ever minimizing, and so $\I_T(\bar v) < \I_\gamma(\bar v)$ for all $\bar v \in (0,1)$. In particular:
\begin{enumerate}
\item $v_v > I_\gamma^{-1}(\frac{1}{T})$ (inverse taken in $(0,1/2]$), so $v_v \in (I_\gamma^{-1}(\frac{1}{T}) , \frac{\sqrt{2 \pi}}{2 T}]$.
\item $v_h = 0$ and Gaussian unduloids are minimizing for all $\bar v \in (0,v_v)$. 
\end{enumerate}
\end{enumerate}
\end{theorem}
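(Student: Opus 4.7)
\emph{Reduction to $n=2$.} For any Borel $E\subset\G^n_T$, applying Ehrhard symmetrization in each vertical fiber $\{x_1\}\times(\R^{n-1},\gamma^{n-1})$, in the spirit of Section~\ref{sec:CMC}, produces a competitor of the form $\{x_n\leq g(x_1)\}$ with the same $\mu_T$-volume and no larger perimeter. All three candidate families in the statement are already of this form (horizontal half-planes correspond to $g\equiv c$, vertical half-planes to $g\in\{\pm\infty\}$, and Gaussian unduloids to $g=\tau^{-1}$), so it suffices to treat the $2$-dimensional slab $\G^2_T=[0,T]\times\R$, which I assume from here on.

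\emph{CMC classification and trichotomy.} The reduced boundary of any minimizer in $\G^2_T$ is a smooth curve of constant weighted mean curvature $H$ meeting the walls $\{x_1\in\{0,T\}\}$ orthogonally. Writing it locally as $x_1=\tau(x_2)$ and letting $\theta$ denote the angle of the tangent with $+e_1$, the weighted CMC equation admits, after multiplication by the integrating factor $e^{-x_2^2/2}$, the first integral
\[
\cos\theta(x_2)\cdot\varphi_\gamma(x_2)=\alpha+\beta\,\Phi_\gamma(x_2).
\]
Substituting $v=\Phi_\gamma(x_2)$ this reads $\cos\theta\cdot\I_\gamma(v)=\ell(v)$ with $\ell$ affine, and the orthogonality condition $|\cos\theta|=1$ at the endpoints $x_2=f_0,f_1$ identifies $\ell$ (after a consistent choice of orientation) with the chord of the graph of $\I_\gamma$ between $(v_0,\I_\gamma(v_0))$ and $(v_1,\I_\gamma(v_1))$. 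From $\cos^2\theta=(\tau')^2/(1+(\tau')^2)$ one then solves $|\tau'|=1/\sqrt{(\I_\gamma/\ell)^2-1}$, which integrates to the stated formula for $\tau$. The CMC solutions thus split into exactly three families: horizontal half-planes (the degenerate case $v_0=v_1$ with $\cos\theta\equiv 0$), the slab walls themselves (vertical half-planes, of perimeter $1/T$), and nondegenerate Gaussian unduloids. Combining standard existence, regularity and compactness of minimizers with the strict isoperimetric-profile ODI analysis of Section~\ref{sec:stability}, $\I_T$ is identified as the lower envelope of the three families' perimeter curves, proving the trichotomy, the strict inequalities $\I_T<\min(\I_\gamma,1/T)$ and $\I_T\I_T''<-1$ (in the viscosity sense) on the unduloid range, and $v_h\leq v_v$. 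The bound $v_v\leq\sqrt{2\pi}/(2T)$ is obtained by exhibiting an explicit Gaussian unduloid of volume $\sqrt{2\pi}/(2T)$ whose perimeter is at most $1/T$.

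\emph{Monotonicity in $T$ and the threshold $T>\pi$.} Since $\I_\gamma(\bar v)$ is $T$-independent and $1/T$ is strictly decreasing, while a Lipschitz-contraction argument between slabs of differing widths yields $\I_{T_2}\leq\I_{T_1}$ pointwise for $T_1\leq T_2$, both $v_h(T)$ and $v_v(T)$ are forced to be non-increasing. For the instability at $T>\pi$, I compute the second variation of weighted perimeter at a horizontal half-plane $\{x_n\leq c\}\subset\G^n_T$ under a volume-preserving normal perturbation of speed $u(x_1)$: since the Bakry-\'Emery Ricci of the slab along the normal $e_n$ equals $1$ and the second fundamental form vanishes, the Jacobi operator restricts to $-\partial_1^2-1$ on $(0,T)$ with Neumann boundary conditions, whose lowest eigenvalue on the mean-zero subspace is $(\pi/T)^2-1$, realized by $u(x_1)=\cos(\pi x_1/T)$; this is strictly negative iff $T>\pi$. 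For such $T$ no horizontal half-plane is stable, hence never minimizing, and $v_h=0$. Moreover, at $\bar v=\I_\gamma^{-1}(1/T)$ horizontal and vertical half-planes tie at perimeter $1/T$, but the instability produces a nearby unduloid strictly beating both, forcing $v_v>\I_\gamma^{-1}(1/T)$.

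The main anticipated obstacle is verifying that the three CMC families above really exhaust the list of possible minimizers (ruling out, in particular, disconnected or non-graphical free-boundary CMC solutions), and establishing the quantitative bound $v_v\leq\sqrt{2\pi}/(2T)$; both steps require a careful case analysis of free-boundary CMC curves in the $(v,\I_\gamma(v))$-plane beyond the structural derivation sketched above.
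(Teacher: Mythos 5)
Your overall architecture is essentially the one in the paper: reduce to $n=2$ via Ehrhard symmetrization (Corollary~\ref{cor:nested} / Proposition~\ref{prop:coincide}), classify CMC boundaries via the first integral of the CMC ODE (Theorem~\ref{thm:model-slab-main}), and combine a profile ODI coming from the second variation with the instability computation for horizontal lines (Lemma~\ref{lem:horizontal-stable}). Two items deserve closer attention.

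First, the trichotomy itself is not a formal consequence of ``$\I_T$ is the lower envelope of three families''; the nontrivial content is that the set where horizontal half-planes win is an interval $[0,v_h]$ anchored at $0$, and the set where vertical half-planes win is an interval $[v_v,1/2]$ anchored at $1/2$. The paper obtains these via a maximum-principle argument: Proposition~\ref{prop:GT-ODI} gives $\I_T\I_T''\leq -1$ in the viscosity sense wherever $\I_T<1/T$, and since $\I_\gamma\I_\gamma''=-1$ with $\I_T(0)=\I_\gamma(0)=0$ and $\I_T(v_h)=\I_\gamma(v_h)$, comparison forces $\I_T\geq\I_\gamma$ (hence $=$) on all of $[0,v_h]$; on the other side, concavity of $\I_T$ plus $\I_T(v_v)=\I_T(1/2)=1/T$ forces $\I_T\equiv 1/T$ on $[v_v,1/2]$. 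You allude to the ``ODI analysis'' but do not spell out either comparison argument, and these are the steps that actually establish the interval structure.

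Second, your proposed route to $v_v\leq\sqrt{2\pi}/(2T)$ is pointed in the wrong direction. Exhibiting a competitor (an explicit unduloid) of volume $\sqrt{2\pi}/(2T)$ and perimeter at most $1/T$ would only show $\I_T\bigl(\tfrac{\sqrt{2\pi}}{2T}\bigr)\leq 1/T$, which is already trivially true since vertical half-planes achieve $1/T$ at every volume; and a competitor with perimeter strictly below $1/T$ would prove $\I_T\bigl(\tfrac{\sqrt{2\pi}}{2T}\bigr)<1/T$, i.e.\ $v_v>\sqrt{2\pi}/(2T)$ --- the opposite of what is claimed. What is needed is a \emph{lower} bound on $\I_T$. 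The paper obtains one by applying the same ODI comparison to the rescaled Gaussian profile $\I_{\gamma,T,v_v}(\bar v)=\delta_{v_v}\I_\gamma(\bar v/\delta_{v_v})$ on $[0,v_v]$, where $\delta_{v_v}$ is tuned so that $\delta_{v_v}\I_\gamma(v_v/\delta_{v_v})=1/T$; since $\I_T\leq 1/T$ everywhere, the resulting inequality $\I_T\geq\I_{\gamma,T,v_v}$ forces $v_v/\delta_{v_v}\leq 1/2$, and then monotonicity of $\delta\mapsto\delta\I_\gamma(v_v/\delta)$ yields $2 v_v\I_\gamma(1/2)\leq 1/T$, i.e.\ $v_v\leq\sqrt{2\pi}/(2T)$. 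This is the step your sketch genuinely lacks.

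A minor point: for the monotonicity of $v_v$ in $T$, observing that $\I_{T_2}\leq\I_{T_1}$ and that $1/T$ decreases is inconclusive, since both sides of the equation $\I_T(\bar v)=1/T$ decrease. You also need the other half of Lemma~\ref{lem:monotone}, namely $T\I_T$ pointwise non-decreasing, which ensures that the level set $\{\bar v:\I_T(\bar v)=1/T\}$ grows with $T$.
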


Additional information regarding $\I_T$ is described in Section \ref{sec:Gn}. In our opinion, it would be interesting to further understand the behavior in the regime when $T \in (\sqrt{2\pi},\pi]$, as well as to determine the precise value of $v_v(T) = \min \{ \bar v \in (0,1/2] \; ; \; \I_T(\bar v) = \frac{1}{T} \}$. Our results imply in particular that
  for any fixed $\bar v \in (0,1)$, for all $T \geq \frac{\sqrt{2 \pi}}{2 \min(\bar v,1-\bar v)}$ and $n \geq 2$, a vertical half-plane  $[0,T \bar v] \times \R^{n-1}$ is a minimizer in $\G_T^n$, thereby confirming a conjecture of Hutchings \cite[Conjecture 3.13]{Morgan-Polytopes} for this particular setting.

\medskip

The rest of this work is organized as follows. In Sections \ref{sec:CMC} and \ref{sec:stability} we collect known results and further develop a general framework for obtaining isoperimetric inequalities on slabs. This is then applied to $\Q^3(\beta)$ and $\G^n_T$ in Sections \ref{sec:Q3} and \ref{sec:Gn}, respectively. A particularly heavy numerical computation is deferred to the Appendix. A refutation of Conjecture \ref{conj:Qn} on high-dimensional cubes is provided in Section \ref{sec:high-dim-cube}, which may be essentially read independently from the rest of this work. 

\medskip

\noindent
\textbf{Acknowledgments.} I thank Frank ``Chip" Morgan for his comments regarding a preliminary version of this work. I also thank the anonymous referees for their detailed reading of the manuscript and helpful comments.

\section{Isoperimetry on Slabs} \label{sec:CMC}

Recall we denote the Euclidean metric on $\R^n$ by $\abs{\cdot}^2$ and corresponding Lebesgue measure by $\m$. 
\begin{definition}[Weighted Riemannian Manifold] A triplet $(M^n,g,\mu)$ is called a weighted Riemannian manifold, if $(M^n,g)$ is a smooth $n$-dimensional connected Riemannian manifold-with-boundary $\partial M^{n-1}$ (possibly empty), and $\mu$ is a Borel probability measure on $(M^n,g)$ having density $\Psi_{\mu}$ with respect to the corresponding Riemannian volume measure $\vol_g$. $(M^n,g)$ is assumed to be either complete, or a (possibly incomplete) convex subset of Euclidean space $(\R^n,\abs{\cdot}^2)$. 
$\Psi_{\mu}$ is assumed positive, locally Lipschitz on $M^n$ and bounded on every geodesic ball. \end{definition}
\noindent Note that we shall only consider the case when $\mu$ is a probability measure in this work. 
Our applications require us to handle densities which are only locally Lipschitz regular. A property is called local if it holds on every compact subset. Note that a closed geodesic ball need not be compact due to incompleteness of $M^n \subset (\R^n,\abs{\cdot}^2)$. It should be possible to extend our setup to include incomplete manifolds having bounded geometry on every geodesic ball, but we refrain from this generality here.

\begin{definition}[Slab]
A Weighted Riemannian Slab (or simply ``slab") of width $T > 0$ is an $n$-dimensional weighted Riemannian manifold of the form:
\[
 (M^n_T,g,\mu_T) :=  ([0,T],\abs{\cdot}^2,\frac{1}{T} \m\mycorner_{[0,T]}) \otimes (\bM^{n-1}, \bg, \bmu),
\]
where $M^n_T :=  [0,T] \times \bM^{n-1}$, $g$ is the Riemannian product metric, and $\mu_T := \frac{1}{T} \m\mycorner_{[0,T]} \otimes \bmu$ is the product probability measure. The $(n-1)$-dimensional weighted Riemannian manifold $(\bM,\bg,\bmu)$ is assumed to be without boundary, and is called the (vertical) ``base" of the slab. \end{definition}

Given a Borel set $E \subset \R^n$ with locally-finite perimeter, its reduced boundary $\partial^* E$ is defined as the Borel subset of $\partial E$ for which there is a uniquely defined outer unit normal vector $\n_E$ to $E$ in a measure theoretic sense (see \cite[Chapter 15]{MaggiBook} for a precise definition). 
The definition of reduced boundary canonically extends to the Riemannian setting by using a local chart, as it is known that $T(\partial^* E) = \partial^* T(E)$ for any smooth diffeomorphism $T$ (see \cite[Lemma A.1]{KMS-LimitOfCapillarity}). It is known that $\partial^* E$ is a Borel subset of $\partial E$, and that modifying $E$ on a null-set does not alter $\partial^* E$. If $E$ is an open set with $C^1$ smooth boundary, it holds that $\partial^* E = \partial E$  (e.g. \cite[Remark 15.1]{MaggiBook}). 
See \cite{MaggiBook} and \cite[Section 2]{BarchiesiCagnettiFusco} for additional background on sets of finite perimeter. 

Given a Borel subset $E \subset (M^n,g,\mu)$, we denote its $\mu$-weighted volume by:
\[
V(E) = V_\mu(E)  := \mu(E) . 
\]
If $E$ is of locally finite perimeter, we denote its $\mu$-weighted perimeter by:
\[
A(E) = A_\mu(E) := \int_{\partial^* E \cap \interior M} \Psi_{\mu} d\H^{n-1} 
\]
(and set $A(E) = \infty$ otherwise), where $\interior M = M \setminus \partial M$ denotes the interior of $M$. 

 Denote by $\I = \I(M^n,g,\mu) : [0,1] \rightarrow \R_+$ the corresponding isoperimetric profile, given by:
\[
\I(\bar v) := \inf \{ A(E) \; ; \;  V(E) = \bar v \} ,
\]
where the infimum is over all Borel subsets $E \subset (M,g)$. A set $E$ realizing this infimum is called an isoperimetric minimizer. A standard compactness argument based on the finiteness of $\mu$ and the lower semi-continuity of (weighed) perimeter ensures that isoperimetric minimizers exist for all $\bar v \in [0,1]$ (see e.g.~\cite[Proposition 12.15]{MaggiBook} and \cite[Theorem 4.1 (i)]{EMilmanNeeman-GaussianMultiBubble}, and note that the argument carries through to the case that the density is locally Lipschitz and positive). 
Since modifying $E$ by null-sets does not change $\partial^* E$, thereby preserving $A(E)$ and $V(E)$, we may always modify $E$ by a null-set so that it is relatively closed in $(M^n,g)$, $\overline{\partial^* E} \cap \interior M = \partial E \cap \interior M$  and $\H^{n-1}((\partial E \setminus \partial^* E) \cap \interior M^n) = 0$ -- this is always possible by e.g.~\cite[Theorem 4.1]{EMilmanNeeman-GaussianMultiBubble}. 
The following are some well-known properties of the isoperimetric profile.

\begin{proposition} \label{prop:I-properties}
Let $\I  : [0,1] \rightarrow \R_+$ denote the isoperimetric profile of a weighted Riemannian manifold $(M^n,g,\mu = \exp(-W) d\vol_g)$. 
\begin{enumerate}
\item $\I$ is symmetric about $1/2$, namely $\I(1-\bar v) = \I(\bar v)$ for all $\bar v \in [0,1]$. 
\item Whenever $n\geq 2$, $\I$ is continuous (in fact, locally $\frac{n-1}{n}$-H\"older). 
\item $\I$ is concave if either of the following assumptions hold:
\begin{enumerate}
\item The boundary $\partial M$ is either empty or locally convex (in the sense that the second fundamental form $\II_{\partial M} \geq 0$), $W \in C^\infty_{loc}$ and $\Ric_g + \nabla^2_g W \geq 0$.  \item $M^n$ is a convex subset of Euclidean space $(\R^n,\abs{\cdot}^2)$ and $W : M^n \rightarrow \R$ is convex. 
\end{enumerate}
\end{enumerate}
\end{proposition}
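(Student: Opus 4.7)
The plan is to verify the three properties of $\I$ in the order listed, with most of the work going into concavity.

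\emph{Symmetry.} For any Borel $E \subset M$ of locally finite perimeter, $\partial^* E \cap \interior M = \partial^*(M \setminus E) \cap \interior M$ (the outer unit normal merely changes sign, which does not affect the integrand $\Psi_\mu d\H^{n-1}$). Hence $A_\mu(E) = A_\mu(M \setminus E)$ while $V_\mu(M \setminus E) = 1 - V_\mu(E)$, which immediately gives $\I(\bar v) = \I(1 - \bar v)$.

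\emph{Continuity.} I plan to use the standard ``modify by a small geodesic ball'' construction. Fix $\bar v \in (0,1)$, let $E$ be a minimizer attaining $\I(\bar v)$ (which exists by the compactness argument already cited), and let $\delta > 0$ be small. To raise the volume by $\delta$, pick a point $p \in \interior M \setminus (\overline{E} \cup \partial M)$ and set $E_\delta := E \cup B_g(p,r)$, where $r = r(\delta)$ is chosen so that $\mu(B_g(p,r)) = \delta$. Using that $\Psi_\mu$ is locally Lipschitz and bounded on geodesic balls, $r \asymp \delta^{1/n}$ and $A_\mu(\partial B_g(p,r)) \asymp \delta^{(n-1)/n}$, yielding
\[
\I(\bar v + \delta) \le A_\mu(E_\delta) \le \I(\bar v) + C\, \delta^{(n-1)/n}.
\]
Decreasing the volume is handled symmetrically by removing a small ball from the interior of $E$. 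Swapping the roles of $\bar v$ and $\bar v + \delta$ gives the matching bound, hence local $\tfrac{n-1}{n}$-Hölder continuity whenever $n \ge 2$.

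\emph{Concavity.} For part (a), I would use the second-variation/ODE-comparison approach to the isoperimetric profile, going back to Bavard--Pansu, Sternberg--Zumbrun and Bayle--Rosales. Near a regular value of $\I$ one produces a smooth CMC minimizer $\Sigma$ enclosing volume $\bar v$; computing the second variation of $A_\mu$ along a constant-velocity normal deformation and using $\Ric_g + \nabla^2_g W \ge 0$ (together with $\II_{\partial M} \ge 0$, which controls the sign of the boundary contribution) delivers, at any $C^2$-point of $\I$,
\[
\I''(\bar v)\, \I(\bar v) \le 0,
\]
and a standard viscosity/approximation argument promotes this pointwise inequality to concavity of $\I$ on all of $(0,1)$. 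For part (b) I plan to bypass second variation entirely and run a Brunn--Minkowski type proof: given minimizers $E_0, E_1 \subset M$ of volumes $\bar v_0, \bar v_1$, form $E_\lambda := (1-\lambda) E_0 + \lambda E_1 \subset M$ (using convexity of $M$), and apply the Borell--Brascamp--Lieb / Prékopa--Leindler inequality to the log-concave density $e^{-W}$; this produces both a lower bound on $\mu(E_\lambda)$ and an upper bound on $A_\mu(E_\lambda)$ by the corresponding convex combinations, yielding $\I((1-\lambda)\bar v_0 + \lambda \bar v_1) \ge (1-\lambda)\I(\bar v_0) + \lambda \I(\bar v_1)$.

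The chief obstacle I foresee lies in case (a), where the hypothesis $\Ric_g + \nabla^2_g W \ge 0$ and the CMC equation both require $C^2$ smoothness of $W$ — which is precisely why the statement assumes $W \in C^\infty_{loc}$ in that case, so the low regularity of $\Psi_\mu$ allowed elsewhere is not actually operative here. Another technical point in (a) is that the minimizer's reduced boundary may meet $\partial M$; the locally convex assumption $\II_{\partial M} \ge 0$ is exactly what is needed to discard the resulting boundary term in the second variation with the correct sign. For case (b), none of these issues arise, as the Brunn--Minkowski argument requires only convexity of $M$ and of $W$, and makes no use of smoothness whatsoever.
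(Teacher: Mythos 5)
Your treatment of symmetry, continuity, and concavity in case (a) matches the paper's approach (the paper disposes of (1) by ``taking complements,'' cites \cite[Lemma 6.9]{EMilman-RoleOfConvexity} for (2), and invokes the Bavard--Pansu / Sternberg--Zumbrun / Bayle / Bayle--Rosales second-variation machinery, via \cite[Theorem A.3 and Corollary 6.12]{EMilman-RoleOfConvexity}, for (3)). Your discussion of why $W \in C^\infty_{loc}$ and $\II_{\partial M} \geq 0$ are the operative hypotheses in (a) is accurate.

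However, the proposed Brunn--Minkowski / Pr\'ekopa--Leindler argument for case (b) has a genuine gap, and the paper does not take that route: it handles (b) by the same second-variation circle of references as (a) (the distinction in the statement is that (b) allows merely convex, non-smooth $W$, which is handled by approximation, not by a different method). The specific inequalities you claim simply do not follow from Pr\'ekopa--Leindler. First, PL applied to $E_\lambda = (1-\lambda)E_0 + \lambda E_1$ with log-concave density yields the \emph{geometric} mean bound $\mu(E_\lambda) \geq \mu(E_0)^{1-\lambda}\mu(E_1)^{\lambda}$, which by AM--GM is \emph{weaker} than the arithmetic mean $(1-\lambda)\bar v_0 + \lambda \bar v_1$, so you do not get a set of the required volume. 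Second, and more critically, neither PL nor Brunn--Minkowski gives an \emph{upper} bound $A_\mu(E_\lambda) \leq (1-\lambda)A_\mu(E_0)+\lambda A_\mu(E_1)$: differentiating the PL inequality for $\epsilon$-enlargements only produces a \emph{lower} bound on the Minkowski content of $E_\lambda$, and even in the classical unweighted convex-body case, Minkowski's mixed-volume inequality bounds $S((1-\lambda)K_0 + \lambda K_1)^{1/(n-1)}$ from \emph{below} by the convex combination of $S(K_i)^{1/(n-1)}$, which is the wrong direction and the wrong exponent for concavity of $\I$ itself. What Brunn--Minkowski-type inputs naturally yield is concavity of powers such as $\I^{n/(n-1)}$ (in the $1/n$-concave case), not concavity of $\I$; the latter requires the second-variation / Jacobi-field analysis, which is why the paper uses the same references for both (a) and (b).
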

\begin{proof}
The first assertion is immediate by taking complements. For the second, see e.g.~\cite[Lemma 6.9]{EMilman-RoleOfConvexity} (stated for complete oriented manifolds, but the only properties used in the proof were the bounded geometry and bounded density on geodesic balls). The third assertion follows from the work of Bavard--Pansu \cite{BavardPansu}, Sternberg--Zumbrun \cite{SternbergZumbrun}, Bayle \cite{BayleThesis} and Bayle--Rosales \cite{BayleRosales}, see \cite[Theorem A.3 and Corollary 6.12]{EMilman-RoleOfConvexity}; the condition $\Ric_g + \nabla^2_g W \geq 0$ is the celebrated Bakry--\'Emery Curvature-Dimension condition $\CD(0,\infty)$ \cite{BakryEmery}. 
\end{proof}

In particular, all of the above applies to the two main examples we shall study in this work, the three-dimensional torus $\T^3(\beta)$ and the slab over a Gaussian base $\G^n_T$ ($n \geq 2$), and we conclude that their isoperimetric profiles are symmetric, continuous and concave on $[0,1]$. 

\begin{remark} \label{rem:reflect-slab}
It will sometimes be convenient to consider the double-cover of a slab $M_T$, defined as $(\S^1(2T),\abs{\cdot}^2,\frac{1}{2T} \m\mycorner_{\S^1(2T)}) \otimes (\bM^{n-1}, \bg, \bmu)$, which has the convenience of not having any boundary. Here $[0,T]$ is embedded in $\S^1(2T)$ , where $\S^1(2T) = [-T,T]/\sim$ is obtained by identifying $\{-T\} \sim \{T\}$. We denote the double-cover by $2 M_T = 2 (M^n_T,g,\mu_T)$. 

By a well-known reflection argument, we have $\I(2 M_T) = \I(M_T)$. Indeed, if $E$ is a minimizer in $M_T$ then its reflection $2 E$ across $\{0\} \times \bM^{n-1}$ in $2 M_T$ has the same (weighted) volume and perimeter as those of $E$ in $M_T$, and hence $\I(2 M_T) \leq \I(M_T)$. On the other hand, if $\tilde E$ is a minimizer in $2 M_T$, consider $E[t] = \tau_t (\tilde E) \cap M_T$, where $\tau_t$ denotes horizontal rotation by $t \in \S^1(2 T)$. Since $\S^1(2 T) \ni t \mapsto V_{2 M_T}(E[t]) - V_{2 M_T}(E[t + T])$ is odd and continuous, it must vanish at some $t_0$, and hence $V_{M_T}(E[t_0]) = V_{M_T}(E[t_0+T]) = V_{2 M_T}(\tilde E)$. Selecting the half having the least relative perimeter in $M_T$, we have without loss of generality $A_{M_T}(E[t_0]) \leq A_{M_T}(E[t_0 + T])$. But since $A_{M_T}(E[t_0]) + A_{M_T}(E[t_0+T]) \leq 2 A_{2 M_{T}}(\tilde E)$, it follows that $A_{M_T}(E[t_0]) \leq A_{2 M_T}(\tilde E)$, and we deduce that $\I(M_T) \leq \I(2 M_T)$. We conclude that $\I(2 M_T) = \I(M_T)$, verifying in particular that if $E$ is a minimizer in $M_T$ then $2 E$ is a minimizer in $2 M_T$. 

Note that this argument does not require the base to be a manifold without boundary, and applies to manifolds with corners. Applying it successively on each coordinate, we see that $\I(\T^n) = \I(\Q^n)$ for all $n \geq 1$ and $\I(\T^3(\beta)) = \I(\Q^3(\beta))$. Uniqueness of minimizers on the double-cover $2 M_T$ immediately implies uniqueness on $M_T$, and typically this can also be reversed, but we shall not be concerned with uniqueness in this work. 
\end{remark}

Given a slab $(M_T,g,\mu_T)$, we denote by $\I_T = \I(M_T,g,\mu_T)$ its corresponding isoperimetric profile, and by $\bI = \I(\bM,\bg,\bmu)$ the isoperimetric profile of its base. Given $\bI$, we shall attempt to infer information on $\I_T$. By considering cylindrical sets (both horizontal and vertical), obviously
\begin{equation} \label{eq:T-min}
\I_T \leq  \min(\bI , \frac{1}{T}) . 
\end{equation}
However, it would be very naive to expect the converse inequality to hold (except, perhaps, in very special circumstances), to an extent we shall investigate in this work.

\subsection{Base-Induced Isoperimetric Profile}

Given a Borel subset $E$ of a slab $(M^n_T,g)$, we denote for all $t \in [0,T]$ by $E_t$ the (Borel) vertical section
\[
 E_t := \{ y \in \bM^{n-1} \; ; \; (t,y) \in E \}.
\]
and for all $y \in \bM^{n-1}$ by $E^y$ the (Borel) horizontal section:
\[
E^y := \{ t \in [0,T] \; ; \; (t,y) \in E \} . 
\]
We assume that $E$ is of locally finite perimeter in $(M^n_T,g)$, and set:
\[
V_E(t) := V_{\bmu}(E_t) ~,~ A_E(t) := A_{\bmu}(E_t) . 
\]
In that case, it is known that $V_E(t)$ is a function of bounded variation on $[0,T]$ (see \cite[Section 3]{BarchiesiCagnettiFusco}). We denote by $DV_E$ its distributional derivative, and decompose it into its absolutely continuous and singular parts as follows:
\[
DV_E = V'_E(t) dt + D_S V_E. 
\]
It is also known that $E_t$ is of locally finite perimeter in $(\bM^{n-1},\bg)$ for almost all $t \in [0,T]$, and that $A_E(t)$ is a Borel function \cite[Theorem 2.4]{BarchiesiCagnettiFusco}. The following is a known computation (see e.g. \cite[Lemma 2.4]{FuscoMaggiPratelli-GaussianProduct}, \cite[Proposition 3.4]{BarchiesiCagnettiFusco}, \cite[Section 4]{CFMP-GaussianIsoperimetricStability}); for completeness, we sketch a proof omitting the technical details:
\begin{proposition} \label{prop:area-formula}
Let $(M_T,g,\mu_T)$ denote a weighted Riemannian slab, and let $E$ be a set of locally finite perimeter in $(M_T,g)$. Then:
\begin{equation} \label{eq:area-formula}
A_{\mu_T}(E) \geq \frac{1}{T} \brac{\int_0^T \sqrt{ V'_E(t)^2 + A_E(t)^2 } dt + \norm{D_S V_E}} .
\end{equation}
Equality holds if for every $t,s \in [0,T]$, either $E_t \subset E_s$ or $E_s\subset E_t$, and for almost every $t \in [0,T]$, $\scalar{\n_E(t,y),e_1}$ is constant for $\H^{n-2}$-almost all $y \in \partial^* E_t$. 
\end{proposition}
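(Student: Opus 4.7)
The plan is to decompose the reduced boundary $\partial^* E$ according to the horizontal and vertical components of the outer unit normal, and apply a Cauchy--Schwarz-type inequality slicewise. Write $\n_E = (\alpha, v)$ on $\partial^* E$, where $\alpha \in \R$ and $v$ is tangent to the base $\bM$, with $\alpha^2 + \abs{v}^2 = 1$. Since the density factors as $\Psi_{\mu_T}(t,y) = \Psi_{\bmu}(y)/T$, one has $T \cdot A_{\mu_T}(E) = \int_{\partial^* E \cap \interior M_T} \Psi_{\bmu} \, d\H^{n-1}$, and this is the quantity I will bound from below by $T$ times the right-hand side of \eqref{eq:area-formula}.

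The key inputs are the following two slicing identities. Partition $\partial^* E$ into $B_+ := \{\abs{v}>0\}$ and $B_0 := \{\abs{v}=0\}$. Applying the coarea formula to $\pi(t,y) = t$ restricted to $\partial^* E$, whose tangential Jacobian equals $\abs{v}$, and using that for a.e.\ $t$ the slice $\partial^* E \cap (\{t\}\times \bM)$ coincides $\H^{n-2}$-a.e.\ with $\partial^* E_t$, yields
\[
\int_{B_+} \Psi_{\bmu} \, d\H^{n-1} = \int_0^T \int_{\partial^* E_t} \frac{\Psi_{\bmu}}{\abs{v}} \, d\H^{n-2} \, dt.
\]
For the horizontal direction, integrating the distributional $t$-derivative of $\mathbf{1}_E$ against $\Psi_{\bmu}(y) \, d\vol_{\bg}(y)$ identifies $-DV_E$ with the pushforward of $\alpha \Psi_{\bmu} \H^{n-1}|_{\partial^* E}$ under $\pi$; since its $B_+$-part is absolutely continuous by coarea, while $B_0$ projects into a Lebesgue-null set, the Lebesgue decomposition of this pushforward yields for a.e.\ $t$
\[
V'_E(t) = -\int_{\partial^* E_t} \frac{\alpha \, \Psi_{\bmu}}{\abs{v}} \, d\H^{n-2}, \qquad \snorm{D_S V_E} \leq \int_{B_0} \abs{\alpha} \Psi_{\bmu} \, d\H^{n-1} = \int_{B_0} \Psi_{\bmu} \, d\H^{n-1}
\]
(using $\abs{\alpha}=1$ on $B_0$).

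The pointwise bound on each slice now comes from applying Cauchy--Schwarz twice with the measure $d\sigma_t := \Psi_{\bmu} \abs{v}^{-1} d\H^{n-2}$ on $\partial^* E_t$:
\[
\Big(\int d\sigma_t\Big)\Big(\int \abs{v}^2 \, d\sigma_t\Big) \geq A_E(t)^2, \qquad \Big(\int d\sigma_t\Big)\Big(\int \alpha^2 \, d\sigma_t\Big) \geq V'_E(t)^2.
\]
Summing and using $\alpha^2+\abs{v}^2 = 1$ collapses the products on the left to $(\int d\sigma_t)^2$, producing the pointwise lower bound $\int d\sigma_t \geq \sqrt{V'_E(t)^2 + A_E(t)^2}$. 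Integrating in $t$ handles the $B_+$-contribution, and adding the bound for $\snorm{D_S V_E}$ and dividing by $T$ yields the claim.

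For the equality discussion, Cauchy--Schwarz becomes an equality on a.e.\ slice precisely when $\alpha$ (equivalently $\abs{v}$) is constant on $\partial^* E_t$, while the lower bounds involving $V'_E(t)^2$ and $\snorm{D_S V_E}$ are sharp only when $\alpha$ carries a single sign on each slice so that no cancellation occurs in the fiberwise projection; the nesting hypothesis $E_t \subset E_s$ or $E_s \subset E_t$ forces $V_E$ to be monotone and thus eliminates all such cancellation. The main technical obstacle I anticipate is setting up the coarea and BV-slicing machinery carefully for a base carrying only a locally Lipschitz (and possibly unbounded-support) density, which should be handled by a standard truncation/approximation argument together with known fine properties of sets of finite perimeter on product spaces.
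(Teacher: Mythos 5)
Your proof is correct and follows essentially the same route as the paper's sketch: slice via the coarea formula and lower-bound each slice contribution $\int_{\partial^* E_t}\Psi_{\bmu}/\abs{v}\,d\H^{n-2}$ by $\sqrt{V'_E(t)^2+A_E(t)^2}$, with the singular term $\snorm{D_S V_E}$ absorbed by the vertical walls $B_0$; the double Cauchy--Schwarz against $d\sigma_t=\Psi_{\bmu}\abs{v}^{-1}d\H^{n-2}$ combined with $\alpha^2+\abs{v}^2=1$ is algebraically identical to the paper's Jensen inequality for the convex map $z\mapsto\sqrt{1+z^2}$ applied to $z=\cot\alpha$. The one spot deserving an extra word is the assertion that $B_0$ projects to a Lebesgue-null set: this follows directly from the coarea formula only when $n=2$ (there $\H^0$-null means empty), whereas for $n\geq 3$ one should instead observe that $B_0$ is $(n-1)$-rectifiable with approximate tangent plane $e_1^{\perp}$ at $\H^{n-1}$-a.e.\ point, so the $t$-component of any Lipschitz chart for $B_0$ has vanishing gradient a.e.\ and hence the projection is null up to an $\H^{n-1}$-negligible subset -- which is enough, since what your Lebesgue-decomposition step really requires is that $\pi_*\bigl(\H^{n-1}\mycorner B_0\bigr)$ be singular with respect to $dt$; both you and the paper defer this BV slicing fine-structure to the cited references, so this is a point of precision rather than a gap.
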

Here $\norm{D_S V_E}$ denotes the total-variation norm of the singular measure $D_S V_E$.  
\begin{proof}[Sketch of Proof] We assume for simplicity that $E$ has smooth boundary. The perimeter of the vertical part of the boundary (where $\scalar{\n_E(x) , e_1}=1$) is lower-bounded by $\norm{D_S V_E}$ (times the horizontal density $\frac{1}{T}$) since $\bmu(E_{t+} \Delta E_{t-}) \geq \abs{\bmu(E_{t+}) - \bmu(E_{t-})}$, and equality holds if $E_{t+} \subset E_{t-}$ or $E_{t-} \subset E_{t+}$. Once the vertical part is taken care of, we may assume $V_E$ is absolutely continuous and calculate by the co-area formula:
\[
A_{\mu_T}(E) = \frac{1}{T} \int_0^T \int_{(\partial^* E)_t} \frac{\Psi_{\mu_0}(y) d\H^{n-2}(y)}{\abs{\sin \alpha(t,y)}} dt ,
\]
where $\alpha(t,y)$ is the angle between the normal $\n_E(t,y)$  
and the horizontal direction $e_1$,  so that $\cos \alpha(t,y) = \scalar{\n_E(t,y),e_1}$. 
On the other hand, the rate of vertical change in $E_t$ per horizontal movement is given by $\cot \alpha(t,y)$, 
and so
\[
V'_E(t) = \int_{(\partial^*E)_t} \cot(\alpha(t,y)) \Psi_{\mu_0}(y) d\H^{n-2}(y) .
\]
Since $1 / |\sin \alpha| = \Phi(\cot \alpha)$ for $\Phi(z) = \sqrt{1 + z^2}$ which is convex, we may apply Jensen's inequality to obtain:
\begin{align*}
A_{\mu_T}(E) & = \frac{1}{T} \int_0^T \int_{(\partial^* E)_t} \Phi(\cot \alpha(t,y)) \Psi_{\mu_0}(y) d\H^{n-2}(y) \\
& \geq \frac{1}{T} \int_0^T A_E(t) \Phi(V'_E(t) / A_E(t)) dt \\
& = \frac{1}{T} \int_0^T \sqrt{ V'_E(t)^2 + A_E(t)^2 } dt ,
\end{align*}
with equality if for almost every $t \in [0,T]$, $\alpha(t,y)$ is constant for $\H^{n-2}$-almost every $y \in (\partial^* E)_t$ (which for almost all $t$ coincides $\H^{n-2}$-almost-everywhere with $\partial^* E_t$ by \cite[Theorem 2.4]{BarchiesiCagnettiFusco}).
\end{proof}

Since $A_E(t) \geq \bI(V_E(t))$ and $\frac{1}{T} \int_0^T V_E(t) dt = V(E)$, we are naturally led to the following:
\begin{definition}[Base-Induced Isoperimetric Profile] \label{def:base-profile}
The base-induced isoperimetric profile $\I^b_T = \I^b_T(\bI)$ is defined as:
\begin{equation} \label{eq:Ib}
\I^b_T(\bar v) := \inf \set{ \frac{1}{T}  \brac{\int_0^T \sqrt{ v'(t)^2 + \bI(v(t))^2 } dt + \norm{D_S v}} \; ; \; \frac{1}{T} \int_0^T v(t) dt = \bar v } ,
\end{equation}
where the infimum is over all functions $v : [0,T] \rightarrow [0,1]$ of bounded variation. 
\end{definition}
In fact, whenever $\bI$ is continuous, a standard compactness argument for BV functions ensures that the above infimum is always attained. 
Note that by testing constant and heavyside functions, we trivially have $\I^b_T \leq \min(\bI,\frac{1}{T})$. 
Moreover, an immediate consequence of Proposition \ref{prop:area-formula} is:

\begin{corollary} \label{cor:base-induced-profile}
For every weighted Riemannian slab, $\I_T \geq \I^b_T$. 
\end{corollary}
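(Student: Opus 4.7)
The plan is to deduce the corollary as an immediate consequence of Proposition \ref{prop:area-formula} by using the profile $v(t):=V_E(t)$ of vertical slices as a test function in the variational problem \eqref{eq:Ib}.

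First I would fix an arbitrary Borel set $E \subset M_T$ with $V_{\mu_T}(E) = \bar v$ and $A_{\mu_T}(E) < \infty$; otherwise the inequality $A_{\mu_T}(E) \geq \I^b_T(\bar v)$ is vacuous, and taking the infimum over all such $E$ will give $\I_T(\bar v) \geq \I^b_T(\bar v)$. By the discussion preceding Proposition \ref{prop:area-formula}, $v(t) := V_E(t)$ is a function of bounded variation on $[0,T]$ taking values in $[0,1]$, so it is an admissible competitor for \eqref{eq:Ib}.

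Next I would verify the two constraints. By Fubini's theorem applied to the product measure $\mu_T = \frac{1}{T}\m\mycorner_{[0,T]} \otimes \bmu$, we have $\frac{1}{T}\int_0^T v(t)\,dt = \mu_T(E) = \bar v$, giving the mean-value constraint in \eqref{eq:Ib}. Moreover, since $E_t$ is of locally finite perimeter in the base for almost every $t \in [0,T]$ and has $\bmu$-volume equal to $v(t)$, the definition of $\bI$ yields $A_E(t) = A_{\bmu}(E_t) \geq \bI(v(t))$ for almost every $t$.

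Finally, I would combine these facts with Proposition \ref{prop:area-formula}. Using the monotonicity of $z \mapsto \sqrt{V'_E(t)^2 + z^2}$ in $z \geq 0$, the pointwise lower bound $A_E(t) \geq \bI(v(t))$ lifts to an integral lower bound, yielding
\[
A_{\mu_T}(E) \;\geq\; \frac{1}{T}\brac{\int_0^T \sqrt{v'(t)^2 + \bI(v(t))^2}\,dt + \norm{D_S v}} \;\geq\; \I^b_T(\bar v),
\]
where the last inequality is the very definition of $\I^b_T(\bar v)$ applied to the admissible competitor $v$. Taking the infimum over all $E$ with $V_{\mu_T}(E) = \bar v$ gives $\I_T(\bar v) \geq \I^b_T(\bar v)$, as required. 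There is essentially no obstacle here beyond invoking the previously established tools; the only subtle point is remembering that $V_E$ may have a singular part in its distributional derivative, which is exactly why Proposition \ref{prop:area-formula} and the definition of $\I^b_T$ both include the $\norm{D_S V_E}$ term.
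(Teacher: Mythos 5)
Your proof is correct and takes essentially the same route as the paper: it applies Proposition \ref{prop:area-formula} to an arbitrary competitor $E$, uses $A_E(t) \geq \bI(V_E(t))$ together with the Fubini volume constraint, and recognizes $v(t) = V_E(t)$ as an admissible test function in \eqref{eq:Ib}. The paper simply leaves these steps implicit when calling the corollary an immediate consequence of the area formula.
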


Unfortunately, in general, the converse inequality need not hold, due to the fact that isoperimetric minimizers on the base which realize $\bI(\bar v)$ may not be nested as a function of $\bar v$, creating some extra vertical perimeter, or more generally, not satisfy equality in (\ref{eq:area-formula}). However, we can trivially state:

\begin{corollary}[Nested and Aligned Base Minimizers] \label{cor:nested}
Assume that there exists a nested family of isoperimetric minimizers for the base $(\bM,\bg,\bmu)$, namely a mapping $[0,1] \ni \bar v \mapsto E_0(\bar v)$ of Borel subsets of $(\bM,\bg)$ so that $A_{\bmu}(E_0(\bar v)) = \bI(\bar v)$ and
\[
 \bar v_1 \leq \bar v_2 \;\; \Rightarrow \;\; E_0(\bar v_1) \subset E_0(\bar v_2).
 \]
 Consider the set $E^1$ in the slab $(M_1,g,\mu_1)$ of width $1$ whose sections are given by $E^1_{\bar v} := E_0(\bar v)$, and assume that $E^1$ is a Borel set of locally finite perimeter and that for all $\bar v \in (0,1)$, $\scalar{\n_{E^1}(\bar v,y),e_1}$ is constant for almost all $y \in \partial^* E^1_{\bar v}$. \\
 Then $\I_T = \I^b_T$ for any slab with base $(\bM,\bg,\bmu)$ (and width $T > 0$). 
\end{corollary}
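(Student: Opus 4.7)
The approach is to establish the reverse of the inequality from Corollary~\ref{cor:base-induced-profile}, namely $\I_T \leq \I^b_T$. Given $\bar v \in (0,1)$, the plan is: for any BV competitor $v : [0,T] \to [0,1]$ with $\frac{1}{T}\int_0^T v(t)\, dt = \bar v$, construct a Borel set $E \subset M_T$ whose $t$-section is $E_t := E_0(v(t))$, and show that $E$ realizes equality in Proposition~\ref{prop:area-formula}. Since $V_E(t) = V_{\bmu}(E_0(v(t))) = v(t)$ and $A_E(t) = A_{\bmu}(E_0(v(t))) = \bI(v(t))$, Fubini gives $V_{\mu_T}(E) = \bar v$, and the equality case of \eqref{eq:area-formula} would yield
\[
A_{\mu_T}(E) = \frac{1}{T}\brac{\int_0^T \sqrt{v'(t)^2 + \bI(v(t))^2}\, dt + \norm{D_S v}}.
\]
Taking the infimum over $v$ then gives $\I_T(\bar v) \leq \I^b_T(\bar v)$ as desired.

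To verify the two equality conditions in Proposition~\ref{prop:area-formula}: the nesting condition on sections is automatic, since by the nesting hypothesis either $E_0(v(t)) \subset E_0(v(s))$ or $E_0(v(s)) \subset E_0(v(t))$ for any $t,s \in [0,T]$. For the constancy of $\scalar{\n_E(t,\cdot),e_1}$ along $\partial^* E_t$, the plan is to exploit the hypothesis on $E^1$ by a horizontal rescaling. Locally realize the nested family as sublevel sets $E_0(\bar v) = \{y \in \bM : f(y) \leq \bar v\}$ for a defining function $f$ (e.g.\ $f(y) := \inf\{\bar v : y \in E_0(\bar v)\}$). Then $E^1 = \{(t',y) : f(y) \leq t'\}$, and the hypothesis that $\scalar{\n_{E^1},e_1}$ is $\H^{n-2}$-a.e.\ constant on each $\partial^* E^1_{\bar v}$ translates into $\abs{\nabla_{\bg} f}$ being $\H^{n-2}$-a.e.\ constant on each level set $\{f = \bar v\}$. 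Consequently, $E = \{(t,y) : f(y) \leq v(t)\}$ has outer unit normal proportional to $(-v'(t),\nabla_{\bg} f(y))$ at a.e.\ regular boundary point, and its horizontal component $-v'(t)/\sqrt{v'(t)^2 + \abs{\nabla_{\bg} f(y)}^2}$ is thus constant along $y \in \partial^* E_t$.

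The main technical obstacle is to handle a general BV $v$ rigorously -- especially its singular part -- and to justify the construction of the defining function $f$ beyond the smooth setting. A safe route is to verify the computation first for smooth monotone $v$, where the above formulas and Proposition~\ref{prop:area-formula} apply directly, and then approximate a general BV $v$ by smooth functions: jumps of $v$ correspond to cylindrical vertical pieces of $\partial E$ carrying precisely the singular contribution $\norm{D_S v}/T$, while lower semi-continuity of weighted perimeter combined with convergence of the functional in \eqref{eq:Ib} under regularization secure the inequality in the limit. Alternatively, one may observe that by the concavity of $\bI$ a rearrangement-type argument confines the infimum in \eqref{eq:Ib} to smooth monotone competitors, bypassing the approximation step entirely.
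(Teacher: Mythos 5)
Your proposal is correct and follows essentially the same route as the paper: build $E$ with sections $E_t = E_0(v(t))$ and verify both equality conditions of Proposition \ref{prop:area-formula}, the second via the hypothesis on $E^1$ (your explicit defining-function computation is just a fleshed-out version of the paper's ``stretching $E^1$ to match $v'(t)$'' remark). Your closing worries about singular BV competitors are already handled by the formulation of Proposition \ref{prop:area-formula} itself -- the nestedness condition absorbs the singular contribution, and the normal-angle condition is only required to hold for almost every $t$ -- so the approximation/rearrangement detour is unnecessary (and the rearrangement route would wrongly smuggle in a concavity assumption on $\bI$ that this corollary does not make).
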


Note that the above assumption is satisfied for a Gaussian base $(\R^{n-1} , \abs{\cdot}^2, \gamma^{n-1})$, but not when the base is a $2$-dimensional Torus (or square), since the minimizers cannot be nested in the latter case. 

\begin{proof}[Proof of Corollary \ref{cor:nested}]
Given a function $v : [0,T] \rightarrow [0,1]$ of bounded variation, consider the Borel set $E \subset (M_T,g)$ with sections $E_t = E_0(v(t))$. By the definition of $\I^b_T$, it is enough to show that equality holds in (\ref{eq:area-formula}), and so it is enough to show that the sufficient condition for equality given by Proposition \ref{prop:area-formula} is satisfied. Indeed, the nestedness assumption guarantees the first part of the condition, and the second part (which need only hold outside the null-set of $t$'s where $v$ is non-differentiable) holds since to first order around a given slice $E_t$, $E$ coincides with $E^1$ around the slice $E^1_{v(t)}$ (after stretching $E^1$ appropriately to match the value of $v'(t)$). 
\end{proof}

\begin{remark}
The reader will note the similarity with the celebrated reformulation by Bobkov \cite{BobkovGaussianIsopInqViaCube} of the Gaussian isoperimetric inequality (say on $\R$) in the following functional form:
\begin{equation} \label{eq:Bobkov-inq}
\int_{\R} \sqrt{v'(t)^2 + \I_\gamma(v(t))^2} d\gamma^1(t) \geq \I_\gamma \brac{\int_{\R} v(t) d\gamma^1(t)} ,
\end{equation}
valid for all (say) locally Lipschitz functions $v : \R \rightarrow [0,1]$. 
\end{remark}

\subsection{Reduction to two-dimensional isoperimetric problem}

Computing the base-induced profile $\I^b_T$ given by (\ref{eq:Ib}) is a classical optimization problem in the calculus of variations. Instead of working on the functional level, it will be more convenient and insightful for us to reformulate it as a two-dimensional isoperimetric problem on a certain ``model slab". We draw our inspiration from the work of Bobkov, who observed in \cite{BobkovGaussianIsopInqViaCube} that his one-dimensional functional inequality (\ref{eq:Bobkov-inq}) is equivalent to the two-dimensional Gaussian isoperimetric inequality (in fact, the equivalence holds between the analogous $n$-dimensional Gaussian functional inequality and the $(n+1)$-dimensional Gaussian isoperimetric inequality). In Bobkov's case, one can use $I_\gamma$ in both sides of (\ref{eq:Bobkov-inq}), a fortunate coincidence which in some sense characterizes the Gaussian measure and is responsible for its tensorization and dimension-free properties, but the general situation is more complicated, as we shall see below. 

\smallskip

We will henceforth assume that:
\[
 \text{$\bI : [0,1] \rightarrow \R_+$ is concave and continuous}
\]
(of course, continuity just adds information at the end-points $\bar v \in \{0,1\}$).
By Proposition \ref{prop:I-properties}, this holds for the two main examples we investigate in this work -- when the base is the $2$-dimensional flat torus $\T^2$, and when it is the $(n-1)$-dimensional Gaussian space $\G^{n-1}_0$. 

\smallskip

It was shown by Bobkov \cite{BobkovExtremalHalfSpaces} that concave, symmetric about $1/2$, functions $\I : [0,1] \rightarrow \R_+$ are in one-to-one correspondence with log-concave, even, probability densities $\varphi : \R \rightarrow \R_+$. Recall that a function $\varphi : \R \rightarrow \R_+$ is called log-concave if $\log \varphi : \R \rightarrow \R \cup \{-\infty\}$ is concave. To ensure that the correspondence described below is one-to-one, we will always modify $\varphi$ on $\partial \supp(\varphi)$ to make it continuous on its (convex) support;  note that $\varphi$ is always locally Lipschitz on $M_{\varphi}$, the interior of $\supp(\varphi)$. Whenever $\varphi$ is a log-concave probability density, the isoperimetric profile $\I$ of $(M_\varphi,\abs{\cdot}^2,\varphi(s) ds)$ is concave and symmetric around $1/2$, and conversely, every such $\I$ is the isoperimetric profile of $(M_\varphi,\abs{\cdot}^2,\varphi(s) ds)$ for some log-concave probability density $\varphi$. The requirement that $\varphi$ be even ensures that $\varphi$ is determined uniquely by $\I$, via the relation:
\begin{equation} \label{eq:I-varphi}
\I = \varphi \circ \Phi^{-1} ~,~ \Phi^{-1}(v) = \int_{1/2}^v \frac{dw}{\I(w)} ,
\end{equation}
where $\Phi(s) = \int_{-\infty}^s \varphi(x) dx$. This correspondence is an immediate consequence of Bobkov's observation that the isoperimetric minimizers on $(M_\varphi,\abs{\cdot}^2,\varphi(s) ds)$ are given by half-lines (and the evenness ensures that it is enough to consider left half-lines). Note that when all expressions are smooth, indeed
\begin{equation} \label{eq:I''}
 \I' = (\log \varphi)' \circ \Phi^{-1} \text{ and } \I'' = \frac{(\log \varphi)''}{\varphi} \circ \Phi^{-1},
 \end{equation}
and the relation between the concavity of $\I$ and that of $\log \varphi$ becomes apparent. 

\smallskip

Now let $\varphi_{\bI}$ denote the unique even log-concave probability density satisfying (\ref{eq:I-varphi}) for $\I = \bI$, and set $\Phi_{\bI}(s) = \int_{-\infty}^s \varphi_{\bI}(x) dx$. We denote the interior of $\supp(\varphi_{\bI})$ by $(-R_{\bI},R_{\bI})$, $R_{\bI} \in (0,\infty]$. 
 In the two main examples we examine in this work, it is straightforward to write $\varphi_{\bI}$ explicitly:
\begin{itemize}
\item When the base is the $2$-Torus $\T^2$, $\I_0 = \I(\T^2) =: \I_{\T^2}$. We shall recall in Section \ref{sec:Q3} that $\I_{\T^2}(\bar v) = \min(\sqrt{ \pi \bar v} , 1 , \sqrt{\pi (1-\bar v)})$, and it follows that $\varphi_{\I_{\T^2}} = \varphi_{\T^2}$ is simply piecewise linear on $(-(\frac{1}{2} + \frac{1}{\pi}),\frac{1}{2} + \frac{1}{\pi})$ (explaining why we need to assume that our densities are only Lipschitz regular):
\begin{equation} \label{eq:varphi2}
\varphi_{\T^2}(s) := \begin{cases} \frac{\pi}{2} (\frac{1}{\pi} + \frac{1}{2} + s) &  s \in (-(\frac{1}{2} + \frac{1}{\pi}) ,-(\frac{1}{2} - \frac{1}{\pi})] \\ 
1 & s \in [-(\frac{1}{2} - \frac{1}{\pi}) , \frac{1}{2} - \frac{1}{\pi}] \\ 
\frac{\pi}{2} (\frac{1}{\pi} + \frac{1}{2} - s) & s \in [\frac{1}{2} - \frac{1}{\pi}, \frac{1}{2} + \frac{1}{\pi})
\end{cases} . 
\end{equation}
\item When the base is Gaussian space $\G^{n-1}$, $\bI = \I(\G^{n-1}) = \I_\gamma$. Therefore $\varphi_{\I_\gamma} = \varphi_\gamma$ is the standard Gaussian density on $\R$. 
\end{itemize}

\begin{definition}[Model Slab] \label{def:model-slab}
The Model Slab for the base profile $\bI$ is defined as the following two-dimensional slab of width $T > 0$ with vertical base $((-R_{\bI},R_{\bI}) , \abs{\cdot}^2 , \varphi_{\bI}(s) ds)$:
\[
S_T(\I_0) := ([0,T],\abs{\cdot}^2,\frac{1}{T} \m\mycorner_{[0,T]}) \otimes ((-R_{\bI},R_{\bI}), \abs{\cdot}^2 , \varphi_{\bI}(s) ds) . 
\]
\end{definition}

\begin{proposition} \label{prop:coincide}
The isoperimetric profile of the model slab $S_T(\I_0)$ coincides with the base-induced profile $\I^b_T$. 
\end{proposition}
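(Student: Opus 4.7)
The plan is to invoke Corollary \ref{cor:nested} applied to the model slab. By the construction of $\varphi_{\bI}$ via (\ref{eq:I-varphi}) together with Bobkov's characterization, the one-dimensional base $((-R_{\bI},R_{\bI}), \abs{\cdot}^2, \varphi_{\bI}(s)\,ds)$ is a log-concave probability space whose isoperimetric profile is exactly $\bI$, with isoperimetric minimizers given by the left half-lines $E_0(\bar v) := (-R_{\bI}, \Phi_{\bI}^{-1}(\bar v))$. Since $\Phi_{\bI}^{-1} : (0,1) \to (-R_{\bI}, R_{\bI})$ is monotone increasing, this family is manifestly nested: $\bar v_1 \leq \bar v_2$ implies $E_0(\bar v_1) \subset E_0(\bar v_2)$.

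Next I would verify the remaining technical hypotheses of Corollary \ref{cor:nested} for this choice. The associated set in the unit-width slab, $E^1 = \{(\bar v, s) \in [0,1] \times (-R_{\bI}, R_{\bI}) : s < \Phi_{\bI}^{-1}(\bar v)\}$, is the open subgraph of the locally Lipschitz increasing function $\bar v \mapsto \Phi_{\bI}^{-1}(\bar v)$; it is therefore Borel and of locally finite perimeter on the interior. The constancy requirement --- that $\scalar{\n_{E^1}(\bar v, y), e_1}$ is constant in $y$ across $\partial^* E^1_{\bar v}$ for almost every $\bar v$ --- is automatic here, since the reduced boundary of the half-line $E_0(\bar v) \subset \R$ consists of the single point $\{\Phi_{\bI}^{-1}(\bar v)\}$. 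Corollary \ref{cor:nested} then yields $\I(S_T(\I_0)) = \I^b_T$ for every $T > 0$.

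I do not anticipate any serious obstacle. The conceptual reason this works for model slabs (whereas, say, a slab over $\T^2$ eludes us) is precisely the one-dimensionality of the base: nestedness of optimal half-lines is forced by the monotonicity of $\Phi_{\bI}^{-1}$, and the ``tilting'' issue in the normal vector that spoils equality in Proposition \ref{prop:area-formula} cannot arise, since each vertical slice has a single-point reduced boundary. Should a self-contained derivation be preferred to invoking Corollary \ref{cor:nested}, one may instead combine the general inequality $\I_T \geq \I^b_T$ from Corollary \ref{cor:base-induced-profile} with a direct calculation on the competitor sets $E[v] := \{(t,s) : s < \Phi_{\bI}^{-1}(v(t))\}$ for BV functions $v : [0,T] \to [0,1]$. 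A short computation using $\bI = \varphi_{\bI} \circ \Phi_{\bI}^{-1}$, the chain rule, and the co-area formula of Proposition \ref{prop:area-formula} shows that $V(E[v]) = \frac{1}{T}\int_0^T v(t)\,dt$ and that $A(E[v])$ saturates the integrand of (\ref{eq:Ib}), yielding the matching upper bound $\I(S_T(\I_0)) \leq \I^b_T$ after optimizing over $v$.
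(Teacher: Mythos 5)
Your proof is correct and follows the same route as the paper's: both invoke Corollary \ref{cor:nested} with the left half-lines as the nested family of base minimizers, noting that the normal-constancy hypothesis is automatic because each vertical slice has a single-point reduced boundary. The extra verifications and the alternative self-contained sketch you offer are sound but not additional content beyond what the paper's one-line invocation already packages.
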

\begin{proof}
The simplest way to see this is to invoke Corollary \ref{cor:nested}. Indeed, the isoperimetric minimizers of the vertical base $((-R_{\bI},R_{\bI}) , \abs{\cdot}^2 , \varphi_{\bI}(s) ds)$ are left half-lines and therefore trivially nested and satisfy the condition on constancy of the normal angle. Since the isoperimetric profile of the  base is $\I_0$ by construction, Corollary \ref{cor:nested} verifies that $\I(S_T(\I_0)) = \I^b_T$. 
\end{proof}

Since the density of $S_T(\I_0)$ is of the form $\exp(-W(t,s)) = \frac{1}{T} \varphi_{\bI}(s)$ with $W : [0,T] \times (-R_{\bI},R_{\bI}) \rightarrow \R$ convex, Proposition \ref{prop:I-properties} immediately yields:
\begin{corollary} \label{cor:Ib-concave}
$\I^b_T = \I(S_T(\I_0))$ is continuous, concave and symmetric about $1/2$. 
\end{corollary}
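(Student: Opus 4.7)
The plan is to package together Proposition \ref{prop:coincide} (which identifies $\I^b_T$ with the isoperimetric profile of the two-dimensional model slab $S_T(\I_0)$) with the three general properties collected in Proposition \ref{prop:I-properties}. Concretely, I would reduce the corollary to verifying that $\I(S_T(\I_0))$ is continuous, concave and symmetric about $1/2$, and then dispatch each property in turn using a part of Proposition \ref{prop:I-properties}.

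Symmetry about $1/2$ is immediate from Proposition \ref{prop:I-properties}(1), since $\mu_T$ is a probability measure and symmetry holds in complete generality by taking complements. Continuity is immediate from Proposition \ref{prop:I-properties}(2), since the model slab is two-dimensional ($n=2 \geq 2$); here one should remark that the hypothesis of positive locally-Lipschitz density is satisfied, because by Bobkov's correspondence $\varphi_{\bI}$ is a log-concave density which is positive and (locally) Lipschitz on the interior $(-R_{\bI},R_{\bI})$ of its support, and multiplication by the horizontal factor $\frac{1}{T}\m\mycorner_{[0,T]}$ only adds a positive constant prefactor.

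The only part that requires a small argument is concavity. I would invoke Proposition \ref{prop:I-properties}(3b): the underlying manifold $[0,T] \times (-R_{\bI},R_{\bI})$ is a convex subset of Euclidean space $(\R^2,\abs{\cdot}^2)$, and the density of $S_T(\I_0)$ has the form $\exp(-W(t,s))$ with
\[
W(t,s) \;=\; \log T \;-\; \log \varphi_{\bI}(s).
\]
Since by construction $\varphi_{\bI}$ is log-concave on $(-R_{\bI},R_{\bI})$, the function $-\log \varphi_{\bI}$ is convex there, and therefore $W$ is convex on $[0,T] \times (-R_{\bI},R_{\bI})$ (in fact it depends only on $s$). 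The hypotheses of Proposition \ref{prop:I-properties}(3b) are thus met, and concavity of $\I(S_T(\I_0))$ follows.

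No real obstacle arises: the corollary is essentially a repackaging of the identification $\I^b_T = \I(S_T(\I_0))$ with the fact that log-concavity of $\varphi_{\bI}$ is exactly the structural feature that triggers Bayle--Rosales concavity of the isoperimetric profile in the Euclidean convex-density setting.
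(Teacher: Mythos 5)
Your proposal is correct and follows exactly the paper's argument: combine Proposition \ref{prop:coincide} with Proposition \ref{prop:I-properties}, using part (3b) for concavity since the density of $S_T(\I_0)$ is $\exp(-W(t,s)) = \frac{1}{T}\varphi_{\bI}(s)$ with $W$ convex on the convex domain $[0,T]\times(-R_{\bI},R_{\bI})$.
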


\begin{remark} \label{rem:BartheRos}
Combining Corollary \ref{cor:base-induced-profile} with Proposition \ref{prop:coincide}, we obtain, denoting $L = ([0,T],\abs{\cdot}^2,\frac{1}{T} \m\mycorner_{[0,T]})$:
\[
 \I(L \otimes (\bM^{n-1}, \bg, \bmu)) \geq \I(L \otimes ((-R_{\bI},R_{\bI}) , \abs{\cdot}^2 , \varphi_{\bI}(s) ds)) .
 \]
 This is in fact a direct consequence of an isoperimetric comparison theorem of Ros \cite[Theorem 22]{RosIsoperimetricProblemNotes} and Barthe \cite[Theorem 8]{BartheTensorizationGAFA}. When $(\bM^{n-1}, \bg, \bmu)$ is $(n-1)$-dimensional Gaussian space $\G^{n-1}$, we have equality above by Corollary \ref{cor:nested}, but this may also be shown directly by employing Ehrhard symmetrization \cite{EhrhardPhiConcavity}. More precisely, these statements are proved in the aforementioned references for the notion of outer Minkowski content, which is slightly weaker than the notion of weighted perimeter we employ above; however, by \cite[Corollary 3.8]{MorganRegularityOfMinimizers}, the boundary of an isoperimetric minimizer in our setting is of class $C^{1,\alpha}_{loc}$ (apart from a singular set of Hausdorff dimension at most $n-8$), and so these two notions a-posteriori coincide for minimizers, and hence the above references do apply. 
 The above inequality and equality for the Gaussian case are the only two statements we shall require in the sequel, but we have chosen to present things in a more pedagogical manner in the spirit of \cite{FuscoMaggiPratelli-GaussianProduct}, first without assuming that $\I_0$ is concave and then adding on this assumption. Along the way, we obtained another perspective on the above comparison theorem, interpreting it as a consequence of Proposition \ref{prop:area-formula}.
 \end{remark}

\begin{remark}
Since it is enough to test sets with smooth non-vertical boundary in the definition the isoperimetric profile of $S_T(\I_0)$, 
it is possible to show that it is enough to take infimum over absolutely continuous functions in the definition (\ref{eq:Ib}) of $\I^b_T$; in that case the $\norm{D_S v}$ term disappears, as in (\ref{eq:Bobkov-inq}). 
\end{remark}

A simple (and well-understood) corollary is the following:

\begin{corollary} \label{cor:at-half}
Assume that $\frac{\bI}{\bI(1/2)} \geq \frac{\I_\gamma}{\I_\gamma(1/2)}$ on $[0,1]$. Then:
\begin{equation} \label{eq:Gaussian-minorant}
\I_T \geq \I(S_T(\bI))  \geq \min\brac{\bI(1/2) , \frac{1}{T}} \frac{\I_\gamma}{\I_\gamma(1/2)} \text{ on $[0,1]$.}
\end{equation}
In particular:
\[
\I_T(1/2) = \I(S_T(\bI))(1/2) = \min\brac{\bI(1/2) , \frac{1}{T}} . 
\]
\end{corollary}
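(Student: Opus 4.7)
The plan splits into two distinct parts. The first inequality $\I_T \geq \I(S_T(\bI))$ is already in hand: Corollary \ref{cor:base-induced-profile} gives $\I_T \geq \I^b_T$, while Proposition \ref{prop:coincide} identifies the base-induced profile with that of the model slab, $\I(S_T(\bI)) = \I^b_T$. All of the substantive work therefore concerns the second, ``Gaussian minorant" inequality, which I would establish via the same Lipschitz transference principle of isoperimetry that was already invoked in the proof of the preceding lemma, now with a carefully adjusted map.

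Concretely, I would construct a Lipschitz map $P = (P_1, P_2) : (\R^2, \abs{\cdot}^2, \gamma^2) \to S_T(\bI)$ that pushes $\gamma^2$ forward to the probability measure of the model slab. The horizontal component $P_1(s) := T \Phi_\gamma(s)$ is exactly the one from the preceding lemma, of Lipschitz constant $T/\sqrt{2\pi}$. The vertical component $P_2 := \Phi_{\bI}^{-1} \circ \Phi_\gamma$ is the increasing monotone transport from $\gamma^1$ onto the base density $\varphi_{\bI}(s)\,ds$; using the defining relation $\bI = \varphi_{\bI}\circ \Phi_{\bI}^{-1}$ from (\ref{eq:I-varphi}) and the chain rule, its derivative works out to
\[
P_2'(s) \;=\; \frac{\varphi_\gamma(s)}{\varphi_{\bI}(P_2(s))} \;=\; \brac{\frac{\I_\gamma}{\bI}}(\Phi_\gamma(s)).
\]
The hypothesis $\bI/\bI(1/2) \geq \I_\gamma/\I_\gamma(1/2)$ on $[0,1]$ is precisely what is needed to deduce the uniform pointwise bound $P_2'(s) \leq \I_\gamma(1/2)/\bI(1/2) = 1/(\sqrt{2\pi}\,\bI(1/2))$, so that $P$ is $L$-Lipschitz with $L = \max\brac{T/\sqrt{2\pi},\; 1/(\sqrt{2\pi}\,\bI(1/2))}$. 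Invoking the transference principle, together with the dimension-independence $\I_{\gamma^2} = \I_\gamma$, then yields
\[
\I(S_T(\bI)) \;\geq\; \frac{1}{L}\,\I_\gamma \;=\; \min\brac{\frac{\sqrt{2\pi}}{T},\; \sqrt{2\pi}\,\bI(1/2)} \I_\gamma \;=\; \min\brac{\frac{1}{T},\, \bI(1/2)} \frac{\I_\gamma}{\I_\gamma(1/2)},
\]
which is exactly (\ref{eq:Gaussian-minorant}).

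For the ``in particular" conclusion at $\bar v = 1/2$, the newly established lower bound reduces to $\I_T(1/2) \geq \min(\bI(1/2), 1/T)$, since $\I_\gamma/\I_\gamma(1/2)$ takes the value $1$ there, while the matching upper bound is immediate from the trivial (\ref{eq:T-min}) at $\bar v = 1/2$; the two combine to give the claimed equality. I do not anticipate any serious obstacle: the only step demanding any attention is the derivative computation for $P_2$ together with the translation of the hypothesis into the pointwise Lipschitz bound on $P_2'$, but both of these are routine consequences of the density-profile correspondence recorded in (\ref{eq:I-varphi}).
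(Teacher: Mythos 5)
Your proposal is correct and follows essentially the same route as the paper: both arguments run the Lipschitz transference principle through the product map $P=(P_1,P_2)$ from $(\R^2,\gamma^2)$ onto $S_T(\bI)$, with $P_1$ the monotone transport onto the uniform measure on $[0,T]$ (Lipschitz constant $T/\sqrt{2\pi}$) and $P_2$ the monotone transport onto $\varphi_{\bI}(s)\,ds$. The only cosmetic difference is that where the paper cites an external reference for the fact that $\bI \geq \frac{1}{L}\I_\gamma$ makes $P_2$ an $L$-Lipschitz map, you compute $P_2'(s) = (\I_\gamma/\bI)(\Phi_\gamma(s))$ directly via the chain rule and the correspondence (\ref{eq:I-varphi}) — a marginally more self-contained variant of the same step.
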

\begin{proof}
Whenever $\bI$ is concave and $\bI  \geq \frac{1}{L} \I_\gamma$ for some $L < \infty$, then it is easy to see that the natural monotone map $P_2$ pushing forward $\gamma$ onto $\varphi_{\bI}(s) ds$ is $L$-Lipschitz \cite[Subsection 4.3]{EMilman-SpectrumAndContractions}. In particular, since $\max \I_{\gamma} = \I_\gamma(1/2) = 1/\sqrt{2 \pi}$, the natural monotone map $P_1$ pushing forward $\gamma$ onto $\frac{1}{T} \m\mycorner_{[0,T]}$ is $\frac{T}{\sqrt{2 \pi}}$-Lipschitz. Consequently, the map $P = (P_1,P_2) : \R^2 \rightarrow S_T(\bI)$ pushes forward $\G^2 = (\R^2, \abs{\cdot}^2,\gamma^2)$ onto $S_T(\bI)$ and is $\max(L , \frac{T}{\sqrt{2 \pi}})$-Lipschitz. Since $\I(\G^2) = \I_\gamma$, by a standard transference principle for isoperimetric inequalities (e.g.~\cite[Section 5.3]{EMilman-RoleOfConvexity}), we obtain:
\[
\I(S_T(\bI)) \geq \frac{1}{\max(L , \frac{T}{\sqrt{2 \pi}})} \I_{\gamma} . 
\]
Our assumption is that this holds with $L = \frac{\I_{\gamma}(1/2)}{\bI(1/2)}$, and so recalling Corollary \ref{cor:base-induced-profile} and Proposition \ref{prop:coincide} (or Remark \ref{rem:BartheRos}), we deduce (\ref{eq:Gaussian-minorant}).
Applying this to $\bar v=1/2$ and recalling (\ref{eq:T-min}), we deduce:
\[
\min\brac{\bI(1/2) , \frac{1}{T}} \geq \I_T(1/2) \geq \I(S_T(\bI))(1/2) \geq \min\brac{\bI(1/2) , \frac{1}{T}}  .
\]
\end{proof}

\subsection{Monotone Minimizers in $S_T(\bI)$}

So far, we've only used that the density of the vertical factor of $S_T(\I_0)$ is log-concave, and thus its minimizers are (nested) half-lines. It is now time to use that the (uniform) density of the horizontal factor is also log-concave and thus enjoys the same property. In conjunction, this implies that minimizers are jointly monotone in both coordinates:

\begin{proposition}[Monotone Minimizers] \label{prop:monotone}
For every $\bar v \in (0,1)$, there is a ``downward monotone" closed isoperimetric minimizer $E$ in $S_T(\I_0)$ with $V(E) = \bar v$, namely of the form:
\begin{equation} \label{eq:downward-f}
E = \{ (t,s) \in [0,T] \times (-R_{\bI},R_{\bI}) \; ; \;  s \leq f(t) \} ,
\end{equation}
for some \emph{non-increasing} upper-semi-continuous function $f : [0,T] \rightarrow [-R_{\bI},R_{\bI}]$ which is continuous at $t \in \{0,T\}$.\\
Equivalently, in the definition (\ref{eq:Ib}) of $I^b_T(\bar v)$, the minimum is attained for a non-increasing upper-semi-continuous function $v : [0,T] \rightarrow [0,1]$ which is continuous at $v \in \{0,T\}$. These functions are related by $v(t) = \Phi_{\I_0}(f(t))$. 
\end{proposition}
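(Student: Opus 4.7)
The plan is a two-stage symmetrization of an arbitrary isoperimetric minimizer $E \subset S_T(\bI)$ of volume $\bar v$, whose existence is guaranteed by the standard compactness argument recalled earlier.

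\textbf{Vertical symmetrization.} First I would define $v_E(t) := V_E(t)$ and $f_1(t) := \Phi_{\bI}^{-1}(v_E(t))$, and replace $E$ by $E^{(1)} := \{(t,s) \in [0,T] \times (-R_{\bI}, R_{\bI}) \; ; \; s \leq f_1(t)\}$. Then $V_{E^{(1)}}(t) = V_E(t)$ by construction, while $A_{E^{(1)}}(t) = \varphi_{\bI}(f_1(t)) = \bI(V_E(t)) \leq A_E(t)$, the inequality because left half-lines are isoperimetric minimizers in the vertical base. Combining the area formula (Proposition \ref{prop:area-formula}) applied to $E$ with the fact that $E^{(1)}$ has nested, aligned base-minimizer slices and hence attains equality in (\ref{eq:area-formula}) (by Corollary \ref{cor:nested} and Proposition \ref{prop:coincide}) yields $A(E^{(1)}) \leq A(E)$ with $V(E^{(1)}) = V(E) = \bar v$.

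\textbf{Horizontal rearrangement.} Next I would replace $f_1$ by its non-increasing rearrangement $f := (f_1)^{\ast}$ on $[0,T]$ (with respect to Lebesgue measure), and set $E^{(2)} := \{(t,s) \; ; \; s \leq f(t)\}$. Equivalently, this is the Steiner symmetrization of $E^{(1)}$ in the $t$-direction: for each $s$, the horizontal slice becomes $(E^{(2)})^s = [0, g(s))$ with $g(s) := \m\{t \in [0,T] \; ; \; f_1(t) > s\}$, and $g$ being non-increasing in $s$ translates, via $f(t) = \sup\{s \; ; \; g(s) > t\}$, to $f$ being non-increasing in $t$ and naturally upper-semicontinuous. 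Equimeasurability of $f_1$ and $f$ gives $V(E^{(2)}) = V(E^{(1)})$. The inequality $A(E^{(2)}) \leq A(E^{(1)})$ follows from the area formula: both sets attain equality in (\ref{eq:area-formula}), and the BV functional $v \mapsto \int_0^T \sqrt{v'(t)^2 + \bI(v(t))^2} dt + \|D_S v\|$ appearing in (\ref{eq:Ib}) is non-increased when the BV function $v = \Phi_{\bI}(f_1)$ is replaced by its non-increasing rearrangement $\Phi_{\bI}(f)$ -- a Polya--Szego-type statement that reduces, via a level-set / coarea decomposition, to the isoperimetric statement for left half-intervals in the horizontal factor $([0,T], \frac{1}{T}\m\mycorner_{[0,T]})$.

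\textbf{Finalization and equivalence.} Choosing the upper-semicontinuous representative of $f$ makes $E^{(2)}$ closed in $[0,T] \times (-R_{\bI}, R_{\bI})$. A jump of $f$ at an endpoint $t \in \{0, T\}$ contributes only to $\{0, T\} \times (-R_{\bI}, R_{\bI}) \subset \partial M_T$, which is disjoint from $\interior M_T$ and hence contributes nothing to $A(E^{(2)})$; it also has zero volume. I can therefore harmlessly reassign $f(0) := f(0^+)$ and $f(T) := f(T^-)$ to secure continuity at the endpoints without changing either $V$ or $A$. The equivalent functional formulation is immediate by setting $v(t) := \Phi_{\bI}(f(t))$ and using the continuity and strict monotonicity of $\Phi_{\bI}$. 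The main obstacle I anticipate is rigorously justifying the perimeter inequality in the second stage in the BV setting -- in particular, tracking the singular part $D_S v$ under non-increasing rearrangement -- although this should follow from standard coarea arguments applied slice-wise.
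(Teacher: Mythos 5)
Your two-stage symmetrization (vertical, then horizontal) is precisely the paper's proof, which packages the two steps as successive applications $\Sym_1 \Sym_2$ of Ros's symmetrization theorem stated immediately before the proposition. The ``Polya--Szego-type'' inequality for $\int_0^T \sqrt{v'^2 + \bI(v)^2}\,dt + \norm{D_S v}$ that you flag as the remaining obstacle is exactly what the horizontal application of that theorem delivers --- apply the area formula of Proposition \ref{prop:area-formula} to \emph{horizontal} rather than vertical slices of $E^{(1)}$, bound each horizontal slice's perimeter from below by that of a left half-interval in $([0,T],\frac{1}{T}\m\mycorner_{[0,T]})$, and invoke the equality case for $E^{(2)}$ whose horizontal slices are nested half-intervals --- so the BV/singular-part bookkeeping you worry about is already handled by that proposition and needs no separate rearrangement lemma.
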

\begin{remark}
With some more effort, one can show that \emph{any} minimizer in $S_T(\I_0)$ must be of the above form, up to horizontal or vertical reflections (or both), and of course up to null sets. But we will not require this here. 
\end{remark}

For the proof, it will be convenient to invoke a general symmetrization theorem of Ros \cite[Proposition 8]{RosIsoperimetricProblemNotes}, which for simplicity we state here in a more restrictive form; for completeness, we sketch a proof in the spirit of \cite{FuscoMaggiPratelli-GaussianProduct} using the results of the previous subsection.
\begin{theorem}[Ros]
Let $((-R_i,R_i),\abs{\cdot}^2,\mu_i = \varphi_i(t) dt)$, $i=1,2$, denote two (one-dimensional) weighted Riemannian manifolds, and assume that $\varphi_2(t)$ is an even log-concave density. Given a Borel set $E$ in the product space:
\[
((-R_1,R_1) \times (-R_2,R_2),\abs{\cdot}^2,\mu = \mu_1 \otimes \mu_2) ,
\]
denote by $\Sym_2 E$ the Borel set whose vertical sections $(\Sym_2 E)_t$ are closed left half-lines satisfying:
\[
\mu_2((\Sym_2 E)_t) = \mu_2(E_t) \;\;\; \forall t \in (-R_1,R_1) . 
\]
Then $V_\mu(\Sym_2 E) = V_\mu(E)$ and
\begin{equation} \label{eq:Sym2A}
A_{\mu}(\Sym_2 E) \leq A_{\mu}(E) . 
\end{equation}
\end{theorem}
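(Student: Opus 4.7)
The plan is to reduce the statement to the one-dimensional isoperimetric inequality for the even log-concave density $\varphi_2$, by applying a coarea-type identity in the spirit of Proposition \ref{prop:area-formula}. Writing $v_E(t) := \mu_2(E_t)$, Fubini gives $V_\mu(E) = \int v_E(t)\,\varphi_1(t)\,dt$; since the definition of $\Sym_2 E$ ensures $v_{\Sym_2 E}(t) = v_E(t)$ for every $t$, the volume identity $V_\mu(\Sym_2 E) = V_\mu(E)$ is immediate, and moreover the distributional derivatives $Dv_E$ and $Dv_{\Sym_2 E}$ (including their singular parts) coincide.

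For the perimeter bound, I would first state a mild extension of Proposition \ref{prop:area-formula} in which the constant horizontal density $\tfrac{1}{T}$ is replaced by the (locally Lipschitz) density $\varphi_1(t)$: repeating the same coarea-plus-Jensen argument yields
$$A_\mu(E) \;\geq\; \int_{-R_1}^{R_1} \sqrt{v'_E(t)^2 + A_{\mu_2}(E_t)^2}\;\varphi_1(t)\,dt \;+\; \int_{-R_1}^{R_1} \varphi_1\,d\|D_S v_E\|,$$
with equality whenever the sections $E_t$ are nested (so that the singular vertical contribution is exactly $\int \varphi_1\,d\|D_S v_E\|$ rather than just a lower bound) and the angle between $\n_E$ and $e_1$ is constant $\H^0$-a.e.\ on each slice. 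Since $\Sym_2 E = \{(t,s) : s\leq f(t)\}$ with $f(t):= \Phi_2^{-1}(v_E(t))$ (where $\Phi_2(s)=\int_{-R_2}^s\varphi_2$), its sections are nested left half-lines and the constancy-of-angle condition is trivially satisfied, so the displayed inequality is an equality for $\Sym_2 E$.

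The concluding step is to invoke the one-dimensional isoperimetric inequality for the even log-concave density $\varphi_2$ on $(-R_2,R_2)$: by Bobkov's characterization recalled in the previous subsection, left half-lines are minimizers, hence $A_{\mu_2}(E_t) \geq A_{\mu_2}((\Sym_2 E)_t) = \varphi_2(f(t))$ for a.e.\ $t$. Since $\sqrt{v'^2 + a^2}$ is monotone increasing in $a>0$, while $v_{\Sym_2 E}\equiv v_E$ makes the singular contributions match exactly, plugging this pointwise inequality into the lower bound for $A_\mu(E)$ and comparing with the equality for $A_\mu(\Sym_2 E)$ immediately yields \eqref{eq:Sym2A}.

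The main technical obstacle is the rigorous justification of the generalized area formula at the BV level with a non-uniform horizontal density, for sets of merely locally finite perimeter; this can be handled exactly as in the sketch of Proposition \ref{prop:area-formula}, by smooth approximation of $E$ combined with lower semicontinuity of the weighted perimeter, using that $\varphi_1$ is locally Lipschitz and positive. A minor point to verify is that $\Sym_2 E$ is indeed Borel and of locally finite perimeter, which follows from standard slicing theory for BV functions once $v_E$ is known to be BV on $(-R_1,R_1)$.
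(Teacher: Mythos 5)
Your proposal is correct and follows essentially the same route as the paper's own sketch: both apply the coarea/Jensen bound of Proposition~\ref{prop:area-formula} with the horizontal density $\varphi_1$ in place of the uniform one, invoke the one-dimensional isoperimetric inequality for the even log-concave $\varphi_2$ (so left half-lines minimize), use that $V_E \equiv V_{\Sym_2 E}$, and close by noting that the bound is attained with equality for the nested $\Sym_2 E$.
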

\begin{proof}[Sketch of Proof]
We may assume that $E$ is of locally finite perimeter (otherwise there is nothing to prove). Applying a version of Proposition \ref{prop:area-formula} (proved in exactly the same manner):
\[
A_{\mu}(E) \geq \int_{-R_1}^{R_1} \sqrt{ V'_E(t)^2 + A_E(t)^2 } \varphi_1(t) dt + \int_{-R_1}^{R_1} \varphi_1(t) D_S V_E(dt) ,
\] 
where as usual $V_E(t) := \mu_2(E_t)$ and $A_E(t) := A_{\mu_2}(E_t)$. Clearly $V_E \equiv V_{\Sym_2 E}$, and since left half-lines are isoperimetric minimizers on $((-R_2,R_2),\abs{\cdot}^2,\mu_2)$, we also have $A_E(t) \geq A_{\Sym_2 E}(t)$ for all $t$. Consequently:
\[
A_{\mu}(E) \geq \int_{-R_1}^{R_1} \sqrt{ V'_{\Sym_2 E}(t)^2 + A_{\Sym_2 E}(t)^2 } \varphi_1(t) dt + \int_{-R_1}^{R_1} \varphi_1(t) D_S V_{\Sym_2 E}(dt) .
\]
Invoking now the equality case of Proposition \ref{prop:area-formula} (as in the proof of Corollary \ref{cor:nested}), it follows that the latter expression is equal to $A_{\mu}(\Sym_2 E)$, establishing (\ref{eq:Sym2A}). That $V_\mu(\Sym_2 E) = V_\mu(E)$ is trivial by Fubini's theorem, concluding the proof.
\end{proof}

\begin{proof}[Proof of Proposition \ref{prop:monotone}]
Instead of the slab $S_T(\I_0)$ on $[0,T]$, we may clearly consider the same slab on $(-T/2,T/2)$.
Denote $\mu_1 = \frac{1}{T} \m\mycorner_{(-T/2,T/2)}$ and $\mu_2 = \varphi_{\I_0} \m\mycorner_{(-R_{\bI},R_{\bI})}$, and note that both of the corresponding densities are log-concave and even. 
Given an isoperimetric minimizer $E$ in $S_T(\I_0)$ with $V(E) = \bar v$, we apply the previous theorem twice, firstly for the vertical sections and secondly for the horizontal ones. It follows that the set $\tilde E = \Sym_1 \Sym_2 E$ is also a minimizer (with the same weighed volume), where $\Sym_1 F$ denotes the analogous horizontal symmetrization of $F$. Note that $\Sym_i \tilde E = \tilde E$ for both $i =1,2$; this is trivial for $i=1$, but also holds for $i=2$ since:
\[
s_1 \leq s_2 \;\; \Rightarrow \;\; (\Sym_2 E)^{s_1} \supset (\Sym_2 E)^{s_2} \;\; \Rightarrow \;\; \mu_1( (\Sym_2 E)^{s_1}) \geq \mu_1( (\Sym_2 E)^{s_2}) .  
\] 
Consequently, $\tilde E$ is of the asserted ``downward monotone" form for some non-increasing $f : (-T/2,T/2) \rightarrow [-R_{\bI},R_{\bI}]$, which we may translate and extend by continuity and monotonicity to the entire closed interval $[0,T]$. By modifying $f$ on the countable set where it has jump discontinuities, we can always make it upper-semi-continuous and thus $\tilde E$ closed. As isoperimetric minimizers are closed under null-set modifications, $\tilde E$ is still a minimizer with $V(\tilde E) = \bar v$, thereby concluding the proof. 
\end{proof}

\subsection{Boundary regularity and CMC equation}

Recall that $\log \varphi_{\bI} : (-R_{\bI},R_{\bI}) \rightarrow \R$ is assumed concave, and hence locally Lipschitz and differentiable almost-everywhere.
Let us further assume that:

\medskip
\begin{tabular}{m{0.88 \textwidth} m{1em}}
\leftskip=0.3cm plus 0.3fil 
\rightskip=0.3cm plus -0.3fil 
\parfillskip=0cm plus 0.3\textwidth
The set $\SSS_{\bI} \subset (-R_{\bI},R_{\bI})$ where $\log \varphi_{\bI}$ (equivalently, $\varphi_{\bI}$) is non-differentiable consists of isolated points
 and $\varphi_{\bI}$ is $C^\infty_{loc}$ smooth outside $\SSS_{\bI}$. 
&
\end{tabular}

\begin{definition}[Zone]
For every maximal open interval $Z$ where $\varphi_{\I_0}$ is $C^\infty$ smooth, we will call $[0,T] \times Z$ a ``zone" in $S_T(\I_0)$ and the interval $Z$ a ``zonal interval". If $s \in \SSS_{\bI}$ is a point where $\varphi_{\I_0}$ is non-differentiable, we will call $[0,T] \times \{ s\}$ a ``zonal line". 
\end{definition}
 For example, recalling (\ref{eq:varphi2}), $S_T(\I_{\T^2})$ has 3 zones corresponding to the 3 piecewise linear parts of $\varphi_{\I_{\T^2}}$.

\begin{proposition} \label{prop:regularity}
Let $E$ be an isoperimetric minimizer in a weighted Riemannian manifold $(M^n,g,\mu)$. Denote $\Sigma = \overline{\partial ^* E}$, and assume (for simplicity) that $n \leq 7$.
\begin{enumerate}
\item $\Sigma$ is $C^\infty_{loc}$ smooth in every open set $\Omega \subset \interior M^n$ in which the (positive) density $\Psi_\mu$ is of class $C^{\infty}_{loc}$. 
\item Specializing to the case that $(M^n,g,\mu) = S_T(\I_0)$:
\begin{enumerate}
\item \label{it:regularity1} $\Sigma$ is of $C^\infty_{loc}$ smooth in every zone of $S_T(\I_0)$, and is moreover of class $C^{1,1}_{loc}$ on the entire  $S_T(\I_0)$.
In particular, the unit outward unit-normal $\n_{\Sigma}$ is well-defined and locally Lipschitz continuous on the entire $\Sigma$. 
\item \label{it:regularity2} If $\Sigma$ intersects $\partial S_T(\I_0) = \{0,T\} \times (-R_{\bI},R_{\bI})$, it does so perpendicularly. 
\end{enumerate}
\end{enumerate}
\end{proposition}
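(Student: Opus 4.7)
The plan is to treat the three sub-claims separately.

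For part (1), I would invoke the classical regularity theory for minimizers of weighted perimeter under a volume constraint (Gonz\'alez--Massari--Tamanini, Morgan, and others; cf.\ \cite{MorganRegularityOfMinimizers}). These results show that away from a singular set of Hausdorff dimension at most $n-8$ -- which is empty in the assumed range $n\leq 7$ -- the reduced boundary $\partial^*E\cap\Omega$ is locally a $C^{1,\alpha}$ hypersurface. On this regular part, the first-variation identity with a volume-preserving normal perturbation yields the constant-weighted-mean-curvature equation
\[
H_{\Sigma,g} - \scalar{\nabla_g\log\Psi_\mu,\n_\Sigma} = \lambda \qquad \text{on } \Sigma \cap \Omega
\]
for some Lagrange multiplier $\lambda\in\R$. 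Expressing $\Sigma$ locally as a graph, this is a quasilinear elliptic PDE whose coefficients inherit the $C^\infty_{loc}$ regularity of $\Psi_\mu$ on $\Omega$, and Schauder bootstrapping upgrades $\Sigma$ to $C^\infty_{loc}(\Omega)$.

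For part (2)(a), I would first apply (1) zone-by-zone: on each open zone $[0,T]\times Z$ the density $\tfrac{1}{T}\varphi_{\bI}$ is $C^\infty_{loc}$, so $\Sigma$ is $C^\infty_{loc}$ there. The non-trivial remaining claim is $C^{1,1}_{loc}$ regularity across a zonal line $[0,T]\times\{s_0\}$ with $s_0\in\SSS_{\bI}$. Since $\log\varphi_{\bI}$ is concave, hence locally Lipschitz, its gradient lies in $L^\infty_{loc}$ across $\{s_0\}$, which already yields a local $L^\infty$ bound on the \emph{un}weighted mean curvature:
\[
|H_\Sigma| = \abs{\lambda + \scalar{\nabla\log\varphi_{\bI},\n_\Sigma}} \in L^\infty_{loc} .
\]
Standard regularity for hypersurfaces of bounded generalized mean curvature (applied from each side, where smoothness is known, together with continuity of $\varphi_{\bI}$ across the zonal line) then promotes $\Sigma$ to $C^{1,1}_{loc}$ through the zonal line, so $\n_\Sigma$ extends to a locally Lipschitz field on all of $\Sigma$.

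For part (2)(b), I would use the reflection construction of Remark \ref{rem:reflect-slab}: the double cover $2 S_T(\bI)$ is horizontally boundaryless, and $2E$ is a minimizer in it. By part (2)(a) applied to $2 S_T(\bI)$, the closure $2\Sigma$ of its reduced boundary is $C^{1,1}_{loc}$ across the seam $\{0\}\times(-R_{\bI},R_{\bI})$. Since $2\Sigma$ is the union of $\Sigma$ and its reflection across $\{t=0\}$, whose differential sends tangent vectors $(a,b)\mapsto(-a,b)$, the $C^1$-matching condition at the seam forces the tangent plane to $\Sigma$ at every point of $\Sigma\cap\{t=0\}$ to be invariant under this involution. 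This leaves only two options: $\n_\Sigma=\pm e_1$, meaning $\Sigma$ is tangent to the boundary (excluded at a genuine transversal intersection), or $\scalar{\n_\Sigma,e_1}=0$, i.e.\ perpendicular intersection; the same argument applies at $\{t=T\}$.

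The main obstacle is the $C^{1,1}_{loc}$ regularity across zonal lines in part (2)(a): the driving datum $\nabla\log\varphi_{\bI}$ has a genuine jump discontinuity there, so one cannot hope for better than $C^{1,1}$, and one must carefully verify that the bounded-mean-curvature regularity theory (classically stated for smooth ambient geometry) applies in the presence of a merely locally Lipschitz density. This is precisely why the statement singles out $C^{1,1}_{loc}$ globally, rather than higher regularity at the zonal line.
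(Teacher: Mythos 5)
Your outline follows the paper's route: cite classical interior regularity for part~(1), then use the reflection to the boundaryless double cover to handle the boundary and the perpendicularity in part~(2). However, two steps are glossed over in a way that leaves genuine gaps.

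For part~(2)(a), your argument ``bounded generalized mean curvature $\Rightarrow C^{1,1}$'' is circular as stated: to even write $H_\Sigma = \lambda + \scalar{\nabla\log\varphi_{\bI},\n_\Sigma}$ pointwise near a zonal line, you already need to know that $\Sigma$ is $C^1$ there, so that $\n_\Sigma$ is defined; and the standard regularity theory for varifolds/minimizers of bounded generalized mean curvature gives only $C^{1,\alpha}$, not $C^{1,1}$, in general dimension. The paper resolves this by citing Morgan's work directly, which establishes $C^{1,\alpha}_{loc}$ regularity for isoperimetric minimizers with merely locally Lipschitz density, and then observes that this improves to $C^{1,1}_{loc}$ \emph{because $n=2$}: a $C^{1,\alpha}$ curve whose (weak, one-sided) curvature is uniformly bounded is automatically $W^{2,\infty}_{loc}=C^{1,1}_{loc}$. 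Your write-up never mentions the $n=2$ dependence, and never supplies the a priori $C^{1,\alpha}$ step. It also only treats interior points; the claim in (2)(a) extends to the boundary $\{0,T\}\times(-R_{\bI},R_{\bI})$, and your proof of that part is implicitly smuggled into (2)(b) via the reflection, so the order of the argument needs adjusting: first reflect to the boundaryless $2S_T(\bI)$, apply the interior regularity there, then restrict back.

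For part~(2)(b), dismissing the tangency case ($\n_\Sigma=\pm e_1$) with ``excluded at a genuine transversal intersection'' is circular: transversality is precisely what is being proved. The correct argument, and the one the paper uses, is that if $\Sigma$ were tangent to the seam at $p$, then near $p$ the reflected set $2\Sigma=\Sigma\cup R(\Sigma)$ would consist of two curves meeting tangentially at $p$ from opposite half-planes (a cusp), and so $2\Sigma$ could not be a $C^1$ one-manifold at $p$ -- contradicting the regularity established in (2)(a) applied to the double cover. Thus perpendicularity is forced.
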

\begin{proof}
The interior $C^\infty_{loc}$ regularity of $\Sigma = \overline{\partial^* E}$ for an isoperimetric minimizer $E$ in an open subset of $\R^n$ when $n \leq 7$ is classical (see e.g.~\cite{MaggiBook}); for an extension to the weighted Riemannian setting $(M^n,g,\mu)$ in a neighborhood of a point $p \in \interior M^n$ where the density $\Psi_{\mu}$ is $C^\infty_{loc}$ see Morgan \cite[Corollary 3.7 and Subsection 3.10]{MorganRegularityOfMinimizers}. When the density is only locally Lipschitz and $3 \leq n \leq 7$, it is also shown there that $\Sigma \cap \interior M^n$ is $C^{1,\alpha}_{loc}$, but this improves to $C^{1,1}_{loc}$ when $n=2$. To extend this all the way up to the boundary $\partial S_T(\I_0)$, we use a standard reflection argument as in Remark \ref{rem:reflect-slab}: the set $2E$ in the double cover $2 S_T(\I_0)$ obtained by reflecting $E$ across $\{0\} \times (-R_{\bI},R_{\bI})$ must be an isoperimetric minimizer in $2 S_T(\I_0)$, and so the interior regularity applies on $\partial S_T(\I_0)$, establishing (\ref{it:regularity1}). Finally, (\ref{it:regularity2}) also follows since the reflected $\Sigma$ cannot be $C^1_{loc}$ in $2 S_T(\I_0)$ without meeting $\partial S_T(\I_0)$ perpendicularly.
\end{proof}

Being a minimizer of area under a volume constraint, an isoperimetric minimizer is necessarily stationary, meaning that the first variation of area of an isoperimetric minimizer must be equal to a constant multiple $\lambda \in \R$ of its first variation of volume (more background on the first and second variations will be given in the next section). The following is well-known (see e.g.~\cite{BavardPansu}, \cite[Section 18.3]{MorganBook4Ed}, \cite[Proposition 3.3]{MorganJohnson}, \cite[Lemma-3.4.12]{BayleThesis}, \cite[Proposition 3.7]{BayleRosales}):
\begin{proposition} \label{prop:lambda-I}
With the same assumptions and notation as in the previous proposition, there exists $\lambda \in \R$ with the following property:
\begin{enumerate}
\item \label{it:CMC} 
If the positive density $\Psi_\mu$ is of class $C^{\infty}_{loc}$ in some open set $\Omega \subset \interior M^n$, then the weighted mean-curvature $H_{\Sigma,\mu}$ of $\Sigma$ is constant (``constant mean-curvature" or ``CMC") and equal to $\lambda$ in $\Omega$, where:
\begin{equation} \label{eq:Hmu}
H_{\Sigma,\mu} = H_{\Sigma} + \scalar{\nabla \log \Psi_\mu,\n_{\Sigma}} ,
\end{equation}
and $H_{\Sigma}$ denotes the trace of the second fundamental form $\II_{\Sigma}$ of $\Sigma$ with respect to the inward normal $-\n_{\Sigma}$. 
\item 
If $\bar v = V(E) \in (0,1)$ and $\Omega$ as in (\ref{it:CMC}) is non-empty, then:
\[
\underline{\I}^{',-}(\bar v) \geq \lambda \geq \overline{\I}^{',+}(\bar v)  ,
\]
where $\underline{\I}^{',-}(\bar v) := \liminf_{\eps \rightarrow 0-} (\I(\bar v+\eps) - \I(\bar v)) / \eps$ and $\overline{\I}^{',+}(\bar v) :=  \limsup_{\eps \rightarrow 0+} (\I(\bar v+\eps) - \I(\bar v)) / \eps$ denote the lower left and upper right derivatives of $\I$ at $\bar v$, respectively. 
\end{enumerate}
\end{proposition}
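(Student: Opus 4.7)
The plan is to invoke the standard Lagrange--multiplier / first-variation machinery: $E$ minimizes the weighted perimeter $A$ under the constraint $V = \bar v$, so it must be stationary under volume-preserving smooth deformations. The key formulas I would use are the first variations along the flow $\Phi^X_t$ of a compactly supported smooth vector field $X$ whose support lies in an open set $U \subset \Omega$ where both $\Psi_\mu$ is smooth and $\Sigma$ is smooth (the latter is guaranteed by Proposition \ref{prop:regularity} under the assumption $n\leq 7$):
\[
\frac{d}{dt}\bigg|_{t=0} V(\Phi_t^X(E)) = \int_\Sigma \scalar{X,\n_\Sigma} \Psi_\mu \, d\H^{n-1},
\]
\[
\frac{d}{dt}\bigg|_{t=0} A(\Phi_t^X(E)) = \int_\Sigma H_{\Sigma,\mu} \scalar{X,\n_\Sigma} \Psi_\mu \, d\H^{n-1},
\]
with $H_{\Sigma,\mu}$ as defined in (\ref{eq:Hmu}). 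Near $\partial M$ I would require $X$ to be tangent to the boundary, or alternatively pass to the double-cover of Remark \ref{rem:reflect-slab} so that the boundary disappears and only interior first variations are needed; this is legitimate because Proposition \ref{prop:regularity} gives perpendicular intersection of $\Sigma$ with $\partial S_T(\I_0)$, so the reflected minimizer is still $C^{1,1}_{loc}$.

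For part (1), I would first pick an auxiliary vector field $Y$ compactly supported near a point of $\Sigma \cap \Omega$ with $c := \int_\Sigma \scalar{Y,\n_\Sigma} \Psi_\mu d\H^{n-1} \neq 0$ (assuming without loss of generality that $\Sigma \cap \Omega \neq \emptyset$, otherwise the CMC statement in $\Omega$ is vacuous), and define
\[
\lambda := \frac{1}{c}\int_\Sigma H_{\Sigma,\mu} \scalar{Y,\n_\Sigma} \Psi_\mu d\H^{n-1}.
\]
Given any other compactly supported $X$, form the two-parameter family $\Phi^Y_s \circ \Phi^X_t(E)$, and apply the implicit function theorem to solve $s = s(t)$ so that the volume is preserved along the resulting curve. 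Minimality of $E$ forces the corresponding derivative of $A$ to vanish, which after substituting the two first-variation formulas yields $\int_\Sigma (H_{\Sigma,\mu}-\lambda)\scalar{X,\n_\Sigma} \Psi_\mu d\H^{n-1} = 0$. Since $\scalar{X,\n_\Sigma}$ can realize any smooth compactly supported function on $\Sigma \cap \Omega$, it follows that $H_{\Sigma,\mu} \equiv \lambda$ on $\Sigma \cap \Omega$. The same $\lambda$ is independent of the choice of $\Omega$: any two smoothness zones sharing a point of $\Sigma$ in their common boundary force equality of their Lagrange multipliers by continuity of $H_{\Sigma,\mu}$ along $\Sigma$, which holds because $\Sigma$ is $C^{1,1}_{loc}$ globally.

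For part (2), I would use the $Y$-variation itself: rescale $Y$ so that $c = 1$. Then $V(\Phi^Y_s(E)) = \bar v + s + o(s)$ and, invoking part (1), $A(\Phi^Y_s(E)) = \I(\bar v) + \lambda s + o(s)$. Inverting the volume relation via $\epsilon := V(\Phi^Y_s(E)) - \bar v = s + o(s)$ yields a family of competitors of volume $\bar v + \epsilon$ and perimeter $\I(\bar v) + \lambda \epsilon + o(\epsilon)$. Comparing with $\I(\bar v + \epsilon) \leq A(\Phi^Y_{s(\epsilon)}(E))$, dividing by $\epsilon > 0$ and taking $\limsup_{\epsilon \to 0^+}$ gives $\lambda \geq \overline{\I}^{',+}(\bar v)$; dividing by $\epsilon < 0$ reverses the inequality, and taking $\liminf_{\epsilon \to 0^-}$ yields $\lambda \leq \underline{\I}^{',-}(\bar v)$.

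The main obstacle I anticipate is verifying the existence of the auxiliary vector field $Y$ with nonzero flux $c$ when the hypothesis ``$\Omega$ as in (1) is non-empty'' is interpreted literally as allowing $\Omega \cap \Sigma = \emptyset$. In that degenerate case the CMC conclusion is vacuous in $\Omega$, but for (2) one still needs a concrete $\lambda$; this forces one to choose a different $\Omega'$ where the density is smooth and $\Sigma \cap \Omega'$ has positive $\H^{n-1}$-measure. Existence of such $\Omega'$ follows from the hypothesis that $\SSS_\mu$ is ``small'' (isolated points in $s$, hence horizontal lines in $S_T(\I_0)$, whose complement in $\interior S_T(\I_0)$ must meet the nonempty $\Sigma$ since $\bar v \in (0,1)$), together with the $C^{1,1}_{loc}$ regularity from Proposition \ref{prop:regularity}, which ensures $\n_\Sigma$ is continuous and one may arrange $\scalar{Y,\n_\Sigma}$ of fixed sign on a small neighborhood.
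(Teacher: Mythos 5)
Your argument is the standard first-variation / Lagrange-multiplier derivation, which is precisely what the references cited by the paper (Bavard--Pansu, Morgan--Johnson, Bayle, Bayle--Rosales) do; the paper itself gives no proof, treating the statement as well-known. One small imprecision worth fixing: independence of $\lambda$ from the choice of smoothness zone should not be argued via ``continuity of $H_{\Sigma,\mu}$ along $\Sigma$'' (which $C^{1,1}_{loc}$ regularity does not ensure, since $H_{\Sigma}$ is then only $L^\infty$), but follows directly from the linear-algebraic fact that the perimeter first variation annihilates the kernel of the volume first variation: running your two-parameter argument with fields $Y_1,Y_2$ supported in two different smooth zones and having nonzero fluxes $c_1, c_2$ yields $c_2 \lambda_1 c_1 - c_1 \lambda_2 c_2 = 0$, i.e.\ $\lambda_1=\lambda_2$, with no continuity needed.
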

\begin{remark}
Note that when $\I$ is concave, the corresponding left and right derivatives $\I^{',-}$ and $\I^{',+}$ always exist on $(0,1)$, and so there is no need to use $\liminf$ and $\limsup$ in the above limits. 
\end{remark}

In the two-dimensional setting, when $\Sigma$ is a curve in $S_T(\bI)$, the equation $H_{\Sigma,\mu} \equiv \lambda$ on the union of all zones is a second order ODE which we can write down explicitly. 
 As $\Sigma$ is of class $C^{1,1}_{loc}$ globally, in order for this second order ODE to uniquely determine $\Sigma$ when crossing a zonal line, we just need to note that $\Sigma$ can a-priori only intersect each zonal line in at most a single point:

\begin{lemma} \label{lem:transverse}
Let $E$ be a downward monotone isoperimetric minimizer in $S_T(\bI)$, and $\Sigma = \partial E$. Then either $\Sigma$ is a non-zonal horizontal line, or else $\Sigma$ intersects every horizontal line in at most a single point. 
 In particular, a (horizontal) zonal line is never an isoperimetric minimizer. 
\end{lemma}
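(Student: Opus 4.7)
The plan is to analyze exactly when a horizontal line can meet $\Sigma$ at more than one point, and to dispose of every such possibility by combining the ODE uniqueness of the CMC equation (in the non-zonal case) with a one-sided first variation argument (in the zonal case). Since $E = \{(t,s) : s \leq f(t)\}$ is downward monotone with $C^{1,1}_{\text{loc}}$ boundary (Proposition \ref{prop:regularity}), $\Sigma$ is the graph of $f$ together with the vertical segments at the jump discontinuities of $f$. Vertical segments meet any horizontal line in at most one point, so $\Sigma$ meets $\{s = s_0\}$ at more than one point if and only if $f$ admits a \emph{plateau} at $s_0$, i.e., a maximal interval $[t_1,t_2] \subset [0,T]$ with $t_1 < t_2$ on which $f \equiv s_0$. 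The lemma thus reduces to ruling out such plateaus, except when the plateau is the whole of $[0,T]$ and $s_0 \notin \SSS_{\bI}$.

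Suppose first that the plateau lies at some $s_0 \notin \SSS_{\bI}$, so that $s_0$ sits in the interior of a zonal interval where $\varphi_{\bI}$ is $C^\infty$. By Proposition \ref{prop:lambda-I}, $\Sigma$ satisfies the CMC equation $H_{\Sigma,\mu_T} \equiv \lambda$ throughout the corresponding horizontal strip, which for a graph $s = f(t)$ is a smooth autonomous second-order ODE of the form $f'' = G(f,f')$. A direct calculation shows $f \equiv s_0$ is a solution precisely when $\lambda = (\log \varphi_{\bI})'(s_0)$, which is therefore forced by the plateau. The $C^{1,1}_{\text{loc}}$ regularity gives $(f,f')(t_1) = (s_0,0)$ from either side, and uniqueness of solutions to a Lipschitz ODE forces $f \equiv s_0$ in a neighborhood of $t_1$, contradicting the maximality of $[t_1,t_2]$ unless $t_1 = 0$; symmetrically $t_2 = T$. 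Hence $f \equiv s_0$ on $[0,T]$ and $\Sigma$ is the non-zonal horizontal line asserted in the lemma.

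Now suppose the plateau lies at some $s_0 \in \SSS_{\bI}$, so that log-concavity of $\varphi_{\bI}$ and the fact that $s_0$ is a corner point yield the strict inequality $(\log \varphi_{\bI})'(s_0-) > (\log \varphi_{\bI})'(s_0+)$. Assume first that $[t_1,t_2] \subsetneq [0,T]$, so some smooth non-flat arc of $\Sigma$ sits in a zone above or below $s_0$ where the Lagrange/CMC relation $H_{\Sigma,\mu_T} = \lambda$ from Proposition \ref{prop:lambda-I} holds. Couple a normal perturbation of the plateau by $\epsilon \eta(t)$, with $\eta \geq 0$ smooth and compactly supported in $(t_1,t_2)$, with a smooth compensating normal variation $\delta \zeta$ on the non-flat arc chosen with $\int \zeta\, d\mu_\Sigma \neq 0$ so that total volume is preserved to first order. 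On the compensating arc the CMC relation converts area variation to $\lambda$ times volume variation; on the plateau, because $\varphi_{\bI}$ is only Lipschitz at $s_0$, the one-sided first variation of area uses $\varphi_{\bI}'(s_0+)$ as $\epsilon \to 0^+$ and $\varphi_{\bI}'(s_0-)$ as $\epsilon \to 0^-$. After the Lagrange multiplier cancellation, minimality of $E$ at fixed volume forces both
\[
(\log \varphi_{\bI})'(s_0-) \leq \lambda \leq (\log \varphi_{\bI})'(s_0+),
\]
directly contradicting the strict corner inequality above.

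The only remaining case is that $\Sigma$ is the entire horizontal line $\{s = s_0\}$ with $s_0 \in \SSS_{\bI}$, which is precisely the ``in particular'' assertion. Here no smooth non-flat arc is available, but we can test the explicit sign-changing perturbation $f_\epsilon(t) = s_0 + \epsilon \cos(\pi t / T)$: it preserves the perpendicular boundary condition $f_\epsilon'(0) = f_\epsilon'(T) = 0$ of Proposition \ref{prop:regularity} and has vanishing first-order volume change because $\int_0^T \cos(\pi t / T)\, dt = 0$. The same one-sided variational calculus, using $\varphi_{\bI}'(s_0+)$ on $\{\cos > 0\}$ and $\varphi_{\bI}'(s_0-)$ on $\{\cos < 0\}$, gives
\[
\left.\frac{dA}{d\epsilon}\right|_{\epsilon = 0^+} = \frac{1}{\pi}\bigl(\varphi_{\bI}'(s_0+) - \varphi_{\bI}'(s_0-)\bigr) < 0,
\]
and a small $O(\epsilon^2)$ constant vertical shift restores the volume constraint exactly without disturbing this leading behavior, yielding a volume-preserving competitor of strictly smaller area and contradicting minimality. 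The main obstacle will be to justify rigorously the one-sided first variations of weighted area at the corner of $\varphi_{\bI}$, and to ensure in the intermediate plateau case that the compensating smooth arc really can be chosen with nonzero weighted volume derivative so that the first-order volume cancellation can be solved for $\delta$ in terms of $\epsilon$.
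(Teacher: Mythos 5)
Your proof is correct in substance and identifies the same two core mechanisms as the paper: uniqueness of solutions to the (autonomous, translation-invariant) CMC ODE forces a plateau at a non-zonal height $s_0 \notin \SSS_{\bI}$ to propagate to the entire interval $[0,T]$, and a one-sided first-variation argument exploiting the strict corner $(\log\varphi_{\bI})^{',-}(s_0) > (\log\varphi_{\bI})^{',+}(s_0)$ rules out a plateau at $s_0 \in \SSS_{\bI}$. The structural difference lies in how you run the corner case. You split it in two: for a proper plateau $[t_1,t_2] \subsetneq [0,T]$ you use a one-signed perturbation on the plateau coupled with a compensating normal variation on a non-flat smooth arc elsewhere, and for the full horizontal line you use the explicit mean-zero perturbation $\cos(\pi t/T)$ corrected by an $O(\eps^2)$ vertical shift. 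The paper handles both sub-cases at once with a single mean-zero perturbation $u$ supported \emph{inside} the plateau $J$: since $\int_J u = 0$ the volume is automatically preserved to first order, while the one-sided density derivatives enter separately on $\{u>0\}$ and $\{u<0\}$, and the identity $\int_J u_+ = -\int_J u_-$ together with the corner inequality yields a strictly negative first variation of perimeter. This unified choice makes both of the concerns you flagged moot: there is no compensating arc, hence no implicit-function solve for the coupling parameter $\delta$; and the one-sided first variation need only be justified on the plateau itself, where the arclength factor $\sqrt{1+(\eps u')^2} = 1 + O(\eps^2)$ is trivially controlled. Both routes close the argument, but the paper's single perturbation is shorter and avoids the extra machinery; your $\cos(\pi t/T)$ instance is in fact a special case of the paper's construction restricted to $J = [0,T]$.
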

\begin{proof}
Assume that $\Sigma$ intersects $[0,T] \times \{s_0 \}$ in more than a single point. As $E = E_f := \{ (t,s) \in [0,T] \times (-R_{\bI},R_{\bI}) \; ; \; s \leq f(t) \}$ with monotone $f$, it follows that $\Sigma$ coincides with $J \times \{s_0\}$ for some (closed) interval $J  \subset [0,T]$ with non-empty interior. If $\log \varphi_{\bI}$ is non-differentiable at $s_0$, it has distinct left and right derivatives $(\log \varphi_{\bI})^{',-}(s_0) > (\log \varphi_{\bI})^{',+}(s_0)$ by concavity. Hence, we may define a smooth function $u$ supported on $J$, so that $\int_J u(t) dt = 0$ but 
\[
\int_J ((\log \varphi_{\bI})^{',-}(s_0) u_-(t) + (\log \varphi_{\bI})^{',+}(s_0) u_+(t)) dt < 0,
\]
 where $u_{\pm}$ denote the positive and negative parts of $u$. Consequently, a perturbation $f_\eps := f + \eps u$ will preserve the (weighted) volume of $E_{f_\eps}$ to first order and yet strictly decrease its (weighted) perimeter to first order, contradicting stationarity and hence minimality of $E$. 

Therefore $\log \varphi_{\bI}$ must be differentiable at $s_0$, which means that $[0,T] \times \{s_0\}$ is contained in a zone, where we know $\Sigma$ is of class $C^\infty_{loc}$ and satisfies the second order ODE $H_{\Sigma,\mu} \equiv \lambda$. But since $\Sigma$ coincides with $J \times \{s_0\}$ and the ODE is invariant under horizontal translations, uniqueness of solutions to this ODE implies that $\Sigma$ must coincide with the entire horizontal line $[0,T] \times \{ s_0\}$. 
\end{proof}

\begin{proposition} \label{prop:CMC}
Let $E$ be a downward monotone isoperimetric minimizer in $S_T(\I_0)$ of the form (\ref{eq:downward-f}). Denote $\Sigma = \partial E$ and let $J = f^{-1}(-R_{\I_0},R_{\I_0})$ (noting that $J \neq \emptyset$ iff $\Sigma$ is non-empty nor a vertical line). Denote $Z = (-R_{\I_0},R_{\I_0}) \setminus \SSS_{\bI}$ (recall that $\SSS_{\bI}$ is the collection of points where $\varphi_{\bI}$ is non-differentiable), and let $J_\infty = f^{-1}(Z)$. 

\begin{enumerate}
\item On $J_\infty$, the function $f$ is $C^{\infty}_{loc}$, and satisfies the following second order ODE:
\begin{equation} \label{eq:CMC1}
\frac{-f''}{(1 + (f')^2)^{\frac{3}{2}}} + \frac{(\log \varphi_{\I_0})'(f)}{(1 + (f')^2)^{\frac{1}{2}}} = \lambda ,
\end{equation}
where $\lambda \in \R$  is the constant weighted mean-curvature of $\Sigma$. \\
On $J$, the function $f$ is $C^{1,1}_{loc}$, and satisfies the above ODE almost everywhere. 
\item \label{it:perp} If $t \in \{0,T\} \cap J$ then $f'(t) = 0$.  
\item The ODE (\ref{eq:CMC1}) has a first integral: there exists $c \in \R$ so that pointwise on $J$
\begin{equation} \label{eq:CMC2}
\frac{\varphi_{\bI}(f)}{(1 + (f')^2)^{\frac{1}{2}}} -\lambda \Phi_{\bI}(f) \equiv c .
\end{equation}
In particular, $0 \leq \lambda \Phi_{\bI}(f)  + c \leq \varphi_{\bI}(f)$ on $J$. 
\item Equivalently, in terms of $v = \Phi_{\bI}(f)$, $v$ is $C^{\infty}_{loc}$ on $J_\infty$ and $C^{1,1}_{loc}$ on $J$, and satisfies pointwise on $J$:
\begin{equation} \label{eq:CMC3}
\frac{dv}{dt} = - \bI(v) \sqrt{ \brac{\frac{\bI(v)}{\lambda v + c}}^2 - 1} ~ . 
\end{equation}
In particular, $0 \leq \lambda v + c \leq \bI(v)$ on $J$. 
\item Unless $\Sigma$ is a vertical line, $v$ (equivalently, $f$) is continuous on $[0,T]$. In particular, $J$ is relatively open in $[0,T]$, and either $\Sigma$ is a vertical line or $\Sigma = \{ (t,f(t)) \; ; \; t \in J \}$ is the graph of $f$ on $J$. 
\item \label{it:not-infinite} $\lambda v + c > 0$ on $J$. 
\end{enumerate}
Furthermore, unless $\Sigma$ is a (non-zonal) horizontal line:
\begin{enumerate}
\setcounter{enumi}{6}
\item \label{it:monotone} $f$ is strictly monotone decreasing on $J$; equivalently, $\lambda v + c < \bI(v)$ on $\interior J$. 
\end{enumerate}
\end{proposition}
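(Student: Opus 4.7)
The plan is to realize $\Sigma = \partial E$ as the graph of $f$ and reduce everything to the CMC condition guaranteed by Proposition \ref{prop:lambda-I}. For $(t,s) \in \Sigma$ with $s = f(t)$ (in the interior of $J$), the outward unit normal to $E$ is $\n_\Sigma = (-f',1)/\sqrt{1+(f')^2}$ (pointing up since $E$ lies below the graph). The unweighted curvature with respect to the inward normal is $H_\Sigma = -f''/(1+(f')^2)^{3/2}$, and since $\log \Psi_{\mu} = -\log T + \log \varphi_{\bI}(s)$, we have $\nabla \log \Psi_\mu = (0,(\log\varphi_{\bI})'(f))$, so $\scalar{\nabla \log \Psi_{\mu},\n_\Sigma} = (\log\varphi_{\bI})'(f)/\sqrt{1+(f')^2}$. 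On $J_\infty$, the density $\varphi_{\bI}$ is $C^\infty_{loc}$, so by Proposition \ref{prop:regularity} (\ref{it:regularity1}) $\Sigma$ is $C^\infty_{loc}$ and by Proposition \ref{prop:lambda-I}(\ref{it:CMC}) the weighted mean curvature equals a constant $\lambda$; since $f$ is non-increasing and $\Sigma$ is smooth (and not a vertical line on $J_\infty$ since $f$ takes values in $Z$), $\Sigma$ is locally the graph of $f$ on $J_\infty$, yielding (\ref{eq:CMC1}). Across zonal lines, Proposition \ref{prop:regularity} gives only $C^{1,1}_{loc}$ regularity of $\Sigma$, hence of $f$ on $J$, and (\ref{eq:CMC1}) then holds almost everywhere on $J$. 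Item (\ref{it:perp}) is immediate from Proposition \ref{prop:regularity} (\ref{it:regularity2}): perpendicularity with $\partial S_T(\bI)$ means the outward normal is horizontal, i.e.\ $f'(t)=0$.

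For the first integral, I would differentiate the left-hand side of (\ref{eq:CMC2}) on any zone, obtaining
\[
\frac{d}{dt}\!\left[\frac{\varphi_{\bI}(f)}{\sqrt{1+(f')^2}} - \lambda \Phi_{\bI}(f)\right] = f' \cdot \varphi_{\bI}(f) \left[\frac{(\log\varphi_{\bI})'(f)}{\sqrt{1+(f')^2}} - \frac{f''}{(1+(f')^2)^{3/2}} - \lambda \right],
\]
which vanishes by (\ref{eq:CMC1}); the constant $c$ is then the same across zonal lines because the expression is continuous on $J$ (by $C^{1,1}_{loc}$ regularity). Rearranging gives $\lambda\Phi_{\bI}(f)+c = \varphi_{\bI}(f)/\sqrt{1+(f')^2}\in [0,\varphi_{\bI}(f)]$, establishing (3). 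For item (4), use $v = \Phi_{\bI}(f)$ so that $v' = \varphi_{\bI}(f) f' = \bI(v) f'$, square (\ref{eq:CMC2}) to solve for $(f')^2$, take the negative root (since $f$ is non-increasing), and rewrite in terms of $v$; this gives (\ref{eq:CMC3}) and $0 \leq \lambda v + c \leq \bI(v)$.

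For continuity (item 5): suppose $f$ has a jump at some $t_0$, so $\Sigma$ contains the vertical segment $\{t_0\}\times[f(t_0+),f(t_0)]$. By Proposition \ref{prop:regularity} (\ref{it:regularity1}), $\Sigma$ is $C^{1,1}_{loc}$, so at each endpoint of the segment the adjacent graph piece of $f$ must approach with vertical tangent, in which case the local picture of $\Sigma$ is an analytic continuation. If the vertical segment does not extend all the way from $-R_{\bI}$ to $R_{\bI}$, then at one of its endpoints $\Sigma$ continues as a genuine graph of $f$, and evaluating the first integral (\ref{eq:CMC2}) at that endpoint (where $f'$ blows up, so $\varphi_{\bI}(f)/\sqrt{1+(f')^2}=0$) forces $\lambda\Phi_{\bI}(f)+c=0$ there; meanwhile on the vertical segment itself (a straight line with horizontal normal), the weighted mean curvature is identically $0$, so $\lambda=0$ and then $c=0$ as well. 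Substituting back into (\ref{eq:CMC3}) yields $v' = -\bI(v)\sqrt{(\bI(v)/0)^2-1}$, which is inconsistent unless $\bI(v) = 0$ on the adjacent graph, i.e.\ the graph is constantly at $\pm R_{\bI}$; this is precisely the case that $\Sigma$ is a vertical line. Hence otherwise $f$ is continuous, $J$ is relatively open in $[0,T]$, and $\Sigma$ coincides with the graph of $f$ over $J$ together with (possibly) vertical lines $\{0\}\times\cdot$ or $\{T\}\times\cdot$ which are parts of $\partial S_T(\bI)$ and thus not counted in $\Sigma \cap \mathrm{int}\,S_T(\bI)$. Item (\ref{it:not-infinite}) follows since (\ref{eq:CMC3}) requires $\lambda v+c\neq 0$ wherever $v'$ is finite, combined with $\lambda v+c\geq 0$ from (3).

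Finally, for strict monotonicity (\ref{it:monotone}): if $v'(t_0)=0$ at some interior $t_0\in J$, then from (\ref{eq:CMC3}) we get $\bI(v(t_0))=\lambda v(t_0)+c$, i.e.\ the graph is momentarily horizontal. Either $\Sigma$ already coincides with a full horizontal line on an open subinterval (in which case Lemma \ref{lem:transverse} forces $\Sigma$ to be a single non-zonal horizontal line), or $t_0$ is an isolated critical point and strict monotonicity is preserved; this dichotomy follows from the uniqueness theorem for the ODE (\ref{eq:CMC3}) restricted to a zone (where it is smooth in $v$), since constancy of $v$ on any open subset of $J_\infty$ propagates to all of the zone and then by $C^{1,1}$ matching to all of $J$, contradicting the assumption that $\Sigma$ is not a non-zonal horizontal line. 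The main obstacle throughout is item (5), where the interplay between the upper-semi-continuity of $f$, the $C^{1,1}$ regularity of $\Sigma$, and the degeneration of (\ref{eq:CMC2}) at the boundary $s=\pm R_{\bI}$ must be handled simultaneously.
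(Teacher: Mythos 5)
Your proposal follows the same overall architecture as the paper's proof: realize $\Sigma$ as the graph of $f$, invoke Proposition \ref{prop:regularity} for regularity and perpendicularity, use the CMC equation from Proposition \ref{prop:lambda-I}, derive the first integral, and then analyze the cases. Your direct differentiation of the left-hand side of (\ref{eq:CMC2}) to verify the first integral is correct and is exactly the Beltrami-identity computation that the paper references; both are fine.

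However, your argument for item (\ref{it:not-infinite}) is circular as written. You say that (\ref{eq:CMC3}) forces $\lambda v+c\neq 0$ ``wherever $v'$ is finite,'' but finiteness of $v'$ (equivalently $f'\neq-\infty$) on $J$ is precisely the nontrivial content of this item, and it is also exactly the ``debt'' that item (1) leaves open (the paper flags explicitly that $C^{1,1}$ regularity of $\Sigma$ does not immediately give $C^{1,1}$ regularity of $f$, since $\Sigma$ could a priori have a vertical tangent at an interior point of $J$). The actual argument needs to run by contradiction: if $\lambda v(t)+c=0$ at some interior $t\in J$, then (\ref{eq:CMC3}) forces $v'(t)=-\infty$, so $v$ strictly decreases through $t$; choosing $\epsilon$ of the same sign as $\lambda$ then gives $\lambda v(t+\epsilon)+c<0$, contradicting item (3) (the case $\lambda=0$ being excluded separately, since then $c=0$ and (\ref{eq:CMC2}) fails on $J$ as in item (5)). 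The boundary points $t\in\{0,T\}\cap J$ are handled by item (\ref{it:perp}). Without this contradiction argument you have not established the result.

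A second, smaller issue: in item (\ref{it:monotone}) you invoke ``the uniqueness theorem for the ODE (\ref{eq:CMC3}) restricted to a zone (where it is smooth in $v$).'' But at the critical point where $\bI(v)=\lambda v+c$ the right-hand side of (\ref{eq:CMC3}) vanishes like a square root, so it is not Lipschitz there and Picard--Lindel\"of uniqueness fails for the first-order ODE. The uniqueness that is actually used (as in Lemma \ref{lem:transverse}, which you cite) is for the second-order equation (\ref{eq:CMC1}), which is Lipschitz in $(f,f')$ within each zone. Better to argue directly: if $v$ is constant on an open subinterval then $\Sigma$ contains a horizontal segment, whence by Lemma \ref{lem:transverse} it is a full non-zonal horizontal line; otherwise the monotone $f$ is never locally constant, hence strictly decreasing.
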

\begin{proof} 
Recall that $E = \{ (t,s) \in [0,T] \times (-R_{\bI},R_{\bI}) \; ; \; s \leq f(t) \}$.
If $\Sigma$ is a (necessarily non-zonal) horizontal line, all of the assertions trivially hold. 
If $\Sigma$ is not a horizontal line, then it intersects every zonal line at isolated points and at most once by Lemma \ref{lem:transverse}, and hence the weighted mean-curvature $\lambda$ of $\Sigma$ is well-defined almost everywhere. In addition, $f$ must be strictly monotone decreasing on $J$, and so  $(\log \varphi_{\bI})'(f(t))$ is well-defined for almost all $t \in J$.  
\begin{enumerate}
\item
Proposition \ref{prop:regularity} implies that $f$ is $C^{\infty}_{loc}$ on $J_\infty$ and $C^{1,1}_{loc}$ on $J$, except where its derivative is $-\infty$ -- we will see that this cannot happen in the proof of assertion (\ref{it:not-infinite}); to avoid making a circular argument, we will not assume that $f' > -\infty$ until then. 

The unweighted mean curvature $H_{\Sigma}$ of the graph of $f$ is well-known and easily computable as the first term in (\ref{eq:CMC1}). The outer unit-normal to the graph is given by $\frac{(-f',1)}{\sqrt{1 + (f')^2}}$, and so recalling that $\Psi_\mu(t,s) = \frac{1}{T} \varphi_{\bI}(s)$, the computation of the second term in (\ref{eq:CMC1}) is complete. See also \cite[Subsection 2.1]{KoisoMiyamoto} for a self-contained computation. The constancy of the weighted mean-curvature where the density is smooth yields (\ref{eq:CMC1}) on $J_\infty$ and almost-everywhere on $J$.
\item This follows since $\Sigma$ meets $\partial S_T(\I_0)$ perpendicularly. 
\item The ODE (\ref{eq:CMC1}) has a first integral since it is invariant under horizontal translations (and so e.g. No\"ether's theorem applies, see e.g. \cite{PedrosaRitore-Products}). Alternatively and equivalently, this follows since (\ref{eq:CMC1}) is the Euler-Lagrange equation for the Lagrangian:
\[
L(f,f',t) := (1 + (f')^2)^{\frac{1}{2}} \varphi_{\bI}(f) - \lambda \Phi_{\bI}(f) ,
\]
and since $L$ does not depend on $t$, the Euler-Lagrange equation reduces to the Beltrami identity stating that the Hamiltonian $\frac{\partial L}{\partial f'} f' - L$ must be constant (see e.g. \cite[Appendix B]{KoisoMiyamoto}). In any case, the left-hand-side of (\ref{eq:CMC2}) is locally Lipschitz on $J$ since $f$ is $C^{1,1}_{loc}$, and since its derivative is equal to zero almost-everywhere by (\ref{eq:CMC1}), we conclude that it is constant, yielding (\ref{eq:CMC2}). 
\item 
Substituting $f = \Phi_{\bI}^{-1}(v)$ into (\ref{eq:CMC2}) and recalling that $\bI = \varphi_{\bI} \circ \Phi_{\bI}^{-1}$, (\ref{eq:CMC3}) easily follows. 
The negative sign of the square root is due to the downward monotonicity of $v$. 
Note that $v(t) \in (0,1)$ for $t \in J$, and hence $\bI(v) > 0$ on $J$ (e.g.~by concavity).
\item
Assume that $\Sigma \neq \emptyset$ is not a vertical line, hence there exists $t_0 \in J$ with $v(t_0) \in (0,1)$. Assume that $v$ has a jump discontinuity at some $t \in J$. This means that the mean-curvature and also the weighted mean-curvature of this vertical segment of $\Sigma$ is $0$, and hence $\lambda =0$.
Since $v'(t) = -\infty$ we must have by (\ref{eq:CMC3}) that $\lambda v(t) + c = 0$, 
and hence also $c=0$. By (\ref{eq:CMC2}), this is impossible unless $v \equiv 0$ or $v \equiv 1$ in $J$, contradicting that $v(t_0) \in (0,1)$.  
\item
By (\ref{eq:CMC3}), the claim is equivalent to showing that $v'(t) \neq -\infty$ (or equivalently $f'(t) \neq -\infty$) on $J$, a debt we still owe from the proof of the first assertion. 
If $\lambda v(t) + c = 0$ for some $t \in  \interior J$, then by (\ref{eq:CMC3}) necessarily $v'(t) = -\infty$, and so $v$ would be strictly decreasing at such $t$ and hence $\lambda v(t+\eps) + c < 0$ for small enough $\eps \lambda > 0$, contradicting $\lambda v + c \geq 0$ on $J$. It follows that we cannot have $v'(t)=-\infty$ (equivalently, $f'(t) = -\infty$) for any $t$ in the relatively open $J$, since this cannot happen in the interior of $J$ by the previous argument, nor can it happen for $t \in \{0,T\}$ by assertion (\ref{it:perp}). 
\item
Finally, in view of (\ref{eq:CMC3}), $f$ being strictly monotone decreasing is equivalent to having  $\lambda v + c < \bI(v)$ on $\interior J$. 
Indeed, since $\bI$ is concave and $\lambda v + c \leq \bI(v)$ on $J$, if there was equality for some $t_0 \in \interior J$ this would imply that $\lambda v + c = \bI(v)$ on the entire $J$, and so $v$ would be constant by (\ref{eq:CMC3}), implying that $\Sigma$ is a horizontal line. 
\end{enumerate}
\end{proof}

\begin{lemma} \label{lem:chord}
With the same assumptions and notation as in the previous Proposition, assume further that $\Sigma$ is non-empty nor a vertical line, and denote:
\[
1 \geq v_1 := v(0) \geq v(T) =: v_0 \geq 0 . 
\]
As $v$ is monotone and continuous on $[0,T]$ and $J$ is non-empty by our assumptions, we may equivalently define:
\[
v_1 := \sup_{t \in J} v(t) = \lim_{t \searrow \inf J} v(t) ~,~ v_0 := \inf_{t \in J} v(t) = \lim_{t \nearrow \sup J} v(t) . 
\]
  Then:
 \[
\lambda v_i + c = \bI(v_i)\;\;, \;\; i=0,1.
\]
 In other words, $\{ \lambda v + c \, ; \, v \in [v_0,v_1] \}$ is the (possibly degenerate) chord of $\bI$ between $v_0$ and $v_1$. In particular:
 \begin{enumerate}
 \item If $v_1 > v_0$ then $\lambda = \frac{\bI(v_1) - \bI(v_0)}{v_1 - v_0}$ (and $\lambda v + c < \bI(v)$ for all $v \in (v_0,v_1)$).  \item If $\lambda \geq 0$ then necessarily $0 \in J$ and $v_1 < 1$. Analogously, if $\lambda \leq 0$ then necessarily $T \in J$ and $v_0 > 0$.
 \item $c \geq 0$, and $c=0$ if $v_0 = 0$. Conversely, if $v_1 > v_0$ then $v_0=0$ if $c=0$. 
  \item We cannot have both $v_0=0$ and $v_1=1$. 
\end{enumerate}
\end{lemma}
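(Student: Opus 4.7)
The heart of the lemma is the identity $\lambda v_i + c = \bI(v_i)$ for $i=0,1$, which I read as a boundary-value statement for the ODE (\ref{eq:CMC3}). My plan is to analyse the $v_1$ endpoint (the $v_0$ endpoint being symmetric) by splitting on whether $0 \in J$. If $0 \in J$, then Proposition \ref{prop:CMC}(\ref{it:perp}) gives $f'(0)=0$, hence $v'(0)=0$; feeding this into (\ref{eq:CMC3}) forces $\bI(v_1)^2 = (\lambda v_1 + c)^2$, and the pointwise bound $\lambda v + c \ge 0$ on $J$ picks out the correct sign, yielding $\lambda v_1 + c = \bI(v_1)$. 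If instead $0 \notin J$, then continuity of $v$ and downward monotonicity of $f$ force $f(0)=R_{\bI}$, hence $v_1=1$ and $\bI(v_1)=0$; sending $v \nearrow 1$ along $J$ in the pointwise sandwich $0 \le \lambda v + c \le \bI(v)$ forces $\lambda+c = 0 = \bI(v_1)$. The same dichotomy at $t=T$, now with $f(T)=-R_{\bI}$ in the complementary case, handles $v_0$. The edge case in which $\Sigma$ is a (non-zonal) horizontal line, not precluded by the hypothesis, is trivial: with $f \equiv s_0$ and $f'\equiv 0$, the first integral (\ref{eq:CMC2}) directly gives $\lambda \Phi_{\bI}(s_0) + c = \varphi_{\bI}(s_0)$.

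Once the two identities are in hand, the enumerated claims reduce to elementary geometric facts about the chord of $\bI$ joining $(v_0,\bI(v_0))$ and $(v_1,\bI(v_1))$. Claim (1) is automatic since two distinct points determine the affine function $v \mapsto \lambda v + c$, while the strict inequality on $(v_0,v_1)$ is already recorded in Proposition \ref{prop:CMC}(\ref{it:monotone}). For the inequality $c \ge 0$ in claim (3), I use that $\bI$ is concave with $\bI(0)=0$: the chord agreeing with the graph of $\bI$ at $v_0,v_1 \in [0,1]$ lies weakly above $\bI$ outside $[v_0,v_1]$, so $c = \lambda\cdot 0 + c \ge \bI(0)=0$. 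Claim (4) is immediate: if $v_0=0$ and $v_1=1$ then $c = \bI(0)=0$ and $\lambda = \bI(1)=0$, making $\lambda v + c \equiv 0$ on $J$, which contradicts Proposition \ref{prop:CMC}(\ref{it:not-infinite}). Claim (2) follows by the same mechanism: if $\lambda \ge 0$ and $v_1=1$, then $c = -\lambda \le 0$, which combined with $c \ge 0$ forces $\lambda = c = 0$, again contradicting positivity of $\lambda v + c$ on $J$; hence $v_1 < 1$, equivalently $0 \in J$.

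The most delicate subpoint is the converse in claim (3): assuming $v_1>v_0$ and $c=0$, one must rule out $v_0 > 0$. Were $v_0 > 0$, the three points $(0,0),(v_0,\bI(v_0)),(v_1,\bI(v_1))$ would be collinear on the graph of the concave $\bI$; a standard sandwich argument, expressing each interior point as a convex combination of its two neighbours and applying concavity in both directions, would then force $\bI$ to be affine on the entire interval $[v_0,v_1]$, directly contradicting the strict inequality of Proposition \ref{prop:CMC}(\ref{it:monotone}). The main obstacle in the whole proof is thus the boundary analysis in the first paragraph when the endpoint lies outside $J$: one must pin down the boundary value of $f$ using monotonicity and continuity, and then extract an equality from the $\le$-degeneration of the pointwise bound in the limit. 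Everything else reduces to routine chord/concavity geometry applied to $\bI$.
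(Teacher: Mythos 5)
Your proof is correct and follows essentially the same argument as the paper's: analyse each endpoint of $J$ (perpendicularity from Proposition \ref{prop:CMC}(\ref{it:perp}) when the endpoint is in $J$, a sandwich limit using $0 \le \lambda v + c \le \bI(v)$ when it is not), then read off the enumerated claims from concavity of $\bI$ together with the strict positivity $\lambda v + c > 0$ on $J$ from Proposition \ref{prop:CMC}(\ref{it:not-infinite}). Your derivation of claim (2) via $c \ge 0$ plus positivity is a minor cosmetic variant of the paper's chord-slope-sign argument, and your explicit handling of the horizontal-line case is subsumed in the paper's uniform treatment.
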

\begin{proof}
Note that $v_1  > 0$ and $v_0 < 1$, otherwise $\Sigma$ would be empty. 
Also note that $0 \in J$ iff $v_1 < 1$ and $T \in J$ iff $v_0 > 0$. If $v_1 < 1$ then $v'(0) = 0$ by Proposition \ref{prop:CMC} (\ref{it:perp}), and so evaluating (\ref{eq:CMC3}) at $t=0$, since $\bI(v_1) > 0$ (e.g.~by concavity), we must have $\bI(v_1) = \lambda v_1 + c$. Alternatively, if $v_1 = 1$ then $\bI(v_1) = 0$, and since $0 \leq \lambda v(t) + c \leq \bI(v(t))$ for $t \in J$, it follows by taking the limit $J \ni t \searrow \inf J$ and continuity of $\bI$ and $v$ that $\lambda v_1 + c = 0$. An identical argument holds for $v_0$ using the endpoint $T$ instead of $0$. 

Consequently, $\{ \lambda v + c \, ; \,  v \in [v_0,v_1] \}$ is the asserted chord having slope $\lambda = \frac{\bI(v_1) - \bI(v_0)}{v_1 - v_0}$ if $v_0 < v_1$. We shall see below that we cannot have both $v_0 = 0$ and $v_1=1$. Consequently, if $v_1 = 1$ (and since $0 < v_0 < 1$) then $\lambda < 0$, and similarly if $v_0 =0$ then $\lambda > 0$.

Since $\bI$ is concave, it follows that its chord between $v_0$ and $v_1$ cannot lie below $\bI$ for $v \notin (v_0,v_1)$, and in particular at $v=0$, implying that $c \geq 0$. In addition, if this chord touches $\bI$ at some $v \in (v_0,v_1)$ then it must coincide with $\bI$ on the entire $[v_0,v_1]$; in that case, (\ref{eq:CMC3}) would imply that $v' \equiv 0$ on $J$, which is only possible if $v_0 = v_1$. Consequently, if $v_1 > v_0$ we see again as in Proposition \ref{prop:CMC} (\ref{it:monotone}) that we must have $\lambda v + c < \bI(v)$ for all $v \in (v_0,v_1)$, implying by (\ref{eq:CMC3}) that $v$ is strictly decreasing on $J$. 

If $v_0=0$ then clearly $c=0$, but also conversely, if $c=0$ and $v_1 > v_0$ then necessarily $v_0=0$, since if $v_0 > 0$, the secant line $\lambda v$ would meet $\bI$ at three distinct points $v \in \{0,v_0,v_1\}$, which by concavity implies that $\bI$ coincides with $\lambda v$ for all $v \in [0,v_1]$, contradicting that $\lambda v < \bI(v)$ for all $v \in (v_0,v_1)$. 

Finally, we cannot have both $v_0=0$ and $v_1=1$ since $\lambda v_0 + c = \bI(v_0) = 0$ and $\lambda v_1 + c = \bI(v_1) = 0$ would mean that $\lambda = c = 0$, in contradiction to the positivity $\lambda v + c > 0$ on $J$ from the previous proposition. Another way to see this is to note that such a $\Sigma$ would have weighted length strictly larger than that of a vertical line enclosing the same weighed volume, simply by projecting $\Sigma$ onto said line (and using that the density of the horizontal factor is constant). 
\end{proof}

\subsection{Generalized unduloids}

We can now finally summarize all of the previous information as follows. 

\begin{definition} \label{def:ell}
Given $0 \leq v_0 < v_1 \leq 1$, we denote:
\[
\lambda = \lambda(v_0,v_1) = \frac{\bI(v_1)-\bI(v_0)}{v_1-v_0} ~,~ c = c(v_0,v_1) = \frac{v_1 \bI(v_0) - v_0 \bI(v_1)}{v_1-v_0} ,
\]
and $\ell = \ell_{v_0,v_1} : [v_0,v_1] \rightarrow \R_+$ the chord of $\bI$ between $v_0$ and $v_1$, namely:
\[
\ell(v) = \lambda v + c = \frac{v_1-v}{v_1-v_0} \bI(v_0) + \frac{v-v_0}{v_1-v_0} \bI(v_1) . 
\]
\end{definition}

\begin{theorem} \label{thm:model-slab-main}
Let $E$ be a downward monotone isoperimetric minimizer in $S_T(\I_0)$ of the form (\ref{eq:downward-f}), and denote $\Sigma = \partial E$. 
Set $v = \Phi_{\bI}(f) : [0,T] \rightarrow [0,1]$ and $v_1 = v(0)$, $v_0 = v(T)$. Then $\Sigma$ (equivalently, $v$) is uniquely defined from $v_0,v_1$ as follows -- either:
\begin{enumerate}
\item $v_1 = v_0 = \bar v$. Then $\Sigma$ is either empty ($\bar v \in \{0,1\}$) or a horizontal line ($\bar v \in (0,1)$).
\item $v_1 = 1$ and $v_0=0$. Then $\Sigma$ is a vertical line.
\item $1 \geq v_1 > v_0 \geq 0$ and either $1 > v_1$ or $v_0 > 0$. Then $\ell_{v_0,v_1} < \bI$ on $(v_0,v_1)$ and
\begin{equation} \label{eq:T-formula}
T(v_0,v_1) := \int_{v_0}^{v_1} \frac{dv}{\bI(v) \sqrt{ (\bI(v)/\ell_{v_0,v_1}(v))^2 - 1}} \leq T ,
\end{equation}
with equality if $1 > v_1$ and $v_0 > 0$; in particular, $T(v_0,v_1)$ is finite. Denote $\bar J = [0,T(v_0,v_1)]$ if $1 > v_1$ and $\bar J = [T - T(v_0,v_1) , T]$ if $v_0 > 0$ (so that $\bar J = [0,T]$ if both $1 > v_1$ and $v_0 > 0$). Then $v|_{\bar J}$ is strictly monotone decreasing from $v_1$ to $v_0$, and its inverse $\tau(v) : [v_0,v_1] \rightarrow \bar J = [\bar J_-,\bar J_+]$ is given by:
\begin{equation} \label{eq:tau-formula}
\tau(v) = \int_{v}^{v_1} \frac{d\omega}{\bI(\omega) \sqrt{ (\bI(\omega)/\ell_{v_0,v_1}(\omega))^2 - 1}} + \bar J_- .
\end{equation}
Equivalently, the inverse of  $f|_{\bar J} : \bar J \rightarrow [f_0,f_1]$, $f_i = \Phi_{\bI}^{-1}(v_i)$, is given by:
\begin{equation} \label{eq:f-formula}
\tau \circ \Phi_{\bI} (f) = \int_{f}^{f_1} \frac{ds}{\sqrt{ ((\bI/\ell_{v_0,v_1}) \circ \Phi_{\bI}(s))^2 - 1}} + \bar J_- .
\end{equation}
$\Sigma$ is the graph of $f$ over the relative interior of $\bar J$ in $[0,T]$, and has constant weighted mean-curvature equal to $\lambda = \lambda(v_0,v_1) = \frac{\bI(v_1)-\bI(v_0)}{v_1-v_0}$. In particular, if $\lambda \geq 0$ then $1 > v_1$ and if $\lambda \leq 0$ then $v_0 > 0$. 
\end{enumerate}
\end{theorem}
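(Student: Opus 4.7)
The result is essentially a bookkeeping of Proposition \ref{prop:CMC} and Lemma \ref{lem:chord}, repackaged as a parametrization by the endpoint data $(v_0,v_1)$. The plan is as follows.

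First I would dispose of the degenerate cases. If $v_1=v_0=\bar v$, then by monotonicity $v\equiv \bar v$ on $[0,T]$, so $\Sigma$ is either empty (when $\bar v\in\{0,1\}$) or a horizontal line (when $\bar v\in(0,1)$); note that by Lemma \ref{lem:transverse} the horizontal line is necessarily non-zonal. The case $v_1=1$, $v_0=0$ cannot arise with $v$ continuous by Lemma \ref{lem:chord}; since $v$ is continuous on $[0,T]$ unless $\Sigma$ is a vertical line (Proposition \ref{prop:CMC}), this case forces $\Sigma$ to be a vertical line, matching the assertion.

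For the non-degenerate case $1\ge v_1>v_0\ge 0$ with not both extremal, Lemma \ref{lem:chord} pins down the first integral as $\lambda v+c=\ell_{v_0,v_1}(v)$, so that $\ell_{v_0,v_1}<\bI$ strictly on the open interval $(v_0,v_1)$ by concavity of $\bI$ and the argument already used in the lemma. Plugging this chord into (\ref{eq:CMC3}) and separating variables gives, on the open set where $v\in(v_0,v_1)$,
\[
-dt = \frac{dv}{\bI(v)\sqrt{(\bI(v)/\ell_{v_0,v_1}(v))^2-1}},
\]
which I would integrate from a generic $t$ up to the right endpoint of the strict-monotonicity interval $\bar J$ to produce (\ref{eq:tau-formula}); specializing to $v=v_0$ yields (\ref{eq:T-formula}). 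Formula (\ref{eq:f-formula}) then follows from (\ref{eq:tau-formula}) by the substitution $v=\Phi_{\bI}(s)$, using $dv=\varphi_{\bI}(s)\,ds=\bI(\Phi_{\bI}^{-1}(v))\,ds$ and $\bI=\varphi_{\bI}\circ\Phi_{\bI}^{-1}$.

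The remaining, and most delicate, step is to identify $\bar J$ and to verify that the integral in (\ref{eq:T-formula}) is finite. Lemma \ref{lem:chord} says that $0\in J$ iff $v_1<1$ and $T\in J$ iff $v_0>0$, and in each such case Proposition \ref{prop:CMC}(\ref{it:perp}) forces the corresponding horizontal tangency. By strict monotonicity on $J$ (Proposition \ref{prop:CMC}(\ref{it:monotone})) and continuity of $v$, this determines $\bar J$ as in the statement: both endpoints active gives $\bar J=[0,T]$ hence equality in (\ref{eq:T-formula}); one-sided activity gives $\bar J=[0,T(v_0,v_1)]$ or $[T-T(v_0,v_1),T]$, with $v\equiv v_1$ (resp.\ $v\equiv v_0$) on the complementary interval, which is consistent since there $\lambda v+c=\bI(v)$ holds identically. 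The main obstacle is showing $T(v_0,v_1)<\infty$ so that the one-sided scenario is actually realizable: near $v=v_i$ the radicand $(\bI/\ell)^2-1$ vanishes only to first order because $\ell$ is a supporting chord and $\bI$ is smooth and strictly concave there (or has the explicit behavior forced by $\bI(1)=\bI(0)=0$ at the extremes), so the integrand behaves like $1/\sqrt{v_i-v}$ (resp.\ $1/\sqrt{v-v_i}$) and is integrable. Finally, the asserted sign dichotomy for $\lambda$ is immediate from the chord identity $\lambda=(\bI(v_1)-\bI(v_0))/(v_1-v_0)$ together with $\bI(0)=\bI(1)=0$: $\lambda\ge 0$ forces $\bI(v_1)\ge \bI(v_0)\ge 0$ hence $v_1<1$, and symmetrically for $\lambda\le 0$.
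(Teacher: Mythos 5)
Your proposal follows essentially the same route as the paper: it treats the theorem as a repackaging of Proposition \ref{prop:CMC} and Lemma \ref{lem:chord}, obtains $\tau$ by separating variables in (\ref{eq:CMC3}), identifies $\bar J$ from the ``$0\in J$ iff $v_1<1$, $T\in J$ iff $v_0>0$'' dichotomy, and gets (\ref{eq:f-formula}) by the substitution $v=\Phi_{\bI}(s)$ — exactly the paper's argument. Two small remarks: the constant value of $v$ on the complement of $\bar J$ is reversed from what you wrote (when $\bar J=[0,T(v_0,v_1)]$ one has $v\equiv v_0$ there, and when $\bar J=[T-T(v_0,v_1),T]$ one has $v\equiv v_1$); and the finiteness of $T(v_0,v_1)$ need not be established by estimating the integrand near $v_i$ — it is automatic from the existence of the given minimizer, since $\tau$ maps onto the bounded interval $\bar J\subset[0,T]$ (and indeed, by Corollary \ref{cor:no-one-sided}, your integrability argument at $v=0$ or $v=1$ can genuinely fail for some $\bI$, in which case the one-sided case simply never occurs).
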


\begin{proof}
Most of the statements are already contained in Proposition \ref{prop:CMC} and Lemma \ref{lem:chord}. As for the explicit formulas for $v(t)$ and $T(v_0,v_1)$ when $1 \geq v_1 > v_0 \geq 0$, recall the ODE satisfied by $v$ as a function of $t$ from (\ref{eq:CMC3}). Since $\tau(v)$ is its inverse, we have:
\[
\frac{d\tau}{dv} = \frac{1}{dv/dt} = -\frac{1}{\bI(v) \sqrt{ (\bI(v)/\ell_{v_0,v_1}(v))^2 - 1}} ,
\]
and so (\ref{eq:tau-formula}) follows, up to the value of $\tau(v_1)$. Clearly $\tau(v_1) \geq 0$ with equality when $v_1 < 1$ and $\tau(v_0) \leq T$ with equality when $v_0 > 0$, verifying (\ref{eq:T-formula}) with its equality case, the definition of $\bar J$ and the initial condition $\tau(v_1) = \bar J_-$. (\ref{eq:f-formula}) follows from (\ref{eq:tau-formula}) after a change of variables $\omega = \Phi_{\bI}(s)$, as $\frac{d\omega}{ds} = \bI(\omega)$. 
\end{proof}

\begin{definition}[Generalized Unduloids]
When $1 > v_1 > v_0 > 0$, the function $f = \Phi_{\bI}^{-1}(v) : [0,T(v_0,v_1)] \rightarrow [-R_{\bI},R_{\bI}]$ constructed in Theorem \ref{thm:model-slab-main} (by taking $v$ to be the inverse of $\tau$ in (\ref{eq:tau-formula})) will be called the ``generalized unduloid" corresponding to $\bI$ between volumes $v_0$ and $v_1$ (or values $s_0$ and $s_1$, $s_i = \Phi_{\bI}^{-1}(v_i)$). \\
When $v_1=1$ or $v_0 = 0$, we will refer to any $f = \Phi_{\bI}^{-1}(v) : [0,T] \rightarrow [-R_{\bI},R_{\bI}]$ with $T \geq T(v_0,v_1)$  as a ``one-sided generalized unduloid". 
\end{definition}

\begin{corollary} \label{cor:perp}
If $\bI(v)$ is super-linear at $v=0$, namely if $\lim_{v \rightarrow 0} \frac{\bI(v)}{v} = +\infty$, then a one-sided generalized unduloid meets the top or bottom of $S_T(\bI)$ perpendicularly, namely satisfies $\lim_{t \nearrow t_0} f'(t) = -\infty$ for $t_0 = \bar J_+$ if $v_0 = 0$ and $\lim_{t \searrow t_0} f'(t) = -\infty$ for $t_0 = \bar J_-$ if $v_1 = 1$. 
\end{corollary}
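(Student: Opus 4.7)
The plan is to extract $|f'|$ from the first integral (\ref{eq:CMC2}) of the CMC equation and show it blows up at the appropriate endpoint. Writing (\ref{eq:CMC2}) as $\varphi_{\bI}(f)/\sqrt{1+(f')^2} = \ell(v)$, with $v = \Phi_{\bI}(f)$, and using the identity $\bI(v) = \varphi_{\bI}(f)$, I would rearrange to obtain
$$ (f'(t))^2 \;=\; \left(\frac{\bI(v(t))}{\ell(v(t))}\right)^{\!2} - 1 . $$
Thus the corollary reduces to showing $\bI(v)/\ell(v) \to +\infty$ at the relevant endpoint of the range of $v$.

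For the case $v_0 = 0$, Lemma \ref{lem:chord} forces $c = 0$ and $\lambda > 0$, so $\ell(v) = \lambda v$. Hence $\bI(v)/\ell(v) = \bI(v)/(\lambda v)$ diverges as $v \to 0^+$ precisely by the super-linearity hypothesis. Since $v$ is continuous and strictly decreases from $v_1$ to $v_0 = 0$ on $\bar J = [0,T(v_0,v_1)]$, we have $v(t) \searrow 0$ as $t \nearrow t_0 = \bar J_+$, and therefore $f'(t) \to -\infty$, which is the asserted perpendicularity at the bottom.

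The case $v_1 = 1$ is symmetric. Lemma \ref{lem:chord} now gives $\lambda + c = \bI(1) = 0$ with $\lambda < 0$, so $\ell(v) = |\lambda|(1-v)$. By the symmetry of the base-profile $\bI$ about $1/2$ (Proposition \ref{prop:I-properties}(1)), the hypothesis $\lim_{v \to 0^+} \bI(v)/v = +\infty$ translates into $\lim_{v \to 1^-} \bI(v)/(1-v) = +\infty$, so again $\bI(v)/\ell(v) \to +\infty$ as $v \nearrow 1$. Since $v(t) \nearrow 1$ as $t \searrow t_0 = \bar J_-$, we conclude $f'(t) \to -\infty$, the perpendicularity at the top. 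I do not anticipate any substantive obstacle: once one writes $(f')^2$ through the first integral and reads off $\ell$ in each boundary case from Lemma \ref{lem:chord}, the whole argument is essentially bookkeeping, with the only point requiring care being the matching of the endpoints $\bar J_{\pm}$ with the correct values $v_0=0$ or $v_1=1$.
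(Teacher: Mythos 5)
Your proposal is correct and takes essentially the same approach as the paper: both extract $f'$ from the first integral (\ref{eq:CMC2}) (equivalently (\ref{eq:CMC2.5})), specialize the chord $\ell_{v_0,v_1}$ to $\lambda v$ when $v_0=0$ and to $-\lambda(1-v)$ when $v_1=1$ via Lemma \ref{lem:chord}, and invoke the symmetry $\bI(1-v)=\bI(v)$ together with superlinearity at $0$ to conclude $f'\to-\infty$ at the appropriate endpoint.
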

\begin{proof}
As $f = \Phi_{\bI}^{-1}(v)$, we have by (\ref{eq:CMC3}) (or directly by (\ref{eq:CMC2})):
\begin{equation} \label{eq:CMC2.5}
\frac{df}{dt} = \frac{1}{\bI(v)} \frac{dv}{dt} = - \sqrt{ (\bI(v)/\ell_{v_0,v_1}(v))^2 - 1} . 
\end{equation}
Since $\ell_{v_0,v_1}(v)$ is of the form $\lambda v$ if $v_0 =0$ and $-\lambda (1-v)$ if $v_1 = 1$, and since $\bI(1-v) = \bI(v)$, the superlinearity of $\bI(v)$ at $v=0$ implies that the right-hand-side above converges to $-\infty$ as $v$ tends to $0$ or $1$, as asserted. 
\end{proof}

\begin{corollary} \label{cor:no-one-sided}
If $\frac{v}{\bI(v)^2}$ is non-integrable at $v=0$, then there are no one-sided generalized unduloids on $S_T(\bI)$ for any $T > 0$. In other words, if $E$ is a downward monotone minimizer on $S_T(\bI)$ and $\Sigma = \partial E$ is non-empty and not a horizontal nor vertical line, then it is necessarily the graph of a generalized unduloid with $1 > v_1 > v_0 > 0$ (so that the cases $v_1=1$ or $v_0 =0$ cannot occur).
\end{corollary}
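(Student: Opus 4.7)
My plan is to argue by contradiction, showing that the explicit integral formula (\ref{eq:T-formula}) for the horizontal width of a one-sided generalized unduloid must diverge under the stated integrability hypothesis. A one-sided unduloid corresponds, by definition, either to $v_0 = 0$ with $v_1 \in (0,1)$, or to $v_0 \in (0,1)$ with $v_1 = 1$. Since the vertical reflection $s \mapsto -s$ on $S_T(\bI)$ is measure-preserving (because $\varphi_{\bI}$ is even by construction) and converts the pair $(v_0, v_1)$ into $(1-v_1, 1-v_0)$ via the change of variables $v \mapsto 1-v$, I may assume without loss of generality that $v_0 = 0$ and $v_1 \in (0,1)$. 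By Theorem \ref{thm:model-slab-main}, case 3, the quantity $T(0, v_1)$ given by (\ref{eq:T-formula}) must satisfy $T(0, v_1) \leq T < \infty$, so it suffices to show that the hypothesis forces $T(0, v_1) = +\infty$.

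The key step is an elementary lower bound on the integrand near $v = 0$. With $v_0 = 0$ the chord is $\ell(v) = \ell_{0,v_1}(v) = \lambda v$ where $\lambda = \bI(v_1)/v_1 > 0$, and concavity of $\bI$ together with $\bI(0) = 0$ gives $\bI(v) \geq \lambda v$ on $[0, v_1]$, so the integrand is well-defined and positive on $(0, v_1)$. Rewriting algebraically,
\[
\frac{1}{\bI(v)\sqrt{(\bI(v)/\ell(v))^2 - 1}} \;=\; \frac{\ell(v)}{\bI(v)\sqrt{\bI(v)^2 - \ell(v)^2}} \;=\; \frac{\lambda v}{\bI(v)\sqrt{\bI(v)^2 - \lambda^2 v^2}} \;\geq\; \frac{\lambda v}{\bI(v)^2},
\]
where the last inequality uses $\bI(v)^2 - \lambda^2 v^2 \leq \bI(v)^2$. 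Integrating over $[0, v_1]$ then yields $T(0, v_1) \geq \lambda \int_0^{v_1} v/\bI(v)^2\,dv = +\infty$ by the non-integrability hypothesis, the desired contradiction.

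The second assertion of the corollary---that a downward monotone minimizer whose boundary $\Sigma$ is non-empty and neither horizontal nor vertical must correspond to case 3 of Theorem \ref{thm:model-slab-main} with $1 > v_1 > v_0 > 0$---follows immediately: the excluded sub-cases with $v_0 = 0$ or $v_1 = 1$ (but not both, since that is case 2, the vertical line) would produce a one-sided generalized unduloid, which we have just ruled out. I do not foresee a serious obstacle here; the argument reduces to a one-line asymptotic estimate once the algebraic identity $1/\sqrt{(\bI/\ell)^2 - 1} = \ell/\sqrt{\bI^2 - \ell^2}$ is used to expose the $v/\bI(v)^2$ behavior of the integrand at the endpoint $v = 0$.
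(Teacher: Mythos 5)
Your proof is correct and follows essentially the same route as the paper: both argue that $v_0=0$ (or by symmetry $v_1=1$) would force $T(v_0,v_1)=+\infty$, contradicting $T(v_0,v_1)\leq T<\infty$. The paper states the divergence of the integral in (\ref{eq:T-formula}) tersely, whereas you usefully spell out the algebraic lower bound $\frac{1}{\bI(v)\sqrt{(\bI(v)/\ell(v))^2-1}} = \frac{\lambda v}{\bI(v)\sqrt{\bI(v)^2-\lambda^2 v^2}} \geq \frac{\lambda v}{\bI(v)^2}$, which makes the connection to the hypothesis explicit.
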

\begin{proof}
If $v_0=v_0(\Sigma) = 0$, then since $\ell_{v_0,v_1}(v) = \lambda v$ and $\bI(v)$ is concave, it would follow that $T(v_0,v_1)$ defined in (\ref{eq:T-formula}) is infinite (as the integral diverges at $v=0$ by our assumption), a contradiction. Similarly if $v_1 = v_1(\Sigma) = 1$ (as the integral diverges at $v=1$). 
\end{proof}

\subsection{The isoperimetric profile of $S_T(\bI)$}

\begin{lemma} \label{lem:VA}
With the same assumptions and notation as in Theorem \ref{thm:model-slab-main}, assume further that $v_1 > v_0$. Then the weighted volume and area (respectively) of the downward minimizer $E$ in $S_T(\bI)$ are given by:
\begin{align}
\label{eq:V-formula} V(E) & = V_T(v_0,v_1) := \frac{1}{T} \int_{v_0}^{v_1} \frac{v \, dv}{\bI(v) \sqrt{ (\bI(v)/\ell_{v_0,v_1}(v))^2 - 1}} , \\
\label{eq:A-formula} A(E) & = A_T(v_0,v_1) := \frac{1}{T} \int_{v_0}^{v_1} \frac{ dv}{\sqrt{ 1 - (\ell_{v_0,v_1}(v)/\bI(v))^2 }} .
\end{align}
Denoting $d\sigma_T := \frac{1}{T} \varphi_{\bI}(s)\H^1(dt,ds)$, we have when $1 > v_1 > v_0 > 0$:
\[
\int_{\Sigma} \scalar{\n_{\Sigma},e_2}^2 d\sigma_T = \lambda V(E) + c = \ell_{v_0,v_1}(V(E)),\]
and:
\[
\int_{\Sigma} \scalar{\n_{\Sigma},e_1} d\sigma_T = \frac{v_1-v_0}{T} .
\]
Consequently, when $1 > v_1 > v_0 > 0$, we have the following simple estimates:
\[
\frac{v_1-v_0}{T}  \geq A(E) - \ell_{v_0,v_1}(V(E)) \geq \frac{1}{A(E)} \brac{\frac{v_1-v_0}{T} }^2 ,
\]
and thus:
\[
A(E) \geq \frac{\ell_{v_0,v_1}(V(E)) + \sqrt{\ell_{v_0,v_1}(V(E))^2 + 4 \brac{\frac{v_1-v_0}{T}}^2}}{2} . 
\]
\end{lemma}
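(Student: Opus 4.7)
My plan is to parameterize $\Sigma$ as the graph of $f$ over $[0,T]$, which is permitted in the case $1 > v_1 > v_0 > 0$ since $\bar J = [0,T]$ by Theorem \ref{thm:model-slab-main}. The central tool is the first integral (\ref{eq:CMC2}) of the CMC equation, which I would rewrite in the two convenient forms
\[
\frac{\varphi_{\bI}(f)}{\sqrt{1 + (f')^2}} = \ell_{v_0,v_1}(v), \qquad \sqrt{1 + (f')^2} = \frac{\bI(v)}{\ell_{v_0,v_1}(v)},
\]
where $v = \Phi_{\bI}(f)$ (so $\varphi_{\bI}(f) = \bI(v)$), together with (\ref{eq:CMC3}) which yields $dt/dv = -\bigl(\bI(v)\sqrt{(\bI/\ell)^2-1}\bigr)^{-1}$.

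From here, (\ref{eq:V-formula}) follows by writing $V(E) = \frac{1}{T}\int_0^T \Phi_{\bI}(f(t))\,dt = \frac{1}{T}\int_0^T v(t)\,dt$ and changing variables to $v$ using the monotone inverse $\tau(v)$. Similarly (\ref{eq:A-formula}) follows from $A(E) = \frac{1}{T}\int_0^T \varphi_{\bI}(f)\sqrt{1+(f')^2}\,dt = \frac{1}{T}\int_0^T \bI(v)^2/\ell_{v_0,v_1}(v)\,dt$ after substitution and the algebraic simplification $\frac{\bI/\ell}{\sqrt{(\bI/\ell)^2-1}} = \frac{1}{\sqrt{1-(\ell/\bI)^2}}$, valid because $\ell_{v_0,v_1} > 0$ on $(v_0,v_1)$.

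For the two integrals against the outer normal $\n_\Sigma = (-f',1)/\sqrt{1+(f')^2}$, I would first compute
\[
\int_\Sigma \scalar{\n_\Sigma,e_2}^2\,d\sigma_T = \frac{1}{T}\int_0^T \frac{\varphi_{\bI}(f)}{\sqrt{1+(f')^2}}\,dt = \frac{1}{T}\int_0^T \ell_{v_0,v_1}(v(t))\,dt = \ell_{v_0,v_1}(V(E)),
\]
where the first equality is a direct computation, the second uses the first integral, and the last step uses the linearity of $\ell_{v_0,v_1}$ together with $\frac{1}{T}\int_0^T v(t)\,dt = V(E)$. For the $e_1$ integral, the chain rule $-f'(t)\varphi_{\bI}(f(t)) = -\tfrac{d}{dt}\Phi_{\bI}(f(t)) = -v'(t)$ and the fundamental theorem of calculus collapse the integrand to a total derivative, yielding
\[
\int_\Sigma \scalar{\n_\Sigma,e_1}\,d\sigma_T = -\frac{1}{T}\bigl[v(t)\bigr]_0^T = \frac{v_1 - v_0}{T}.
\]

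The final estimates then follow from Pythagoras' identity $\scalar{\n_\Sigma,e_1}^2 + \scalar{\n_\Sigma,e_2}^2 \equiv 1$ on $\Sigma$, which gives $A(E) - \ell_{v_0,v_1}(V(E)) = \int_\Sigma \scalar{\n_\Sigma,e_1}^2\,d\sigma_T$. The monotonicity $-f' \geq 0$ on $J$ (Proposition \ref{prop:CMC}(\ref{it:monotone})) forces $\scalar{\n_\Sigma,e_1} \in [0,1]$, so $\scalar{\n_\Sigma,e_1}^2 \leq \scalar{\n_\Sigma,e_1}$, producing the upper bound $A(E)-\ell_{v_0,v_1}(V(E)) \leq (v_1-v_0)/T$. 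The reverse inequality comes from Cauchy--Schwarz, $\bigl((v_1-v_0)/T\bigr)^2 \leq A(E) \cdot \int_\Sigma \scalar{\n_\Sigma,e_1}^2\,d\sigma_T$, and solving the resulting quadratic inequality $A(E)^2 - \ell_{v_0,v_1}(V(E))\,A(E) - \bigl((v_1-v_0)/T\bigr)^2 \geq 0$ for $A(E) > 0$ yields the stated bound. I do not foresee a substantive obstacle: the argument is a sequence of direct substitutions plus one application of Cauchy--Schwarz, with all the content packed into the CMC first integral (\ref{eq:CMC2}) that collapses $\scalar{\n_\Sigma,e_2}^2\,d\sigma_T$ to $\ell_{v_0,v_1}(v)\,dt/T$.
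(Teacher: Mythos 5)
Your proof is correct and follows essentially the same approach as the paper: both rely on parametrizing $\Sigma$ as a graph, the first integral (\ref{eq:CMC2}) in the forms $\scalar{\n_\Sigma,e_2}\varphi_{\bI}(f) = \ell(v)$ and $\sqrt{1+(f')^2} = \bI/\ell$, the change of variables $t \leftrightarrow v$, and then Cauchy--Schwarz plus $\scalar{\n_\Sigma,e_1}\in[0,1]$ for the estimates. The only cosmetic difference is that you consistently write everything as a $dt$-integral and change variables at the end, whereas the paper sometimes projects $\H^1$ directly onto the vertical or horizontal axis before changing variables; the computations are equivalent.
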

\begin{proof}
Abbreviate $\ell = \ell_{v_0,v_1}$. Recalling (\ref{eq:CMC3}), we have:
\[
V(E) = \frac{1}{T} \int_{0}^T \Phi_{\bI}(f(t)) dt = \frac{1}{T} \int_{v_1}^{v_0} v \frac{dt}{dv} dv = \frac{1}{T} \int_{v_0}^{v_1} \frac{ v \, dv}{\bI(v) \sqrt{ (\bI(v)/\ell(v))^2 - 1}} .
\]
To see the formula for $A(E)$, recall from (\ref{eq:CMC2.5}) that: \[
f'(t) = - \sqrt{ (\bI(v)/\ell(v))^2 - 1} . 
\]
Since the outer unit-normal $\n_{\Sigma}$ to $\Sigma$ is given by $\frac{(-f'(t),1)}{\sqrt{1 + (f')^2}}$, we deduce:
\begin{equation} \label{eq:n-formulas}
\scalar{\n_{\Sigma},e_2} = \frac{1}{\sqrt{1 + (f')^2}} = \frac{\ell(v)}{\bI(v)} ~,~ 
\scalar{\n_{\Sigma},e_1} = \sqrt{1 - \brac{\frac{\ell(v)}{\bI(v)}}^2} . 
\end{equation}
Therefore, projecting $\Sigma$ onto the vertical axis (as the projection is one-to-one), and changing variables $v = \Phi_{\bI}(s)$ (so that $dv = \varphi_{\bI}(s) ds$), we obtain:
\[
A(E) = \frac{1}{T} \int_{\Sigma} \varphi_{\bI}(s) \H^1(dt,ds) = \frac{1}{T} \int_{s_0}^{s_1} \frac{\varphi_{\bI}(s) ds}{\scalar{\n_{\Sigma},e_1}} = \frac{1}{T} \int_{v_0}^{v_1} \frac{dv}{\sqrt{1 - (\ell(v)/\bI(v))^2}} . 
\]

Now, since $\scalar{\n_{\Sigma},e_2} = \frac{\ell(v)}{\bI(v)}$ where $v = \Phi_{\bI}(s)$, we have $\scalar{\n_{\Sigma},e_2} \varphi_{\bI}(s) = \lambda \Phi_{\bI}(s) + c$, and so projecting $\Sigma$ onto the horizontal axis (the projection is one-to-one) and using that $\bar J = [0,T]$ as $1 > v_1 > v_0 > 0$, we obtain:
\begin{align*}
& \int_{\Sigma} \scalar{\n_{\Sigma},e_2}^2 \frac{\varphi_{\bI}(s)}{T} \H^1(dt,ds) = \frac{1}{T} \int_{\Sigma} \scalar{\n_{\Sigma},e_2} (\lambda \Phi_{\bI}(s) + c) \H^1(dt,ds) \\
& =  \frac{1}{T} \int_{0}^T (\lambda \Phi_{\bI}(f(t)) + c) dt = \lambda V(E) + c  . 
\end{align*}
Similarly, projecting $\Sigma$ onto the vertical axis and changing variables $v = \Phi_{\bI}(s)$:
\[
\int_{\Sigma} \scalar{\n_{\Sigma},e_1} d\sigma_T = \frac{1}{T} \int_{s_0}^{s_1} \varphi_{\bI}(s)  ds = \frac{1}{T} \int_{v_0}^{v_1} dv =  \frac{v_1-v_0}{T}. 
\]

Finally, since:
\[
A(E) - \ell(V(E)) = \int_{\Sigma} (1 - \scalar{\n_{\Sigma},e_2}^2) d\sigma_T = \int_{\Sigma} \scalar{\n_{\Sigma},e_1}^2 d\sigma_T ,
\]
the asserted inequalities follow by comparing to $\int_{\Sigma} \scalar{\n_{\Sigma},e_1} d\sigma_T$
using $\scalar{\n_{\Sigma},e_1} \geq \scalar{\n_{\Sigma},e_1}^2$ on one hand and Jensen's inequality on the other. 
\end{proof}

The following is an immediate corollary of Theorem \ref{thm:model-slab-main} (and Proposition \ref{prop:monotone}):
\begin{corollary}
The isoperimetric profile $\I^b_T = \I(S_T(\bI))$ is given by:
\begin{align*}
& \I(S_T(\bI))(\bar v) = \min   \left ( \frac{1}{T}   , \bI(\bar v) ,  \right .    \\
& \left .  \;\; \inf \set{ A_{T}(v_0,v_1) \; ; \; \begin{array}{l}  T(v_0,v_1) = T , \\ V_{T}(v_0,v_1) = \bar v , \\ 1 > v_1 > v_0 > 0  \end{array} \text{ or } 
\begin{array}{l}  T(v_0,v_1) \leq T , \\ V_{T}(v_0,v_1) = \bar v , \\ 1 > v_1 > v_0 = 0  \end{array}  \text{ or } 
\begin{array}{l}  T(v_0,v_1) \leq T , \\ V_{T}(v_0,v_1) = \bar v , \\ 1 = v_1 > v_0 > 0  \end{array} 
}  \right ) . 
\end{align*}
where $T(v_0,v_1), V_T(v_0,v_1), A_T(v_0,v_1)$ were defined in (\ref{eq:T-formula}), (\ref{eq:V-formula}) and (\ref{eq:A-formula}). 
\end{corollary}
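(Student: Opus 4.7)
The plan is to assemble the classification already built up in Proposition~\ref{prop:monotone}, Theorem~\ref{thm:model-slab-main}, and Lemma~\ref{lem:VA}, and then package both directions of the inequality. First I would invoke existence of an isoperimetric minimizer $E$ for each $\bar v \in (0,1)$ (standard BV compactness plus lower semi-continuity of weighted perimeter, already noted in Section~\ref{sec:CMC}). By Proposition~\ref{prop:monotone} we may assume $E$ is downward monotone, $E = \{s \le f(t)\}$ for a non-increasing upper-semi-continuous $f : [0,T] \to [-R_{\bI},R_{\bI}]$, and set $v = \Phi_{\bI}\circ f$, $v_1 = v(0)$, $v_0 = v(T)$.

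Next I would invoke Theorem~\ref{thm:model-slab-main} to classify $\Sigma = \partial E$ according to $(v_0,v_1)$. Exactly three regimes arise: (i) $v_1 = v_0 = \bar v$, giving a (possibly empty) horizontal line of weighted area $\bI(\bar v)$; (ii) $v_1 = 1$ and $v_0 = 0$, giving a vertical line of weighted area $\tfrac{1}{T}$, achievable for any $\bar v \in (0,1)$ by sliding the horizontal position; (iii) $1 \ge v_1 > v_0 \ge 0$ with case (ii) excluded, in which case $\Sigma$ is the graph of a (one- or two-sided) generalized unduloid. Theorem~\ref{thm:model-slab-main} forces the parameters of (iii) to lie in exactly one of the three blocks in the infimum: the two-sided block $1 > v_1 > v_0 > 0$ with $T(v_0,v_1) = T$, or one of the two one-sided blocks $v_0 = 0 < v_1 < 1$ or $0 < v_0 < v_1 = 1$ with $T(v_0,v_1) \le T$. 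In each such case Lemma~\ref{lem:VA} supplies $V(E) = V_T(v_0,v_1)$ and $A(E) = A_T(v_0,v_1)$, with the natural convention that in the one-sided cases the ``filler'' region (where $f \equiv \pm R_{\bI}$ on the part of $[0,T]$ not occupied by the graph) is absorbed into the formulas --- it contributes nothing to perimeter and only a trivial $\tfrac{T - T(v_0,v_1)}{T}$ (or $0$) to volume.

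The two inequalities now come out immediately. For the lower bound $\I(S_T(\bI))(\bar v) \ge \min(\cdots)$: the minimizer falls into one of the three regimes, and its area coincides with one of the three quantities appearing in the minimum. For the upper bound $\I(S_T(\bI))(\bar v) \le \min(\cdots)$: each admissible configuration --- the horizontal half-slab $\{s \le \Phi_{\bI}^{-1}(\bar v)\}$, the vertical half-slab $[0,T\bar v] \times (-R_{\bI},R_{\bI})$, and each generalized unduloid with parameters $(v_0,v_1)$ in the infimum set --- is a concrete competitor of weighted volume $\bar v$, and its area therefore bounds the profile above.

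The only bookkeeping obstacle, and it is genuinely minor, lies in the one-sided cases: one must check that the ``filler'' piece on the remaining part of $[0,T]$ is indeed $E_t = \emptyset$ (when $v_0 = 0$) or $E_t = \bM$ (when $v_1 = 1$) --- which follows from monotonicity of $f$ together with the boundary behaviour $f \to \pm R_{\bI}$ guaranteed by Theorem~\ref{thm:model-slab-main} --- and that this piece contributes no perimeter while adjusting the volume count so that $V_T(v_0,v_1) = \bar v$ is interpreted with the appropriate convention. Once that is verified, the corollary falls out in one line as the minimum over the enumerated classes of admissible downward-monotone competitors.
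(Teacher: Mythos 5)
Your proposal is correct and matches the intended argument: the paper gives no proof, stating only that this is ``an immediate corollary of Theorem \ref{thm:model-slab-main} (and Proposition \ref{prop:monotone})'', and your assembly of those two results together with Lemma \ref{lem:VA} -- monotone minimizer, classify $\Sigma$ by $(v_0,v_1)$ into horizontal line / vertical line / unduloid, plug the volume and area formulas, and then run both directions of the inequality -- is exactly that. You have also correctly flagged the one genuine bookkeeping subtlety: in the one-sided block $v_1 = 1$, the filler region $\{v\equiv 1\}$ over $[0,\,T-T(v_0,v_1)]$ contributes $\tfrac{T - T(v_0,v_1)}{T}$ to the volume, which the literal integral $V_T(v_0,v_1)$ of (\ref{eq:V-formula}) omits (the change of variables in Lemma \ref{lem:VA} is a bijection only on $\bar J$), so the constraint $V_T(v_0,v_1) = \bar v$ in that block must indeed be read, as you propose, as constraining the actual $V(E)$.
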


In fact, one can replace the first column above (when $1 > v_1 > v_0 > 0$) by:
\[
\inf \set{ A_{T(v_0,v_1)}(v_0,v_1) \; ; \; \begin{array}{l}  T(v_0,v_1) \leq T , \\ V_{T(v_0,v_1)}(v_0,v_1) = \bar v , \\ 1 > v_1 > v_0 > 0  \end{array} },
\]
thanks to the second inequality in the following simple:

\begin{lemma} \label{lem:monotone}
For all $0 < T_1 \leq T_2$, we have (pointwise):
\[
\frac{T_1}{T_2} \tilde \I_{T_1} \leq \tilde \I_{T_2} \leq \tilde \I_{T_1} ,
\]
for both $\tilde \I_T = \I(M_T)$ and $\tilde \I_T = \I(S_T(\bI))$. 
\end{lemma}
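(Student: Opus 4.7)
The plan is to prove both inequalities simultaneously by a single construction: horizontal rescaling. For any $\beta>0$, let $\sigma_\beta : [0,T_1]\times \bM \to [0,\beta T_1]\times \bM$ be the diffeomorphism
\[
\sigma_\beta(t,y) := (\beta t, y),
\]
which maps $M_{T_1}$ bijectively onto $M_{\beta T_1}$ (and $S_{T_1}(\bI)$ onto $S_{\beta T_1}(\bI)$), since only the horizontal factor is rescaled and the horizontal density of a slab is uniform. I would then take $\beta=T_2/T_1\geq 1$ for the right inequality, and $\beta=T_1/T_2\leq 1$ for the left one.

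First, I verify that $\sigma_\beta$ preserves weighted volume. Given Borel $E\subset M_{T_1}$, a change of variables $t=\beta s$ yields
\[
V_{\mu_{\beta T_1}}(\sigma_\beta(E)) = \int_{\sigma_\beta(E)} \tfrac{1}{\beta T_1}\Psi_{\bmu}(y)\,dt\,dy = \int_{E} \tfrac{\beta}{\beta T_1}\Psi_{\bmu}(y)\,ds\,dy = V_{\mu_{T_1}}(E),
\]
so relative volumes are untouched.

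Next, I compute the effect of $\sigma_\beta$ on weighted perimeter. By the area formula applied to the bi-Lipschitz map $\sigma_\beta$,
\[
A_{\mu_{\beta T_1}}(\sigma_\beta(E)) = \int_{\partial^* E}\Psi_{\mu_{\beta T_1}}(\sigma_\beta(p))\cdot J_\beta(p)\,d\H^{n-1}(p),
\]
where $J_\beta(p)$ is the $(n-1)$-dimensional Jacobian of $\sigma_\beta$ restricted to the tangent hyperplane $T_p(\partial^* E) = \n_E(p)^\perp$. A direct Gram-determinant computation (choose an orthonormal basis of $\n^\perp$ with one vector in the span of $e_1$ and $\n$, and the rest perpendicular to $e_1$) gives
\[
J_\beta(p) = \sqrt{\beta^2(1-n_1(p)^2)+n_1(p)^2} = \sqrt{\beta^2-(\beta^2-1)n_1(p)^2},\qquad n_1 := \scalar{\n_E,e_1}.
\]
Since $\Psi_{\mu_{\beta T_1}}(\sigma_\beta(p)) = \tfrac{1}{\beta T_1}\Psi_{\bmu}(y) = \tfrac{1}{\beta}\Psi_{\mu_{T_1}}(p)$, we conclude
\[
A_{\mu_{\beta T_1}}(\sigma_\beta(E)) = \tfrac{1}{\beta}\int_{\partial^* E}\Psi_{\mu_{T_1}}\sqrt{\beta^2-(\beta^2-1)n_1^2}\,d\H^{n-1}.
\]

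The two inequalities then follow by elementary bounds on the integrand. When $\beta\geq 1$, $\sqrt{\beta^2-(\beta^2-1)n_1^2}\leq \beta$, so $A_{\mu_{\beta T_1}}(\sigma_\beta(E))\leq A_{\mu_{T_1}}(E)$; taking $E$ a near-minimizer of volume $\bar v$ in $M_{T_1}$ and $\beta=T_2/T_1$ yields $\tilde\I_{T_2}(\bar v)\leq \tilde\I_{T_1}(\bar v)$. When $\beta\leq 1$, $\sqrt{\beta^2-(\beta^2-1)n_1^2}\leq 1$, so $A_{\mu_{\beta T_1}}(\sigma_\beta(E))\leq \tfrac{1}{\beta}A_{\mu_{T_1}}(E)$; taking $E$ a near-minimizer of volume $\bar v$ in $M_{T_2}$ with $\beta=T_1/T_2$ yields $\tfrac{T_1}{T_2}\tilde\I_{T_1}(\bar v)\leq \tilde\I_{T_2}(\bar v)$. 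The same argument (just the $n=2$ case) handles $S_T(\bI)$. There is no real obstacle here; the only slightly technical point is to justify the $(n-1)$-Jacobian formula at reduced-boundary points using the Lipschitz area formula, rather than by parametrizing the possibly singular $\partial^* E$ directly.
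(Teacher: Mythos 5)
Your proof is correct and takes essentially the same route as the paper: horizontally rescale the slab by $T_1/T_2$, observe the map is measure-preserving, and quantify how perimeter changes under it. The paper simply quotes the Lipschitz transference principle for isoperimetric profiles (the scaling map is $1$-Lipschitz one way and $T_2/T_1$-Lipschitz the other), whereas you make that transference explicit via the tangential Jacobian computation — a more self-contained but equivalent version of the same argument.
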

\begin{proof}
Let $P$ be the map from $S_{T_2}(\bI)$ to $S_{T_1}(\bI)$ which pushes forward $\frac{1}{T_2} \m\mycorner_{[0,T_2]} \otimes \varphi(s) ds$ onto $\frac{1}{T_1} \m\mycorner_{[0,T_1]} \otimes \varphi(s) ds$ by simply linearly scaling in the horizontal axis. Clearly $P$ is $1$-Lipschitz and its inverse is $T_2/T_1$-Lipschitz. The assertion then follows by a standard isoperimetric transference principle for Lipschitz maps (see e.g.~\cite[Section 5.3]{EMilman-RoleOfConvexity}). 
The proof for $M_T$ is identical. 
\end{proof}

\section{Second order information - stability and ODI for isoperimetric profile} \label{sec:stability}

Let $(M^n,g,\mu)$ be a weighted Riemannian manifold, let $T_t : (M^n,g)  \rightarrow (M^n,g)$ denote a one-parameter smooth variation for $t \in (-\eps,\eps)$ with $T_0 = \Id$, and denote by $X := \frac{d T_t}{dt}|_{t=0}$ the associated vector-field at time $t=0$. Note that when $M$ has a boundary then $X$ must be tangential, i.e. $X(p) \in T_p \partial M$ for all $p \in \partial M$. For simplicity, we demand that $X$ be compactly supported in $M$. Let $E \subset (M^n,g)$ be a set whose boundary $\Sigma = \partial E$ is a $(n-1)$-dimensional submanifold (with possibly non-empty boundary $\Sigma \cap \partial M$). We initially assume that the density $\Psi_{\mu}$ and $\Sigma$ are of class $C^\infty_{loc}$, and then see how the well-known results below should be modified when $\Psi_{\mu}$ is only locally Lipschitz and $\Sigma$ is only $C^{1,1}_{loc}$, as in the context of our two-dimensional model slabs. Note that an isoperimetric minimizer $E$ may in general have singularities when $n \geq 8$, but when $n \leq 7$ as in our context this is not possible \cite{MorganRegularityOfMinimizers}, and so we do not treat geometric singularities in our discussion. 

Denote $V(t) = V_{\mu}(T_t(E))$ and $A(t) = A_{\mu}(T_t(E))$. If $A'(0) = 0$ for all variations for which $V'(0)=0$, $E$ is called \emph{stationary}. 
It is well-known (e.g. \cite[Lemma 2.4 and Proposition 2.7]{BarbosaDoCarmo-StabilityInRn}, \cite[Proposition 3.2]{RCBMIsopInqsForLogConvexDensities}, \cite[Appendix C]{EMilmanNeeman-GaussianMultiBubble}) that stationarity is equivalent to the existence of a Lagrange multiplier $\lambda \in \R$ so that $(V - \lambda A)'(0) = 0$ for all variations $T_t$. Geometrically, this means that $\Sigma$ must have constant weighted mean-curvature (CMC) $H_{\Sigma,\mu} \equiv \lambda$ (recall that $H_{\Sigma,\mu}$ was defined in (\ref{eq:Hmu})). Furthermore, since our variations are always tangential, stationarity also implies that wherever $\Sigma$ meets $\partial M$, it must do so perpendicularly \cite[p. 266]{Gruter}. 
Conversely, if $\Sigma$ is CMC and meets $\partial M$ perpendicularly then it is necessarily stationary (see e.g. \cite[Lemma 4.3 and Appendix C]{EMilmanNeeman-GaussianMultiBubble}).

The set $E$ is called \emph{stable} (under volume-preserving variations) if in addition to being stationary, it satisfies $(V - \lambda A)''(0) \geq 0$ for all variations so that $V'(0) = 0$. It is well-known \cite{BarbosaDoCarmo-StabilityInRn,RCBMIsopInqsForLogConvexDensities} that if $E$ is an isoperimetric minimizer then it must be stationary and stable.

Denote $u = \scalar{X,\n_{\Sigma}}$ the normal component of $X$ on $\Sigma$, where recall $\n_{\Sigma}$ denotes the outer unit-normal to $\Sigma$. Clearly $V'(0) = \int_{\Sigma} u \, d\mu^{n-1}$, where we set $\mu^{k} = \Psi_{\mu} \H^{k}$ if $\mu = \Psi_{\mu} \vol_g$. 
It turns out  that $(V - \lambda A)''(0)$ is a quadratic form which depends on $X$ only via its normal component $u$ as follows
(see \cite{BarbosaDoCarmo-StabilityInRn,RCBMIsopInqsForLogConvexDensities,Ritore-IsoperimetricBook} for the case that $\Sigma \cap \partial M = \emptyset$, \cite[Theorem 2.5]{SternbergZumbrun} or \cite[Formula (3.6)]{BayleRosales} for the general one, and also \cite{EMilmanNeeman-TripleAndQuadruple} for an extension to the multi-bubble setting):
\[
(V - \lambda A)''(0) = Q(u) := - \int_{\Sigma} (L_{Jac} u) u \, d\mu^{n-1} - \int_{\Sigma \cap \partial M} \II_{\partial M}(\n_{\Sigma},\n_{\Sigma}) u^2 d\mu^{n-2} ,
\]
where $\II_{\partial M}$ is the second fundamental form of $\partial M$ with respect to the inward pointing normal (recall that $\n_{\Sigma} \in T \partial M$ on $\partial M$), and $L_{Jac}$ is the associated Jacobi operator:
\[
L_{Jac} u := \Delta_{\Sigma,\mu} u + (\Ric_{g} + \nabla^2_g W)(\n_{\Sigma},\n_{\Sigma}) u + \norm{\II_{\Sigma}}^2 u .
\]
Here $\Psi_{\mu} = \exp(-W)$, $\nabla_g$ is the Levi-Civita on $(M^n,g)$, $\Ric_g$ is the Ricci curvature of $\nabla_g$, $\norm{\II_{\Sigma}}$ is the Hilbert-Schmidt norm of the second-fundamental form $\II_{\Sigma}$ of $\Sigma$, $\Delta_{\Sigma}$ is the Laplace-Beltrami operator on $\Sigma$ equipped with its induced metric and connection $\nabla_{\Sigma}$, and $\Delta_{\Sigma,\mu}$ is the corresponding weighted surface Laplacian: 
\[
 \Delta_{\Sigma,\mu} u := \Delta_{\Sigma} u - \scalar{\nabla_{\Sigma} W, \nabla_{\Sigma} u} .
\]
For slabs we have $\II_{\partial M} = 0$, and so stability boils down to the requirement that 
\begin{equation} \label{eq:stability}
\int_{\Sigma} u \, d\mu^{n-1} = 0 \;\; \Rightarrow \;\; Q(u) = -  \int_{\Sigma} (L_{Jac} u) u \, d\mu^{n-1} \geq 0 ,
\end{equation}
for all $u = \scalar{X,\n_{\Sigma}}$ where $X$ is a smooth compactly-supported tangential vector-field on $(M^n,g)$; we denote the family of such functions $u$ by $\scalar{C^\infty_c , \n_{\Sigma}}$. The quadratic form $Q = Q_{\Sigma}$ is called the index-form corresponding to $\Sigma$. 

\medskip
One useful interpretation of the Jacobi operator on a CMC hypersurface $\Sigma$ is that it is captures the first variation of the (constant) weighted mean-curvature in the normal direction (e.g. \cite[Remark 5.8]{EMilmanNeeman-TripleAndQuadruple}):
\[
L_{Jac} u = - \left . \frac{d}{dt}\right |_{t=0} H_{T_t(\Sigma),\mu} .
\]
In particular, at a point $p \in \Sigma$ where $(M^n,g)$ is locally isometric to flat space $(\R^n,\abs{\cdot}^2)$, by testing the locally constant vector-field $X = \theta$ which generates a translation $T_t(\Sigma) = \Sigma + t \theta$ in a neighborhood of $p$, since the unweighted mean-curvature $H_{\Sigma + t \theta} = H_{\Sigma}$ remains fixed, we only obtain a contribution from the variation of $-\scalar{\nabla W(p + t \theta),\n_{\Sigma + t \theta}(p + t \theta)} = -\scalar{\nabla W(p + t \theta),\n_{\Sigma}(p)}$ along the translation, yielding the useful:
\begin{equation} \label{eq:LJac-locally-flat}
L_{Jac} \scalar{\theta,\n_{\Sigma}} =  \nabla^2 W(\theta,\n_{\Sigma}) \;\;\; \forall \theta \in T_p M^n . 
\end{equation}
This continues to hold when $p \in \partial M$, $\theta \in T_p \partial M$, and $(M^n,g)$ is locally isometric to a half-space $(\R_+ \times \R^{n-1}, \abs{\cdot}^2)$.
\medskip

To apply these classical facts to our two-dimensional slab $S_T(\bI)$, when $W(t,s) = - \log \varphi(s)$ is only assumed locally Lipschitz and a minimizer's boundary $\Sigma$ is only of class $C^{1,1}_{loc}$, recall our assumption that $\SSS_{\bI}$ consists of isolated points and Lemma \ref{lem:transverse}, stating that $\Sigma$ cannot be a zonal horizontal line and that it intersects each zonal line in at most a single point. Consequently, $\nabla^2 W$ is well-defined except on a set of isolated points $S \subset \Sigma$, $\n_{\Sigma}$ and therefore $u = \scalar{X,\n_{\Sigma}}$ are smooth except at $S$ where they are only locally Lipschitz, and we interpret $L_{Jac} u$ in the distributional sense; in particular, we interpret $Q(u)$ via integration-by-parts. 

\subsection{Instability of generalized unduloids} \label{subsec:unduloids-unstable}

Determining the stability of a given $\Sigma$, even in the two-dimensional slab $S_T(\bI)$, is in general not very tractable. A precise criterion for stability was given by Koiso \cite{Koiso} (in fact, for more general two-dimensional surfaces $\Sigma$ in a three-dimensional manifold; see \cite{KoisoMiyamoto} for a specialization to the case of a generalized unduloid on a two-dimensional slab). 

Let $\Sigma$ be a generalized unduloid (i.e.~$1 > v_1 > v_0 > 0$) in $S_T(\bI)$. In that case, it is easy to check that $L_{Jac}$ is an essentially self-adjoint operator acting on $\scalar{C^\infty_c , \n_{\Sigma}}$ with vanishing Neumann boundary conditions on $\Sigma \cap \partial S_T(\bI)$, and its spectrum is discrete, consisting of a sequence of distinct eigenvalues (of multiplicity $1$) $\{\lambda^N_i\}_{i=1,2,\ldots}$ increasing to infinity. 
Recalling (\ref{eq:stability}) and considering a linear combination of the first two eigenfunctions, it is immediate to see that a sufficient condition for instability is the negativity of the second eigenvalue. An explicit criterion for this was found by Pedrosa--Ritor\'e \cite{PedrosaRitore-Products}; for completeness, we mention it here: Given $v_0,v_1$, recall Definition \ref{def:ell} of $\ell_{v_0,v_1}$, and consider a variation $\eps \mapsto v_i(\eps)$ so that the chord $\ell_{v_0(\eps),v_1(\eps)} = \ell_{v_0,v_1} + \eps$ moves up vertically while preserving its slope $\lambda$. Then $\frac{d}{d\eps} T(v_0(\eps),v_1(\eps)) > 0$ (where the unduloid's half-period $T(v_0,v_1)$ was defined in (\ref{eq:T-formula})) implies that $\lambda_2^N < 0$ and thus that $\Sigma$ is unstable. 

When $\bI(\bar v) = c \bar v^{\frac{n-2}{n-1}}$ for $\bar v \in [0,v_e]$, even when $n=3$, an explicit computation of $\frac{d}{d\eps} T(v_0(\eps),v_1(\eps))$ involves several elliptic integrals, and so is highly intractable. However, Pedrosa and Ritor\'e were able to express this as a contour integral over a Riemann surface, and after some clever manipulations, control its sign when $3 \leq n \leq 8$, thereby showing that all generalized unduloids with $v_e > v_1 > v_0 > 0$ are always unstable in that case (yielding the instability of unduloids on $\R^{n-1} \times [0,T]$ for those dimensions). Further numerical evidence for instability in a certain range of $v_i$'s and also stability in the complementary range when $n=9$ was obtained in \cite{KoisoMiyamoto}. When $n\geq 10$, it was shown by Pedrosa--Ritor\'e that there are isoperimetric minimizers which are (stable) generalized unduloids.

When $\bI = \I_\gamma$, we are not aware of any stability or instability results for generalized Gaussian unduloids.

\subsection{Instability of horizontal lines}

\begin{lemma} \label{lem:horizontal-stable}
Let $\Sigma = [0,T] \times \{ \bar s\}$ be a non-zonal horizontal line in $S_T(\bI)$. Then $\Sigma$ is stable if and only if:
\[
\sqrt{-(\bI \bI'')(\bar v)} \leq \frac{\pi}{T} ,
\]
where $\bar v = \Phi_{\bI}(\bar s)$ is the weighted volume of the set delineated by $\Sigma$. \\
In particular, if $\bI''(\bar v) < 0$, then $\Sigma$ will be unstable for $T$ large enough. 
\end{lemma}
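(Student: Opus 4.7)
The plan is to reduce the stability question to a one-dimensional eigenvalue problem by computing the Jacobi operator on the horizontal line $\Sigma$ and identifying the natural boundary conditions at the two endpoints where $\Sigma$ meets $\partial S_T(\bI)$.

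First I would set up the local geometry. On $\Sigma = [0,T] \times \{\bar s\}$ with $\bar s \notin \SSS_{\bI}$, the outer unit-normal is $\n_{\Sigma} = e_2$, the second fundamental form $\II_{\Sigma}$ vanishes, and the ambient metric is flat, so $\Ric_g = 0$. With density $\Psi_\mu(t,s) = \frac{1}{T}\varphi_{\bI}(s)$, i.e.\ $W(t,s) = \log T - \log \varphi_{\bI}(s)$, one has $\nabla^2_g W(\n_{\Sigma},\n_{\Sigma}) = -(\log \varphi_{\bI})''(\bar s)$. The induced weight on $\Sigma$ is the constant $\varphi_{\bI}(\bar s)/T$, and the gradient of $W$ along $\Sigma$ vanishes, so $\Delta_{\Sigma,\mu} u = u''$ in the arc-length parametrization. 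Using the identity $\I'' = \frac{(\log \varphi)''}{\varphi} \circ \Phi^{-1}$ from (\ref{eq:I''}), I rewrite $(\log \varphi_{\bI})''(\bar s) = \bI(\bar v)\,\bI''(\bar v)$, which yields
\[
L_{Jac} u = u'' - (\bI \bI'')(\bar v)\, u .
\]

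Next I would handle the boundary behavior. Since $\Sigma$ is horizontal and $\partial S_T(\bI) = \{0,T\}\times(-R_{\bI},R_{\bI})$ is vertical, the perpendicular-meeting condition of Proposition \ref{prop:regularity}(\ref{it:regularity2}) is satisfied, and $\II_{\partial M}\equiv 0$ on a slab. Consequently, integrating by parts in the index form produces
\[
Q(u) = -\int_{\Sigma} (L_{Jac}u)\, u\, d\mu^1 = \frac{\varphi_{\bI}(\bar s)}{T}\Bigl(\int_0^T (u')^2\, dt + (\bI\bI'')(\bar v)\int_0^T u^2\, dt\Bigr),
\]
with the boundary terms vanishing because admissible $u = \scalar{X,\n_{\Sigma}}$, with $X$ smooth, compactly supported, and tangential to $\partial M$, must satisfy the Neumann condition $u'(0)=u'(T)=0$. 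The stability criterion (\ref{eq:stability}) thus becomes the Poincar\'e-type inequality
\[
\int_0^T (u')^2\, dt \;\geq\; -(\bI\bI'')(\bar v)\int_0^T u^2\, dt
\]
for every mean-zero $u \in H^1([0,T])$ with $u'(0)=u'(T)=0$.

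The last step is classical spectral theory: the optimal constant in this inequality is the first nonzero Neumann eigenvalue of $-\partial_t^2$ on $[0,T]$, which equals $(\pi/T)^2$, attained by $u(t) = \cos(\pi t/T)$ (which automatically has zero mean). Since $\bI$ is concave, $-(\bI\bI'')(\bar v) \geq 0$, and the inequality holds for all admissible $u$ if and only if $-(\bI\bI'')(\bar v) \leq (\pi/T)^2$, i.e.\ $\sqrt{-(\bI\bI'')(\bar v)} \leq \pi/T$. When $\bI''(\bar v) < 0$ strictly, this fails as soon as $T > \pi/\sqrt{-(\bI\bI'')(\bar v)}$, proving the final assertion. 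I do not foresee a serious obstacle; the only subtlety is to verify that the tangentiality of admissible vector-fields combined with the perpendicular meeting of $\Sigma$ and $\partial M$ produces pure Neumann data rather than a Robin-type condition, but this is automatic from $\II_{\partial M} \equiv 0$.
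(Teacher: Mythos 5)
Your proof is correct and follows the same route as the paper: compute the Jacobi operator on $\Sigma$, which reduces to $L_{Jac}u = u'' - (\bI\bI'')(\bar v)\,u$, and convert the stability condition into the Poincar\'e--Wirtinger inequality with the first nonzero Neumann eigenvalue $(\pi/T)^2$ of $-\partial_t^2$ on $[0,T]$ for mean-zero test functions. One small imprecision worth flagging: tangentiality of $X$ to $\partial M$ constrains $\scalar{X,e_1}=0$ on $\{0,T\}\times(-R_{\bI},R_{\bI})$, but it places no constraint on $u=\scalar{X,e_2}$ or on $u'$ at the endpoints $t\in\{0,T\}$, so the admissible test functions do \emph{not} automatically satisfy Neumann data; rather, the Sternberg--Zumbrun/Bayle--Rosales second-variation formula gives $Q$ directly as a gradient (Dirichlet) form with no endpoint term once $\II_{\partial M}\equiv 0$, and the Neumann condition emerges only as the natural boundary condition of the associated eigenvalue problem. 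Your conclusion is unaffected, since the sharp constant in $\int_0^T (u')^2\,dt \geq C\int_0^T u^2\,dt$ over mean-zero $H^1([0,T])$ is $(\pi/T)^2$ whether or not Neumann data are imposed, realized by $u(t)=\cos(\pi t/T)$.
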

\begin{proof}
Denote $P =  -(\log \varphi_{\bI})''(\bar s)$, and note that $P = -(\bI \bI'')(\bar v)$ in view of (\ref{eq:I''}).  
The Jacobi operator on $\Sigma$ is particularly simple:
\[
L_{Jac} u = u'' + P  u ,
\]
where we naturally parametrize $u$ on $[0,T]$. Applying vanishing Neumann boundary conditions on $[0,T]$, the
eigenvalues $\{ \lambda^N_k \}_{k=1,2,\ldots}$ of $-L_{Jac}$ are precisely given by $\lambda^N_k := (k-1)^2 \brac{\frac{\pi}{T}}^2 - P$, corresponding to the eigenfunctions $\xi_k(t) = \cos( (k-1) \frac{\pi}{T} t)$, which constitute an orthogonal basis for $L^2([0,T])$. Since $\xi_1$ is the constant function, it follows that (\ref{eq:stability}) holds iff $\lambda^N_2 \geq 0$, establishing the claim. 
\end{proof}

\subsection{Stability of vertical lines}

\begin{lemma}
Any vertical line $\Sigma$ is stable in $S_T(\bI)$.
\end{lemma}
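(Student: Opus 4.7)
The plan is to show the stronger statement that $Q(u) \geq 0$ for \emph{every} admissible test function $u$, not merely those preserving the enclosed volume, by observing that all three zeroth-order ingredients in the Jacobi operator vanish on a vertical line so that only a manifestly non-negative Dirichlet-type term survives.

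First I would fix $\Sigma = \{t_0\} \times (-R_{\bI}, R_{\bI})$ with $t_0 \in (0, T)$ (the cases $t_0 \in \{0, T\}$ being degenerate), and verify stationarity: the outward unit normal is $\n_{\Sigma} = e_1$, the unweighted curvature vanishes since $\Sigma$ is a straight line, and $\langle \nabla \log \Psi_{\mu_T}, e_1 \rangle = 0$ because the density depends only on $s$, so $H_{\Sigma,\mu} \equiv 0$. Since $\Sigma \subset \interior S_T(\bI)$, the perpendicularity condition on $\partial S_T(\bI)$ is vacuous.

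Next I would simplify $L_{Jac}$. Writing $W(t,s) = \log T - \log \varphi_{\bI}(s)$ gives $\nabla^2 W(e_1, e_1) = 0$; combined with $\Ric_g = 0$ (flatness of the slab) and $\|\II_{\Sigma}\|^2 = 0$ (straight line), the operator reduces to the weighted surface Laplacian on $\Sigma$, parametrized by $s$ with induced measure $\varphi_{\bI}(s) ds / T$:
\[
L_{Jac} u \;=\; \Delta_{\Sigma, \mu} u \;=\; \frac{1}{\varphi_{\bI}(s)} \bigl(\varphi_{\bI}(s)\, u'(s)\bigr)'.
\]

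Finally, for $u = \langle X, \n_{\Sigma} \rangle$ with $X$ smooth and compactly supported in $S_T(\bI)$, the function $u$ is smooth and compactly supported on $(-R_{\bI}, R_{\bI})$, so integration by parts yields no boundary term:
\[
Q(u) \;=\; -\int_{\Sigma} (L_{Jac} u)\, u \, d\mu^{n-1} \;=\; \frac{1}{T} \int_{-R_{\bI}}^{R_{\bI}} \varphi_{\bI}(s) \bigl(u'(s)\bigr)^{2} ds \;\geq\; 0.
\]
This holds unconditionally, hence \emph{a fortiori} under the constraint $\int_{\Sigma} u\, d\mu^{n-1} = 0$, yielding stability. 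There is no genuine obstacle here; the lemma is a clean structural consequence of the product geometry and of the fact that horizontal translation preserves the density profile.
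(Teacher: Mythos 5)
Your proof is correct and follows essentially the same route as the paper: reduce the Jacobi operator on a vertical line to the one-dimensional weighted surface Laplacian $L_{Jac}u = u'' + (\log\varphi_{\bI})'u'$, then integrate by parts to see $Q(u) = \frac{1}{T}\int \varphi_{\bI}(u')^2\,ds \ge 0$ for all compactly supported test functions, with no need to impose the volume constraint. The extra verification of stationarity is harmless but not required by the statement.
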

\begin{proof}
The Jacobi operator on a vertical line $\Sigma$ is simply given by $L_{Jac} u = u'' + u' (\log \varphi_{\bI})'$, using the natural parametrization of $u$ on $M_{\bI} = (-R_{\bI},R_{\bI})$. 
Since
\[
- \int_{M_{\bI}} u L_{Jac} u \, \varphi_{\bI}(s) ds = \int_{M_{\bI}} (u')^2 \varphi_{\bI}(s) ds \geq 0 \;\;\; \forall u \in C_c^\infty(M_{\bI}),
\]
 (\ref{eq:stability}) is established (even without assuming $\int_{M_{\bI}} u \, \varphi_{\bI}(s) ds = 0$). 
\end{proof}

\subsection{ODI for isoperimetric profile}

The index-form $Q$ associated to an isoperimetric minimizer is useful not only for testing stability, but also to establish a second order ordinary differential inequality (ODI) for the isoperimetric profile $\I = \I(S_T(\bI))$. This observation has its origins in the work of Bavard--Pansu \cite{BavardPansu}, and has been further developed by Sternberg--Zumbrun \cite{SternbergZumbrun}, Kuwert \cite{Kuwert}, Bayle \cite{BayleThesis} and Bayle--Rosales \cite{BayleRosales} for normal variations; the usefulness of non-normal variations was exploited in \cite{EMilmanNeeman-GaussianMultiBubble,EMilmanNeeman-QuintupleBubble}. 
Recall that $d\sigma_T = \frac{1}{T} \varphi_{\bI}(s) \H^1(dt,ds)$ denotes the perimeter measure on $S_T(\bI)$. 
\begin{lemma} \label{lem:ODI}
Let $E_{\bar v}$ be an isoperimetric minimizer of volume $\bar v \in (0,1)$ on $S_T(\bI)$. Set $\Sigma_{\bar v} = \overline{\partial^* E_{\bar v}}$ and let $Q_{\Sigma_{\bar v}}(u)$ be the corresponding index-form. Then for any $u \in \scalar{C^\infty_c,\n_{\Sigma_{\bar v}}}$, we have:  
\begin{equation} \label{eq:ODI-gen}
\I''(\bar v) (\int_{\Sigma_{\bar v}} u \, d\sigma_T)^2 \leq Q_{\Sigma_{\bar v}}(u) = - \int_{\Sigma_{\bar v}} (L_{Jac} u) u \, d\sigma_T 
\end{equation}
in the viscosity sense (see Definition \ref{def:viscosity} below). In particular, if for all $\bar v \in (\bar v_0, \bar v_1) \subset (0,1)$, there exists $u = u_{\bar v} \in \scalar{C^\infty_c,\n_{\Sigma_{\bar v}}}$ so that $\int_{\Sigma_{\bar v}} u \, d\sigma_T \neq 0$,
\begin{equation} \label{eq:ODI-assumption}
\frac{- \int_{\Sigma_{\bar v}} (L_{Jac} u) u \, d\sigma_T \;  \int_{\Sigma_{\bar v}} d\sigma_T}{ \brac{\int_{\Sigma_{\bar v}} u \, d\sigma_T}^2 } \leq \I_m(\bar v) \I''_m(\bar v) ,
\end{equation}
and $\I(\bar v_i) = \I_m(\bar v_i)$, $i=0,1$, for some smooth function $\I_m : [\bar v_0, \bar v_1] \rightarrow \R_+$ with $\I_m'' < 0$ on $(\bar v_0, \bar v_1)$, then $\I \geq \I_m$ pointwise on $[\bar v_0,\bar v_1]$. 
\end{lemma}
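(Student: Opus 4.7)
The plan is to split the argument into two parts: first, to derive the pointwise ODI (\ref{eq:ODI-gen}) at each $\bar v$ by constructing a smooth ``area-versus-volume'' curve that touches $\I$ from above; second, to convert the hypothesis (\ref{eq:ODI-assumption}) into the comparison $\I \geq \I_m$ by a classical maximum-principle argument applied to $h := \I - \I_m$.

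\textbf{Part 1 (the ODI).} Fix $\bar v \in (0,1)$ and $u = \scalar{X,\n_{\Sigma_{\bar v}}} \in \scalar{C^\infty_c,\n_{\Sigma_{\bar v}}}$; the case $\int u\,d\sigma_T = 0$ being trivial, assume $\int u\,d\sigma_T \neq 0$. Let $T_t$ be the flow of $X$ and set $V(t) := \mu_T(T_t E_{\bar v})$, $A(t) := A_{\mu_T}(T_t E_{\bar v})$. The standard first/second variation formulas, combined with the stationarity of $\Sigma_{\bar v}$ (CMC on each zone with Lagrange multiplier $\lambda$ and perpendicular incidence to $\partial S_T(\bI)$), yield $V'(0) = \int u\,d\sigma_T$, $A'(0) = \lambda V'(0)$ and $(A-\lambda V)''(0) = Q_{\Sigma_{\bar v}}(u)$. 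Inverting $V$ locally via the implicit function theorem and setting $\tilde A(s) := A(V^{-1}(\bar v + s))$ produces a smooth function near $s=0$ with $\tilde A(0) = \I(\bar v)$, $\tilde A(s) \geq \I(\bar v + s)$ (since $T_t E_{\bar v}$ is an admissible competitor of volume $\bar v + s$), $\tilde A'(0) = \lambda$, and $\tilde A''(0) = Q_{\Sigma_{\bar v}}(u)/(\int u\,d\sigma_T)^2$. The existence of such a smooth upper-touching parabola at every $\bar v$ is precisely the viscosity content of (\ref{eq:ODI-gen}).

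\textbf{Part 2 (comparison).} Assume toward contradiction that $\I < \I_m$ somewhere in $(\bar v_0, \bar v_1)$. By continuity and $\I(\bar v_i) = \I_m(\bar v_i)$, the function $h := \I - \I_m$ attains a negative minimum $h_* < 0$ at some interior point $\bar v^* \in (\bar v_0, \bar v_1)$. Apply Part 1 at $\bar v^*$ with the prescribed test function $u = u_{\bar v^*}$, producing a smooth $\tilde A$ with $\tilde A(0) = \I(\bar v^*)$, $\tilde A(s) \geq \I(\bar v^*+s)$ near $s=0$, and $\tilde A''(0) = Q_{\Sigma_{\bar v^*}}(u)/(\int u\,d\sigma_T)^2$. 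Simultaneously, $\phi(v) := \I_m(v) + h_*$ is smooth and touches $\I$ from below at $\bar v^*$, so chaining yields $\phi(\bar v^* + s) \leq \I(\bar v^* + s) \leq \tilde A(s)$ near $s=0$ with equality at $s=0$. Both bracketing functions being smooth, this forces $\tilde A''(0) \geq \phi''(\bar v^*) = \I_m''(\bar v^*)$. Combining with the hypothesis (\ref{eq:ODI-assumption}) and $\int_{\Sigma_{\bar v^*}} d\sigma_T = \I(\bar v^*)$,
\[
\I_m''(\bar v^*) \;\leq\; \tilde A''(0) \;=\; \frac{Q_{\Sigma_{\bar v^*}}(u)}{(\int u\,d\sigma_T)^2} \;\leq\; \frac{\I_m(\bar v^*)\,\I_m''(\bar v^*)}{\I(\bar v^*)} .
\]
Since $\I_m''(\bar v^*) < 0$ and $\I(\bar v^*) < \I_m(\bar v^*)$, the right-hand side is strictly less than $\I_m''(\bar v^*)$, a contradiction.

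\textbf{Main obstacle.} The delicate technical point is justifying the second variation identity and the CMC/stationarity statements in the low-regularity setting of $S_T(\bI)$, where $W = -\log\varphi_{\bI}$ is only locally Lipschitz and $\Sigma_{\bar v}$ is only $C^{1,1}_{loc}$ across zonal lines. Since the zonal crossings of $\Sigma_{\bar v}$ form a finite set by Lemma \ref{lem:transverse}, we plan to handle this either by approximating $W$ by smooth convex functions and passing to the limit in the variation formulas, or by restricting to test functions $u$ vanishing in a neighborhood of the zonal-crossing points, which yields a dense subspace sufficient for the viscosity statement. Notably, the comparison step itself (Part 2) is clean: by chaining the smooth upper-touch $\tilde A$ against the smooth lower-touch $\phi = \I_m + h_*$ at the minimum of $h$, we bypass any intricate definition of ``viscosity sense'' and reduce everything to elementary second-derivative comparisons between smooth functions.
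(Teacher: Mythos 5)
Your proof is correct and follows the same route as the paper: the paper cites \cite{SternbergZumbrun,BayleRosales,EMilmanNeeman-GaussianMultiBubble} for the viscosity inequality (\ref{eq:ODI-gen}) and \cite{EMilmanNeeman-GaussianMultiBubble,HPRR-PeriodicIsoperimetricProblem} for the maximum-principle comparison step, and you have simply reproduced the content of those references in full -- Part 1 is the standard derivation that $\tilde{A}(s) := A(V^{-1}(\bar v + s))$ is an upper barrier for $\I$ with $\tilde A''(0) = Q(u)/(\int u\,d\sigma_T)^2$, and Part 2 is the usual two-sided-touching argument at an interior minimum of $\I-\I_m$. One small remark: your ``Main obstacle'' caveat is handled in the paper not by mollification or by thinning the test-function space, but by the distributional interpretation of $L_{Jac}u$ and integration-by-parts spelled out just before Subsection 3.1; in the actual application (Proposition~\ref{prop:GT-ODI}, Gaussian base) the density is smooth, so the concern is moot there in any case.
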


\begin{definition}[Viscosity sense] \label{def:viscosity}
A continuous function $\I : [0,1] \rightarrow \R_+$ is said to satisfy the second order differential inequality $F(\I'',\I',\I) \leq 0$ (with $\frac{\partial F}{\partial \I''} > 0$) at $\bar v$ in the viscosity sense, if there exists a $C^2_{loc}([0,1])$ function $\iota$ so that $\iota(v) \geq \I(v)$ in a neighborhood of $\bar v$, $\iota(\bar v) = \I(\bar v)$, and $F(\iota'',\iota',\iota) \leq 0$ at $\bar v$ (see \cite{HPRR-PeriodicIsoperimetricProblem,EMilmanNeeman-QuintupleBubble} for further details).
\end{definition}

\begin{proof}[Proof of Lemma \ref{lem:ODI}]
For a proof of (\ref{eq:ODI-gen}), see e.g.~\cite[Theorem 2.5]{SternbergZumbrun}, 
\cite[Proof of Theorem 3.2]{BayleRosales} or \cite[Proof of Theorem 1.1 in Section 9]{EMilmanNeeman-GaussianMultiBubble}. Multiplying (\ref{eq:ODI-gen}) by $\I(\bar v) = \int_{\Sigma_{\bar v}} d\sigma_T$ and dividing by $(\int_{\Sigma_{\bar v}} u \, d\sigma_T)^2  > 0$, if (\ref{eq:ODI-assumption}) holds then we have $\I \I'' \leq \I_m \I''_m$ on $(\bar v_0,\bar v_1)$ in the viscosity sense, with $\I(\bar v_i) = \I_m(\bar v_i)$, $i=0,1$. Since $\I_m'' < 0$, an application of the maximum principle as in \cite[Proof of Theorem 1.1 in Section 9]{EMilmanNeeman-GaussianMultiBubble} or \cite[Lemma 8]{HPRR-PeriodicIsoperimetricProblem} yields the asserted $\I \geq \I_m$ on $[\bar v_0,\bar v_1]$. 
\end{proof}

\section{Three dimensional cube} \label{sec:Q3}

Given $\beta \in (0,1]$, recall our notation $\Q^3(\beta) := ([0,\beta] \times [0,1]^2, \abs{\cdot}^2, \frac{1}{\beta} \m\mycorner_{ [0,\beta] \times [0,1]^2})$ for the $3$-dimensional cube with side lengths $(\beta,1,1)$, endowed with its uniform measure. Note that the shortest edge is of length $\beta$. 

\subsection{Conjectured isoperimetric profile}

Recall that a minimizer in $\Q^3(\beta)$ is expected to be enclosed by an eighth sphere about a corner, a quarter cylinder about the short edge $[0,\beta]$, or a flat plane $[0,\beta] \times [0,1]$.

Also note that we have normalized our measure to have total mass $1$, affecting both the weighted volume and surface-area. 
Denoting by $P_k(r)$ the weighted volume ($P=V$) and surface-area ($P=A$) enclosed by a sphere ($k=3$) and cylinder ($k=2$) of radius $r$ ($r \leq \beta$ and $r \leq 1$, respectively), and expressing the corresponding surface-area as a function of volume (in the appropriate range) by $\I_m^{(k)}(\bar v)$, we compute:
\begin{center}
\begin{tabular}{cccc}
$V_3(r) = \frac{\frac{4}{3} \pi r^3}{8 \beta}$ & $A_3(r) = \frac{4 \pi r^2}{8 \beta}$ & $\I_m^{(3)}(\bar v) = \brac{\frac{9 \pi}{2 \beta}}^{\frac{1}{3}} {\bar v}^{\frac{2}{3}}$ & $\bar v \leq \frac{\pi \beta^2}{6}$ \\
$V_2(r) = \frac{\pi r^2 \beta}{4 \beta}$ & $A_2(r) = \frac{2 \pi r \beta}{4 \beta}$ & $\I_m^{(2)}(\bar v) = \sqrt{\pi \bar v}$ & $\bar v \leq \frac{\pi}{4}$ .
\end{tabular}
\end{center}
Of course the (weighted) surface-area of a flat plane $[0,\beta] \times [0,1]$ is $\I_m^{(1)}(\bar v) = 1$ for all $\bar v \in (0,1)$. Setting $\I_m := \min(\I_m^{(3)},\I_m^{(2)},\I_m^{(1)})$ on $[0,1/2]$ and extending by symmetry to the entire $[0,1]$, we obtain:
\[
\I_m(\bar v) = \begin{cases}  \brac{\frac{9 \pi}{2 \beta}}^{\frac{1}{3}} {\bar v}^{\frac{2}{3}} & \bar v \in [0,\frac{4 \pi}{81} \beta^2] \\
\sqrt{ \pi \bar v} & \bar v \in [\frac{4 \pi}{81} \beta^2, \frac{1}{\pi}] \\
1 & \bar v \in [\frac{1}{\pi},\frac{1}{2}] \\
 \end{cases} ~,~ \I_m(1-\bar v) = \I_m(\bar v).
\]
We see that a minimizer on $\Q^3(\beta)$ is expected to be enclosed by a sphere for $\bar v \in (0, \frac{4 \pi}{81} \beta^2]$, a cylinder for $\bar v \in [\frac{4 \pi}{81} \beta^2, \frac{1}{\pi}]$, and a flat plane for $\bar v \in [\frac{1}{\pi},\frac{1}{2}]$; see Figure \ref{fig:I-cube}.
Note that when $\bar v \leq 1/2$, there is no need to test the complements of the above tubular neighborhoods, since a minimizer of volume $\bar v \leq  1/2$ will necessarily have non-negative mean-curvature by the concavity and symmetry of the isoperimetric profile in conjunction with Proposition \ref{prop:lambda-I}.

\begin{figure}
    \begin{center}
        \includegraphics[scale=0.4]{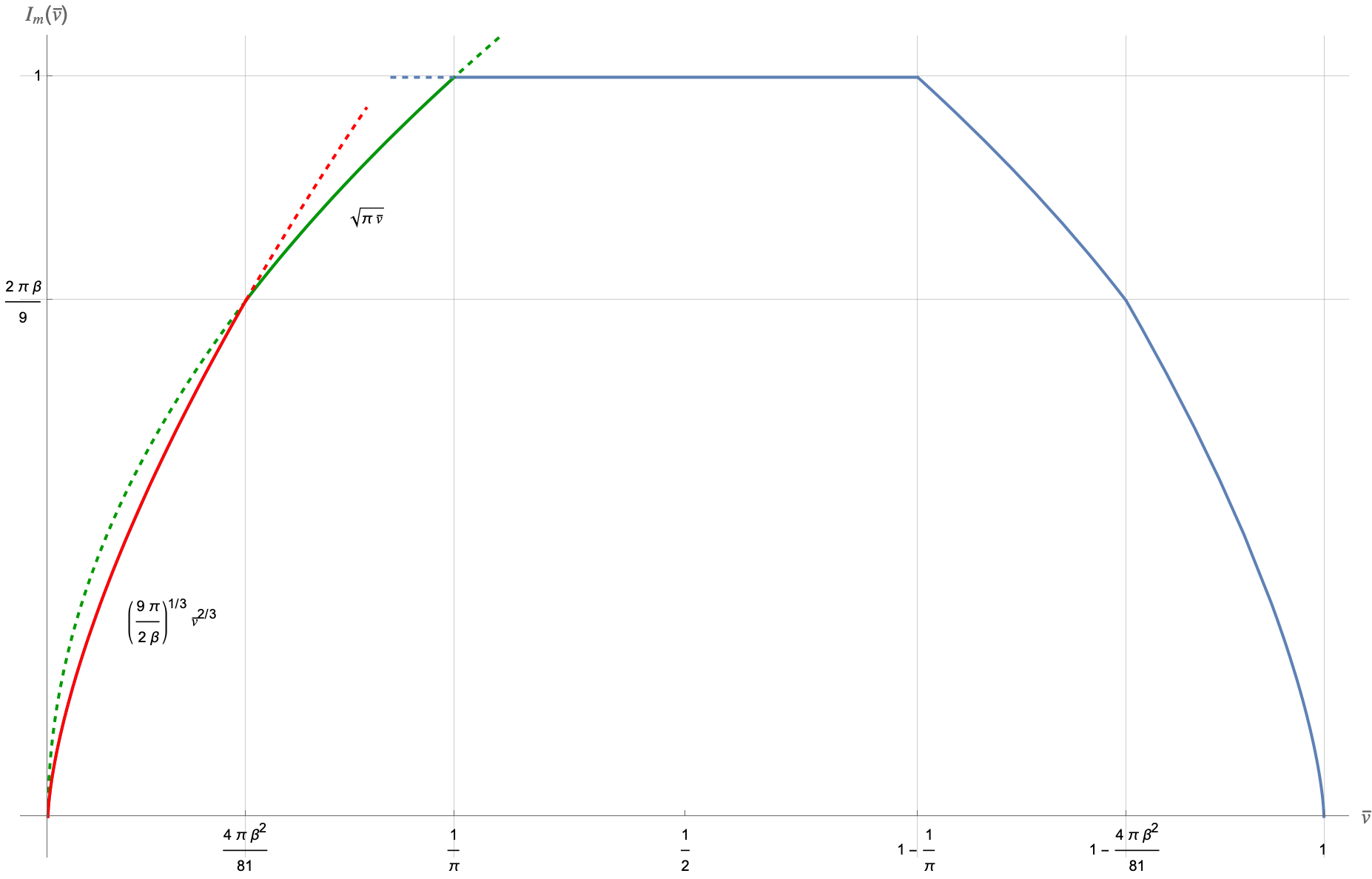}
     \end{center}
     \caption{
         \label{fig:I-cube}
         Plot of conjectured isoperimetric profile $\I_m$ of the three-dimensional cube $\Q^3(\beta)$ with side lengths $(\beta,1,1)$. The regimes when the interior of an eighth sphere about a corner and a quarter cylinder about the short edge are minimizing are highlighted in red and green, respectively. The true isoperimetric profile of the base $\Q^2$ (on $[0,1/2]$) is obtained by replacing the red curve by the dashed-green one.
     }
\end{figure}

The isoperimetric profile $\I(\Q^3(\beta))$ is thus conjectured to coincide with $\I_m$ above. 
Clearly $\I(\Q^3(\beta)) \leq \I_m$, so our goal is to show the converse inequality. 

\subsection{Unduloid Analysis} \label{subsec:Q3.2}

We would like to consider $\Q^3(\beta)$ as a slab of width $\beta$ over the base $\Q^2$. Since we assumed in Section \ref{sec:CMC} that the base is a manifold without boundary, let us pass to the equivalent flat torus $\T^3(\beta) = \R^3 / (2 \beta \Z \times 2 \Z^2)$, and consider it as a double-cover of the slab $\T^3_\beta$ of width $\beta$ over the base $\T^2 = \R^2 / (2 \Z^2)$ (all manifolds are equipped with their standard metrics and uniform probability measures). In view of Remark \ref{rem:reflect-slab}, we know that $\I(\Q^3(\beta)) = \I(\T^3(\beta)) = \I(\T^3_\beta)$, and so we will study the isoperimetric profile of $\T^3_\beta$. The isoperimetric minimizers of the two-dimensional base $\T^2$ are known to be enclosed by either a circle or two parallel horizontal or vertical lines \cite[Theorem 3.1]{Howards-BScThesis}, \cite[Section 7]{HHM-Surfaces} (or recall Remark \ref{rem:reflect-slab} and \cite{BrezisBruckstein}), and a calculation yields:
\[
\I_{\T^2}(\bar v) := \I(\T^2)(\bar v) = \min(\sqrt{\pi \bar v} , 1 , \sqrt{\pi (1-\bar v)}) .
\]
Our goal will be to calculate (at least, partially) the base-induced isoperimetric profile $\I^b_\beta(\I_{\T^2})$, or equivalently, the isoperimetric profile $\I(S_\beta(\I_{\T^2}))$ of the two-dimensional model slab
\[
 S_\beta(\I_{\T^2}) =  ([0,\beta],\abs{\cdot}^2,\frac{1}{\beta} \m\mycorner_{[0,\beta]}) \otimes ((-R_{\T^2},R_{\T^2}), \abs{\cdot}^2 , \varphi_{\T^2}(s) ds) ,
\]
where the density $\varphi_{\T^2}$ was already calculated in (\ref{eq:varphi2}) (with $R_{\T^2} = \frac{1}{2} + \frac{1}{\pi}$). 
Since $\I(\T^3_\beta) \geq \I(S_\beta(\I_{\T^2}))$ by Corollary \ref{cor:base-induced-profile} and Proposition \ref{prop:coincide}, we would like to show that $\I(S_\beta(\I_{\T^2}))(\bar v) = \I_m(\bar v)$ for some range of $\bar v$'s we can get a handle on. As explained in Section \ref{sec:CMC}, since the minimizers of $\T^2$ aren't nested, there is no expectation that $\I(\T^3_\beta)(\bar v) = \I(S_\beta(\I_{\T^2}))(\bar v)$ for all $\bar v \in (0,1)$, and so our strategy will inherently be confined to some sub-range of $\bar v$'s. As the isoperimetric profile $\I(S_\beta(\I_{\T^2}))$ is symmetric around $1/2$, it is enough to consider $\bar v \in (0,1/2)$. 

For example, since a zonal horizontal line cannot be minimizing in $S_\beta(\I_{\T^2})$ by Lemma \ref{lem:transverse}, we necessarily have $\I(S_\beta(\I_{\T^2}))(\bar v) < \I_{\T^2}(\bar v) = \I_m(\bar v) = 1$ for $\bar v = \frac{1}{\pi}$ (where $\I_{\T^2}$ is non-differentiable), and by continuity of both sides (recall Corollary \ref{cor:Ib-concave}), 
a strict inequality must also hold for all $\bar v$ in a neighborhood of $\frac{1}{\pi}$, and we conclude that any hope to show that $\I(S_\beta(\I_{\T^2}))(\bar v) = \I_m(\bar v)$ must be confined to $\bar v$ well separated from $\frac{1}{\pi}$. 

On the other hand, we can easily establish that:
\begin{equation} \label{eq:at-half}
\I(\Q^3(\beta))(1/2) = \I(\T^3_\beta)(1/2) = \I(S_\beta(\I_{\T^2}))(1/2) = 1 = \I_m(1/2) .
\end{equation}
As explained in the Introduction, the equality between the left and right hand sides is well-known, and follows by constructing a Lipschitz map pushing forward the Gaussian measure $(\R^3,\abs{\cdot}^2,\gamma^3)$ onto the uniform measure on $\Q^3(\beta)$. A slightly less obvious observation is that $\I(S_\beta(\I_{\T^2}))(1/2)= 1$. This follows by Corollary \ref{cor:at-half}, which applies since $\I_{\T^2} = \I(\Q^2) \geq \I_{\gamma}/\I_{\gamma}(1/2)$ by the above contraction argument from $(\R^2,\abs{\cdot}^2,\gamma^2)$ onto $\Q^2$.

\medskip

Our first main observation is the following:

\begin{proposition} \label{prop:main-computation}
For all $\beta > 0$, if $\Sigma$ is a generalized unduloid in $S_\beta(\I_{\T^2})$ with parameters $0 < v_0 < v_1 < 1$, having weighted mean-curvature $\lambda \geq 0.8$ and enclosing weighted volume $\bar v \leq \frac{4 \pi}{81}$, then necessarily $v_1 \leq \frac{1}{\pi}$. 
\end{proposition}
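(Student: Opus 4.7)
The plan is to argue by contradiction: I assume $v_1 > 1/\pi$ and show that the hypotheses $\lambda \geq 0.8$ and $\bar v \leq 4\pi/81$ are then incompatible.

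\textbf{Setup.} Dividing (\ref{eq:V-formula}) by (\ref{eq:T-formula}) with $T = \beta$ presents $\bar v$ as a weighted average of $v$ over $[v_0, v_1]$:
\[
\bar v \;=\; \frac{\int_{v_0}^{v_1} \tfrac{v\,dv}{\bI(v)\sqrt{(\bI(v)/\ell(v))^2-1}}}{\int_{v_0}^{v_1} \tfrac{dv}{\bI(v)\sqrt{(\bI(v)/\ell(v))^2-1}}}.
\]
In particular, $v_0 \leq \bar v \leq 4\pi/81 < 1/\pi$, so $\Sigma$ must cross the zonal line $v = 1/\pi$. A brief preliminary case analysis using the chord-slope identity $\lambda = (\bI(v_1) - \bI(v_0))/(v_1 - v_0)$, the symmetry of $\I_{\T^2}$, and the constraint $v_0 \leq 4\pi/81$ disposes of the range $v_1 \in (1-1/\pi, 1)$ separately (cruder middle-zone estimates already suffice there), so I focus on the main case $v_1 \in (1/\pi, 1 - 1/\pi]$, where $\bI(v_1) = 1$ and the chord simplifies to $\ell(v) = 1 - \lambda(v_1 - v)$.

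\textbf{Middle-zone geometry.} Split $\beta = T_1 + T_2$ and $\beta\bar v = N_1 + N_2$ at $v = 1/\pi$. Because $\varphi_{\T^2} \equiv 1$ on the middle zone (see (\ref{eq:varphi2})), the density of $S_\beta(\I_{\T^2})$ is constant there and the CMC equation (\ref{eq:CMC1}) reduces to constant Euclidean curvature $\lambda$. Consequently, the middle-zone portion of $\Sigma$ is a circular arc of radius $r = 1/\lambda$, meeting $\{t=0\}$ perpendicularly (the turning-point boundary condition at $t=0$) at the point $(0, s_1)$ with $s_1 = \Phi_{\T^2}^{-1}(v_1) = v_1 - 1/2$. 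Since the vertical extent of this arc is $v_1 - 1/\pi$, standard trigonometry with $\cos\theta_* = 1 - \lambda(v_1 - 1/\pi)$ for the subtended angle yields the closed forms
\[
T_2 \;=\; \sqrt{(v_1 - \tfrac{1}{\pi})\bigl(\tfrac{2}{\lambda} - (v_1 - \tfrac{1}{\pi})\bigr)}, \qquad N_2 \;=\; (v_1 - r)T_2 + \tfrac{r^2}{2}\bigl(\theta_* + \sin\theta_*\cos\theta_*\bigr).
\]

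\textbf{Closing estimate.} Using the trivial inequality $N_1 \geq v_0 T_1$ on the lower zone, the target $\bar v > 4\pi/81$ reduces to
\[
\frac{v_0 T_1 + N_2}{T_1 + T_2} \;>\; \frac{4\pi}{81}.
\]
An upper bound on $T_1$ comes from (\ref{eq:T-formula}) on the lower zone, where $\bI(v) = \sqrt{\pi v}$ and $\ell(v) = \sqrt{\pi v_0} + \lambda(v - v_0)$. The chord-slope equation $\lambda(v_1 - v_0) = 1 - \sqrt{\pi v_0}$ eliminates one free parameter, reducing matters to an explicit inequality in $(v_0, \lambda)$ over the compact region $v_0 \in (0, 4\pi/81]$, $\lambda \in [0.8, \pi/(1+\sqrt{\pi v_0}))$. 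The substitution $u = \sqrt{v}$ linearizes $\bI$ in the lower-zone integrand and makes $T_1$ amenable to an elementary upper estimate.

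\textbf{Expected main obstacle.} Both constants ($\lambda \geq 0.8$ and $\bar v \leq 4\pi/81$) appear numerically tight, so the upper bound on $T_1$ must be nearly sharp at the extremal configuration (presumably $v_0$ close to $4\pi/81$ and $\lambda$ close to $0.8$, which then pins down $v_1$). Producing an elegant analytic bound rather than relying on a verified numerical check on a fine grid of $(v_0,\lambda)$ values is the delicate step; the fallback is the grid check itself, which reduces to evaluating a handful of explicit elementary functions at finitely many points.
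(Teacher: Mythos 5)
Your split of the unduloid at the zonal line $v = \tfrac{1}{\pi}$ and the observation that the middle-zone portion of $\Sigma$ is a Euclidean circular arc of radius $1/\lambda$ (yielding closed forms for $T_2$ and $N_2$) are correct and not made explicit in the paper's proof, so that part is a genuine geometric insight. The fatal gap is in the ``closing estimate'': the bound $N_1 \geq v_0 T_1$ on the lower zone discards the entire positive contribution of $\int (v - \tfrac{4\pi}{81})\,dt$ over the sub-interval where $v \in (\tfrac{4\pi}{81}, \tfrac{1}{\pi})$, which is where the unduloid spends most of its ``time'' and where the integrand is as large as $\tfrac{1}{\pi} - \tfrac{4\pi}{81} \approx 0.163$. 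Concretely, writing $\tilde Q := v_0 T_1 + N_2 - \tfrac{4\pi}{81}(T_1 + T_2)$, your reduction requires $\tilde Q > 0$; but at the near-extremal parameters $(v_0, v_1) \approx (0.020, 0.324)$ (so $\lambda \approx 2.46$, not anywhere near $0.8$), the true margin is only $Q \approx 8 \times 10^{-4}$ while $T_2 \approx 0.066$ and $T_1 \approx 1$, giving $\tilde Q \approx -0.135\,T_1 + 0.163\,T_2 < -0.1$. Thus $\tilde Q > 0$ is simply false on part of the parameter range, and no upper bound on $T_1$, however sharp, can rescue the argument, since even the exact $T_1$ makes $\tilde Q$ negative. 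Your heuristic that the binding constraints are $\lambda \approx 0.8$ and $\bar v = \tfrac{4\pi}{81}$ is also off: the hypothesis $\lambda \geq 0.8$ is only used for the easy auxiliary range $v_1 > 1 - \tfrac{1}{\pi}$.

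The paper's proof keeps the sign structure of the lower-zone integral: it writes $Q \geq P_1 + P_2 + P_3$, with $P_1$ over $[v_0, \tfrac{4\pi}{81}]$ (negative), $P_2$ over $[\tfrac{4\pi}{81}, \tfrac{1}{\pi}]$ (positive, crucially \emph{not} discarded), and $P_3$ over $[\tfrac{1}{\pi}, v_1]$, then decouples the two parameters by replacing $\ell_{v_0,v_1}$ with $\ell_{v_0,1/\pi}$ in $P_1$ and $\ell_{0,v_1}$ in $P_2$, using concavity of $\I_{\T^2}$. The residual two-parameter problem is still settled by a mesh evaluation with explicit one-sided derivative bounds controlling the error -- so the ``grid check'' fallback you anticipate is indeed what is needed, but it must be run on a lower bound that retains $P_2$, not on your $\tilde Q$.
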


Proposition \ref{prop:main-computation} boils down to estimating the sign of a certain complicated (yet explicit) function involving elliptic integrals, and we do not know how to establish it without relying on numeric computation. Consequently, its proof is deferred to the Appendix. We will in addition require the following analogous proposition regarding one-sided generalized unduloids. Here the parameter space is only one-dimensional, and the expressions do not involve elliptic integrals but rather algebraic and trigonometric functions, and so we provide a proof here to give the reader a taste of what this type of argument entails.

\begin{proposition} \label{prop:one-sided-volume}
If $\Sigma$ is a one-sided generalized unduloid in $S_\beta(\I_{\T^2})$ with parameters $0 = v_0 < v_1 < 1$, having weighted mean-curvature $\lambda \geq 0.6$ and enclosing weighted volume $\bar v < \frac{v_{\min}}{\beta}$ for an explicit $v_{\min}\simeq 0.120582$, then necessarily $v_1 < \frac{1}{\pi}$ and $\Sigma$ is a quarter circle of radius $\frac{2}{\lambda}$ centered at the bottom-left corner of $S_\beta(\I_{\T^2})$.
\end{proposition}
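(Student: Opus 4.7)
The proof hinges on Lemma~\ref{lem:chord}: since $v_0 = 0$, we have $c = 0$ and the chord condition $\lambda v_1 = \I_{\T^2}(v_1)$, which determines $v_1 = v_1(\lambda)$ as an explicit piecewise function of $\lambda$: $v_1 = \pi/\lambda^2$ if $\lambda \geq \pi$ (so $v_1 \leq 1/\pi$, entirely in zone~1); $v_1 = 1/\lambda$ if $\lambda \in [\pi/(\pi-1), \pi]$ (so $v_1 \in [1/\pi, 1-1/\pi]$, reaching zone~2); and an explicit algebraic expression for $\lambda < \pi/(\pi-1)$ (so $v_1 > 1-1/\pi$, reaching zone~3). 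In particular, $v_1 < 1/\pi$ is equivalent to $\lambda > \pi$.

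First I show that the assertion about the shape of $\Sigma$ follows automatically once $v_1 \leq 1/\pi$ is known. In that regime $\Sigma$ lives entirely in zone~1, where with the shifted coordinate $r := s + \tfrac{1}{2} + \tfrac{1}{\pi} \in (0, 2/\pi]$ we have $\varphi_{\T^2}(s) = \tfrac{\pi}{2} r$ and $\Phi_{\T^2}(s) = \tfrac{\pi}{4} r^2$. Substituting into the first integral (\ref{eq:CMC2}) with $c = 0$ yields $r/\sqrt{1+(r')^2} = \tfrac{\lambda}{2} r^2$, which rearranges algebraically to $(t-t_0)^2 + r^2 = 4/\lambda^2$. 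The perpendicular-meeting condition $r'(0) = 0$ from Proposition~\ref{prop:CMC}(\ref{it:perp}), together with Corollary~\ref{cor:perp} (applicable since $\I_{\T^2}(v) = \sqrt{\pi v}$ is super-linear at $0$), forces $t_0 = 0$ and $T_\ast = 2/\lambda$, so $\Sigma$ is exactly the quarter-circle of radius $2/\lambda$ centered at $(t,s) = (0,-R_{\I_{\T^2}})$.

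For the main assertion I argue the contrapositive: assuming $\lambda \geq 0.6$ and $v_1 > 1/\pi$, I show $\bar v \geq v_{\min}/\beta$. Lemma~\ref{lem:VA} gives $\beta \bar v = F(\lambda)$ where
\[
F(\lambda) := \int_0^{v_1(\lambda)} \frac{v\, dv}{\I_{\T^2}(v)\sqrt{(\I_{\T^2}(v)/(\lambda v))^2 - 1}} = F_1(\lambda) + F_2(\lambda) + F_3(\lambda),
\]
the $F_i$ being the contribution from zone~$i$. Each admits an elementary closed form: the substitution $u = \pi - \lambda^2 v$ reduces $F_1$ to an algebraic expression in $\sqrt{\pi^2-\lambda^2}$; the substitution $w = \lambda v$ yields $F_2 = \tfrac{1}{2\lambda^2}[\arcsin w - w\sqrt{1-w^2}]$ between appropriate limits; and a symmetric substitution handles $F_3$. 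No elliptic integrals appear---this is what distinguishes this argument from the two-sided Proposition~\ref{prop:main-computation}---because $c = 0$ trivializes the radicand.

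The final step is to verify $\min_{\lambda \in [0.6,\pi]} F(\lambda) = v_{\min} \simeq 0.120582$. I differentiate $F$ zone-by-zone, check continuity of $F$ and $F'$ across the zone-transition values $\lambda = \pi$ and $\lambda = \pi/(\pi-1)$ (these match automatically, since both the CMC ODE and the volume formula agree across zone boundaries), and locate the unique interior critical point---numerics place it near $\lambda \approx 3$, inside the zone~2 subinterval, giving $F \simeq 0.12058$. Comparison against the endpoint values $F(\pi) = 4/(3\pi^2) \simeq 0.1351$ and $F(0.6)$ then confirms that the minimum is attained in the interior and strictly exceeds the trivial zone~1 boundary value, so that $\bar v < v_{\min}/\beta$ rules out the entire range $\lambda \in [0.6,\pi]$ and forces $\lambda > \pi$, i.e.~$v_1 < 1/\pi$. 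The main obstacle is purely computational---keeping track of the three sub-cases and performing the algebra and one-variable optimization of an elementary but unwieldy function---rather than conceptual.
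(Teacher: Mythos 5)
Your proposal is correct and matches the paper's approach: parametrize by the mean curvature (the paper uses $\xi = 1/\lambda$ rather than $\lambda$, which is purely cosmetic), split the volume integral $\beta\bar v = F(\lambda)$ into the three zones where $\varphi_{\T^2}$ is linear, derive elementary closed forms, and locate the minimum over $\lambda \in [0.6,\pi]$ numerically. The only substantive deviation is your handling of the third zone: you compute its contribution exactly (which is indeed elementary, since the integrand reduces to $v^2/\sqrt{\text{quadratic in }v}$ once $c=0$), whereas the paper opts for a lower bound obtained by replacing $\I_{\T^2}$ by $1$ on $[1-1/\pi,v_1]$ and bounding $v_1(\xi) \geq \pi/(\pi+1/\xi)$, precisely to avoid dragging around the messy algebraic root $v_1(\lambda)$ and to simplify the monotonicity check on that subinterval. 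The paper also sharpens your ``locate the unique interior critical point'' step by proving strict convexity of the zone-2 piece $F_2$, which reduces the identification of $v_{\min}$ to solving a single explicit transcendental equation; your sketch leaves this as an unverified numerical claim, as does your assertion that $F'$ matches automatically across zone boundaries (which requires checking from the closed forms rather than differentiating under the integral sign, since the integrand blows up at $v = v_1$).
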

\begin{remark}
As will be apparent from the proof, both restrictions above are necessary: if $\lambda \rightarrow 0$, or equivalently, $v_1 \rightarrow 1$, one may check that the enclosed weighted volume $\bar v$ tends to zero;  and if $\beta > 0$ is not too small, there are one-sided generalized unduloids enclosing weighted volume $\frac{v_{\min}}{\beta}$ with $v_1 > \frac{1}{\pi}$. 
\end{remark}
\begin{proof}[Proof of Proposition \ref{prop:one-sided-volume}]
Recalling Theorem \ref{thm:model-slab-main}, $\Sigma$ is the graph of $f$ over $[0,T(0,v_1)]$, where $T(0,v_1)$ is given by (\ref{eq:T-formula}). Also recall that $\ell(v) = \lambda v$ (with $c=0$), where $\lambda = \I_{\T^2}(v_1)/v_1 \geq 0$ is the constant weighted mean-curvature of $\Sigma$. Note that by Corollary \ref{cor:perp}, $\Sigma$ meets the bottom of the slab $S_\beta(\I_{\T^2})$ perpendicularly. See Figure \ref{fig:all-one-sided-unduloids}.
 
\begin{figure}
    \begin{center}
        \includegraphics[scale=0.4]{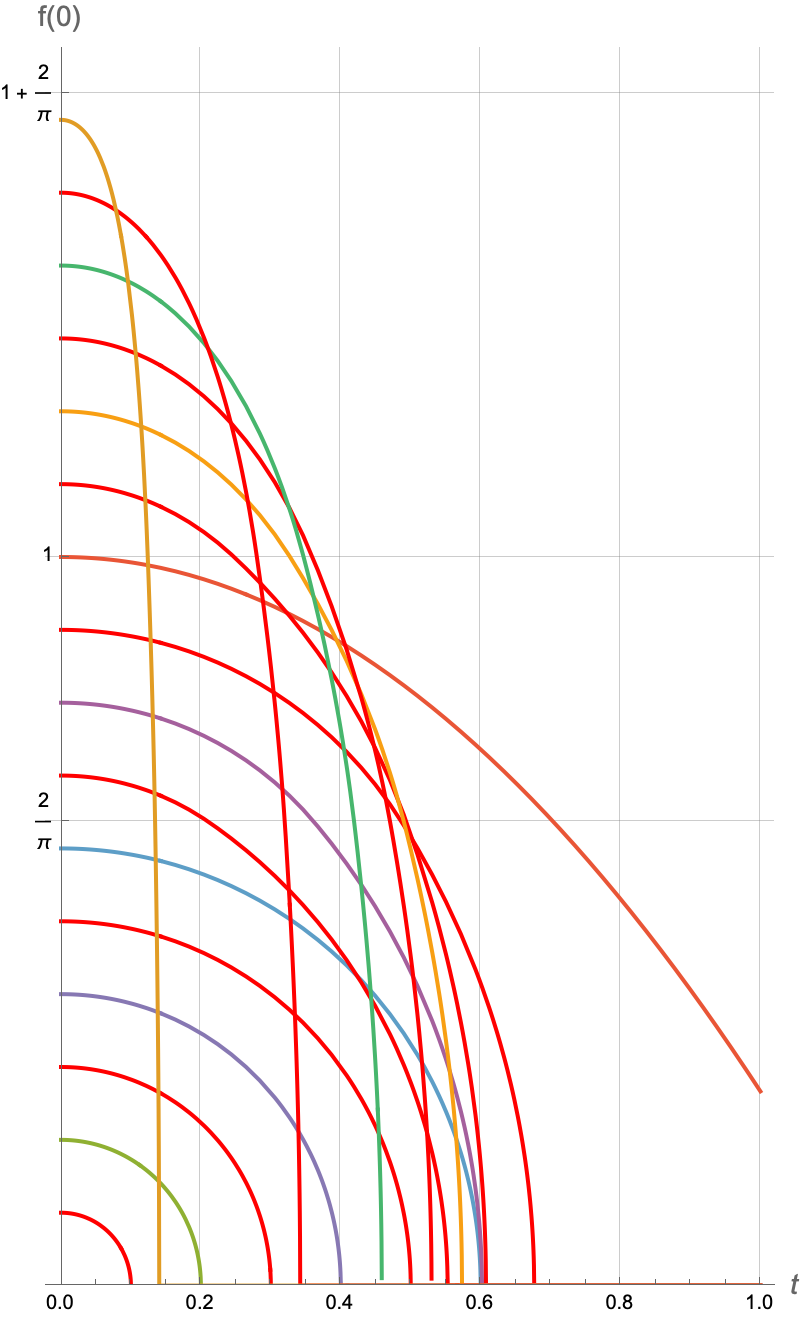}
     \end{center}
     \caption{
         \label{fig:all-one-sided-unduloids}
         Various one-sided generalized unduloids $\{f(t)\}$ with different initial values $f(0) = s_1 = \Phi_{\T^2}^{-1}(v_1)$. For convenience, the $s$-axis is parametrized on $(0,1+\frac{2}{\pi})$ instead of $(-\frac{1}{2} - \frac{1}{\pi},\frac{1}{2} + \frac{1}{\pi})$. Note that when $s_1 \leq \frac{2}{\pi}$ (equivalently, $v_1 \leq \frac{1}{\pi}$), $\Sigma$ is a quarter circle in the model slab, corresponding to an eighth sphere in $\Q^3(\beta)$. When $s_1 > \frac{2}{\pi}$ the one-sided unduloids are no longer nested (and some require time $> \beta$ to reach the bottom).      }
\end{figure}

It will be more convenient to parametrize things according to $\xi = 1/\lambda$; since we assume $\lambda \geq 0.6$ then $\xi \leq 5/3$. We write $v_1 = v_1(\xi)$ (which is determined uniquely since $\I_{\T^2}$ is strictly concave at the origin), and note that $v_1(\xi)$ is strictly increasing. Denote $V(\xi) = V(0,v_1(\xi))$, the weighted volume enclosed by $\Sigma$, where $V(v_0,v_1)$ is given by (\ref{eq:V-formula}). 

When $v_1 \in (0,\frac{1}{\pi}]$, or equivalently $\xi \in (0,\frac{1}{\pi}]$, note that $\Sigma$ is precisely a quarter circle in the model slab $S_{\beta}(\I_{\T^2})$ of radius $\frac{2}{\lambda} = 2 \xi$, corresponding to an eighth sphere around a corner of $\Q^3(\beta)$ having mean-curvature $\lambda$. Indeed, recalling (\ref{eq:CMC2.5}) and using that $v = \Phi_{\T^2}(f) = \frac{\pi}{4} f^2$ whenever $v \leq \frac{1}{\pi}$, we know that $f$ satisfies the following ODE when $v_1 \in (0,\frac{1}{\pi}]$:
\[
\frac{df}{dt} = - \sqrt{(\I_{\T^2}(v)/\ell(v))^2 - 1} = - \sqrt{ \frac{\pi}{\lambda^2 v} - 1} = - \frac{\sqrt{\brac{\frac{2}{\lambda}}^2 - f^2}}{f} .
\]
Denoting $g = \brac{\frac{2}{\lambda}}^2 - f^2$, we see that $\frac{dg}{dt} = 2 \sqrt{g}$. Since $g$ is strictly increasing on $[0,T(0,v_1)]$, it follows that $g(t) = (t+t_0)^2$, and as $f'(0) = 0$, we deduce that $t_0=0$ and $f(t) = \sqrt{ \brac{\frac{2}{\lambda}}^2 - t^2}$. Consequently, $\Sigma$ encloses (weighted) volume $V(\xi) = \frac{1}{8 \beta} \frac{4}{3} \pi (2 \xi)^3 = \frac{4 \pi}{3 \beta} \xi^3$; this can also be verified by direct computation using (\ref{eq:V-formula}) and $v_1(\xi) = \pi \xi^2$:
\[
V(\xi) = \frac{1}{\beta} \int_0^{v_1(\xi)} \frac{v dv}{\sqrt{\pi^2 \xi^2 - \pi v}} = - \left . \frac{2}{3 \beta} \sqrt{\xi^2 - v/\pi} (v + 2 \pi \xi^2) \right |_0^{\pi \xi^2} = \frac{4 \pi}{3 \beta} \xi^3 =: \frac{1}{\beta} F_1(\xi) . 
\]

When $v_1 \in [\frac{1}{\pi}, 1 - \frac{1}{\pi}]$, or equivalently, $\xi \in  [\frac{1}{\pi}, 1 - \frac{1}{\pi}]$, $v_1(\xi) = \xi$ and (\ref{eq:V-formula}) yields:
\begin{align*}
V(\xi) &  = \frac{1}{\beta} \brac{\int_0^{\frac{1}{\pi}} \frac{v dv}{\sqrt{\pi^2 \xi^2 - \pi v}} + \int_{\frac{1}{\pi}}^{\xi} \frac{v^2 dv}{\sqrt{\xi^2 - v^2}} } \\
& = \frac{1}{\beta} \brac{ \frac{4}{3} \pi \xi^3 - \brac{\frac{4}{3} \xi^2 + \frac{1}{6 \pi^2}} \sqrt{\pi^2 \xi^2 - 1} + \frac{\xi^2}{2} \sec^{-1}(\pi \xi) } =: \frac{1}{\beta} F_2(\xi) .
\end{align*}

Finally, when $v_1 \in [1 - \frac{1}{\pi},1)$, or equivalently, $\xi \in [1 - \frac{1}{\pi},\infty)$, we use $\I_{\T^2} \leq 1$ to lower bound the integral over $v \in [1-\frac{1}{\pi},v_1]$, yielding:
\[
V(\xi) \geq \frac{1}{\beta} F_2(\xi) - \frac{1}{\beta}  \int_{v_1(\xi)}^{\xi} \frac{v^2 dv}{\sqrt{\xi^2 - v^2}} . 
\]
Since $v_1(\xi)$ is the intersection point in $[1-1/\pi , 1)$ of $v/\xi$ with the concave $\sqrt{\pi (1 - v)}$, we can lower bound it by the intersection with the latter function's chord $\pi (1-v)$ between $1 - \frac{1}{\pi}$ and $1$, namely $v_1(\xi) \geq \frac{\pi}{\pi + 1/\xi}$. Using this lower bound above and integrating, we obtain for all $\xi \in [1 - \frac{1}{\pi},\infty)$:
\[
V(\xi) \geq \frac{1}{\beta} F_2(\xi) - \frac{1}{\beta} \frac{\xi^2}{2}  \brac{ \frac{\sqrt{(1/\pi+\xi)^2 - 1}}{(1/\pi+\xi)^2} +  \sec^{-1}\brac{1/\pi +\xi}}   =: \frac{1}{\beta} F_3(\xi) .
\]
We conclude that $V(\xi) \geq \frac{1}{\beta} F(\xi)$ with equality when $\xi \in (0,1-1/\pi]$, where $F(\xi)$ is defined as $F_1(\xi)$ if $\xi \in (0,1/\pi]$, $F_2(\xi)$ if $\xi \in [1/\pi , 1 - 1/\pi]$, and $F_3(\xi)$ if $\xi \in [1-1/\pi,\infty)$; see Figure \ref{fig:one-sided-unduloid-volume}.

\begin{figure}
    \begin{center}
        \includegraphics[scale=0.5]{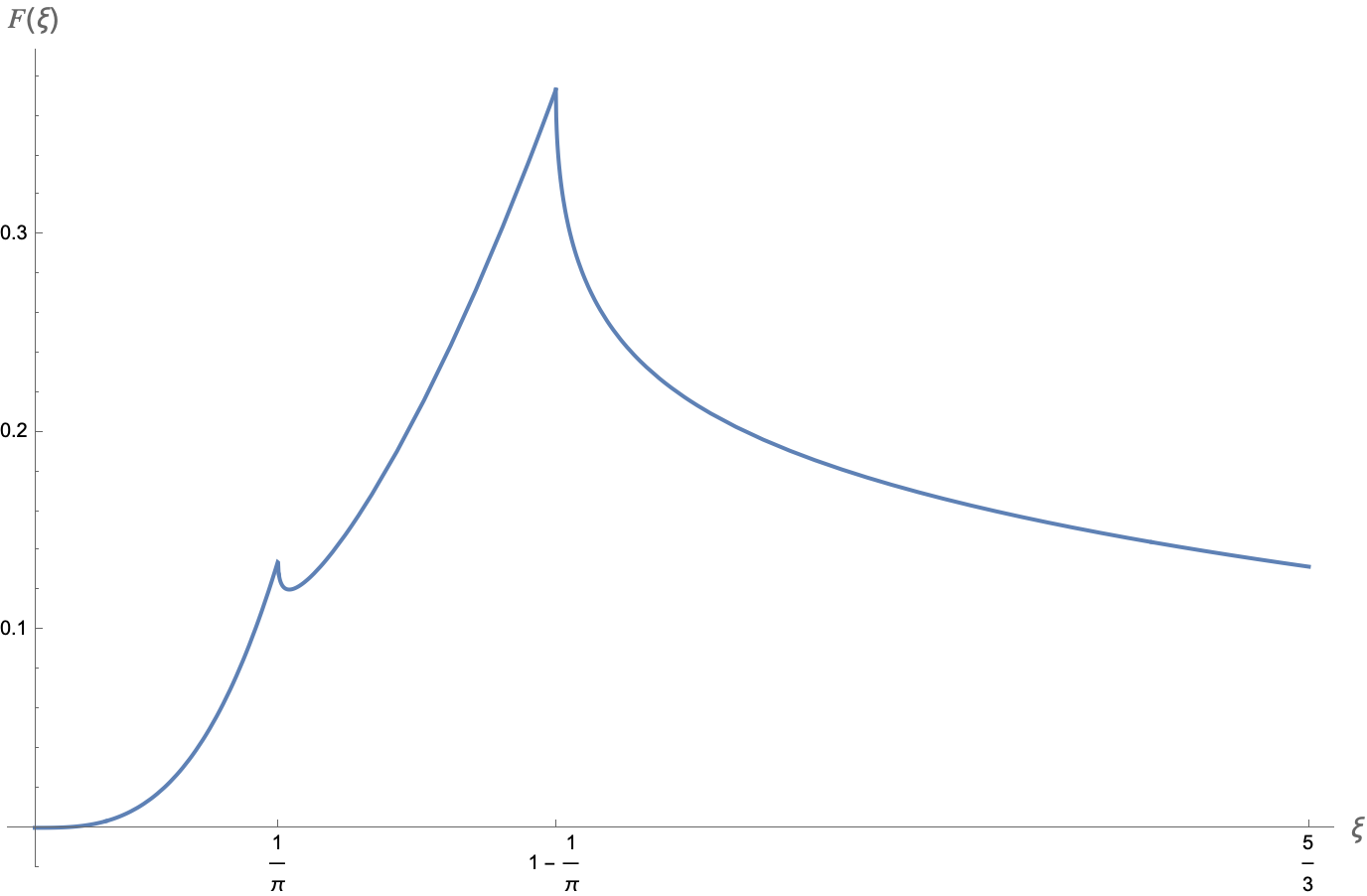}
     \end{center}
     \caption{
         \label{fig:one-sided-unduloid-volume}
         Plot of $F(\xi)$, lowerbounding $\beta V(\xi)$. 
     }
\end{figure} 

Direct calculation confirms that:
\[
F_2''(\xi) - \sec^{-1}(\pi \xi) = 8 R(\pi \xi) ~,~ R(x) = x - \frac{8 x^4 - 13 x^2 + 4}{(x^2-1)^{3/2}} ,
\]
and it is elementary to check that $R(x) > 0$ for all $x > 1$. Consequently, we see that $F_2$ is strictly convex on $[1/\pi,1-1/\pi]$, and as it first decreases and then increases (see Figure \ref{fig:one-sided-unduloid-volume}), it attains a unique minimum in that interval at $\xi_0$ where $F_2'(\xi_0) = 0$. Direct calculation of $F_2'(\xi_0)$ verifies that $x_0 = \pi \xi_0 > 1$ is the unique root of the equation:
\[
4 x + \sec^{-1}(x) = \frac{4 x^2 - 3}{\sqrt{x^2 - 1}} .
\] 
Numerically, this occurs at $x_0 \simeq 1.046172$ and thus $\xi_0 \simeq 0.333007$. We deduce that 
\[
v_{\min} := \beta \min_{\xi \in [1/\pi , 1-1/\pi]} V(\xi) = \beta V(\xi_0) = F_2(\xi_0) \simeq 0.120582 . 
\]

It remains to treat the range $\xi \in (1 - \frac{1}{\pi},5/3]$. It is not very hard to check that $F_3(\xi)$ is strictly decreasing in that range (we omit the verification), and so it is enough to numerically verify that $0.132149 \simeq F_3(5/3) > v_{\min}$ to conclude that $\min_{\xi \in [1/\pi , 5/3]} V(\xi) = \frac{v_{\min}}{\beta}$. It follows that if $\bar v < \frac{v_{\min}}{\beta}$ then necessarily $\xi < \frac{1}{\pi}$, or equivalently, $v_1 < \frac{1}{\pi}$, as asserted. 
\end{proof}

Assuming the validity of Proposition \ref{prop:main-computation}, we can now easily establish parts (\ref{it:Q3-1}) and (\ref{it:Q3-2}) of Theorem \ref{thm:Q3-main} when $\bar v \leq \min( \frac{4 \pi}{81} , \frac{v_{\min}}{\beta})$ as follows. 

\begin{proof}[Proof of Theorem \ref{thm:Q3-main}, parts (\ref{it:Q3-1}) and (\ref{it:Q3-2}) when $\bar v \leq \min(\frac{4 \pi}{81} , \frac{v_{\min}}{\beta})$.]
Let $\Sigma$ be a downward monotone isoperimetric minimizer in $S_\beta(\I_{\T^2})$ of weighted volume $\bar v \in (0, \frac{4 \pi}{81}]$. Let $0 \leq v_0 \leq v_1 \leq 1$ be the associated parameters to $\Sigma$ from Lemma \ref{lem:chord}. 
According to Theorem \ref{thm:model-slab-main}, $\Sigma$ is either a vertical line, a horizontal line, a generalized unduloid or a one-sided generalized unduloid, according to the values of $v_0,v_1$. 
A vertical line has weighted area $\frac{1}{\beta}$, while a horizontal line has weighted area $\I_{\T^2}(\bar v) < 1 \leq \frac{1}{\beta}$ (as $\bar v < \frac{1}{\pi}$), ruling out the former as a minimizer. 

Let us now lower bound the weighted mean-curvature $\lambda$ of $\Sigma$. Recall from Corollary \ref{cor:Ib-concave} that $\I_b := \I(S_\beta(\I_{\T^2}))$ is concave, and that by Proposition \ref{prop:lambda-I}
\[
\lambda \geq \I^{',+}_b(\bar v)  ,
\]
where $\I^{',+}_b(\bar v)$ denotes the right-derivative of $\I_b$ at $\bar v$ (which always exists by concavity). Concavity of $\I_b$ and the assumption that $\bar v \leq \frac{4 \pi}{81}$ implies that:
\[
\I^{',+}_b(\bar v) \geq \frac{\I_b(\frac{1}{2}) - \I_b(\bar v)}{\frac{1}{2} - \bar v} \geq \frac{\I_b(\frac{1}{2}) - \I_b(\frac{4 \pi}{81})}{\frac{1}{2} - \frac{4 \pi}{81}} .
\]
Since $\I_b(\frac{1}{2}) = 1$ by (\ref{eq:at-half}) and $\I_b(\frac{4 \pi}{81}) \leq\I_m(\frac{4 \pi}{81}) = \frac{2 \pi}{9}$, we conclude that:
\[
\lambda \geq \frac{1 - \frac{2 \pi}{9}}{\frac{1}{2} - \frac{4 \pi}{81}} \simeq  0.87533 > 0.8 > 0.6 . 
\]

Consequently, if $\Sigma$ is a generalized unduloid, then necessarily $0 < v_0 < v_1 \leq \frac{1}{\pi}$ by Proposition \ref{prop:main-computation}. In that range, the density $\varphi_{\T^2}(s)$ is linear, corresponding to the perimeter of a sphere of radius $s$ in $\R^2$, and so by the results of Pedrosa and Ritor\'e \cite[Proposition 3.2]{PedrosaRitore-Products} already mentioned in Subsection \ref{subsec:unduloids-unstable}, the generalized unduloid $\Sigma$ is unstable, and thus cannot be a minimizer. 
On the other hand, if we assume in addition that $\bar v < \frac{v_{\min}}{\beta}$ as in Proposition \ref{prop:one-sided-volume}, we are ensured that if $\Sigma$ is a one-sided generalized unduloid then it is necessarily a quarter circle around the bottom left corner of $S_\beta(\I_{\T^2})$.

Summarizing, we have shown that if $\bar v \leq \frac{4 \pi}{81}$ and $\bar v < \frac{v_{\min}}{\beta}$, then $\Sigma$ is either a horizontal line (corresponding to the case that a cylinder in $\T^3(\beta)$ is minimizing), or a quarter circle (corresponding to the case that a sphere in $\T^3(\beta)$ is minimizing). The transition from the case when spheres have less perimeter than cylinders for a given volume in $T^3(\beta)$ occurs exactly at $\frac{4 \pi}{81} \beta^2$, concluding the proof of parts (1) and (2) of Theorem \ref{thm:Q3-main} when $\bar v < \frac{v_{\min}}{\beta}$. The case when $\bar v = \frac{v_{\min}}{\beta}$ follows by continuity of the surface area of cylinders as a function of their volume and the continuity of the isoperimetric profile $\I(\Q^3(\beta))$. 
\end{proof}

\subsection{ODE Analysis} \label{subsec:Q3.3}

To obtain the other cases of Theorem \ref{thm:Q3-main} as well as Theorem \ref{thm:T3-ODE}, we will employ an ODE argument for the isoperimetric profile $\I = \I(\T^3(\beta)) = \I(\Q^3(\beta))$. Such an argument was already used by Hauswirth--P\'erez--Romon--Ros in \cite{HPRR-PeriodicIsoperimetricProblem} to study the doubly periodic isoperimetric problem on $\T^2(\beta) \times \R$. Theorem \ref{thm:T3-ODE} is in essence already implicitly contained in \cite{HPRR-PeriodicIsoperimetricProblem}, building upon the work of Ritor\'e--Ros \cite{RitoreRos-RP3} on $3$-dimensional manifolds, but we provide a proof for completeness. 

\medskip

It was shown in \cite{RitoreRos-RP3} (see \cite[Theorems 5 and 7]{HPRR-PeriodicIsoperimetricProblem}) that if $E$ is an isoperimetric minimizer in an orientable, flat $3$-manifold $M^3$, bounded by a closed surface $\Sigma = \overline{\partial^* E}$, then either $\Sigma$ is the disjoint union of two parallel totally geodesic (flat) $2$-tori, or else $\Sigma$ is a connected oriented surface of genus $g = g(\Sigma)$ so that:
\begin{itemize}
\item If $g=0$ then necessarily $\Sigma$ is a round sphere. 
\item If $g=1$ then necessarily $\Sigma$ is a flat torus, obtained as the quotient of either a plane or a circular cylinder. 
\end{itemize}
In fact, it was shown in \cite{Ritore-Genus4} that $g \leq 4$ (and that the case $g=4$ can only occur if $\Sigma$ is a minimal surface), and this was subsequently improved in \cite{Ros-StableGenus3} to $g \leq 3$. 
For a self-contained presentation with the most recent state-of-the-art information on $\Sigma$, we refer to Sections 6.3 and 6.5, and in particular, Subsection 6.5.2, in the excellent monograph by Ritor\'e \cite{Ritore-IsoperimetricBook} (see Theorem 6.49, as well as the more general Theorems 6.17 and 6.33). 

\medskip

Applying this to the compact $M^3 = \T^3(\beta)$, note that when the minimizing surface $\Sigma$ is a flat connected torus, then necessarily its lift to $\R^3$ would be a cylinder around the lift of a shortest closed geodesic in $\T^3(\beta)$. Indeed, its lift cannot be a plane (which bounds an entire half-plane), while a cylinder (of radius $r$) around a non-shortest closed geodesic (of length $\ell$) would have strictly greater surface area $A = 2 \pi r \ell$ than its counterpart around a shortest closed geodesic enclosing the same volume $V = \pi \ell r^2$. We summarize these results as follows:

\begin{proposition}[Ritor\'e--Ros] \label{prop:genus}
Let $\Sigma = \overline{\partial^* E}$ denote the boundary of a minimizer $E$ in $T^3(\beta)$ of (weighted) volume $\bar v \in (0,1)$. If $g(\Sigma) \leq 1$ then necessarily $\Sigma$ is a round sphere, a round cylinder around a shortest closed geodesic or the disjoint union of two parallel totally geodesic tori $\T^2(\beta)$, and hence $\I(\bar v) = \I_m(\bar v)$. In particular, if $\I(\bar v) < \I_m(\bar v)$ then necessarily $g(\Sigma) \geq 2$. 
\end{proposition}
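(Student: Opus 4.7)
The plan is to invoke the structural classification of Ritor\'e--Ros \cite{RitoreRos-RP3} for isoperimetric minimizers in an orientable flat $3$-manifold (which is recalled in the paragraph immediately preceding the statement), and then to refine it on the specific manifold $\T^3(\beta)$ in two steps: a topological step that uses the homology of $\T^3(\beta)$ to rule out an \emph{a priori} possibility, and a geometric step comparing weighted areas within each surviving one-parameter family to pin down the extremal representative.

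First I would apply the Ritor\'e--Ros dichotomy: either $\Sigma$ is the disjoint union of two parallel totally geodesic flat $2$-tori, or $\Sigma$ is a connected closed oriented surface of genus $g=g(\Sigma)$. In the latter alternative, the hypothesis $g\leq 1$ leaves only two cases. If $g=0$, then $\Sigma$ is already a round sphere by \cite{RitoreRos-RP3}. If $g=1$, then $\Sigma$ is a flat torus whose universal cover in $\R^3$ is either a plane or a circular cylinder. The crucial topological step is to exclude the planar lift: in that case $\Sigma$ would itself be a single totally geodesic torus arising from a plane with normal in the dual of $L = 2\beta\Z \oplus 2\Z \oplus 2\Z$, hence Poincar\'e dual to a non-zero class in $H_2(\T^3(\beta);\Z) \cong \Z^3$, and therefore non-separating -- contradicting $\Sigma = \overline{\partial^*E}$ bounding $E$. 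Consequently the lift must be a circular cylinder and $\Sigma$ is a ``round cylinder'' whose axis is a closed geodesic of $\T^3(\beta)$.

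Next I would identify the extremal closed geodesic. A cylinder of radius $r$ about a closed geodesic of length $\ell$ has un-normalized volume $\pi r^2\ell$ and area $2\pi r\ell$; dividing by the total volume $8\beta$ of $\T^3(\beta)$ and eliminating $r$ at fixed normalized volume $V$ yields
\[
A = \sqrt{\pi V\ell/(2\beta)},
\]
which is strictly increasing in $\ell$. Hence a minimizing cylinder must wrap around a shortest closed geodesic, whose length for $\beta\in(0,1]$ is $2\beta$, giving $A = \sqrt{\pi V} = \I_m^{(2)}(\bar v)$. An analogous computation handles the disconnected case: a pair of parallel totally geodesic tori quotienting a rank-$2$ primitive sublattice $L_2\subset L$ has total normalized area $\mathrm{covol}(L_2)/(4\beta)$, and a short case analysis of primitive sublattices of $L$ shows that for $\beta\in(0,1]$ the minimum of $\mathrm{covol}(L_2)$ is $4\beta$, attained exactly by the sublattices generated by $2\beta e_1$ together with $2 e_i$ for $i\in\{2,3\}$. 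The two tori are therefore copies of $\T^2(\beta) = \R^2/(2\beta\Z\times 2\Z)$, with total normalized area $1 = \I_m^{(1)}(\bar v)$.

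The main obstacle, I expect, is the homological exclusion of the planar lift in the connected $g=1$ case: this is where one genuinely uses that the ambient manifold is the specific $\T^3(\beta)$ (rather than a generic flat $3$-manifold), and it relies on the fact that every single totally geodesic torus in $\T^3(\beta)$ represents a non-zero second homology class. Once this observation is in place, comparing the three identified area values to $\I_m^{(3)},\I_m^{(2)},\I_m^{(1)}$ yields $A(E) = \I_m(\bar v)$ for every minimizer with $g(\Sigma)\leq 1$, and since $A(E)=\I(\bar v)$ always holds for a minimizer while $\I\leq\I_m$ by testing the candidates, we conclude $\I(\bar v)=\I_m(\bar v)$; the contrapositive ``$\I(\bar v)<\I_m(\bar v)\Rightarrow g(\Sigma)\geq 2$'' is then immediate.
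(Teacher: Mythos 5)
Your argument follows the paper's proof essentially verbatim: cite the Ritor\'e--Ros classification of isoperimetric boundaries in orientable flat $3$-manifolds, rule out the planar lift in the connected genus-$1$ case, and identify the area-minimizing representative in each surviving family (shortest closed geodesic for cylinders, minimal-covolume sublattice for the pair of parallel tori). The homological (non-separating) phrasing you give for excluding the planar lift, and the explicit covolume comparison yielding $\T^2(\beta)$, make precise two steps the paper treats more briefly --- ``bounds an entire half-plane'' and simply stating $\T^2(\beta)$ --- but the substance is identical.
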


Denote $p_0 = 0$, $p_1 = \frac{4\pi}{81} \beta^2$, $p_2 = \frac{1}{\pi}$, $p_3 = 1 - p_2$, $p_4 = 1 - p_1$, $p_5 = 1$, the points where $\I_m$ is non-differentiable. We denote by $g_i$ the genus of the conjectured minimizers in the range $v \in (p_i,p_{i+1})$, namely $g_0 = 0$, $g_1 = 1$, $g_2 = 1$, $g_3=1$, $g_4=0$, and set $\chi_i = 2 - 2 g_i$ to denote the corresponding Euler characteristics. The Euler characteristic of a closed surface $\Sigma$ is denoted by $\chi(\Sigma) = 2 - 2 g(\Sigma)$. The following was observed in \cite[Theorem 9]{HPRR-PeriodicIsoperimetricProblem} as a consequence of the Gauss-Bonnet theorem (recall that we are using the uniform measure on $\T^3(\beta)$, yielding a factor of $8 \beta$ relative to the formulation in \cite{HPRR-PeriodicIsoperimetricProblem}):
\begin{proposition}[Hauswirth--P\'erez--Romon--Ros] \label{prop:ODI}
For all $\bar v \in (p_i , p_{i+1})$, the conjectured profile $\I_m$ satisfies:
\[
\I_m^2 \I_m'' + \I_m (\I'_m)^2 - \frac{\pi}{2 \beta} \chi_i = 0 .
\]
If $\Sigma$ is the boundary of an isoperimetric minimizer of volume $\bar v \in (0,1)$, then the actual profile $\I$ satisfies at $\bar v$:
\begin{equation} \label{eq:prop-ODI}
\I^2 \I'' + \I (\I')^2 - \frac{\pi}{2 \beta} \chi(\Sigma) \leq 0 
\end{equation}
in the viscosity sense (recall Definition \ref{def:viscosity}). 
\end{proposition}
In view of all of the above, a simple application of the maximum principle as in \cite[Lemma 8]{HPRR-PeriodicIsoperimetricProblem} yields the following useful proposition; for completeness, we sketch a proof. 

\begin{proposition} \label{prop:max-principle}
Let $p_i \leq \bar v_1 \leq \bar v_2 \leq p_{i+1}$, where $\{p_i\}$ are as above. If $\I(\bar v_j) = \I_m(\bar v_j)$ for $j=1,2$, then necessarily $\I(\bar v) = \I_m(\bar v)$ for all $\bar v \in [\bar v_1,\bar v_2]$. Moreover, any minimizer of volume $\bar v \in (\bar v_1 , \bar v_2)$ must be enclosed by a round sphere if $i \in \{0,4\}$, a round cylinder about a shortest closed geodesic if $i \in \{1,3\}$, or two parallel totally geodesic tori if $i=2$. 
\end{proposition}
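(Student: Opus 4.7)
My plan is to adapt the maximum-principle argument of Hauswirth--P\'erez--Romon--Ros \cite[Lemma 8]{HPRR-PeriodicIsoperimetricProblem}, exploiting the mismatch between the non-negative right-hand side in the ODE of Proposition \ref{prop:ODI} for $\I_m$ and the strictly negative right-hand side of the viscosity ODI for $\I$ on any interval where minimizers must have high genus. First note that $\I \leq \I_m$ globally, since the eighth spheres, quarter cylinders, and flat planes are admissible competitors realizing $\I_m$. If the asserted identity $\I \equiv \I_m$ on $[\bar v_1,\bar v_2]$ fails, then by continuity one can find a maximal open subinterval $(a,b) \subset [\bar v_1,\bar v_2]$ on which $\I < \I_m$ with $\I(a) = \I_m(a)$ and $\I(b) = \I_m(b)$. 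For each $\bar v \in (a,b)$, the contrapositive of Proposition \ref{prop:genus} forces every minimizer at $\bar v$ to satisfy $g(\Sigma) \geq 2$, so $\chi(\Sigma) \leq -2$; Proposition \ref{prop:ODI} then gives the strict viscosity ODI $\I^2 \I'' + \I(\I')^2 \leq -\pi/\beta$ on $(a,b)$, while $\I_m$ satisfies the ODE $\I_m^2 \I_m'' + \I_m(\I_m')^2 = \pi\chi_i/(2\beta) \geq 0$ on $(p_i,p_{i+1})$.

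\textbf{Maximum principle step.} Consider $D := \I_m - \I$, which is continuous, non-negative on $[a,b]$, vanishes at the endpoints, and is strictly positive in the interior. Fix an interior maximizer $\bar v_0 \in (a,b)$ of $D$. The viscosity hypothesis supplies a $C^2$ function $\iota \geq \I$ defined near $\bar v_0$ with $\iota(\bar v_0) = \I(\bar v_0)$ and $\iota^2 \iota'' + \iota(\iota')^2 \leq -\pi/\beta$ at $\bar v_0$. Then $\tilde D := \I_m - \iota \leq D$ locally with equality at $\bar v_0$, so $\bar v_0$ is also a local maximum of the smooth $\tilde D$, giving $\iota'(\bar v_0) = \I_m'(\bar v_0) =: k$ and $\iota''(\bar v_0) \geq \I_m''(\bar v_0)$. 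Writing $a_0 := \I(\bar v_0) < \I_m(\bar v_0) =: A_0$ and solving the ODE and ODI for the two second derivatives produces
\[
\frac{\pi\chi_i}{2\beta A_0^2} - \frac{k^2}{A_0} = \I_m''(\bar v_0) \leq \iota''(\bar v_0) \leq -\frac{\pi}{\beta a_0^2} - \frac{k^2}{a_0},
\]
which, after multiplying by $a_0^2 A_0^2 > 0$ and rearranging, reads
\[
\frac{\pi\chi_i a_0^2}{2\beta} + \frac{\pi A_0^2}{\beta} \leq A_0 a_0 k^2 (a_0 - A_0) \leq 0.
\]
Since $\chi_i \geq 0$ and $A_0 > 0$ the left-hand side is strictly positive, contradicting this chain. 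Hence $\I \equiv \I_m$ on $[\bar v_1,\bar v_2]$.

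\textbf{Identification of minimizers.} Finally, to classify a minimizer at an arbitrary $\bar v_0 \in (\bar v_1,\bar v_2)$, I rule out $g(\Sigma) \geq 2$ by the same mechanism: the ODI $\I^2\I'' + \I(\I')^2 \leq -\pi/\beta$ would hold at $\bar v_0$ in the viscosity sense, but now $\I \equiv \I_m$ is smooth on $(p_i,p_{i+1})$, so any admissible $C^2$ test function $\iota \geq \I = \I_m$ locally with $\iota(\bar v_0) = \I_m(\bar v_0)$ touches $\I_m$ from above at $\bar v_0$, forcing $\iota'(\bar v_0) = \I_m'(\bar v_0)$ and $\iota''(\bar v_0) \geq \I_m''(\bar v_0)$; plugging into the ODE then yields $\iota^2\iota'' + \iota(\iota')^2 \geq \pi\chi_i/(2\beta) \geq 0 > -\pi/\beta$, contradiction. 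Thus every minimizer has $g(\Sigma) \leq 1$, and Proposition \ref{prop:genus} identifies it as a round sphere ($i \in \{0,4\}$), a round cylinder about a shortest closed geodesic ($i \in \{1,3\}$), or a pair of parallel totally geodesic tori ($i=2$), as required. The main technical care lies in the correct use of the viscosity formulation at the interior maximum of $D$; once that is handled, the two comparisons (first to rule out high genus on $(a,b)$, then to rule it out on all of $(\bar v_1,\bar v_2)$) are essentially the same algebraic step.
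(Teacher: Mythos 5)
Your proof is correct and follows essentially the same strategy as the paper's (which in turn cites the maximum-principle argument of Hauswirth--P\'erez--Romon--Ros). The one genuine refinement over the paper's sketch is that you carry out the viscosity step carefully: the paper simply says ``Assume for simplicity that $\I$ is twice differentiable at $\bar v$,'' whereas you correctly transfer the first- and second-derivative conditions from the interior maximum of $D = \I_m - \I$ to the $C^2$ test function $\iota$ supplied by the viscosity hypothesis, making the argument rigorous without the differentiability assumption. Two cosmetic remarks: (a) you do not actually need to pass to a maximal subinterval $(a,b)$ where $\I < \I_m$ -- simply taking $\bar v_0$ to be any point of $[\bar v_1,\bar v_2]$ where $D$ attains its (assumed positive) maximum already lands you at an interior point, since $D$ vanishes at the endpoints; and (b) the final algebraic contradiction, once multiplied out, reduces to the simpler observation that the middle quantity $\frac{\pi\chi(\Sigma)}{2\beta}/\I(\bar v_0)^2$ is negative while $\frac{\pi\chi_i}{2\beta}/\I_m(\bar v_0)^2$ is non-negative, so the polynomial manipulation is a bit more than is needed. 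Neither point affects correctness.
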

\begin{proof}[Sketch of proof]
Recall that $\I$ is continuous on $[0,1]$. Assume that $\min_{\bar v \in [v_1,v_2]} \I(\bar v) - \I_m(\bar v) < 0$. Since $\I(\bar v_j) = \I_m(\bar v_j)$, $j=1,2$, the minimum must be attained at some $\bar v \in (\bar v_1,\bar v_2)$. Let $\Sigma$ be the boundary of a minimizer of volume $\bar v$. Since $\I(\bar v) < \I_m(\bar v)$, we deduce from Proposition \ref{prop:genus} that $g(\Sigma) \geq 2$, and hence $\chi(\Sigma) \leq -2 < 0 \leq \chi_i$. 

By Proposition \ref{prop:ODI}, $\I$ satisfies (\ref{eq:prop-ODI}) at $\bar v$ in the viscosity sense. 
By replacing $\I$ by the function $\iota$ from Definition \ref{def:viscosity} if necessary, we may assume in the ensuing discussion that $\I$ is in $C^2(N_{\bar v})$ for some neighborhood $N_{\bar v}$ of $\bar v$ contained in $(\bar v_1,\bar v_2) \subset (p_i, p_{i+1})$, as $\iota - \I_m$ still attains its minimum on $N_{\bar v}$ at $\bar v$. 
As $\bar v \in (p_i , p_{i+1})$ is a minimum point, 
we have $\I'(\bar v) = \I_m'(\bar v)$ and $\I''(\bar v) \geq \I_m''(\bar v)$. Applying Proposition \ref{prop:ODI}, we deduce:
\[
\I''(\bar v) + \frac{\I'(\bar v)^2}{\I(\bar v)} \leq \frac{\frac{\pi}{2 \beta} \chi(\Sigma)}{\I(\bar v)^2} < \frac{\frac{\pi}{2 \beta} \chi_i}{\I_m(\bar v)^2} = \I_m''(\bar v) + \frac{\I_m'(\bar v)^2}{\I_m(\bar v)} \leq \I''(\bar v) + \frac{\I'(\bar v)^2}{\I(\bar v)} ,
\]
yielding a contradiction. Consequently $\I \geq \I_m$ on $[\bar v_1 , \bar v_2]$, but we also have $\I \leq \I_m$ trivially, yielding $\I = \I_m$ on $[\bar v_1 , \bar v_2]$. It then follows from Proposition \ref{prop:ODI} that $\chi(\Sigma) \geq \chi_i \geq 0$ for the boundary $\Sigma$ of any minimizer of volume $\bar v \in (\bar v_1 , \bar v_2)$. Therefore $g(\Sigma) \leq 1$, and we conclude by Proposition \ref{prop:genus} that it must be one of the three conjectured minimizers according to the value of $i \in \{0,\ldots,4\}$.
\end{proof}

Theorem \ref{thm:T3-ODE} now follows immediately.
\begin{proof}[Proof of Theorem \ref{thm:T3-ODE}]
Define $V_s$, $V_c$ and $V_p$ as the subsets of all $\bar v \in (0,1/2]$ so that there exists an isoperimetric minimizer $E$ in $\T^3(\beta)$ of (weighted) volume $\bar v$ enclosed by a round sphere, a round cylinder and two parallel totally geodesic tori, respectively. Denote $v_s := \sup V_s$, $v_{c-} := \inf V_c$, $v_{c+} := \sup V_s$ and $v_p := \inf V_p$. As mentioned in the Introduction, it is known that spheres and parallel tori are minimizing for $\bar v \in (0,\eps_s]$ and $\bar v \in [1/2-\eps_p,1/2]$, and hence $v_s > 0$ and $v_p < 1/2$; however, it may be that $V_c$ is empty,  in which case we set $v_{c-} = 1/\pi$ and $v_{c+} = \frac{4 \pi}{81} \beta^2$ (as this renders the assertion of Theorem \ref{thm:T3-ODE} regarding cylinders meaningless). Recalling the corresponding ranges of volumes where each of the candidates outperforms the other types, we have $0 < v_s \leq \frac{4 \pi}{81} \beta^2 \leq v_{c-} , v_{c+} \leq \frac{1}{\pi} \leq v_p < 1/2$. 

Compactness of the spaces of minimizers of the above three forms, continuity of their surface areas as a function of their volumes, and continuity of the isoperimetric profile $\I$ together ensure that the supremum and infimum in the above definitions of $v_s$ and $v_p$ are attained, and the same holds for $v_{c-},v_{c+}$ assuming that $V_c$ is non-empty.  

Proposition \ref{prop:max-principle} then implies that $V_s = (0,v_s]$, $V_c$ is either empty or coincides with $[v_{c-} , v_{c+}]$, and $V_p = [v_p,1/2]$; furthermore, in the relative interiors of $V_s$, $V_c$ and $V_p$ in $(0,1/2]$, a minimizer must be enclosed by a sphere, cylinder or parallel tori. 
Note that to obtain the latter for $V_p$, we should apply Proposition \ref{prop:max-principle} on the interval $[v_p , 1-v_p] \subset [p_2,p_3]$. 
This concludes the proof of the first assertion. 

Finally, if $\I(\bar v) = \I_m(\bar v)$ for both $\bar v \in \{\frac{4 \pi}{81} \beta^2,\frac{1}{\pi}\}$, then necessarily $v_s = v_{c-} = \frac{4 \pi}{81} \beta^2$ and $v_{c+} = v_p = \frac{1}{\pi}$, confirming that $\I \equiv \I_m$ on the entire $(0,1/2]$, and thus on the entire $(0,1)$ by symmetry. This establishes the second assertion and concludes the proof. 
\end{proof}

It remains to establish the final assertions of Theorem \ref{thm:Q3-main}. We proceed employing the notation from Theorem \ref{thm:T3-ODE} and its proof. 

\begin{proof}[Proof of part (\ref{it:Q3-2}) when $\bar v > \frac{4\pi}{81}$ and part (\ref{it:Q3-3}) of Theorem \ref{thm:Q3-main}]
Let us assume that $v_{c-} < v_{c+}$ (so in particular, $V_c \neq \emptyset$); by part (\ref{it:Q3-2}) this is guaranteed to be the case whenever $\beta < 0.919431$. If $v_{c+} = \frac{1}{\pi}$ then necessarily $v_p = \frac{1}{\pi}$ and $\I \equiv 1$ on $[\frac{1}{\pi} , \frac{1}{2}]$, so there is nothing to prove; let us therefore assume that $v_{c+} < \frac{1}{\pi}$. 

By concavity of $\I$, the left-derivative $\kappa_p := \I^{',-}(v_p)$ exists, and since $\I \equiv 1$ on the non-empty $(v_p,1/2)$ (or simply since $\I$ is symmetric about $1/2$), we have $\kappa_p \geq 0$. Similarly, the right-derivative $\kappa_{c+} := \I^{',+}(v_{c+})$ exists, and since $\I = \I_m$ on the non-empty $(v_{c-},v_{c+})$, we have $\kappa_{c+} \leq \I'_m(v_{c+})$.

By Proposition \ref{prop:genus}, we know that the genus of all minimizers of volume $\bar v \in (v_{c+},v_p)$ is at least $2$, and so by Proposition \ref{prop:ODI} the isoperimetric profile $\I$ satisfies on $(v_{c+},v_p)$ (in the viscosity sense):
\[
\I^2 \I'' + \I (\I')^2 \leq -\frac{\pi}{\beta}  ~,~ \I(v_p) = 1 ~,~ \I^{',-}(v_p) = \kappa_p .
\]
Defining $F$ to be solution to the ODE
\[
F^2 F'' + F (F')^2 = - \frac{\pi}{\beta} ~,~ F(v_p) = 1 ~,~ F'(v_p) = 0 (\leq \kappa_p) ,
\]
on the maximal interval $J$ where a positive solution exists, it follows by the maximum principle (see e.g. \cite[Lemma 8 (iv)]{HPRR-PeriodicIsoperimetricProblem}) that necessarily $\I \leq F$ on $[v_{c+},v_p] \subset J$. Note that $F \leq 1$, by comparing to the ODE with $0$ on the right-hand-side.
Trading off precision to gain simplicity, if we define $G = F^2/2$, our ODE becomes:
\[
G'' =  - \frac{\pi}{\beta} \frac{1}{F} \leq - \frac{\pi}{\beta} ~,~ G(v_p) = \frac{1}{2} ~,~ G'(v_p) = 0 ,
\]
and hence:
\[
F(\bar v) \leq F_{v_p}(\bar v) := \sqrt{1 - \frac{\pi}{\beta} (\bar v - v_p)^2} \;\;\; \forall \bar v \in J . 
\]
We conclude that $\I \leq F_{v_p}$ on $J$, which would be impossible if $F_{v_p}(\bar v) < \I_m(\bar v) = \sqrt{ \pi \bar v}$ for all $\bar v \in [0,\frac{1}{\pi}] \cap J$. Since changing $v_p$ only translates the graph of $F_{v_p}$, this proves that $v_p \leq v_{p,\max}$, where $v_{p,\max}$ is the critical value for which the graphs of $F_{v_{p,\max}}(\bar v)$ and $\sqrt{\pi \bar v}$ meet tangentially. In other words, $v_{p,\max}$ is defined by demanding that the quadratic equation:
\[
1 - \frac{\pi}{\beta} (x - v_{p,\max})^2 = \pi x 
\]
have a double root, or equivalently, that
\[
\frac{1}{\beta} y^2 + y + v_{p,\max} - \frac{1}{\pi} = 0
\]
have vanishing discriminant, yielding:
\[
v_{p,\max} = \frac{1}{\pi} + \frac{\beta}{4},
\]
and establishing part (\ref{it:Q3-3}). 
Note that the graphs of $F_{v_{p,\max}}(\bar v)$ and $\sqrt{\pi \bar v}$ meet tangentially at the double root $\frac{1}{\pi} - \frac{\beta}{4}$. 

An identical argument shows that $\I \leq F$ on $J \cap [v_{c+} , 1/2]$, where $F$ is now defined to be the solution to:
\[
F^2 F'' + F (F')^2 = - \frac{\pi}{\beta} ~,~ F(v_{c+}) = \I_m(v_{c+}) ~,~ F'(v_{c+}) = \I'_m(v_{c+}) (\geq \kappa_{c+}) ,
\]
on the maximal interval $J$ where a positive solution exists. Defining $G = F^2/2$, we see that as long as $F \leq 1$, $G$ satisfies
\[
G'' =  - \frac{\pi}{\beta} \frac{1}{F} \leq - \frac{\pi}{\beta} ~,~ G(v_{c+}) = \frac{\pi}{2} v_{c+}  ~,~ G'(v_{c+}) = \frac{\pi}{2} .
\]
Consequently, $F \leq F_{v_{c+}}$ as long as $F_{v_{c+}} \leq 1$, where:
\[
F_{v_{c+}}(\bar v) := \sqrt{ \pi v_{c+} + \pi (\bar v - v_{c+}) - \frac{\pi}{\beta} (\bar v - v_{c+})^2 } . 
\]
This would be impossible if $F_{v_{c+}} < 1$ on $J$, and the extremal case is when the graphs of $F_{v_{c+}}$ and $1$ meet tangentially. But this case exactly coincides with the case we've already examined above, and we deduce that this occurs at $v_{c+,\min} = \frac{1}{\pi} - \frac{\beta}{4}$. It remains to note that the graphs of $F_{v_{c+}}$ as a function of $v_{c+}$ are nested, and so if $v_{c+} < v_{c+,\min}$ we would have $F_{v_{c+}} < 1$ which is impossible. It follows that $v_{c+} \geq v_{c+,\min}$, establishing part (\ref{it:Q3-2}) and concluding the proof.
\end{proof}

\section{Counterexample on high-dimensional cubes} \label{sec:high-dim-cube}

In this section we establish Theorem \ref{thm:Qn} asserting the falsehood of Conjecture \ref{conj:Qn} regarding the $n$-dimensional cube $\Q^n$ for large enough  $n$. As already mentioned in the Introduction, the argument for demonstrating that the conjecture is false is the same as the one used by Pedrosa and Ritor\'e in \cite{PedrosaRitore-Products}, but requires some more computation. 
The idea is to show that when $n \geq 10$, a unit-radius neighborhood $B_1$ of a $0$-dimensional face (a vertex) of $\Q^n$ has strictly less surface area than all of the other tubular neighborhoods of $k$-dimensional faces for $k=1,\ldots,n-1$ (or their complements) of the same volume. However, $B_1$ cannot be an isoperimetric minimizer as it is tangential to the boundary of $\Q^n$; equivalently, in view of Remark \ref{rem:reflect-slab}, one may apply this argument to the $n$-dimensional flat torus $\T^n = \R^n / (2 \Z^n)$, and argue that since a geodesic ball of radius $1$ touches itself tangentially, it cannot be an isoperimetric minimizer (by e.g. \cite[Lemma 30.2]{MaggiBook}). 
 
It remains to verify that $B_1$ has strictly less surface area than all of the other tubular neighborhoods of $k$-dimensional faces (or their complements) of the same volume. As for $\Q^3$, when $\bar v \leq 1/2$ there is actually no need to test the complements of tubular neighborhoods, since a minimizer $E$ of volume $\bar v \leq  1/2$ will necessarily have non-negative mean-curvature by the concavity and symmetry of the isoperimetric profile $\I(\Q^n)$ (recall Proposition \ref{prop:I-properties}) in conjunction with Proposition \ref{prop:lambda-I} (which applies to $\Sigma = \partial^* E$ in all dimensions $n$). 
A radius $r \in [0,1]$ tubular neighborhood of a vertex of $\Q^k$ ($k \geq 1$) has volume and surface area given by
\[
V_k(r) = \frac{1}{2^k} \omega_k r^k ~,~ A_k(r) = \frac{1}{2^k} k \omega^{1/k}_k r^{k-1} ,
\]
where:
\[
\omega_k = \frac{\pi^{k/2}}{\Gamma(k/2 + 1)}
\]
is the volume of the $k$-dimensional unit-ball. Clearly, the same formulas apply to a radius $r \in [0,1]$ tubular neighborhood of an $(n-k)$-dimensional face of $\Q^n$ for all $n \geq k$. 
It follows that as long as $r \in [0,1]$, i.e.~as long as $\bar v \in [0,v_s(k)]$ where
\[
v_s(k) := \frac{\omega_k}{2^k} ,
\]
then a tubular neighborhood of an $(n-k)$-dimensional face of volume $\bar v$ has surface area $\I_m^{(k)}(\bar v)$, where:
\[
\I_m^{(k)}(\bar v) := \frac{k \omega_k^{1/k}}{2} {\bar v}^{\frac{k-1}{k}}.
\]
The isoperimetric conjecture for the $n$-dimensional cube $\Q^n$ thus predicts that its isoperimetric profile $\I(\Q^n)$ coincides on $[0,1/2]$ with:
\[
\I_m(\bar v) := \min \{ \I_m^{(k)}(\bar v) \; ; \; \bar v \in [0,v_s(k)] ~ ,~  k=1,\ldots,n \} ~,~ \bar v \in [0,1/2] ~ .
\]

\begin{lemma} \label{lem:omega-decreasing}
$\omega_n$ is strictly decreasing in $n \in \mathbb{N}$ for $n \geq 6$. $v_s(n)$ is strictly decreasing in $n \in \mathbb{N}$ and $v_s(4) < 1/2$. 
\end{lemma}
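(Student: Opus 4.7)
The plan is to reduce both monotonicity statements to the one-step ratio $r_n := \omega_{n+1}/\omega_n = \sqrt{\pi}\,\Gamma(n/2+1)/\Gamma(n/2+3/2)$, controlling it via the classical two-step recursion $\omega_{n+2}/\omega_n = 2\pi/(n+2)$ (which is immediate from $\Gamma(n/2+2) = (n/2+1)\Gamma(n/2+1)$). Monotonicity of $\omega_n$ for $n \geq 6$ is equivalent to $r_n < 1$ for $n \geq 6$, while monotonicity of $v_s(n) = \omega_n/2^n$ is equivalent to $r_n < 2$ for $n \geq 1$, since $v_s(n+1)/v_s(n) = r_n/2$.

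First, I would observe that $r_n$ itself satisfies a simple recursion:
\[
\frac{r_{n+2}}{r_n} = \frac{\omega_{n+3}/\omega_{n+1}}{\omega_{n+2}/\omega_n} = \frac{2\pi/(n+3)}{2\pi/(n+2)} = \frac{n+2}{n+3} < 1 .
\]
Hence $r_n$ is strictly decreasing along both the even and odd subsequences. This reduces the monotonicity of $\omega_n$ for $n \geq 6$ to the two finite checks $r_6 < 1$ and $r_7 < 1$. A direct computation gives $r_6 = \omega_7/\omega_6 = 96/105$ and $r_7 = \omega_8/\omega_7 = 105\pi/384$, both visibly less than $1$. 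This yields $\omega_{n+1} < \omega_n$ for all $n \geq 6$.

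For $v_s(n)$, since $r_n$ decreases along the even and odd subsequences, its maximum over $n \geq 1$ is $\max(r_1,r_2) = \max(\pi/2,\,4/3) = \pi/2 < 2$. Hence $v_s(n+1)/v_s(n) = r_n/2 < 1$ for every $n \geq 1$, establishing the second monotonicity. Finally, $v_s(4) = \omega_4/16 = \pi^2/32 < 1/2$ since $\pi^2 < 16$.

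The only real subtlety is that the usual two-step recursion $\omega_{n+2}/\omega_n = 2\pi/(n+2)$ handles the even and odd subsequences separately but does not by itself compare $\omega_{2k}$ with $\omega_{2k+1}$. The slickest way around this is precisely the observation that $r_n$ itself satisfies a monotone recursion, reducing the whole lemma to a handful of explicit evaluations at small $n$; this is the main (small) obstacle, and once it is noticed the remainder is bookkeeping.
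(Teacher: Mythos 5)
Your proof is correct, and it takes a genuinely different route from the paper's. The paper handles the second claim by factorizing $v_s(n) = (\sqrt{\pi}/2)^n / \Gamma(n/2+1)$ and observing that the numerator $(\sqrt{\pi}/2)^n$ is strictly decreasing (since $\pi < 4$) while $\Gamma(n/2+1)$ is strictly increasing for $n \geq 1$; for the first claim it merely states the verification is elementary. Your approach instead packages both monotonicity statements as one-step ratio estimates $r_n = \omega_{n+1}/\omega_n$, observes that the two-step volume recursion $\omega_{n+2}/\omega_n = 2\pi/(n+2)$ implies $r_{n+2}/r_n = (n+2)/(n+3) < 1$, and thereby reduces everything to the finite checks $r_6, r_7 < 1$ and $\max(r_1, r_2) = \pi/2 < 2$. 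The paper's factorization is shorter for the $v_s(n)$ claim but implicitly relies on knowing where $\Gamma$ is increasing, whereas your ratio-recursion uses only the functional equation $\Gamma(x+1) = x\Gamma(x)$, supplies a uniform mechanism that simultaneously settles the unproved ``elementary'' first claim, and makes explicit exactly which small-$n$ cases need to be checked. Both are valid; yours is arguably more self-contained and systematic at the cost of a few extra lines.
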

\begin{proof}
The first claim is elementary to verify. The second is trivial since $\pi < 4$, and hence $\pi^{n/2} / 2^n$ is strictly decreasing, while $\Gamma(n/2+1)$ is strictly increasing. 
\end{proof}

As explained above, to demonstrate that Conjecture \ref{conj:Qn} is false for $n \geq 10$, it remains to show that the unit-radius tubular neighborhood of a vertex of $\Q^n$ (having volume $v_s(n)<v_s(4) < 1/2$) has strictly smaller surface area than all of the other tubular neighborhoods of $(n-k)$-dimensional faces for $k=1,\ldots,n-1$ of volume $v_s(n)$. We will actually show that this sequence is strictly decreasing in $k$ when $n \geq 10$:

\begin{figure}
\begin{center}
        \includegraphics[scale=0.4]{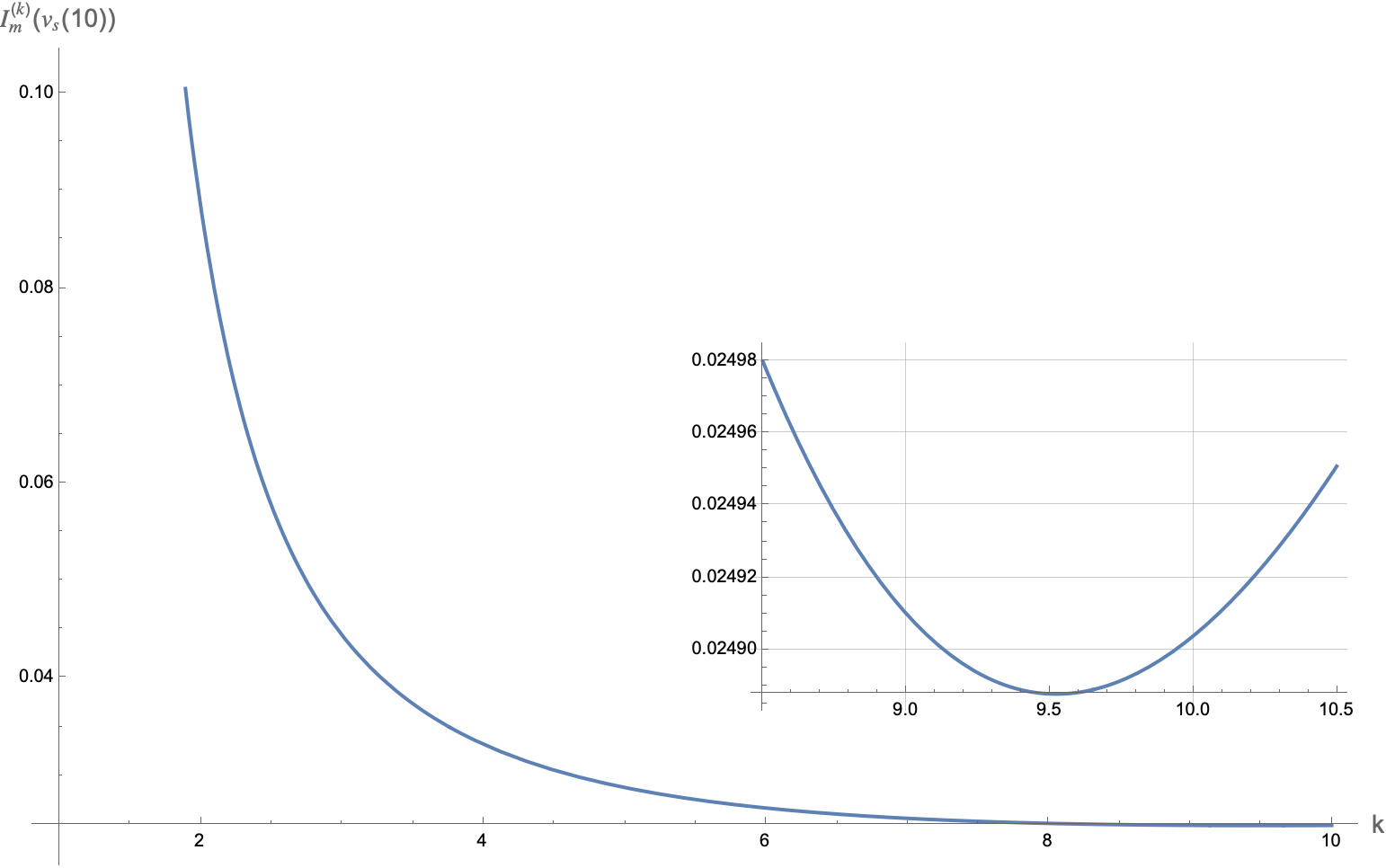}
        \includegraphics[scale=0.4]{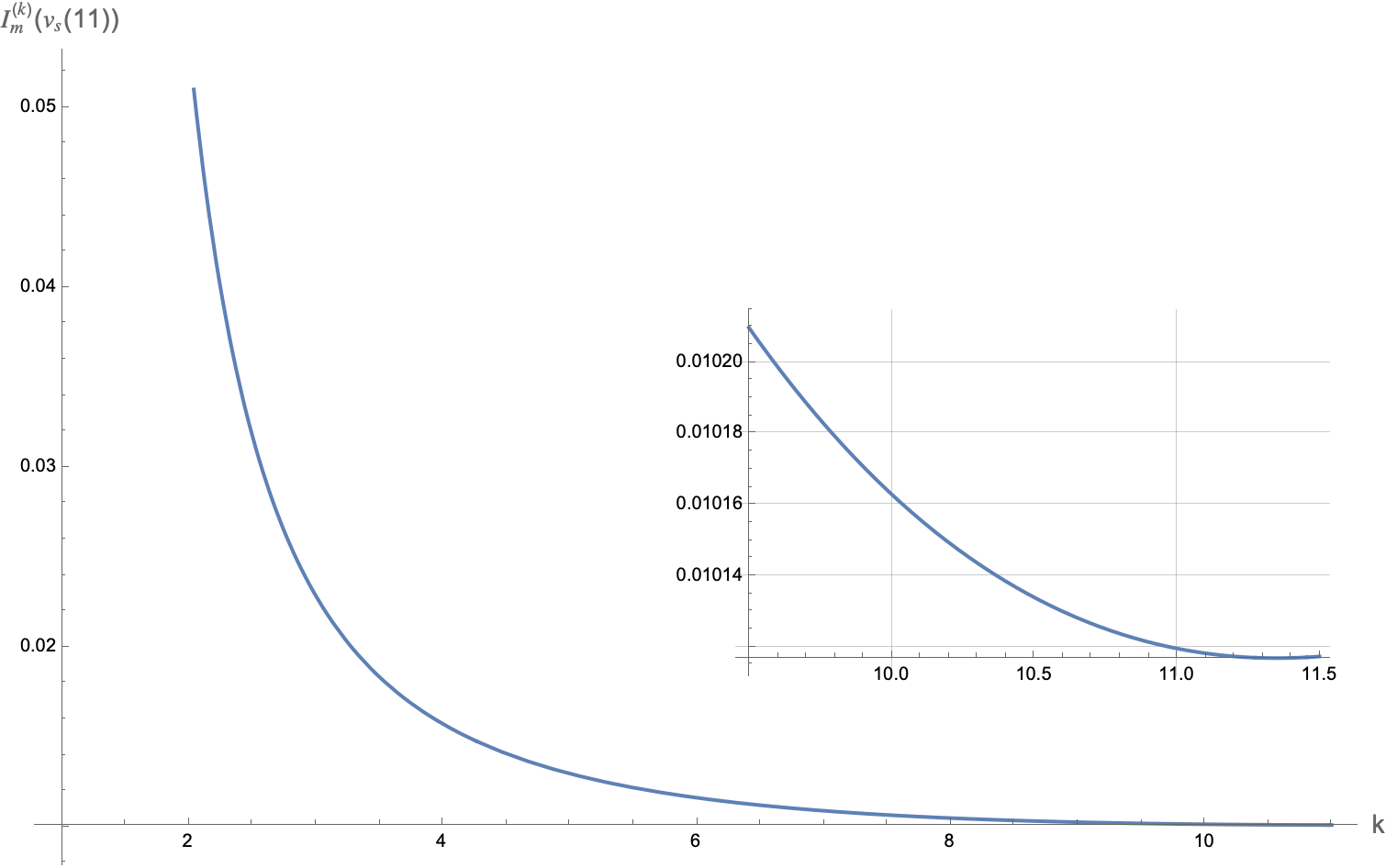}
     \end{center}
     \caption{
         \label{fig:high-dim-10}
         By direct verification, $k \mapsto \I_m^{(k)}(v_s(10))$ is strictly decreasing on the sequence of integers $\{1,\ldots,10\}$, although not on the interval $[9,10]$. $k \mapsto \I_m^{(k)}(v_s(11))$ is strictly decreasing on the sequence of integers $\{1,\ldots,11\}$ (and on the entire $[1,11]$ as well). 
     }
\end{figure}

\begin{lemma}
For all $n \geq 10$, $\{1,\ldots,n\} \ni k \mapsto \I_m^{(k)}(v_s(n))$ is strictly decreasing. This is false for $n \leq 9$. 
\end{lemma}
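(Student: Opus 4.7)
The plan is to reformulate strict monotonicity as a collection of ratio inequalities and exploit a free parameter. First I would record the clean form
\[
\I_m^{(k)}(v_s(n)) = k \cdot v_s(k)^{1/k} \cdot v_s(n)^{(k-1)/k},
\]
obtained by substituting $v_s(k) = \omega_k/2^k$ into the definition of $\I_m^{(k)}(\bar v)$. Strict monotonicity on $\{1,\ldots,n\}$ is then equivalent to $\rho_k(n) < 1$ for all $k \in \{1,\ldots,n-1\}$, where
\[
\rho_k(n) := \frac{\I_m^{(k+1)}(v_s(n))}{\I_m^{(k)}(v_s(n))} = \left(1 + \frac{1}{k}\right) \cdot \frac{v_s(k+1)^{1/(k+1)}}{v_s(k)^{1/k}} \cdot v_s(n)^{1/(k(k+1))},
\]
using the algebraic identity $\frac{k}{k+1} - \frac{k-1}{k} = \frac{1}{k(k+1)}$.

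The crucial observation is that $\log \rho_k(n)$ depends on $n$ only through $\frac{1}{k(k+1)} \log v_s(n)$. Since $v_s(n) \in (0,1)$ is strictly decreasing in $n$ by Lemma \ref{lem:omega-decreasing}, $\log \rho_k(n)$ is strictly decreasing in $n$ as well. Hence it suffices to verify $\rho_k(n) < 1$ at the minimal admissible value $n_k^\ast := \max(k+1, 10)$. This splits the infinitely many inequalities into two regimes: (A) for $k \in \{1,\ldots,9\}$, the nine finite checks $\rho_k(10) < 1$, which I would verify by direct numerical evaluation of $v_s(1),\ldots,v_s(10)$; and (B) for $k \geq 10$, the infinite family $\rho_k(k+1) < 1$. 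In regime (B), the exponents on $v_s(k+1)$ collapse via $\frac{1}{k+1} + \frac{1}{k(k+1)} = \frac{1}{k}$, giving the simpler expression
\[
\rho_k(k+1) = \left(1 + \frac{1}{k}\right)\left(\frac{\omega_{k+1}}{2\omega_k}\right)^{1/k}.
\]

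For (B), I would take logarithms and multiply by $k$: the condition becomes $k\log(1+1/k) + \log(\omega_{k+1}/\omega_k) < \log 2$. Writing $\omega_{k+1}/\omega_k = \sqrt{\pi}\,\Gamma(k/2+1)/\Gamma(k/2+3/2) \sim \sqrt{2\pi/k}$ and using $k\log(1+1/k) < 1$, it suffices to have $\omega_{k+1}/\omega_k < 2/e$. Stirling-type bounds give this for $k > \pi e^2/2 \approx 11.59$, i.e.~for $k \geq 12$. The borderline values $k \in \{10, 11\}$ will require the sharper inequality $k\log(1+1/k) \leq 1 - \frac{1}{2(k+1)}$ combined with explicit computation of $\Gamma(k/2+1)/\Gamma(k/2+3/2)$ from half-integer factorials; this verifies $\rho_{10}(11),\rho_{11}(12) < 1$ with a small but positive margin. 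For the failure claim at $n \leq 9$, I would simply exhibit non-monotonicity by direct evaluation: for instance, at $n = 9$ one finds $\I_m^{(7)}(v_s(9)) < \I_m^{(8)}(v_s(9))$, and analogous explicit checks handle $n = 1,\ldots, 8$.

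The main obstacle is the extreme tightness of the bound near $k = 10$: the naive inequality $(1+1/k)^k < e$ is decisive only for $k \geq 12$, whereas $\rho_{10}(11) \approx 0.9957$ leaves almost no slack, so the real content of the argument concentrates in a handful of careful numerical verifications at small $k$ rather than in a single clean asymptotic estimate. The monotonicity in $n$ is what makes this scheme viable, since it collapses a two-parameter family of inequalities into a one-parameter family indexed by $k$ alone.
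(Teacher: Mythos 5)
Your proof uses the same core observation as the paper's, just organized more directly. The paper sets $F(k,n) := 2^{n/k}k(\omega_k/\omega_n)^{1/k}$ (proportional to $\I_m^{(k)}(v_s(n))$), computes $\partial_n\log F(k,n) = k^{-1}(\log 2 - \tfrac{d}{dn}\log\omega_n)$, notes this is positive and strictly decreasing in $k$, and runs an induction on $n$ from $n=11$, handling the new value $k=n_0+1$ at each step via a derivative comparison and log-convexity of $\Gamma$. Your $\rho_k(n) = F(k+1,n)/F(k,n)$, and ``$\partial_n\log F$ strictly decreasing in $k$'' is exactly ``$\rho_k(n)$ strictly decreasing in $n$,'' so the key observation is identical. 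Your reorganization collapses the induction into a single reduction to $\rho_k(\max(k+1,10))<1$, which is cleaner; your boundary case $\rho_k(k+1)<1$ is precisely the paper's boundary step $F(n_0,n_0+1)>F(n_0+1,n_0+1)$. The two treatments of that boundary differ in flavor: you use a Stirling-type/Gautschi bound on $\omega_{k+1}/\omega_k$ together with the trapezoid (Pad\'e) estimate $k\log(1+1/k)\le 1-\tfrac{1}{2(k+1)}$, while the paper reduces to $\log 2 + \tfrac{d}{dn}\log\Gamma(n/2+1) - \tfrac12\log\pi > 1$ and uses log-convexity of $\Gamma$. Both work; yours is arguably more explicit. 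Two small points to tighten: (i) to make regime (B) rigorous you need the actual inequality $\omega_{k+1}/\omega_k \le \sqrt{2\pi/(k+1)}$ (from Gautschi), not just the asymptotic $\sim\sqrt{2\pi/k}$; and (ii) your illustrative failure at $n=9$ is misstated --- one checks $\I_m^{(7)}(v_s(9)) > \I_m^{(8)}(v_s(9))$, and the actual violation is $\I_m^{(8)}(v_s(9)) < \I_m^{(9)}(v_s(9))$. Neither affects the validity of the argument.
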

\begin{remark}
We restrict to integer values of $k$ since otherwise the claim would be false for $n=10$. 
\end{remark}
\begin{proof}
The negative claim for $n \leq 9$ and positive claims for $n=10$ and $n=11$ are verified by explicit computation; see Figure \ref{fig:high-dim-10}. To prove that the strict decrease remains true for all $n \geq 11$, we argue as follows. $\I_m^{(k)}(\bar v) = k (\omega_k / \bar v)^{1/k} \bar v$, so fixing $\bar v = v_s(n)$, the claim is equivalent to showing that $k (\omega_k / v_s(n))^{1/k}$ is strictly decreasing in $k \in \{1,\ldots,n\}$, i.e. that $F(k,n) := 2^{n/k} k (\omega_k / \omega_n)^{1/k}$ is strictly decreasing in $k \in \{1,\ldots,n\}$. We argue by induction on $n$, starting at $n=11$. Assume this is true for $n_0$, and write:
\[
\log F(k,n) = \frac{n}{k} \log 2 + \log k + \frac{1}{k} (\log \omega_k - \log \omega_n) . 
\]
Taking partial derivative in $n$, we see that:
\[
\partial_n \log F(k,n) = \frac{\log 2 - \frac{d}{dn} \log \omega_n}{k} . 
\]
Recall from Lemma \ref{lem:omega-decreasing}  that $\omega_n$ is strictly decreasing in $n$ when $n \geq 6$. Consequently, the numerator is positive, and so the derivative is positive and strictly decreasing in $k$. Integrating this on $[n_0,n_0+1]$ and using the induction hypothesis at $n_0$, we confirm that $\log F(k,n_0+1)$ remains strictly decreasing in $k \in \{1,\ldots,n_0\}$.

To extend this to $k=n_0+1$ and confirm that $\log F(n_0,n_0+1) > \log F(n_0+1,n_0+1) = \log 2(n_0+1)$, it is enough to show that $\partial_n \log F(n_0,n)  > \partial_n \log (2 n) = \frac{1}{n}$ for all $n \in [n_0,n_0+1]$. Therefore, it is enough to show that for $n \geq 11$:
\[
\log 2 - \frac{d}{dn} \log \omega_n > 1 ,
\]
or equivalently:
\[
\log 2 + \frac{d}{dn} \log \Gamma(n/2+1) - \frac{1}{2} \log \pi  > 1 . 
\]
This is verified by explicit computation for $n=11$, and since $\Gamma(x) = \int_0^\infty t^{x-1} e^{-t} dt$ is log-convex (e.g. by H\"older's inequality) on $(0,\infty)$, this remains true for all $n \geq 11$ as well. 
\end{proof}

\section{Gaussian Slabs} \label{sec:Gn}

Recall that $\G_T^n$ denotes the Gaussian slab of width $T > 0$ over the base $\G^{n-1} = (\R^{n-1} , \abs{\cdot}^2, \gamma^{n-1})$, where $\gamma^{n-1}$ denotes the standard Gaussian measure. Recall from the Introduction that the isoperimetric minimizers in $\G^{n-1}$ are half-planes, and therefore $\I(\G^{n-1}) = \I(\G^1) = \I_\gamma = \varphi_{\gamma} \circ \Phi_{\gamma}^{-1}$, where $\varphi_{\gamma}$ denotes the standard Gaussian density on $\R$ and $\Phi_{\gamma}(s) = \int_{-\infty}^s \varphi_{\gamma}(x) dx$. As the half-planes in $\G^{n-1}$ may be chosen to be nested, 
 it follows by Corollary \ref{cor:nested} that $\I_T := \I(\G_T^n)$ coincides with the based-induced profile $\I_T^b = \I(S_T(\I_\gamma))$. Since $\varphi_{\I_\gamma} = \varphi_{\gamma}$, we see that the model two-dimensional slab $S_T(\I_\gamma)$ coincides with $\G_T^2$, and we conclude that 
 $\I(\G_T^n) = \I(\G_T^2)$. This reduction from the case that the base is an $(n-1)$-dimensional Gaussian to the case that it is a one-dimensional Gaussian is well-known, and was already shown in the work of Fusco--Maggi--Pratelli \cite{FuscoMaggiPratelli-GaussianProduct}. It can also be directly obtained by employing Ehrhard symmetrization \cite{EhrhardPhiConcavity}, which is actually what the general machinery of Section \ref{sec:CMC} does in the Gaussian case. 

Our proof of Theorem \ref{thm:main-Gn} will be based on stability analysis and ODE arguments for the isoperimetric profile. Let $E$ denote a downward monotone minimizer in $\G^2_T = S_T(\I_\gamma)$ enclosed by $\Sigma = \partial E$, which by Proposition \ref{prop:regularity} will be $C^\infty$-smooth. Since the base profile $I_\gamma(\bar v) = \sqrt{2} v \sqrt{\log(1/v)} (1 + o(1))$ as $\bar v \rightarrow 0+$, Corollary \ref{cor:no-one-sided} states that $\Sigma$ cannot be a one-sided Gaussian unduloid, and so by Theorem \ref{thm:model-slab-main} it is either a (two-sided) unduloid, a horizontal line or a vertical one; in particular, unless it is vertical, $\Sigma$ is compact. Note that vertical lines have weighted perimeter $\frac{1}{T}$. 

The density of $\G^2_T$ is of the form $\exp(-W)$ with $W(t,s) = \frac{s^2}{2} + c$, and hence $\nabla^2 W = e_2 \otimes e_2$. Since $\G^2_T$ is geometrically flat, we see from (\ref{eq:LJac-locally-flat}) that for all $\theta \in \R^2$, the Jacobi operator $L_{Jac}$ on $\Sigma$ satisfies
\[
L_{Jac} \scalar{\theta,\n_{\Sigma}} = \nabla^2 W(\theta,\n_{\Sigma}) = \scalar{\theta,e_2}  \scalar{\n_{\Sigma},e_2}  .
\]
Applying this to the vertical direction $\theta = e_2$, we see that $\scalar{e_2,\n_{\Sigma}}$ is an eigenfunction of the Jacobi operator:
\begin{equation} \label{eq:eigenfunction}
L_{Jac} \scalar{e_2,\n_{\Sigma}} = \scalar{e_2,\n_{\Sigma}} . 
\end{equation}
Note that since $E$ is downward monotone, we have $\scalar{e_2 , \n_{\Sigma}} > 0$ unless $\Sigma$ is a vertical line (and then $\scalar{e_2 , \n_{\Sigma}} \equiv 0$). Also note that the vertical field $e_2$ is tangential to the boundary of $\G^2_T$, and (excluding vertical $\Sigma$'s) by truncating it outside a neighborhood of the compact $\Sigma$, we may assume that it is compactly supported, and therefore $\scalar{e_2 , \n_{\Sigma}} \in \scalar{C_c^\infty,\n_{\Sigma}}$ in the notation of Section \ref{sec:stability}. 
In view of Lemma \ref{lem:ODI}, we deduce the following crucial:
\begin{proposition} \label{prop:GT-ODI}
The isoperimetric profile $\I_T$ of $\G^n_T$ satisfies the following differential inequality at $\bar v \in (0,1)$ in the viscosity sense whenever $\I_T(\bar v) < \frac{1}{T}$:
\[
\I_T(\bar v)\I_T''(\bar v) \leq -1 ,
\]
with strict inequality unless all minimizers in $\G^2_T$ of weighted volume $\bar v$ are (up to null-sets) horizontal half-planes. \\
In particular, if $0 \leq \bar v_0 < \bar v_1 \leq 1/2$ are so that $\I_T(\bar v) < \frac{1}{T}$ for all $\bar v \in (\bar v_0 , \bar v_1)$, and $\I_m$ is a smooth function so that $\I_m(\bar v_i) = \I_T(\bar v_i)$, $i=0,1$, and $\I_m \I_m'' = -1$ on $(\bar v_0, ,\bar v_1)$, then $\I_T \geq \I_m$ on the entire $[\bar v_0 , \bar v_1]$. 
\end{proposition}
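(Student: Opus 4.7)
The strategy is to apply Lemma \ref{lem:ODI} with the privileged test function $u := \scalar{e_2,\n_{\Sigma}}$, which by (\ref{eq:eigenfunction}) is an eigenfunction of the Jacobi operator with eigenvalue $1$, i.e.~$L_{Jac} u = u$. Fix $\bar v \in (0,1)$ with $\I_T(\bar v) < \frac{1}{T}$ and let $E$ be any isoperimetric minimizer in $\G^2_T = S_T(\I_\gamma)$ of weighted volume $\bar v$. By Theorem \ref{thm:model-slab-main} and Corollary \ref{cor:no-one-sided} (applicable since $v/\I_\gamma(v)^2$ is non-integrable at $0$), together with the running assumption $\I_T(\bar v) < \frac{1}{T}$ (which rules out vertical lines, whose weighted perimeter is exactly $\frac{1}{T}$), the boundary $\Sigma := \partial E$ must be either a horizontal line or a two-sided generalized unduloid; in both cases $\Sigma$ is compact, and because $e_2$ is tangential to $\partial \G^2_T = \{0,T\} \times \R$, truncating $e_2$ outside a neighborhood of $\Sigma$ produces $u \in \scalar{C^\infty_c,\n_{\Sigma}}$.

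The next step is to evaluate the integrals appearing in Lemma \ref{lem:ODI}. For a two-sided generalized unduloid, Lemma \ref{lem:VA} gives
\[
\int_\Sigma u^2 \, d\sigma_T = \ell_{v_0,v_1}(\bar v),
\]
where $\ell_{v_0,v_1}$ is the chord from Definition \ref{def:ell}. For a horizontal line ($v_0 = v_1 = \bar v$, $u \equiv 1$) the same identity holds by direct inspection, with the degenerate chord equal to $\I_\gamma(\bar v) = \I_T(\bar v)$; denote this common value by $\ell(\bar v)$. Projecting $\Sigma$ onto the horizontal axis and using $\scalar{e_2,\n_{\Sigma}}\, d\H^1|_\Sigma = dt$ along the graph $(t,f(t))$ yields
\[
\int_\Sigma u \, d\sigma_T = \frac{1}{T} \int_0^T \varphi_\gamma(f(t)) \, dt > 0,
\]
verifying the non-degeneracy condition of Lemma \ref{lem:ODI}.

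Plugging into Lemma \ref{lem:ODI} with $L_{Jac} u = u$, the viscosity inequality becomes
\[
\I_T''(\bar v) \brac{\int_\Sigma u \, d\sigma_T}^2 \leq -\int_\Sigma u^2 \, d\sigma_T = -\ell(\bar v).
\]
By Cauchy--Schwarz,
\[
\brac{\int_\Sigma u \, d\sigma_T}^2 \leq \int_\Sigma u^2 \, d\sigma_T \cdot \int_\Sigma d\sigma_T = \ell(\bar v)\,\I_T(\bar v),
\]
with equality iff $u$ is $\sigma_T$-a.e.~constant on $\Sigma$, which forces $\Sigma$ to be horizontal. Hence the upper barrier $\iota$ provided by Lemma \ref{lem:ODI} satisfies $\iota''(\bar v) < 0$ (since $\ell(\bar v) > 0$), and multiplying by $\iota(\bar v) = \I_T(\bar v)$ gives
\[
\iota(\bar v)\,\iota''(\bar v) \leq -\frac{\I_T(\bar v)\,\ell(\bar v)}{\brac{\int_\Sigma u\, d\sigma_T}^2} \leq -1,
\]
which is the claimed $\I_T \I_T'' \leq -1$ in the viscosity sense. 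The strict inequality assertion follows by selecting (when possible) a minimizer $\Sigma$ that is not a horizontal half-plane, so that Cauchy--Schwarz is strict.

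The second half of the proposition follows immediately from the max-principle assertion of Lemma \ref{lem:ODI}: with the same $u$, its required hypothesis
\[
\frac{-\int_\Sigma (L_{Jac} u) u \, d\sigma_T \cdot \int_\Sigma d\sigma_T}{\brac{\int_\Sigma u \, d\sigma_T}^2} = -\frac{\ell(\bar v)\,\I_T(\bar v)}{\brac{\int_\Sigma u \, d\sigma_T}^2} \leq -1 = \I_m(\bar v)\, \I_m''(\bar v)
\]
is exactly the Cauchy--Schwarz bound above, available at every $\bar v \in (\bar v_0, \bar v_1)$ by the standing hypothesis $\I_T(\bar v) < \frac{1}{T}$. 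The main routine obstacle I anticipate is the unified treatment of the horizontal and unduloid cases (Lemma \ref{lem:VA} is stated only for $1 > v_1 > v_0 > 0$, so the horizontal case must be inspected separately) and the viscosity bookkeeping converting $\iota''(\bar v) \ell(\bar v)^2 \leq -\ell(\bar v)$ into $\iota(\bar v)\iota''(\bar v) \leq -1$; both are handled by arguing by cases and exploiting that $\ell(\bar v) > 0$ forces $\iota''(\bar v) < 0$.
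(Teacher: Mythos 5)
Your proposal is correct and follows essentially the same route as the paper's proof: pass to $\G^2_T$, rule out vertical lines via $\I_T(\bar v)<\frac{1}{T}$, test Lemma \ref{lem:ODI} with $u=\scalar{e_2,\n_\Sigma}$ using the eigenfunction identity (\ref{eq:eigenfunction}), and close with Cauchy--Schwarz and the maximum-principle clause of Lemma \ref{lem:ODI}. Two small remarks. First, the explicit evaluation $\int_\Sigma u^2\,d\sigma_T=\ell_{v_0,v_1}(\bar v)$ via Lemma \ref{lem:VA} (and the accompanying case split between horizontal lines and unduloids) is unnecessary: once one knows $L_{Jac}u=u$ and $\int_\Sigma u\,d\sigma_T>0$, the chain $\iota''(\bar v)(\int u)^2\leq -\int u^2$, multiplied by $\iota(\bar v)=\int_\Sigma d\sigma_T$ and combined with Cauchy--Schwarz $\int d\sigma_T\int u^2\geq(\int u)^2$, yields $\iota(\bar v)\iota''(\bar v)\leq -1$ directly, with no need to identify $\int u^2$. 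Second, your step ``$u$ a.e.\ constant forces $\Sigma$ horizontal'' is stated without justification; the reason (which the paper spells out) is that $u$ constant means $f'$ constant, and a nonzero constant slope is incompatible with $\Sigma$ meeting $\partial\G^2_T$ perpendicularly (Proposition \ref{prop:regularity}(\ref{it:regularity2})).
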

\begin{proof}
The assumption that $\I_T(\bar v) < \frac{1}{T}$ guarantees that a minimizer $E$ of weighted volume $\bar v$ in $\G^2_T$ is enclosed by a non-vertical $\Sigma_{\bar v} = \overline{\partial^* E}$, and hence  $\int_{\Sigma_{\bar v}} u  \, d\sigma_T > 0$ for $u =\scalar{e_2,\n_{\Sigma_{\bar v}}} \in \scalar{C_c^\infty,\n_{\Sigma}}$. Using this test-function in (\ref{eq:ODI-gen}), applying (\ref{eq:eigenfunction}) and Cauchy-Schwarz:
\begin{equation} \label{eq:CS}
\int_{\Sigma_{\bar v}} d\sigma_T \int_{\Sigma_{\bar v}} u^2 d\sigma_T \geq \brac{\int_{\Sigma_{\bar v}} u d\sigma_T}^2 ,
\end{equation}
we deduce that $\I_T(\bar v) \I_T''(\bar v) \leq -1$ (in the viscosity sense). The inequality is in fact strict unless (at the very least) for all $E$ as above there is equality in (\ref{eq:CS}), i.e.~unless $u$ is constant on $\Sigma_{\bar v}$; since $\Sigma_{\bar v}$ meets $\partial G^2_T$ perpendicularly, it follows that $\Sigma_{\bar v}$ must be horizontal, and hence all minimizers of weighted volume $\bar v$ must be horizontal half-planes (up to null-sets).  
The ``in particular" part follows from the maximum principle argument of Lemma \ref{lem:ODI}.
\end{proof}

We are now ready to establish Theorem \ref{thm:main-Gn}.

\begin{proof}[Proof of Theorem \ref{thm:main-Gn}]
Let $T > \sqrt{2 \pi}$. Define $v_v := \min \{ \bar v \in (0,1/2] \; ; \; \I_T(\bar v) = \frac{1}{T} \}$; since $\I_T(1/2) = \frac{1}{T}$ and $\I_T$ is continuous the minimum is over a non-empty set and is attained. Similarly, define $v_h := \max \{ \bar v \in [0,1/2] \; ; \; \I_T(\bar v) = \I_{\gamma}(\bar v) \}$.
 
Since $\I_\gamma(v_h) = \I_{T}(v_h) \leq \frac{1}{T}$, $\I_{\gamma}$ is strictly increasing on $[0,1/2]$, and $\I_T \leq \I_{\gamma}$ by testing horizontal half-planes, it follows that $\I_T < \frac{1}{T}$ on $[0,v_h)$. In addition, $\I_T(0) = \I_{\gamma}(0) = 0$ and $\I_{\gamma} \I_{\gamma}'' = -1$, and so Proposition \ref{prop:GT-ODI} implies that $\I_T \geq \I_{\gamma}$ on the entire $[0,v_h]$. But as $\I_T \leq \I_\gamma$, it follows that $\I_T = \I_{\gamma}$ on the entire interval, implying that horizontal half-planes are indeed minimizers (in $\G^2_T$ and $\G^n_T$). 

On the other end, $\I_T(v_v) = \I_T(1/2) = \frac{1}{T}$ (and in fact also $\I_T(1-v_v) = \frac{1}{T}$ by symmetry). The concavity of $\I_T$ (recall Proposition \ref{prop:I-properties}) implies that $\I_T \geq \frac{1}{T}$ on the entire $[v_v,1/2]$. But as $\I_T \leq \frac{1}{T}$ by testing vertical half-planes, it follows that $\I_T \equiv \frac{1}{T}$ on the entire interval,  implying that vertical half-planes are indeed minimizers (in $\G^2_T$ and $\G^n_T$). 

By definition, on $(v_h,v_v)$ we have $\I_T(\bar v) < \min(I_{\gamma},\frac{1}{T})$, and therefore neither horizontal nor vertical half-planes are minimizers. Since we've also already disqualified one-sided generalized unduloids (recall Corollary \ref{cor:no-one-sided}), a minimizer must be enclosed by a (two-sided) Gaussian unduloid, whose explicit description in $\G^2_T$ is given by (\ref{eq:f-formula}), which immediately extends to $\G^n_T$ -- see Figure \ref{fig:Gaussian-unduloids}. In addition, Proposition \ref{prop:GT-ODI} implies that $\I_T \I_T'' < -1$ in the viscosity sense. 

By Lemma \ref{lem:monotone} we know that $\I_T$ is pointwise non-increasing and that $T \I_T$ is pointwise non-decreasing in $T$, respectively implying that $v_h$ and $v_v$ are non-increasing in $T$. 

When $T > \pi$, since $\I_{\gamma} \I_{\gamma}'' = -1$ we know by Lemma \ref{lem:horizontal-stable} that all horizontal lines in $\G^2_T$ are unstable. In particular, (non-empty) horizontal half-planes can never be minimizing and therefore $I_T(\bar v) < I_{\gamma}(\bar v)$ for all $\bar v \in (0,1)$. In particular, $v_h = 0$ and $\frac{1}{T} = I_T(v_v) < \I_\gamma(v_v)$. 

It remains to establish that $v_v \leq \frac{\sqrt{2 \pi}}{2 T}$, which is done in the subsequent lemma. 
\end{proof}

\begin{figure}
\begin{center}
        \includegraphics[scale=0.4]{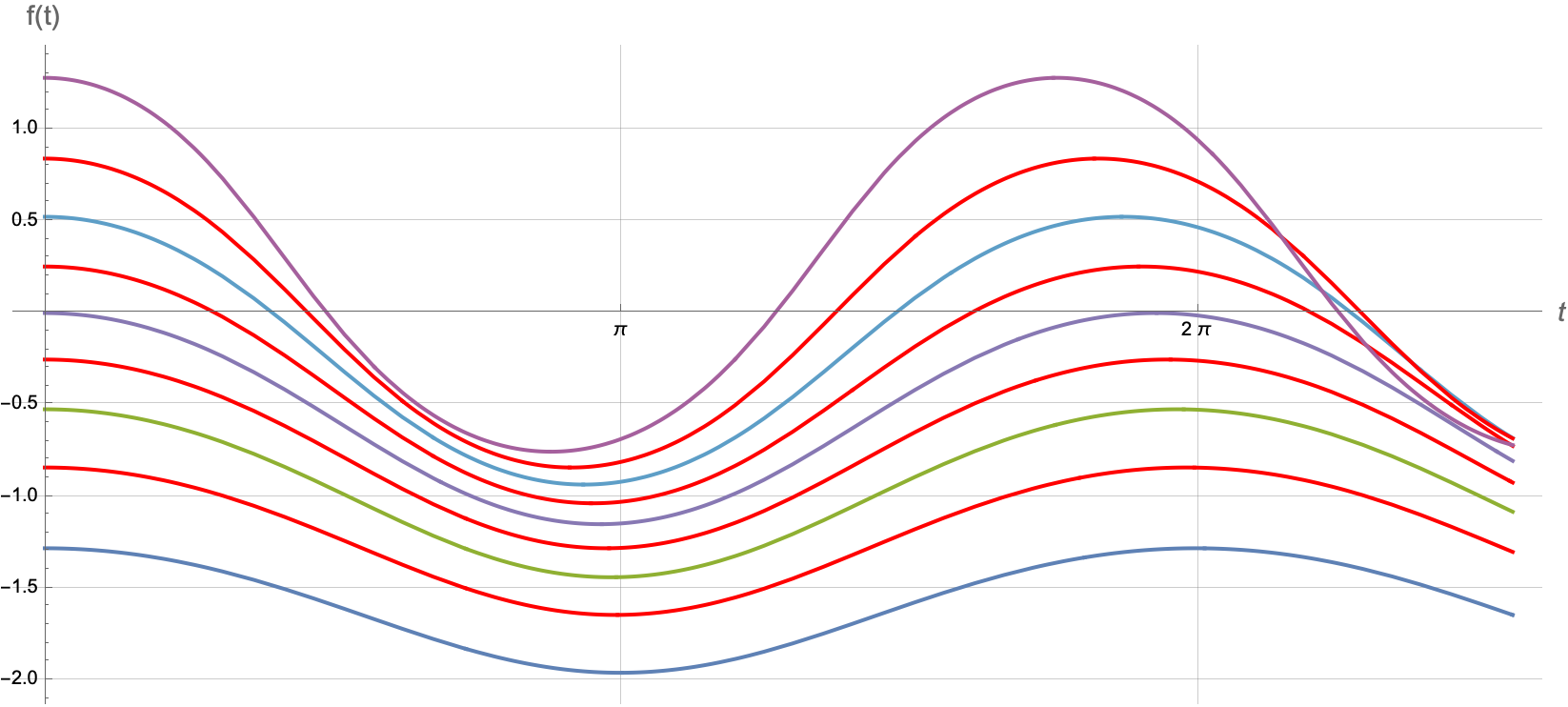}
     \end{center}
     \caption{
         \label{fig:Gaussian-unduloids}
         Gaussian unduloids on $G^2_T$ with various values of $v_1 \in (0,1)$ and $v_0 = v_1/4$. Every unduloid has its own half-period $T(v_0,v_1)$, and only those with $T(v_0,v_1) = T$ can be minimizers, but here we depict all of them and extend them periodically to fill the entire plot. 
     }
\end{figure}

\begin{lemma}
For any $T > \sqrt{2 \pi}$, on $\G^n_T$ we have $v_v \leq v_v^+ := \frac{\sqrt{2 \pi}}{2 T}$ and:
\[
\I_T \geq \I_{\gamma,T,v_v} \text{ on $[0,1]$},
\]
where given $w \in (0,1/2]$ so that $\I_T(w) = \frac{1}{T}$, $\I_{\gamma,T,w}$ is defined by first uniquely determining $\delta_w \in (w,1]$ so that $\delta_w \I_{\gamma}(w/\delta_w) = \frac{1}{T}$, and then setting:
\[
\I_{\gamma,T, w} (\bar v) := \begin{cases} 
\delta_w \I_{\gamma}(\bar v/\delta_w) & \bar v \in [0,w] \\
\frac{1}{T} & \bar v \in [w, 1 - w] \\
\delta_w\I_{\gamma}((1-\bar v)/\delta_w) & \bar v \in [1-w,1] 
\end{cases} . 
\]
In particular, $\delta_{v_v^+} = \frac{\sqrt{2 \pi}}{T}$, and $\I_T$ is lower-bounded by the $C^{1,1}$ function $\I_{\gamma,T,v_v^+}$ (see Figure \ref{fig:GaussianBounds}).
\end{lemma}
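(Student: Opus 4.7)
The main lever is the viscosity differential inequality of Proposition~\ref{prop:GT-ODI}: wherever $\I_T(\bar v)<\tfrac{1}{T}$ we have $\I_T\I_T''\le -1$. I will use this together with the concavity and symmetry of $\I_T$ to sandwich $\I_T$ from below by the explicit $C^{1,1}$ function $\I_{\gamma,T,v_v}$, and then extract the upper bound on $v_v$ by comparing one-sided derivatives at $v_v$.

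\emph{Step 1: well-posedness of $\delta_w$ and the ODE matching.} For fixed $w\in(0,1/2]$, set $g(\delta):=\delta\,\I_\gamma(w/\delta)$ for $\delta\ge w$. Since $g(w)=0$ (as $\I_\gamma(1)=0$), $g(\delta)\to\infty$ as $\delta\to\infty$, and a short calculation gives $g'(\delta)=\I_\gamma(u)-u\,\I_\gamma'(u)$ at $u=w/\delta$, which is the $y$-intercept of the tangent line of the strictly concave $\I_\gamma$ at $u$ and is therefore strictly positive on $(0,1)$; thus $\delta_w$ is uniquely defined. A direct scaling computation shows that the restriction of $\I_{\gamma,T,w}$ to $[0,w]$ satisfies $\I_{\gamma,T,w}\,\I_{\gamma,T,w}''=-1$ (inherited from $\I_\gamma\I_\gamma''=-1$), with $\I_{\gamma,T,w}(0)=0$ and $\I_{\gamma,T,w}(w)=1/T$.

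\emph{Step 2: lower bound $\I_T\ge\I_{\gamma,T,v_v}$ on $[0,1]$.} Specialize $w=v_v$. By the definition of $v_v$, $\I_T<\tfrac{1}{T}$ on $(0,v_v)$, and by Step~1 the boundary values of $\I_T$ and $\I_{\gamma,T,v_v}$ agree at $0$ and $v_v$. The maximum-principle portion of Proposition~\ref{prop:GT-ODI} therefore gives $\I_T\ge\I_{\gamma,T,v_v}$ on $[0,v_v]$. On $[v_v,1-v_v]$, the concavity of $\I_T$ (Proposition~\ref{prop:I-properties}) combined with $\I_T(v_v)=\I_T(1-v_v)=\tfrac{1}{T}$ (the second equality by the symmetry $\I_T(\bar v)=\I_T(1-\bar v)$) forces $\I_T\equiv\tfrac{1}{T}=\I_{\gamma,T,v_v}$ on this interval. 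The range $[1-v_v,1]$ follows from symmetry.

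\emph{Step 3: deducing $v_v\le v_v^+=\tfrac{\sqrt{2\pi}}{2T}$.} From Step~2, $\I_T\equiv\tfrac{1}{T}$ on $[v_v,1-v_v]$, so the right derivative $\I_T^{',+}(v_v)=0$, while concavity gives $\I_T^{',-}(v_v)\ge\I_T^{',+}(v_v)=0$. On the other hand, $\I_T\ge\I_{\gamma,T,v_v}$ on $[0,v_v]$ with equality at $v_v$ yields the reverse inequality on the left derivative: $\I_T^{',-}(v_v)\le\I_{\gamma,T,v_v}^{',-}(v_v)=\I_\gamma'(v_v/\delta_{v_v})$. Hence $\I_\gamma'(u)\ge 0$ for $u:=v_v/\delta_{v_v}$, which forces $u\in(0,1/2]$ (as $\I_\gamma$ is symmetric about $1/2$ and strictly increasing on $[0,1/2]$). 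Rewriting the defining identity $\delta_{v_v}\,\I_\gamma(u)=1/T$ as
\[
v_v \;=\; u\,\delta_{v_v}\;=\;\frac{u}{T\,\I_\gamma(u)},
\]
reduces the claim to showing that $h(u):=u/\I_\gamma(u)$ attains its maximum over $(0,1/2]$ at $u=1/2$. But $h'(u)=(\I_\gamma(u)-u\,\I_\gamma'(u))/\I_\gamma(u)^2$, whose numerator is the positive $y$-intercept already computed in Step~1; so $h$ is strictly increasing and $h(u)\le h(1/2)=\sqrt{2\pi}/2$. This gives $v_v\le\sqrt{2\pi}/(2T)=v_v^+$. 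Finally, plugging $w=v_v^+$ into $\delta_w\,\I_\gamma(w/\delta_w)=1/T$ yields $u=1/2$ and $\delta_{v_v^+}=\sqrt{2\pi}/T$; since the one-sided derivatives of $\I_{\gamma,T,v_v^+}$ at $v_v^+$ are both $0$, the resulting function is $C^{1,1}$ as claimed.

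The only genuinely non-routine step is the derivative comparison in Step~3, which relies on both the one-sided maximum-principle regularity of $\I_T$ at $v_v$ (from concavity) and the sharp matching of the model $\I_{\gamma,T,v_v}$; everything else is bookkeeping around Proposition~\ref{prop:GT-ODI}.
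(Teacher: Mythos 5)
Your proof is correct in its essential claims, and Steps 1 and 2 track the paper's argument closely. Step 3 takes a genuinely different route to the key intermediate fact $v_v/\delta_{v_v}\le 1/2$ (equivalently $\delta_{v_v}\ge 2v_v$): you extract it from a one-sided derivative comparison at $v_v$, using that $\I_T$ is concave and that $\I_T\ge\I_{\gamma,T,v_v}$ touch at $v_v$, whereas the paper argues by contradiction directly from the sandwich $\I_{\gamma,T,v_v}\le\I_T\le\tfrac{1}{T}$ — if $v_v/\delta_{v_v}>1/2$ then $\I_{\gamma,T,v_v}(\delta_{v_v}/2)>\tfrac{1}{T}$, violating the upper bound. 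Both deductions are clean; yours is a touch more structural (it isolates exactly which one-sided derivative information is used), while the paper's requires no derivative comparison at all. The final reduction to the monotonicity of $\delta\mapsto\delta\,\I_\gamma(v_v/\delta)$ (equivalently $u\mapsto u/\I_\gamma(u)$) is the same computation in both.

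There is one small gap you should close: the lemma's concluding sentence also asserts $\I_T\ge\I_{\gamma,T,v_v^+}$, not merely $\I_T\ge\I_{\gamma,T,v_v}$. You establish the latter and compute $\delta_{v_v^+}$ and the $C^{1,1}$ regularity, but you do not pass from $\I_{\gamma,T,v_v}$ to $\I_{\gamma,T,v_v^+}$. This step is not automatic: since $v_v\le v_v^+$, you need that $w\mapsto\I_{\gamma,T,w}$ is pointwise non-increasing on $[v_v,v_v^+]$. The paper dispenses with this as ``easy to check,'' and indeed it follows from the fact you already have at hand: for $\bar v\le w$,
\[
\frac{d}{dw}\bigl(\delta_w\,\I_\gamma(\bar v/\delta_w)\bigr)=\delta_w'\bigl(\I_\gamma(\tilde u)-\tilde u\,\I_\gamma'(\tilde u)\bigr)\quad\text{with }\tilde u=\bar v/\delta_w,
\]
where the bracket is the positive tangent-line $y$-intercept from your Step~1 and a short implicit differentiation of $\delta_w\,\I_\gamma(w/\delta_w)=\tfrac{1}{T}$ gives $\delta_w'=-\I_\gamma'(u)/(\I_\gamma(u)-u\,\I_\gamma'(u))\le 0$ for $u\le 1/2$. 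Adding a line to this effect completes the proof.
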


\begin{proof}
Let $w \in (0,1/2]$ so that $\I_T(w) = \frac{1}{T}$.
Since $\I_\gamma$ is strictly concave, the function $\delta \mapsto \delta \I_\gamma(w/\delta)$ is increasing from $0$ on $[w,\infty)$. 
Since at $\delta=1$ we have $\I_\gamma(w) \geq \I_T(w) =  \frac{1}{T}$, we deduce the existence of a unique $\delta_w \in (w,1]$ so that $\delta_w \I_\gamma(w/\delta_w) = \frac{1}{T}$. 

Recall that $v_v$ is characterized as the minimal $w$ so that $\I_T \equiv \frac{1}{T}$ on $[w,1-w]$. Denoting $\I_m(\bar v) := \delta_{v_v} I_\gamma(\bar v/\delta_{v_v})$, observe that $\I_m \I_m'' = -1$ on $[0,v_v]$, $\I_m(0) = \I_T(0) = 0$ and $\I_m(v_v) = \I_T(v_v) = \frac{1}{T}$. As $\I_T(\bar v) < \frac{1}{T}$ for all $\bar v \in [0,v_v)$, it follows by Proposition \ref{prop:GT-ODI} that $\I_T \geq \I_m$ on the entire $[0,v_v]$, and we deduce by symmetry that $\I_T \geq \I_{\gamma,T,v_v}$ on $[0,1]$. 

Now observe that if $v_v / \delta_{v_v} > 1/2$, we would have $\I_{\gamma,T,v_v}(\delta_{v_v} / 2) >  \I_{\gamma,T,v_v}(v_v) = \frac{1}{T}$,  in contradiction to $\frac{1}{T} \geq \I_T \geq \I_{\gamma,T,v_v}$. Consequently $\delta_{v_v} \geq 2 v_v$, and since $\delta \mapsto \delta \I_\gamma(v_v/\delta)$ is increasing on $[v_v,\infty)$, comparing $\delta \in \{ 2 v_v , \delta_{v_v} \}$ we deduce:
\[
2 v_v I_\gamma(1/2) \leq \delta_{v_v} I_\gamma(v_v/\delta_{v_v}) = \frac{1}{T} ,
\]
and therefore $v_v \leq v_v^+ := \frac{\sqrt{2 \pi}}{2 T}$ as asserted. 

Finally, $\delta_{v_v^+} = 2 v_v^+$, and it is easy to check that $\I_{\gamma,T,w}$ is pointwise non-increasing on $w \in [v_v,v_v^+]$, implying that $\I_T \geq \I_{\gamma,T,v_v^+}$. 
\end{proof}

\begin{figure}
\begin{center}
        \includegraphics[scale=0.4]{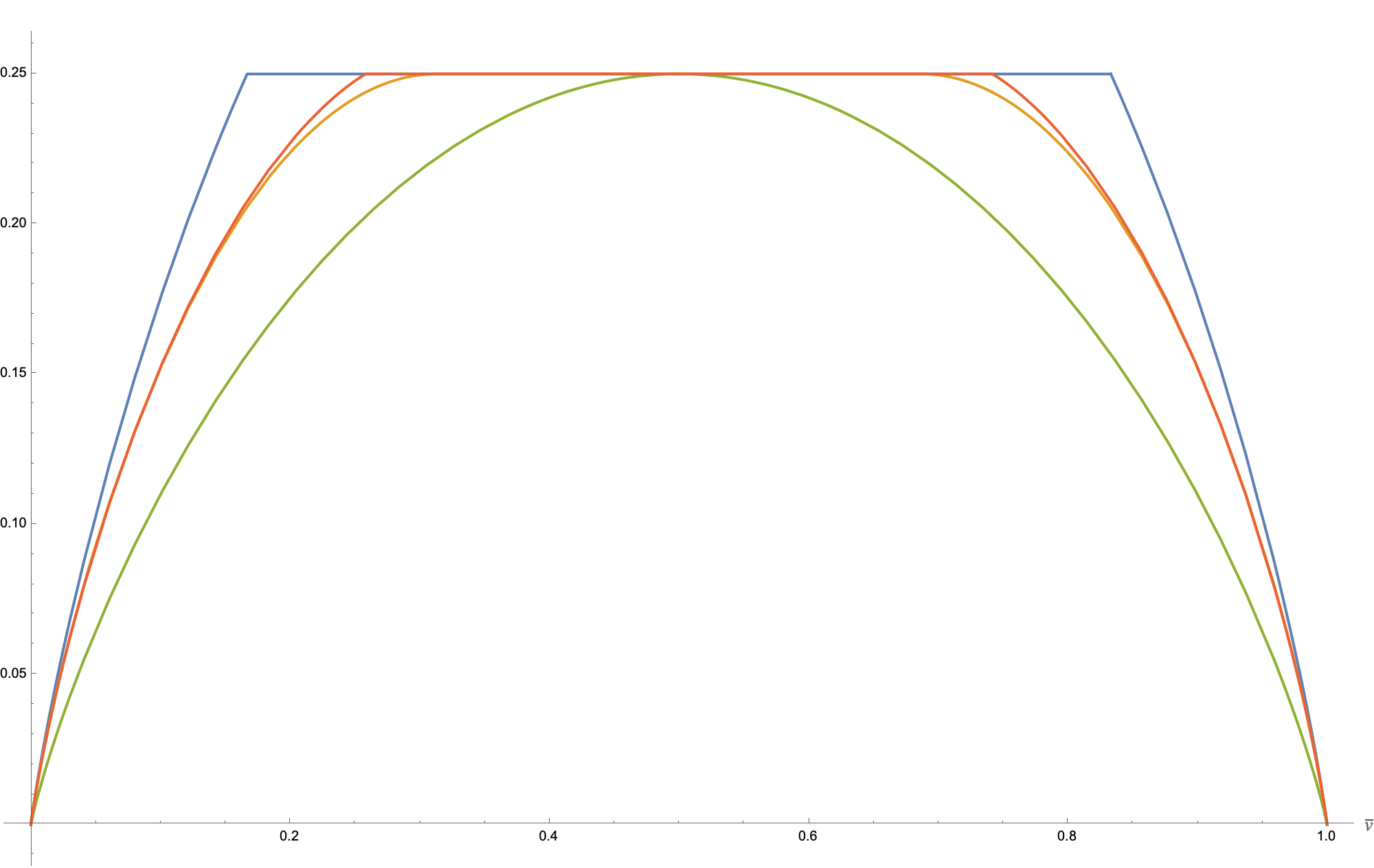}
     \end{center}
     \caption{
         \label{fig:GaussianBounds}
         On $G^2_T$ with $T=4$: trivial upper bound for $\I_T$ is $\min(\I_\gamma , \frac{1}{T})$ in blue; trivial lower bound for $\I_T$ is $\frac{\sqrt{2 \pi}}{T} \I_\gamma$ in green; lower bound $\I_{\gamma,T,v_v^+}$ in yellow; lower bound $A_{-,T}$ in red. 
     }
\end{figure}

\begin{remark}
An interesting challenge is to get an explicit formula for $v_v$. Certainly the upper bound $v_v \leq \frac{\sqrt{2 \pi}}{2 T}$ can be improved for large $T$. For example, recalling Lemma \ref{lem:VA}, we know that the weighted perimeter $A_T(v_0,v_1)$ of a generalized Gaussian unduloid $\Sigma$ in $\G^2_T$ with parameters $0 < v_0 < v_1 < 1$ enclosing a set $E$ of weighted volume $\bar v$, is lower bounded by:
\[
A_T(v_0,v_1) \geq A_{-,T}(\bar v, v_0,v_1) := \frac{\ell(\bar v,v_0,v_1) + \sqrt{\ell(\bar v,v_0,v_1)^2 + 4 \brac{\frac{v_1-v_0}{T}}^2}}{2} ,
\]
where:
\[
\ell(v,v_0,v_1) := \frac{v_1-v}{v_1-v_0} I_\gamma(v_0) + \frac{v-v_0}{v_1-v_0} I_\gamma(v_1) . 
\]
By (\ref{eq:V-formula}) and (\ref{eq:T-formula}), $\bar v = V(E)$ is the weighted average of values $v \in [v_0,v_1]$, and hence $v_0 < \bar v < v_1$. Clearly when $v_0,v_1 \rightarrow \bar v$ the unduloid converges to a horizontal line and indeed $A_{-,T}(\bar v , v_0,v_1)$ converges to its corresponding weighted perimeter $\I_\gamma(\bar v)$; on the other extreme, when $v_0 \rightarrow 0$ and $v_1 \rightarrow 1$ the lower bound converges to $\frac{1}{T}$, the weighted perimeter of a vertical line. Consequently, we have for all $\bar v \in (0,1)$:
\[
\I_T(\bar v) \geq A_{-,T}(\bar v) := \inf \{ A_{-,T}(\bar v,v_0,v_1) \; ; \; 0 < v_0 < \bar v < v_1 < 1 \} . 
\]
Defining:
\[
v_v^{++} := \inf \{ \bar v \in (0,1/2] \; ; \; A_{-,T}(\bar v) = \frac{1}{T} \} ,
\]
it follows that $v_v \leq v_v^{++}$. Numerical evidence suggests that we always have $A_{-,T} \geq \I_{\gamma,T,v_v^+}$ and $v_v^{++} < \frac{\sqrt{2 \pi}}{2 T}$, strictly improving our upper bound on $v_v$ from the previous lemma; see Figure \ref{fig:GaussianBounds}.
\end{remark}

\appendix

\section{Appendix}

The appendix is dedicated to providing a proof of Proposition \ref{prop:main-computation}, which we repeat here for convenience:

\begin{proposition} \label{prop:Appendix}
For all $\beta > 0$, if $\Sigma$ is a generalized unduloid in $S_\beta(\I_{\T^2})$ with parameters $0 < v_0 < v_1 < 1$, having weighted mean-curvature $\lambda \geq 0.8$ and enclosing weighted volume $\bar v \leq \frac{4 \pi}{81}$, then necessarily $v_1 \leq \frac{1}{\pi}$. 
\end{proposition}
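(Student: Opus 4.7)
My plan is to reformulate the statement as a two-parameter inequality on $(v_0,v_1)$ and verify it by a (necessarily numerical) case analysis. By Theorem~\ref{thm:model-slab-main} and Lemma~\ref{lem:VA}, a generalized unduloid in $S_\beta(\I_{\T^2})$ with parameters $0<v_0<v_1<1$ lives in the slab of width $\beta=T(v_0,v_1)$ and encloses weighted volume $\bar v=\tilde V(v_0,v_1)/T(v_0,v_1)$, where
\[
\tilde V(v_0,v_1):=\int_{v_0}^{v_1}\!\frac{v\,dv}{\I_{\T^2}(v)\sqrt{(\I_{\T^2}(v)/\ell_{v_0,v_1}(v))^2-1}},\quad T(v_0,v_1):=\int_{v_0}^{v_1}\!\frac{dv}{\I_{\T^2}(v)\sqrt{(\I_{\T^2}(v)/\ell_{v_0,v_1}(v))^2-1}}.
\]
The parameter $\beta$ drops out entirely, and the inequality $\bar v\le 4\pi/81$ becomes the intrinsic inequality $G(v_0,v_1):=\tilde V(v_0,v_1)-(4\pi/81)T(v_0,v_1)\le 0$. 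My task is therefore to prove $G(v_0,v_1)>0$ whenever $v_1>1/\pi$ and $\lambda(v_0,v_1)\ge 0.8$.

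As a first reduction I use the three-piece form of $\I_{\T^2}$ (equivalently, $\varphi_{\T^2}$ from (\ref{eq:varphi2})) together with Lemma~\ref{lem:chord}. The hypothesis $\lambda\ge 0.8>0$ and the fact that $\I_{\T^2}$ is non-increasing on $[1/\pi,1]$ rule out $v_0\ge 1/\pi$; hence $v_0\in(0,1/\pi)$ and the parameter region splits into (A)~$v_1\in(1/\pi,1-1/\pi]$ and (C)~$v_1\in(1-1/\pi,1)$. In each case the interval $[v_0,v_1]$ partitions into at most three sub-intervals on which $\I_{\T^2}$ is elementary and $\ell_{v_0,v_1}$ is linear.

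The second step is to integrate piece by piece. On the central plateau $\I_{\T^2}\equiv 1$, the integrands reduce to $\ell/\sqrt{1-\ell^2}$ multiplied by $1$ or $v$, with elementary antiderivatives (arcsines and polynomials). On the left piece where $\I_{\T^2}(v)=\sqrt{\pi v}$, the substitution $u=\sqrt{v}$ converts the integrand into a rational function of $u$ divided by $\sqrt{P(u)}$, where $P(u)=\lambda^2(u^2-v_0)((\sqrt{\pi}/\lambda-\sqrt{v_0})^2-u^2)$ has four real roots determined explicitly by $(v_0,\lambda)$ via the chord condition; the antiderivative is therefore a combination of incomplete elliptic integrals of the first, second, and third kinds. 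The right piece appearing in Case~(C) is handled symmetrically via $u=\sqrt{1-v}$. Assembling these pieces yields a closed-form expression for $G(v_0,v_1)$ in terms of elementary and elliptic functions.

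Finally I will verify numerically that $G>0$ on the compact constraint region $\mathcal{R}$ cut out by $v_1\ge 1/\pi$ and $\lambda(v_0,v_1)\ge 0.8$, by combining a fine grid evaluation of $G$ with explicit Lipschitz bounds on $G$ obtained by differentiating the closed form. The main obstacle will be the behavior of $G$ near the relative boundary of $\mathcal{R}$: (i)~along $v_1\downarrow 1/\pi$ the hypothesis $v_1>1/\pi$ degenerates; (ii)~along $v_0\downarrow 0$ the unduloid limits to the one-sided case and I will have to match up with Proposition~\ref{prop:one-sided-volume}; and (iii)~along $\lambda=0.8$ the chord $\ell_{v_0,v_1}$ grazes $\I_{\T^2}$, sharpening the integrable endpoint singularities of the integrands. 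Each stratum will be handled by a separate Taylor/asymptotic analysis showing that $G$ stays strictly positive in the limit (or tends to $0$ at a controlled rate only on explicitly identified boundary pieces where the conclusion already holds), so that the grid+Lipschitz argument on the compact bulk of $\mathcal{R}$ completes the proof.
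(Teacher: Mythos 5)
Your reduction is exactly what the paper does: the claim becomes the intrinsic two-variable inequality $Q(v_0,v_1)=\int_{v_0}^{v_1}\frac{v-4\pi/81}{\I_{\T^2}(v)\sqrt{(\I_{\T^2}(v)/\ell_{v_0,v_1}(v))^2-1}}\,dv>0$ (which equals your $G$), $\beta$ drops out, the case split at $v_1=1-1/\pi$ matches, your quartic factorization of the left-piece integrand is correct, and the final grid-plus-Lipschitz verification is the same strategy the paper adopts. (You could further restrict to $v_0<4\pi/81$ for free, since otherwise the integrand is everywhere nonnegative and $Q>0$ trivially; the paper records this.)

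That said, two things are missing, the second substantively. First, a simplification you skip: the paper does not work with $Q$ directly. By concavity of $\I_{\T^2}$ one has $\ell_{0,v_1}\le\ell_{v_0,v_1}\le\ell_{v_0,1/\pi}$ on the common domains, and this is used to replace $Q$ (for $v_1\le 1-1/\pi$) by a lower bound $P_1(v_0)+P_2(v_1)+P_3(v_0,v_1)$ in which $P_1$ (the only negative term) and $P_2$ each depend on a single variable while $P_3$ takes a simpler form; for $v_1>1-1/\pi$ the hypothesis $\lambda\ge 0.8$ gives $\ell\ge 0.8v$ on the plateau, producing an explicit constant $P_4$ plus the same one-variable $P_1$. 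This decoupling is precisely what makes the Lipschitz-plus-mesh verification tractable: the one-sided derivative bounds take clean single-variable form, each endpoint singularity is isolated in its own coordinate, and the mesh can be built coordinatewise (roughly $650\times 535$ nodes with a certified margin of about $8\times 10^{-4}$). Your plan as written, integrating the full closed form for $G$ over the genuinely two-dimensional constraint region with all three boundary strata degenerating at once, would make the Lipschitz bookkeeping considerably harder. Second, and more importantly: the proposal is a plan, not a proof. The content that actually establishes the inequality --- the explicit derivative estimates with their endpoint-singularity control, a concrete mesh whose certified error beats the numerical minimum, and the boundary asymptotics you correctly flag as hazards --- is all deferred. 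Since the paper itself concedes that nothing more elegant than brute-force verification is known here, those deferred details \emph{are} the proof, and leaving them as a to-do list leaves the statement open.
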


In other words, our goal is to show that whenever $v_1 > \frac{1}{\pi}$ and $\lambda \geq 0.8$ then:
\[
V_\beta(v_0,v_1) - \frac{4 \pi}{81} > 0 ,
\]
where the weighted volume $V_\beta(v_0,v_1)$ of a generalized unduloid in $S_\beta(\I_{\T^2})$ is given in (\ref{eq:V-formula}). Since a generalized unduloid on $S_\beta(\I_{\T^2})$ has $T(v_0,v_1) = \beta$ with $T(v_0,v_1)$ given by (\ref{eq:T-formula}), this boils down to showing that whenever $v_1 > \frac{1}{\pi}$ and $\lambda \geq 0.8$,
\begin{equation} \label{eq:Q-formula}
Q(v_0,v_1) := \int_{v_0}^{v_1} \frac{v - 4 \pi / 81}{\I_{\T^2}(v) \sqrt{(\I_{\T^2}(v)/\ell_{v_0,v_1}(v))^2 - 1} } dv > 0   . 
\end{equation}
Clearly there is nothing to check if $v_0 \geq \frac{4\pi}{81}$, and so we may assume:
\begin{equation} \label{eq:range}
0 < v_0 < \frac{4\pi}{81} < \frac{1}{\pi} < v_1 < 1 . 
\end{equation}

Unfortunately, we could not find a more elegant argument for establishing the positivity of $Q(v_0,v_1)$ in the range (\ref{eq:range}) other than simply by brute force numerical verification. Recalling the definitions of:
\[
 \I_{\T^2}(v) = \min(\sqrt{\pi v} , 1 , \sqrt{\pi (1-v)}) ~,~ \ell_{v_0,v_1}(v) = \frac{v_1-v}{v_1-v_0} \I_{\T^2}(v_0) + \frac{v-v_0}{v_1-v_0} \I_{\T^2}(v_1) ,
\]
the integration in (\ref{eq:Q-formula}) involves several elliptic integrals.  Recall that given $m \in [0,1]$, $F$ and $E$ denote the elliptic integrals of the first and second kinds, respectively, defined as:
 \begin{align*}
 F(x,m) & := \int_0^x \frac{dt}{\sqrt{(1-t^2)(1-m t^2)}}~,~ & K(m) := F(1,m) ,\\
 E(x,m) & := \int_0^x \frac{\sqrt{1 - m t^2}}{\sqrt{1-t^2}} dt  ~,~ & E(m) := E(1,m)  .
    \end{align*}

Let us first treat the case when $v_1 \in (\frac{1}{\pi} , 1 - \frac{1}{\pi}]$, saving the case when $v_1 \in (1-\frac{1}{\pi},1)$ for later. In the former case, we will not need to use the assumption that $\lambda \geq 0.8$. To facilitate the computation and analysis, we divide the integration into three intervals: $[v_0,4 \pi / 81]$, $[4 \pi/81,1/\pi]$ and $[1/\pi,v_1]$. Since $\ell_{v_0,v_1}$ defines the chord between $v_0$ and $v_1$ of the concave $\I_{\T^2}$, we have $\ell_{0,v_1} \leq \ell_{v_0,v_1} \leq \ell_{v_0,1/\pi}$ on the intersection of their corresponding domains. Noting that the integrand is negative in the first interval and positive in the other two, we lower bound $Q$ as follows:
\begin{equation} \label{eq:QP}
Q(v_0,v_1) \geq P(v_0,v_1) := P_1(v_0) + P_2(v_1) + P_3(v_0,v_1) ,
\end{equation}
where:
\begin{align}
\nonumber P_1(v_0) & = \int_{v_0}^{\frac{4\pi}{81}} \frac{v - 4 \pi / 81}{\I_{\T^2}(v) \sqrt{(\I_{\T^2}(v)/\ell_{v_0,1/\pi}(v))^2 - 1} } dv ,\\
\label{eq:P2-int} P_2(v_1) & = \int_{\frac{4\pi}{81}}^{\frac{1}{\pi}}  \frac{v - 4 \pi / 81}{\I_{\T^2}(v) \sqrt{(\I_{\T^2}(v)/\ell_{0,v_1}(v))^2 - 1} } dv ,\\
\label{eq:P3-int} P_3(v_0,v_1) &= \int_{\frac{1}{\pi}}^{v_1}  \frac{v - 4 \pi / 81}{\I_{\T^2}(v) \sqrt{(\I_{\T^2}(v)/\ell_{v_0,v_1}(v))^2 - 1} } dv .
\end{align}
Plugging in $\I_{\T^2}(v)^2 = \pi v$ in the first two cases and $\I_{\T^2}(v) = 1$ in the third, and using that for all $v_1 \in [1/\pi,1-1/\pi]$
\[
\ell_{v_0,v_1}(v) = \frac{ v_1 \sqrt{\pi v_0} - v_0  + (1-\sqrt{\pi v_0}) v}{v_1 - v_0} ,
\]
and in particular $\ell_{0,v_1}(v) = v/ v_1$, we obtain by direct computation:
\begin{align*}
P_1(v_0) &=  - \frac{4}{27} \sqrt{\frac{1}{\pi }-\frac{4 \pi }{81} }
   \sqrt{\frac{4 \pi }{81}-v_0}  \\
   &   - 2 \left(\frac{v_0}{3 \pi }+\frac{4 \sqrt{\pi v_0}}{81}\right) \brac{K(1 - \pi v_0) - F\left(\frac{\sqrt{\frac{1}{\pi }-\frac{4 \pi }{81}}}{\sqrt{\frac{1}{\pi
   }-v_0}}, 1 - \pi v_0  \right)} \\
   &  + \frac{2}{\pi} \left(\frac{2 \left(\sqrt{\pi v_0 }+1\right)^2}{3 \pi
   }-\frac{\sqrt{v_0}}{3 \sqrt{\pi }}-\frac{4 \pi }{81}\right) \brac{E(1 - \pi v_0) - E\left(\frac{\sqrt{\frac{1}{\pi }-\frac{4 \pi }{81}}}{\sqrt{\frac{1}{\pi
   }-v_0}}, 1 - \pi v_0  \right)}  ,\\
P_2(v_1) & = \frac{2}{81} \brac{ \left(54  v_1^2 - \frac{8}{3}\right) \sqrt{\pi ^2 v_1^2-\frac{4
   \pi ^2}{81}} -  \left(54  v_1^2-4 + \frac{27}{\pi^2} \right) \sqrt{\pi ^2
   v_1^2-1} } , \\
P_3(v_0,v_1) &= 
\left(\frac{\sqrt{\pi v_0 } v_1 - v_0}{1 - \sqrt{\pi v_0 }}+
   v_1\right)^2 \tan^{-1}\left(\frac{\sqrt{v_1-\frac{1}{\pi
   }}}{\sqrt{\frac{2 \left(\sqrt{\pi v_0} v_1 - v_0\right)}{1 - \sqrt{\pi v_0}}+v_1+\frac{1}{\pi }}}\right) \\
   & +\frac{1}{2} \left(\frac{1}{\pi } -\frac{8 \pi }{81} -\frac{\sqrt{\pi v_0} v_1-v_0}{1 - \sqrt{\pi v_0}} \right) \sqrt{\left(v_1-\frac{1}{\pi
   }\right) \left(\frac{2 \left(\sqrt{\pi v_0} v_1 - v_0\right)}{1 - \sqrt{\pi v_0}}+v_1+\frac{1}{\pi }\right)} .
\end{align*}

\begin{figure}
\begin{center}
        \includegraphics[scale=0.5]{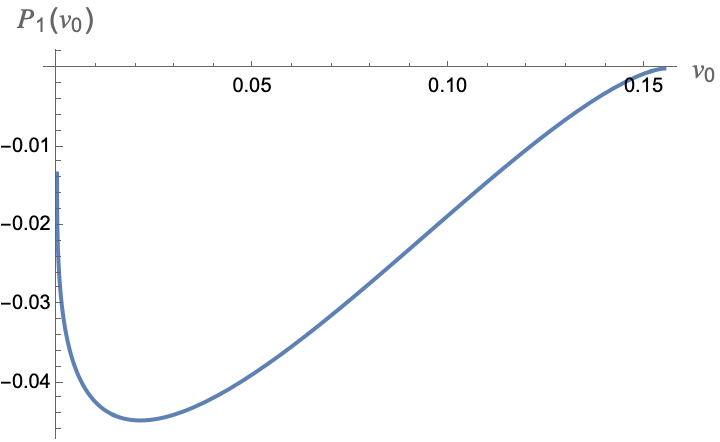}
          \includegraphics[scale=0.5]{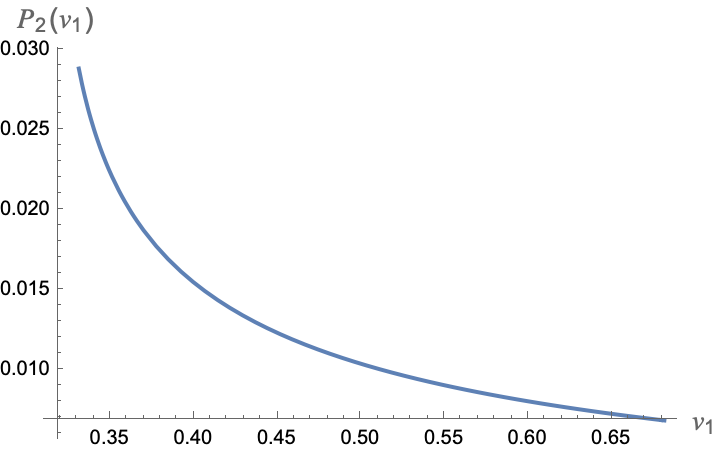}
         \includegraphics[scale=0.5]{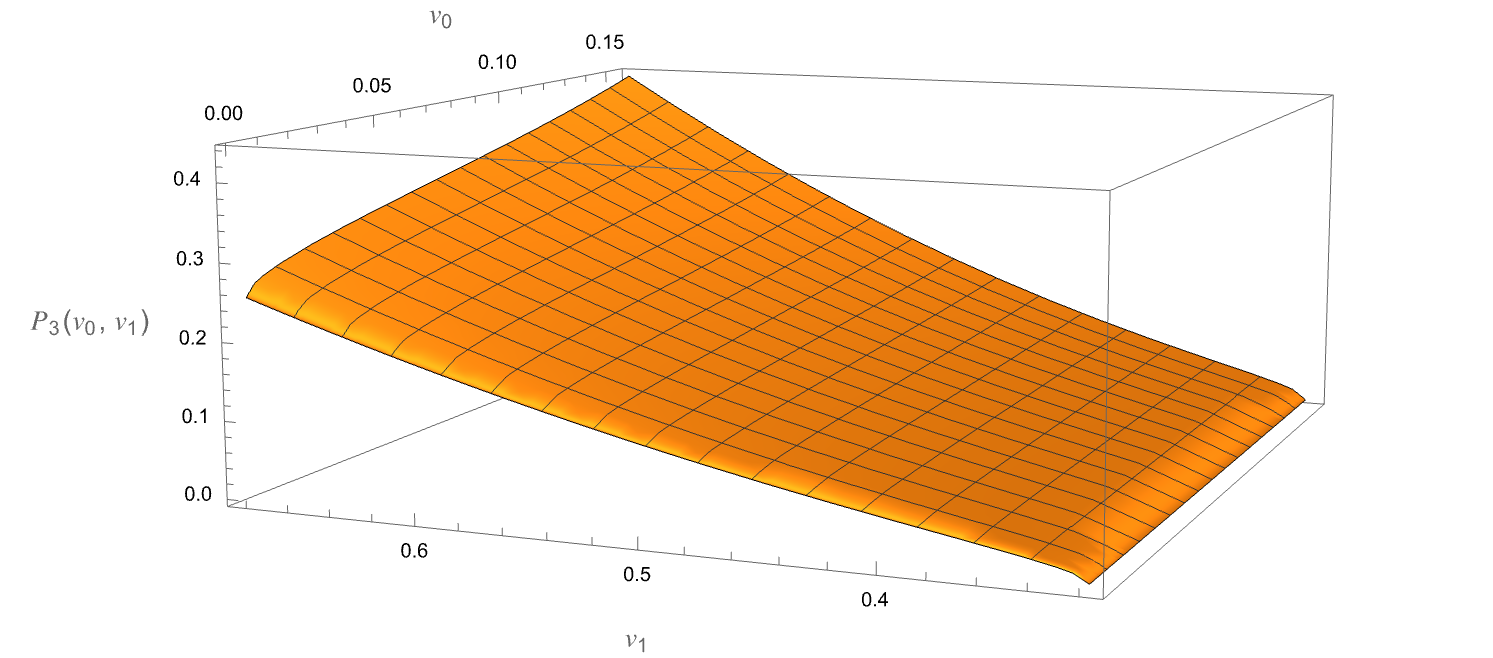} 
     \end{center}
     \caption{
         \label{fig:p123-plots}
         Plots of $P_1(v_0)$ for $v_0 \in (0,\frac{4\pi}{81})$, of $P_2(v_1)$ for $v_1 \in (\frac{1}{\pi},1-\frac{1}{\pi}]$ and $P_3(v_0,v_1)$ in the Cartesian product. 
              }
\end{figure}

\begin{figure}
\begin{center}
        \includegraphics[scale=0.3]{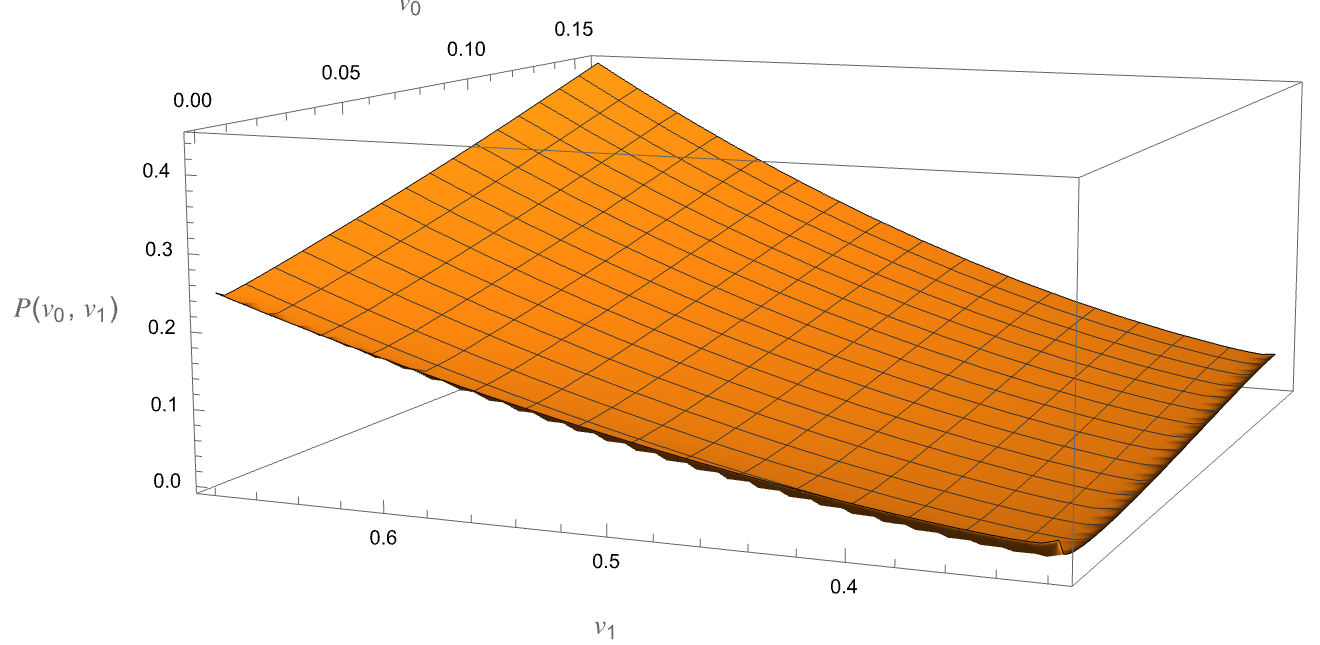}
                \includegraphics[scale=0.3]{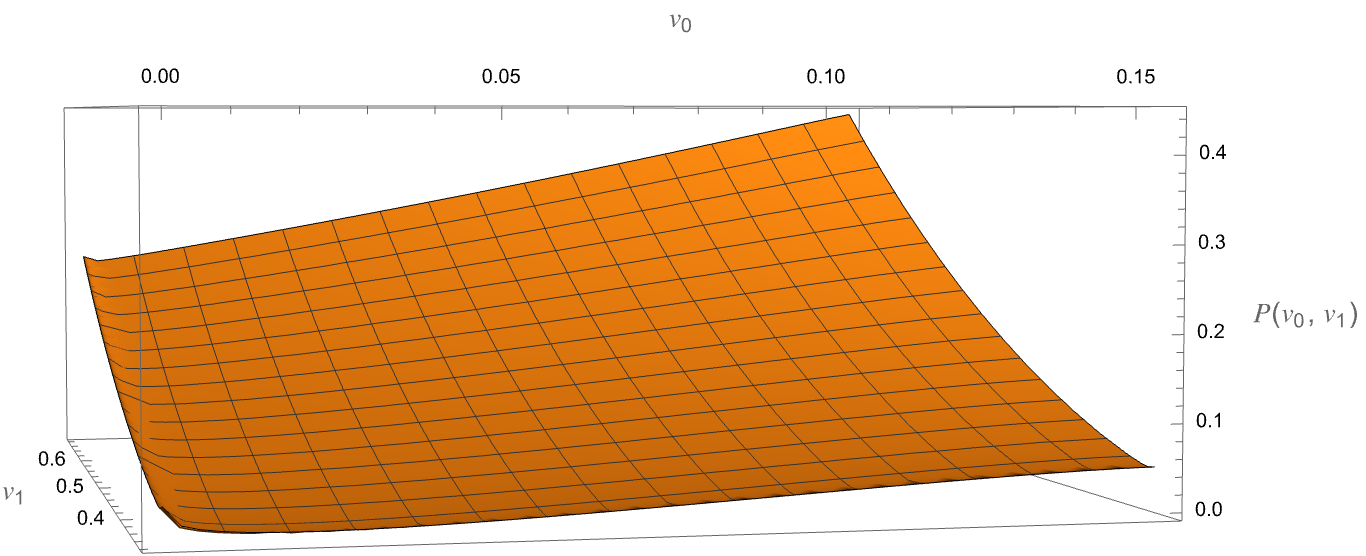}
     \end{center}
     \caption{
         \label{fig:P-plot}
         Plots of $P(v_0,v_1)$ for $(v_0,v_1) \in (0,\frac{4\pi}{81}) \times (\frac{1}{\pi},1-\frac{1}{\pi}]$ from different angles. 
              }
\end{figure}

See Figure \ref{fig:p123-plots} for plots of $P_1(v_0)$, $P_2(v_1)$ and $P_3(v_0,v_1)$. 
It is not hard to check that $P_1(v_0)$ is locally Lipschitz on $(0,\frac{4\pi}{81}]$ and behaves like $- c \sqrt{v_0} \log 1/v_0$ near $v_0 = 0$, 
and that $P_2(v_1)$ is smooth on $(\frac{1}{\pi},1 - \frac{1}{\pi}]$ and $1/2$-H\"older near $v_1 = \frac{1}{\pi}$. Similarly, $P_3(v_0,v_1)$ is smooth on $\mathcal{R} := (0,\frac{4\pi}{81}] \times (\frac{1}{\pi} , 1 - \frac{1}{\pi}]$ and $1/2$-H\"older near $v_0 = 0$ and $v_1 = \frac{1}{\pi}$. 
It turns out that $P(v_0,v_1) > 0$ on the entire $\mathcal{R}$ -- see Figure \ref{fig:P-plot}. According to Mathematica, $\min_{(v_0,v_1) \in \mathcal{R}} P(v_0,v_1) \simeq 0.000844955$, attained at $(v_0,v_1) \simeq (0.0201698, 0.32374)$. 
A rigorous justification of the positivity on $\mathcal{R}$ will be carried out in the subsequent subsections. 
We will first rigorously show that on $\mathcal{R}$ we have
\[
\frac{\partial P}{\partial v_0} \geq - c_1 \frac{\log{1/v_0}}{\sqrt{v_0}} ~,~ \frac{\partial P}{\partial v_1} \geq -c_2 \frac{1}{\sqrt{v_1 - 1/\pi}} 
\]
 for appropriate explicit constants $c_1 ,c_2 > 0$. Using this, we will construct an appropriately fine two-dimensional mesh $\mathcal{M} = \{(v^i_0,v^j_1)\}_{i=1,\ldots,N_0,j=1,\ldots,N_1}$ ensuring that 
 \[
\min_{(v_0,v_1) \in \mathcal{R}} P(v_0,v_1) \geq \min_{(v_0,v_1) \in \mathcal{M}} P(v_0,v_1) - 2\eps,
\]
for values of $\eps,\delta >0$ so that $2\eps + \delta < 0.000844955$. Finally, we will explicitly numerically compute $P(v_0,v_1)$ on this mesh with a working precision ensuring an error of at most $\delta$, thereby rigorously establishing that $P(v_0,v_1) > 0$ on $\mathcal{R}$. 

\begin{remark}
Our original numerical computation employed Wolfram's Mathematica 13.3 \cite{Mathematica-13.3}, whose manuals assert an arbitrary working precision when calculating elliptic integrals, as well as the other non-algebraic functions involved in the computation. However, as pointed out to us by a referee, it is better practice to use an open-source software for a rigorous justification. Consequently, we have also repeated the computation using FLINT \cite{Flint}, specifically its arbitrary precision ball arithmetic (Arb) \cite{Arb-manual}, which can compute incomplete elliptic integrals with rigorous precision intervals. Both softwares agreed that the minimum $\min_{(v_0,v_1) \in \mathcal{M}}  P(v_0,v_1)$ (taking into account the precision interval) is attained on exactly the same point on the mesh $\mathcal{M}$. The source code, written using the SageMath 10.6 environment \cite{Sagemath-10.6}, is available upon request. 
\end{remark}

Lastly, to handle the range $v_1 \in (1-\frac{1}{\pi},1)$, we shall use for simplicity the assumption that $\lambda \geq 0.8$ (even though it is not really required to establish that $Q(v_0,v_1) > 0$). Recalling that $\ell_{v_0,v_1}(v) = \I_{\T^2}(v_0) + \lambda (v-v_0)$ and that $\I_{\T^2}$ is concave, we see that $\ell_{v_0,v_1}(v)$ is minimized for a given $v$ and $\lambda$ when $v_0 = 0$, and hence $\ell_{v_0,v_1}(v) \geq \lambda v \geq 0.8 v$. Using this to lower bound the contribution of the integral in (\ref{eq:Q-formula}) on $[\frac{1}{\pi} , 1 - \frac{1}{\pi}]$, we obtain:
\begin{equation} \label{eq:QP4}
Q(v_0,v_1) \geq P_1(v_0) + P_4  ,
\end{equation}
where, denoting $c = \frac{1}{0.8}$, we have:
\begin{align}
\nonumber P_4 &= \int_{1/\pi}^{1-1/\pi} \frac{v - 4 \pi / 81}{\sqrt{c^2/v^2 - 1} } dv \\
& = \left . \brac{\frac{4 \pi}{81} - \frac{v}{2}} \sqrt{c^2 - v^2} + 
\label{eq:P4} \frac{c^2}{2} \tan^{-1}\brac{\frac{v}{\sqrt{c^2 - v^2}}} \right |_{v=\frac{1}{\pi}}^{v = 1 - \frac{1}{\pi}} \simeq 0.0597521 . 
\end{align}
On the other hand, according to Mathematica, the minimum of $P_1(v_0)$ on $v_0 \in (0,\frac{4\pi}{81})$ is approximately $-0.0447304$ (attained at $v_0 \simeq 0.0212857$). A rigorous justification that $P_1(v_0) >  -0.046$ will be carried out in the last subsection; as before, we will use a brute force computation on a fine mesh. We will thus conclude that $Q(v_0,v_1) > 0$ also when $v_1 > 1 - 1/\pi$ (and $\lambda \geq 0.8$). Up to justification of the various estimates and numerical computations, this concludes the proof of Proposition \ref{prop:Appendix}. The rest of the appendix is dedicated to a rigorous verification of the above arguments.

\subsection{$P_1(v_0)$}

Let us denote:
\begin{align*}
F_-(x,m) & := K(m) - F(x,m) = \int_{x}^1 \frac{dt}{\sqrt{(1-t^2)(1-m t^2)}}  ,\\
E_-(x,m) & := E(m) - E(x,m) = \int_{x}^1 \frac{\sqrt{1 - m t^2}}{\sqrt{1-t^2}} dt  .
\end{align*}

Directly differentiating, we calculate:
\begin{align*}
P_1'(v_0)  = \frac{1}{\sqrt{\pi v_0} (1 - \sqrt{\pi v_0}) } & \left ( \frac{2 \pi}{9}  \sqrt{\frac{1}{\pi} - \frac{4 \pi}{81}} \sqrt{\frac{4 \pi}{81} - v_0} \right . \\
& - \left . \brac{\frac{4 \pi}{81} - v_0} F_{-} \brac{\frac{\sqrt{\frac{1}{\pi }-\frac{4 \pi }{81}}}{\sqrt{\frac{1}{\pi}-v_0}} , 1-\pi  v_0} \right . \\
   &+  \left .  \brac{\frac{4 \pi}{81} +\frac{1}{\pi} -2  v_0}
   E_- \brac{\frac{\sqrt{\frac{1}{\pi}-\frac{4 \pi }{81}}}{\sqrt{\frac{1}{\pi }-v_0}} , 1-\pi v_0 ) } \right ) .
\end{align*}
Note that the first and third terms on the right-hand-side are non-negative for all $v_0 \in (0,\frac{4 \pi}{81})$, and so to lower-bound $P_1'(v_0)$ we just need to treat the second term above.

 \begin{lemma}
 For all $v_0 \in (0, \frac{4 \pi}{81})$:
 \[
 F_- \brac{\frac{\sqrt{\frac{1}{\pi}-\frac{4 \pi }{81}}}{\sqrt{\frac{1}{\pi }-v_0}} , 1-\pi v_0  } \leq \frac{\log(\frac{4 \pi}{9 \sqrt{\pi v_0}})}{\sqrt{1 - \frac{4 \pi^2}{81}}} . 
\]
\end{lemma}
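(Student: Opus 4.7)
The plan is to rewrite $F_-$ as an integral with algebraically simpler integrand by means of the substitution $u = \sqrt{1 - m t^2}$ with $m = 1 - \pi v_0$. Writing $a := \sqrt{\pi v_0}$, one checks directly that $t = 1$ corresponds to $u = a$, while the specific choice $x^2 = (\tfrac{1}{\pi} - \tfrac{4\pi}{81})/(\tfrac{1}{\pi} - v_0)$ in the statement is engineered precisely so that $1 - m x^2 = \tfrac{4\pi^2}{81}$, i.e., $t = x$ maps to $u = \tfrac{2\pi}{9}$ (this relies on the identity $\tfrac{1}{\pi} - v_0 = (1-\pi v_0)/\pi$, which cancels a factor of $m$). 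Computing the Jacobian yields $\sqrt{(1-t^2)(1-mt^2)}\,|dt| = \sqrt{(1-u^2)(u^2 - a^2)}\,|du|$, so that
\[
F_-\brac{x,\, 1 - \pi v_0} \;=\; \int_a^{2\pi/9} \frac{du}{\sqrt{(1-u^2)(u^2 - a^2)}}.
\]

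On the interval $[a, 2\pi/9]$, the factor $\sqrt{1-u^2}$ is decreasing and therefore bounded below by $\sqrt{1 - 4\pi^2/81}$. Pulling this constant out and evaluating the remaining elementary integral via the antiderivative $\cosh^{-1}(u/a)$ gives
\[
F_-\brac{x,\, 1 - \pi v_0} \;\leq\; \frac{1}{\sqrt{1 - 4\pi^2/81}}\, \log\!\brac{\frac{2\pi/9 + \sqrt{4\pi^2/81 - \pi v_0}}{\sqrt{\pi v_0}}}.
\]
Since $v_0 > 0$ forces $\sqrt{4\pi^2/81 - \pi v_0} < 2\pi/9$, the argument of the logarithm is strictly less than $(4\pi/9)/\sqrt{\pi v_0}$, and the asserted bound follows.

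The only non-routine step is recognizing the correct substitution; once one observes that the peculiar value of $x$ appearing in the statement is exactly the one that causes the transformed integral to have clean endpoints $a$ and $\tfrac{2\pi}{9}$, everything else reduces to a monotone-bound on $\sqrt{1-u^2}$ over a short interval together with a standard inverse-hyperbolic antiderivative. The crude monotonicity estimate costs exactly the factor $1/\sqrt{1 - 4\pi^2/81}$ that appears in the claim, so no sharper argument is needed.
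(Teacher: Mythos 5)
Your proof is correct and is essentially the paper's argument, just compressed: the paper reaches the same intermediate integral $\int_{\sqrt{\pi v_0}}^{2\pi/9}\frac{ds}{\sqrt{(1-s^2)(s^2-\pi v_0)}}$ by chaining two substitutions ($u^2 = 1-t^2$, then $s^2 = 1-m+mu^2$, which composes to your single $u=\sqrt{1-mt^2}$), then rescales $s=\sqrt{\pi v_0}\,z$ before bounding $1-\pi v_0 z^2 \geq 1-\tfrac{4\pi^2}{81}$ and evaluating $\int_1^a \frac{dz}{\sqrt{z^2-1}}=\log(a+\sqrt{a^2-1})\leq \log(2a)$. You bound $\sqrt{1-u^2}$ directly in place and note the log argument is at most $\frac{4\pi}{9\sqrt{\pi v_0}}$ — the same estimate without the rescaling — so the two derivations are computationally identical.
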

\begin{proof}
Defining $\lambda = \frac{\sqrt{\frac{1}{\pi}-\frac{4 \pi }{81}}}{\sqrt{\frac{1}{\pi }-v_0}}$ and $m = 1-\pi v_0$, we have after changing variables $u^2 = 1 - t^2$:
\[
F_-(\lambda,m) = \int_{\lambda}^1 \frac{dt}{\sqrt{(1-t^2)(1-m t^2)}}= \int_{0}^{\sqrt{1-\lambda^2}} \frac{du}{\sqrt{(1-u^2)(1-m+ m u^2)}} . 
\]
Setting $s^2 = 1-m+ m u^2$, recalling the definition of $m$ and $\lambda$, and noting that $\sqrt{1-m + m (1-\lambda^2)} = \frac{2 \pi}{9}$, we obtain:
\[
= \int_{\sqrt{\pi v_0}}^{\frac{2\pi}{9}} \frac{ds}{\sqrt{(s^2 - \pi v_0)(1-s^2)}} . 
\]
Finally, setting $s = \sqrt{\pi v_0} z$, we conclude:
\[
= \int_1^{\frac{2 \pi}{9 \sqrt{\pi v_0}}} \frac{dz}{\sqrt{(z^2 - 1)(1 - \pi v_0 z^2)}} . 
\]
Denoting $a = \frac{2 \pi}{9 \sqrt{\pi v_0}}$, we therefore bound:
\[
F_-(\lambda,m) \leq \frac{1}{\sqrt{1 - \frac{4 \pi^2}{81}}} \int_1^{a} \frac{dz}{\sqrt{z^2 - 1}} = \frac{\log( a + \sqrt{a^2-1})}{\sqrt{1 - \frac{4 \pi^2}{81}}} \leq \frac{\log(2a)}{\sqrt{1 - \frac{4 \pi^2}{81}}}  . 
\]
\end{proof}

\begin{lemma}
\[
\max_{v_0 \in [0,\frac{4\pi}{81}]} \frac{\frac{4\pi}{81} - v_0}{1 - \sqrt{\pi v_0}} = \frac{\frac{4 \pi}{81} - \frac{1}{\pi} \brac{ 1  - \sqrt{1-\frac{4 \pi ^2}{81}}}^2 }{\sqrt{1 - \frac{4\pi^2}{81}}} . 
\]
\end{lemma}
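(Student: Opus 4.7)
The plan is to reduce this to a one-variable optimization via the substitution $u = \sqrt{\pi v_0}$, which bijectively maps $[0,\tfrac{4\pi}{81}]$ onto $[0,a]$ with $a := \tfrac{2\pi}{9}$. Under this change of variables the target function becomes
\[
f(u) = \frac{a^2 - u^2}{\pi(1-u)},
\]
after which everything can be handled by elementary calculus on $[0,a]$.

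For the critical-point analysis, I would differentiate and observe that
\[
\pi(1-u)^2 f'(u) = u^2 - 2u + a^2,
\]
which vanishes at $u_{\pm} = 1 \pm \sqrt{1-a^2}$. Since $a = \tfrac{2\pi}{9} \in (0,1)$, the root $u_+$ lies strictly above $1$ and hence outside our interval, while $u^* := 1 - \sqrt{1-a^2}$ satisfies $0 < u^* < a$ (the upper inequality reducing to $(1-a)^2 < 1 - a^2$, which holds for $a>0$). Checking that $f(0) = a^2/\pi > 0 = f(a)$ and inspecting the sign of $f'$ before and after $u^*$ identifies $u^*$ as the unique interior maximizer of $f$ on $[0,a]$.

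The final step is to evaluate $f(u^*)$ and reconcile it with the claimed expression. Using $1 - u^* = \sqrt{1-a^2}$ and the identity $(u^*)^2 = 2 - a^2 - 2\sqrt{1-a^2}$, a direct computation yields the key simplification
\[
a^2 - (u^*)^2 = 2\sqrt{1-a^2}\,(1 - \sqrt{1-a^2}) = 2\sqrt{1-a^2}\, u^*,
\]
whence
\[
f(u^*) = \frac{2\sqrt{1-a^2}\,u^*}{\pi \sqrt{1-a^2}} = \frac{2(1 - \sqrt{1-a^2})}{\pi}.
\]
On the other hand, substituting $a^2 = \tfrac{4\pi^2}{81}$ into the right-hand side of the claim and pulling $\tfrac{1}{\pi}$ out of its numerator yields $\frac{a^2 - (u^*)^2}{\pi\sqrt{1-a^2}}$, which matches $f(u^*)$ by the identity above. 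No real obstacle arises here; the result is a short single-variable calculus exercise once the substitution is made, the only mild care needed being the verification that the critical point $u^*$ lies strictly inside $[0,a]$ and dominates the boundary values.
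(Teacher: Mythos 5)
Your proof is correct and follows essentially the same route as the paper's: the paper substitutes $v_0 = x^2$ and differentiates, which differs from your $u = \sqrt{\pi v_0}$ only by the harmless rescaling $u = \sqrt{\pi}\,x$, and the critical-point computation and evaluation are the same. The only nitpick is that the condition $u^* < a$ requires $a < 1$, not merely $a > 0$, but since $a = \tfrac{2\pi}{9} < 1$ this is immaterial.
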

\begin{proof}
Set $v_0 = x^2$ and differentiate. The maximum is attained at $v_0 = \frac{1}{\pi} \brac{ 1  - \sqrt{1-\frac{4 \pi ^2}{81}}}^2$. 
\end{proof}
\begin{corollary}
\[
\frac{1}{\sqrt{\pi} \sqrt{1 - \frac{4\pi^2}{81}}} \max_{v_0 \in [0,\frac{4\pi}{81}]} \frac{\frac{4\pi}{81} - v_0}{1 - \sqrt{\pi v_0}} \simeq 0.142487 < \frac{1}{7} . 
\]
\end{corollary}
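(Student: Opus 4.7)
The plan is to reduce the corollary to a clean closed-form expression using the identity from the preceding lemma, and then to verify the resulting numerical inequality. Introduce the abbreviation $a := \sqrt{1 - \tfrac{4\pi^2}{81}}$, so that $\tfrac{4\pi}{81} = \tfrac{1-a^2}{\pi}$. Substituting this into the right-hand side of the preceding lemma and dividing by $\sqrt{\pi}\,a$ (the factor $\tfrac{1}{\sqrt{\pi}\sqrt{1-4\pi^2/81}}$ in the corollary), the quantity to estimate becomes
\[
\frac{1}{\sqrt{\pi}\,a}\cdot\frac{\tfrac{1}{\pi}(1-a^2)-\tfrac{1}{\pi}(1-a)^2}{a}
=\frac{(1-a^2)-(1-a)^2}{\pi^{3/2}a^2}.
\]
Factoring the numerator as $(1-a)\bigl[(1+a)-(1-a)\bigr]=2a(1-a)$ collapses the expression to
\[
\frac{2(1-a)}{\pi^{3/2}\,a},
\]
which is the clean form suitable for estimation.

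Next I would reduce the target inequality $\tfrac{2(1-a)}{\pi^{3/2}a}<\tfrac{1}{7}$ to an inequality on $a$ alone: clearing denominators yields the equivalent statement
\[
a>\frac{14}{14+\pi^{3/2}},
\qquad\text{i.e.,}\qquad
a^{2}>\frac{196}{(14+\pi^{3/2})^{2}},
\]
and in view of $a^{2}=1-\tfrac{4\pi^{2}}{81}$, it suffices to verify
\[
\frac{4\pi^{2}}{81}+\frac{196}{(14+\pi^{3/2})^{2}}<1.
\]
This is a concrete scalar inequality involving only $\pi$, and I would close it using rigorous enclosures $3.14159<\pi<3.14160$, which give $\tfrac{4\pi^{2}}{81}<0.48747$ and $\pi^{3/2}>5.5683$, hence $\tfrac{196}{(14+\pi^{3/2})^{2}}<\tfrac{196}{(19.5683)^{2}}<0.51199$; summing yields a value strictly below $1$.

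The main (and only) obstacle is numerical tightness: the desired bound $<1/7\approx 0.14286$ compares against an actual value $\approx 0.14249$, so the margin is roughly $4\cdot 10^{-4}$, and one needs $\pi$ to at least four correct decimal digits together with a careful square-root estimate for $\pi^{3/2}$ to carry the inequality rigorously. Apart from this, the proof is a one-line algebraic simplification followed by a direct substitution of the approximate value $a\approx 0.7160$ to recover the asserted decimal $\simeq 0.142487$.
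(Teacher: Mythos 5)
Your argument is correct, and it is essentially the route the paper implicitly takes: substitute the preceding lemma's closed-form maximum and verify the resulting numerical inequality. The paper states the corollary with no proof at all, so your algebraic simplification is a genuine improvement in transparency: with $a=\sqrt{1-\tfrac{4\pi^2}{81}}$, the identity $\frac{4\pi}{81}=\frac{1-a^2}{\pi}$ collapses the lemma's maximum to $\frac{2(1-a)}{\pi}$, and hence the corollary's quantity to the clean form $\frac{2(1-a)}{\pi^{3/2}a}$, from which the inequality $<\frac{1}{7}$ reduces to the scalar statement $\frac{4\pi^2}{81}+\frac{196}{(14+\pi^{3/2})^2}<1$, verifiable with elementary rational enclosures of $\pi$. (Minor note: $\frac{4\pi^2}{81}\approx 0.48739$, so your bound $<0.48747$ is looser than needed but still valid; the sum $0.48747+0.51199<1$ closes the argument.)
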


Combining the previous estimates, we obtain:
\begin{lemma}
For all $v_0 \in (0,\frac{4\pi}{81})$:
\[
P_1'(v_0) \geq - \frac{\log\brac{\frac{4 \pi}{9 \sqrt{\pi v_0}}}}{7 \sqrt{v_0}} . 
\]
\end{lemma}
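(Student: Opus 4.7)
The plan is straightforward and essentially amounts to assembling the three previous bounds. Starting from the explicit formula for $P_1'(v_0)$ displayed just above the statement, I would first observe that the first summand (with $\sqrt{\tfrac{1}{\pi}-\tfrac{4\pi}{81}}\sqrt{\tfrac{4\pi}{81}-v_0}$) and the third summand (the one with $E_-$, which is non-negative because $E_-\geq 0$ and $\tfrac{4\pi}{81}+\tfrac{1}{\pi}-2v_0\geq \tfrac{4\pi}{81}+\tfrac{1}{\pi}-\tfrac{8\pi}{81}>0$ on the relevant range) are both non-negative, and the prefactor $\frac{1}{\sqrt{\pi v_0}(1-\sqrt{\pi v_0})}$ is positive since $v_0<\tfrac{4\pi}{81}<\tfrac{1}{\pi}$. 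Dropping these two positive contributions yields the lower bound
\[
P_1'(v_0) \geq - \frac{1}{\sqrt{\pi v_0}\,(1-\sqrt{\pi v_0})}\,\Bigl(\tfrac{4\pi}{81}-v_0\Bigr)\, F_{-}\!\left(\tfrac{\sqrt{1/\pi-4\pi/81}}{\sqrt{1/\pi-v_0}},\,1-\pi v_0\right).
\]

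Next I would invoke the first lemma of this subsection to replace the $F_-$ factor by $\log\!\bigl(\tfrac{4\pi}{9\sqrt{\pi v_0}}\bigr)/\sqrt{1-\tfrac{4\pi^2}{81}}$; since $v_0<\tfrac{4\pi}{81}$ forces $\sqrt{\pi v_0}<\tfrac{2\pi}{9}<\tfrac{4\pi}{9}$, this logarithm is positive and the inequality does not flip sign. Re-grouping and pulling out a factor of $\sqrt{v_0}$ gives
\[
P_1'(v_0) \geq -\frac{\log\!\bigl(\tfrac{4\pi}{9\sqrt{\pi v_0}}\bigr)}{\sqrt{v_0}}\cdot\frac{1}{\sqrt{\pi}\sqrt{1-\tfrac{4\pi^2}{81}}}\cdot\frac{\tfrac{4\pi}{81}-v_0}{1-\sqrt{\pi v_0}}.
\]

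The last factor is precisely the quantity estimated in the immediately preceding corollary, which gives that it is bounded by $0.142487<\tfrac17$ uniformly over $v_0\in[0,\tfrac{4\pi}{81}]$. Substituting this numerical bound completes the proof, yielding
\[
P_1'(v_0) \geq -\frac{\log\!\bigl(\tfrac{4\pi}{9\sqrt{\pi v_0}}\bigr)}{7\sqrt{v_0}}.
\]
There is no genuine obstacle here; everything is a mechanical assembly of the two preceding lemmas, and the only points needing verification are the signs (so that dropping terms and applying the upper bound on $F_-$ produce a legitimate lower bound on $P_1'(v_0)$) and the fact that the numeric estimate $0.142487<\tfrac17$ is the right comparison to convert the bound into the clean form $1/(7\sqrt{v_0})$ stated in the lemma.
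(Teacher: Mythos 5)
Your proof is correct and follows exactly the route the paper intends when it says "Combining the previous estimates, we obtain": drop the two manifestly non-negative terms in the displayed formula for $P_1'(v_0)$, apply the first lemma's bound on $F_-$, factor out $\sqrt{v_0}$, and invoke the corollary's bound by $\tfrac{1}{7}$. The sign checks you spell out (positivity of the first and third summands, of the prefactor, and of the logarithm) are exactly what is needed and all hold on $v_0 \in (0,\tfrac{4\pi}{81})$.
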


 \subsection{$P_2(v_1)$}
 
 \begin{lemma}
 $P_2'(v_1) < 0$ and $\sqrt{\pi ^2 v_1^2-1} |P_2'(v_1)|$ is decreasing on $v_1 \in [\frac{1}{\pi} , \infty)$. Therefore: \[
\max_{v_1 \in [\frac{1}{\pi} , 1 - \frac{1}{\pi}]} \sqrt{\pi ^2
   v_1^2-1} \abs{P_2'(v_1)} = 2\brac{\frac{1}{\pi} - \frac{4 \pi}{81}} \simeq 0.326339 < \frac{1}{3} .
\]
In particular, for all $v_1 \in [\frac{1}{\pi} , 1 - \frac{1}{\pi}]$:
\[
0 > P_2'(v_1) > -\frac{1}{3 \sqrt{2 \pi}} \frac{1}{\sqrt{v_1 - \frac{1}{\pi}}} .
\]
\end{lemma}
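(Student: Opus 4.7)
The plan is to first obtain a clean closed form for $P_2'(v_1)$. Plugging $\I_{\T^2}(v) = \sqrt{\pi v}$ and $\ell_{0,v_1}(v) = v/v_1$ into (\ref{eq:P2-int}) collapses it to
\[
P_2(v_1) = \int_{4\pi/81}^{1/\pi} \frac{v - 4\pi/81}{\sqrt{\pi^2 v_1^2 - \pi v}}\, dv .
\]
Differentiating under the integral and then substituting $t = \sqrt{\pi^2 v_1^2 - \pi v}$ (so that $v - 4\pi/81 = (b^2 - t^2)/\pi$ with $b := \sqrt{\pi^2 v_1^2 - 4\pi^2/81}$, and the integrand becomes a rational function of $t$) should reduce $P_2'(v_1)$ to an elementary integral of $\int(b^2/t^2 - 1)\, dt$ between $a := \sqrt{\pi^2 v_1^2 - 1}$ and $b$. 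After a short calculation I expect to arrive at the identity
\[
P_2'(v_1) = -\frac{2 v_1 (b - a)^2}{a} ,
\]
from which $P_2'(v_1) < 0$ is immediate since $b > a$ (as $4\pi^2/81 < 1$).

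With this formula in hand, the quantity of interest simplifies to $\sqrt{\pi^2 v_1^2 - 1}\,|P_2'(v_1)| = 2 v_1 (b - a)^2$. Since $b^2 - a^2 = C^2 := 1 - 4\pi^2/81 > 0$ is a fixed constant, I will reparametrize by $a \in [0,\infty)$ via $\pi v_1 = \sqrt{a^2 + 1}$ and reduce monotonicity in $v_1$ to showing that
\[
\tilde g(a) := \sqrt{a^2 + 1}\,\bigl(\sqrt{a^2 + C^2} - a\bigr)^2
\]
is non-increasing on $[0,\infty)$. Differentiating and factoring out the non-negative factor $(\sqrt{a^2+C^2} - a)^2$ from $\tilde g'(a)$ reduces the claim to the inequality $a\sqrt{a^2 + C^2} \leq 2(a^2 + 1)$, which after squaring becomes $3 a^4 + (8 - C^2) a^2 + 4 \geq 0$; this is manifest since $C^2 < 1$ makes every coefficient positive.

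Since $a$ is increasing in $v_1$ and vanishes at $v_1 = 1/\pi$, the maximum of $\sqrt{\pi^2 v_1^2 - 1}\,|P_2'(v_1)|$ over $[1/\pi, 1-1/\pi]$ is attained at $v_1 = 1/\pi$, where $a=0$ and $b = C$, yielding the asserted value $\tfrac{2 C^2}{\pi} = 2\bigl(\tfrac{1}{\pi} - \tfrac{4\pi}{81}\bigr)$; an arithmetic check confirms this is strictly less than $1/3$. Finally, the pointwise bound on $|P_2'(v_1)|$ follows by factoring $\sqrt{\pi^2 v_1^2 - 1} = \sqrt{(\pi v_1 - 1)(\pi v_1 + 1)} \geq \sqrt{2\pi}\sqrt{v_1 - 1/\pi}$, using $\pi v_1 + 1 \geq 2$ on the range of interest. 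None of the steps poses a real obstacle; the only thing to get right is the $t$-substitution so that the integral collapses cleanly to $(b-a)^2/a$ without stray terms.
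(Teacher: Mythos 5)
Your proposal is correct and follows essentially the paper's approach. Both compute $P_2'(v_1)$ explicitly, arrive (after simplification) at the same clean identity $P_2'(v_1) = -2v_1(b-a)^2/a$ with $a = \sqrt{\pi^2 v_1^2 - 1}$, $b = \sqrt{\pi^2 v_1^2 - 4\pi^2/81}$, and then reduce the lemma to showing $2v_1(b-a)^2$ is decreasing; I verified that the paper's more complicated-looking expression for $P_2'$ does simplify to your form. The one place you diverge is in proving this monotonicity: you differentiate $\tilde g(a) = \sqrt{a^2+1}(\sqrt{a^2+C^2}-a)^2$ directly, factor out the manifestly non-negative $(\sqrt{a^2+C^2}-a)^2$, and reduce to the polynomial inequality $3a^4 + (8-C^2)a^2 + 4 \geq 0$; the paper instead performs a chain of substitutions $y = x^2 - (1+b)/2$, $z = y^2$ to exhibit the relevant quantity as a product of two manifestly decreasing factors. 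Your route is a bit more direct and mechanical; the paper's is algebraically slicker but less obviously motivated. Also note the paper first deduces $P_2'(v_1) < 0$ from the concavity of $\I_{\T^2}$ (making $\ell_{0,v_1}$ pointwise decreasing in $v_1$) without any computation, whereas you read it off from the explicit formula — both fine. The final Hölder bound via $\pi v_1 + 1 \geq 2$ matches. One small caveat: your argument is written as a plan ("I expect to arrive at...") rather than a completed computation of the $t$-substitution, so to make it a proof you would need to actually carry out that elementary antidifferentiation, but the claimed result is correct and the step is routine.
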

\begin{proof}
By concavity of $\I_{\T^2}$ (in fact, strict concavity at $0$), $\ell_{0,v_1}$ is pointwise decreasing in $v_1$. Consequently, $P_2(v_1)$ is immediately seen to be decreasing from its integral definition (\ref{eq:P2-int}).
 Directly differentiating, we calculate:
 \begin{equation} \label{eq:P2d}
 P_2'(v_1) = \frac{4 v_1 \brac{\frac{1}{2} + \frac{2 \pi^2}{81} + \sqrt{\pi^2 v_1^2-\frac{4 \pi^2}{81}} \sqrt{\pi ^2 v_1^2-1} - \pi^2 v_1^2 }}{\sqrt{\pi ^2
   v_1^2-1}} .
\end{equation}
Denoting $x = \pi v_1$ and $b = \frac{4 \pi^2}{81} < 1$, the monotonicity claim then boils down to showing that:
\[
x \brac{x^2 - \frac{1 + b}{2} - \sqrt{x^2 - b} \sqrt{x^2 - 1}}
\]
is decreasing on $[1,\infty)$. Denoting $y = x^2 - \frac{1+b}{2}$, note that $(x^2 - b) (x^2-1) = (y+\frac{1-b}{2})(y-\frac{1-b}{2}) = y^2 - c^2$ where $c = \frac{1-b}{2}$, and so the task is equivalent to showing that:
\[
\sqrt{y + \frac{1+b}{2}} \brac{y - \sqrt{y^2 - c^2}} = \sqrt{\frac{1}{y} + \frac{1+b}{2 y^2} } \brac{y^2 - \sqrt{y^4 - c^2 y^2}} 
\]
is decreasing on $[c,\infty)$. The first term on the right is clearly decreasing, and the second term is decreasing iff (denoting $z = y^2$)
\[
z - \sqrt{z^2 - c^2 z} 
\]
is decreasing on $[c^2,\infty)$. The latter is directly checked by differentiation. 

Consequently, the maximum of $\sqrt{\pi ^2  v_1^2-1} \abs{P_2'(v_1)}$ over  $[\frac{1}{\pi} , 1 - \frac{1}{\pi}]$ is attained at $v_1 = \frac{1}{\pi}$, yielding the first asserted bound by (\ref{eq:P2d}). Using that $\sqrt{ \pi v_1 + 1} \geq \sqrt{2}$ when $v_1 \geq \frac{1}{\pi}$, the second bound follows. 
\end{proof}
 
 \subsection{$P_3(v_0,v_1)$}
 
\begin{lemma}
For all $v_0 \in [0,\frac{4\pi}{81}]$ and $v_1 \in [\frac{1}{\pi} , 1 - \frac{1}{\pi}]$, $\frac{\partial P_3(v_0,v_1)}{\partial v_0} \geq 0$ and $\frac{\partial P_3(v_0,v_1)}{\partial v_1} \geq 0$.
\end{lemma}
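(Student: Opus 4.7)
The key observation is that on the interval $v \in [1/\pi, v_1]$ under consideration we have $\I_{\T^2} \equiv 1$ and hence $(\log \varphi_{\T^2})' \equiv 0$, so the CMC equation (\ref{eq:CMC1}) reduces on this middle zone to the statement that $\Sigma$ has constant \emph{unweighted} curvature equal to $p$. Consequently the portion of the generalized unduloid between $v=1/\pi$ and $v = v_1$ is a circular arc of radius $1/p$, and by Proposition \ref{prop:CMC}(\ref{it:perp}) it is tangent to the top of the slab at $(t,v) = (0, v_1)$. The center of the circle therefore lies at $(0, v_1 - 1/p)$ in the $(t,v)$-plane, and the arc exits the middle zone at $(t^*, 1/\pi)$ with $t^* = \sqrt{\phi(2-\phi)}/p$, where $\phi := pL$ and $L := v_1 - 1/\pi$.

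Plugging the parametrization $v(t) = v_1 - 1/p + \sqrt{1/p^2 - t^2}$ into (\ref{eq:P3-int}) and using $\ell(v(t)) = \sqrt{1-p^2 t^2}$, one observes that the factor $\ell(v)\,dv/\sqrt{1-\ell^2(v)}$ miraculously collapses to $-dt$, yielding the clean expression
\[
P_3(v_0, v_1) = \int_0^{t^*} \brac{v(t) - \tfrac{4\pi}{81}} dt.
\]
In particular $P_3$ depends on $(v_0, v_1)$ only through the pair $(p, v_1)$, with $v_0$ entering solely via $p = p(v_0,v_1) = (1-\sqrt{\pi v_0})/(v_1-v_0)$. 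The plan is to treat $(p, v_1)$ as independent coordinates, compute the two partial derivatives of $P_3$ directly from this expression, and chain-rule back to $(v_0, v_1)$.

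Switching to the angular parametrization $v(\theta) = v_1 - (1-\cos\theta)/p$ for $\theta \in [0, \alpha]$, $\alpha := \arccos(1-\phi)$, a direct Leibniz-rule calculation (a boundary contribution at $\theta = \alpha$ plus an interior piece involving only $\cos\theta$ and $\cos^2\theta$) yields, after simplification using $\phi = pL$,
\[
\partial_{v_1}|_p P_3 = \frac{e(1-\phi)}{\sqrt{\phi(2-\phi)}} + \frac{\sqrt{\phi(2-\phi)}}{p}, \qquad e := \tfrac{1}{\pi} - \tfrac{4\pi}{81},
\]
\[
\partial_{p}|_{v_1} P_3 = \frac{1}{p^3}\brac{\frac{\phi(2-\phi-pe)}{\sqrt{\phi(2-\phi)}} - \arccos(1-\phi)}.
\]
The first expression is manifestly positive. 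For the second, since $pe \geq 0$ we have the chain $\frac{\phi(2-\phi-pe)}{\sqrt{\phi(2-\phi)}} \leq \sqrt{\phi(2-\phi)} = \sin\alpha < \alpha = \arccos(1-\phi)$ on $\alpha \in (0, \pi/2]$, hence $\partial_p|_{v_1} P_3 < 0$.

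It remains to sign the two chain-rule factors. The identity $\partial_{v_1}|_{v_0} p = -p/(v_1-v_0) < 0$ is immediate from the definition of $p$. For $\partial_{v_0}|_{v_1} p$, a completion of the square in $x := \sqrt{v_0}$ shows its numerator (after clearing positive denominators) equals $-\sqrt{\pi}(x - 1/\sqrt{\pi})^2 + (1-\pi v_1)/\sqrt{\pi}$, so the hypothesis $v_1 \geq 1/\pi$ makes both summands non-positive and forces $\partial_{v_0}|_{v_1} p \leq 0$. Combining via the chain rule,
\[
\partial_{v_0}|_{v_1} P_3 = (\partial_p|_{v_1} P_3)\,(\partial_{v_0}|_{v_1} p) \geq 0,
\]
\[
\partial_{v_1}|_{v_0} P_3 = \partial_{v_1}|_p P_3 + (\partial_p|_{v_1} P_3)(\partial_{v_1}|_{v_0} p) > 0,
\]
the latter being a sum of a strictly positive term with a product of two negatives. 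The main labour is the bookkeeping needed to extract the above clean forms for the partial derivatives of $P_3$; the subsequent sign analysis reduces to the classical inequality $\sin \alpha < \alpha$ on $(0,\pi/2]$ and to one completion of the square.
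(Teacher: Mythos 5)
Your proof is correct, and it takes a route that is closely related to but genuinely distinct from the paper's. Both arguments re-express $P_3$ in terms of a reduced pair of variables and then chain-rule back; the paper uses $(z, v_1)$ with $z = (\sqrt{\pi v_0}\,v_1 - v_0)/(1-\sqrt{\pi v_0})$, while you use $(p, v_1)$ with $p = \lambda$ the mean curvature. These are linked by $z + v_1 = 1/p$, so formally the two parametrizations are equivalent, but the content of the sign analysis differs. The paper extracts $\partial_{v_0} P_3 \geq 0$ ``for free'' from the concavity of $\I_{\T^2}$ (the chord $\ell_{v_0,v_1}$ is pointwise non-decreasing in $v_0$, hence so is the integrand of $P_3$), uses this together with $\partial_{v_0} z > 0$ to deduce $\partial_z P_3 \geq 0$, and then reads off $\partial_{v_1}|_z P_3 \geq 0$ directly from $z \geq 0$. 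Your argument instead starts from the geometric observation that the middle-zone portion of $\Sigma$ is literally a circular arc of radius $1/p$ meeting $\{t=0\}$ perpendicularly, rewrites $P_3$ as the area $\int_0^{t^*}(v(t)-\tfrac{4\pi}{81})\,dt$ under that arc, and computes both partials of $P_3$ in the $(p,v_1)$ variables explicitly; the sign of $\partial_p P_3$ then drops out from $pe \geq 0$ together with the elementary inequality $\sin\alpha < \alpha$ (which you correctly invoke in the valid range $\phi = 1-\cos\alpha \in [0,1)$, i.e.\ $\alpha \in [0,\pi/2)$, as follows from $v_0 < 1/\pi \leq v_1$). I verified the closed forms: your $\partial_{v_1}|_p P_3$ and $\partial_p|_{v_1} P_3$ match what one obtains from the paper's explicit $P_3(z,v_1)$ after the substitution $z = 1/p - v_1$, and the two sign-analyses of $\partial_{v_0} p$ (completion of the square in $\sqrt{v_0}$) and $\partial_{v_1} p = -p/(v_1-v_0)$ are correct. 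What your route buys is a cleaner geometric picture (area of a circular segment) and a reduction of the key inequality to $\sin\alpha < \alpha$; what the paper's route buys is that one of the two partials comes free from concavity, saving some computation. One cosmetic remark: the phrase ``tangent to the top of the slab'' is a slight misnomer---the arc meets the \emph{side} $\{t=0\}$ perpendicularly (horizontal tangent), which is exactly Proposition \ref{prop:CMC}(\ref{it:perp}); the formulas you use are the correct ones.
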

\begin{proof} 
Recall that by concavity of $\I_{\T^2}$, $\ell_{v_0,v_1}$ is pointwise non-decreasing in $v_0$. Consequently, $P_3$ is immediately seen to be non-decreasing in $v_0$ from its integral definition (\ref{eq:P3-int}), and hence $\frac{\partial P_3}{\partial v_0} \geq 0$. To see the monotonicity in $v_1$ we proceed as follows. 

Define:
 \[
 z = z(v_0,v_1) = \frac{\sqrt{\pi v_0 } v_1 - v_0}{1 - \sqrt{\pi v_0 }} ,
 \]
 and calculate:
 \[
\frac{\partial z}{\partial v_0} = \frac{ \sqrt{\pi} v_1 - (2 \sqrt{v_0} - \sqrt{\pi} v_0)}{ 2 (1-\sqrt{\pi v_0})^2 \sqrt{v_0}} ~,~ \frac{\partial z}{\partial v_1} =  \frac{\sqrt{\pi v_0}}{1 - \sqrt{\pi v_0}} .
 \]
Consequently, whenever $v_0 \in [0,\frac{1}{\pi})$ and $v_1 \geq \frac{1}{\pi}$, we see that $z \geq 0$, $\frac{\partial z}{\partial v_0} \geq 0$ and $\frac{\partial z}{\partial v_1}\geq 0$ (with strict inequality when $v_0 > 0$).
 
 Expressing $P_3$ as a function of $z$ and $v_1$, we have:
 \[
 P_3(z,v_1) = (z +v_1)^2 \tan^{-1}\left(\frac{\sqrt{v_1-\frac{1}{\pi
   }}}{\sqrt{2 z+v_1+\frac{1}{\pi }}}\right) 
   +\frac{1}{2} \left(\frac{1}{\pi } -\frac{8 \pi }{81} -z \right) \sqrt{\left(v_1-\frac{1}{\pi
   }\right) \left(2 z +v_1+\frac{1}{\pi }\right)} .
 \]
 A calculation verifies:
 \begin{align*}
\frac{\partial P_3(z,v_1)}{\partial z} & =  2 (z+v_1) \tan^{-1}\left(\frac{\sqrt{v_1-\frac{1}{\pi
   }}}{\sqrt{2 z+v_1+\frac{1}{\pi }}}\right)  -  \brac{2 z +v_1 + \frac{4 \pi}{81} } \frac{\sqrt{v_1-\frac{1}{\pi
   }}}{\sqrt{2 z+v_1+\frac{1}{\pi }}} ,\\
   \frac{\partial P_3(z,v_1)}{\partial v_1} & =  (z+v_1) \brac{2  \tan^{-1}\left(\frac{\sqrt{v_1-\frac{1}{\pi
   }}}{\sqrt{2 z+v_1+\frac{1}{\pi }}}\right) + \frac{\frac{1}{\pi} - \frac{4\pi}{81}}{ \sqrt{v_1 - \frac{1}{\pi}}\sqrt{2 z+v_1+\frac{1}{\pi }}  }} .  
 \end{align*}
 Since $z \geq 0$, we immediately see that $\frac{\partial P_3(z,v_1)}{\partial v_1} \geq 0$. In addition, since
 \[
 0 \leq  \frac{\partial P_3(v_0,v_1)}{\partial v_0} = \frac{\partial P_3(z,v_1)}{\partial z} \frac{\partial z}{\partial v_0} 
 \]
 and $\frac{\partial z}{\partial v_0} > 0$ if $v_0 > 0$, we deduce that $\frac{\partial P_3(z,v_1)}{\partial z} \geq 0$ (also when $v_0 = 0$ by continuity). It follows that:
 \[
 \frac{\partial P_3(v_0,v_1)}{\partial v_1} = \frac{\partial P_3(z,v_1)}{\partial z} \frac{\partial z}{\partial v_1} + \frac{\partial P_3(z,v_1)}{\partial v_1} \geq 0,
 \]
 as asserted.  

 \end{proof}
 
 \subsection{Concluding when $v_1 \in [\frac{1}{\pi} , 1 - \frac{1}{\pi}]$}
 
Combining all of the prior estimates, we obtain:
\begin{proposition}
For all $v_0 \in (0,\frac{4\pi}{81}]$ and $v_1 \in (\frac{1}{\pi},1-\frac{1}{\pi}]$:
\begin{equation} \label{eq:Holder}
\frac{\partial P(v_0,v_1)}{\partial v_0} \geq - D_0(v_0)  ~,~ \frac{\partial P(v_0,v_1)}{\partial v_1} \geq  -D_1(v_1) , 
\end{equation}
where:
\[
D_0(v_0) := \frac{\log\brac{\frac{4 \pi}{9 \sqrt{\pi v_0}}}}{7 \sqrt{v_0}} ~,~ D_1(v_1) := \frac{1}{3 \sqrt{2 \pi}} \frac{1}{\sqrt{v_1 - \frac{1}{\pi}}}  . 
\]
Integrating, we obtain the following monotone increasing functions:
\[
L_0(v_0) := \frac{\sqrt{v_0}}{7} \log\brac{\frac{16 e^2 \pi}{81 v_0}} ~,~ L_1(v_1) := \frac{\sqrt{2}}{3 \sqrt{\pi}} \sqrt{v_1 - \frac{1}{\pi}} .
\]
Setting
\[
L(v_0,v_1) = L_0(v_0) + L_1(v_1) ,
\]
it follows that for all $0 \leq v_0 \leq v_0' \leq \frac{4\pi}{81}$ and $\frac{1}{\pi} \leq v_1 \leq v_1' \leq 1 - \frac{1}{\pi}$,
\[
P(v_0', v_1') - P(v_0,v_1) \geq - (L(v_0',v_1') - L(v_0,v_1)) . 
\]
\end{proposition}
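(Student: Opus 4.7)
The plan is to combine the estimates from the three preceding subsections additively and then integrate. Recall $P = P_1(v_0) + P_2(v_1) + P_3(v_0,v_1)$. The lemmas in the previous subsections give, respectively:
\[
P_1'(v_0) \geq -\frac{\log(4\pi/(9\sqrt{\pi v_0}))}{7\sqrt{v_0}}, \quad P_2'(v_1) \geq -\frac{1}{3\sqrt{2\pi}}\frac{1}{\sqrt{v_1 - 1/\pi}}, \quad \frac{\partial P_3}{\partial v_0} \geq 0, \quad \frac{\partial P_3}{\partial v_1} \geq 0.
\]
So the first step is to simply add these to obtain
\[
\frac{\partial P}{\partial v_0} = P_1'(v_0) + \frac{\partial P_3}{\partial v_0} \geq -D_0(v_0), \qquad \frac{\partial P}{\partial v_1} = P_2'(v_1) + \frac{\partial P_3}{\partial v_1} \geq -D_1(v_1),
\]
which is the first display of the proposition.

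The second step is to verify that $L_0$ and $L_1$ are indeed antiderivatives of $D_0$ and $D_1$. For $L_1$ this is immediate: $L_1'(v_1) = \frac{\sqrt{2}}{3\sqrt{\pi}} \cdot \frac{1}{2\sqrt{v_1-1/\pi}} = D_1(v_1)$. For $L_0$, differentiate $L_0(v_0) = \frac{\sqrt{v_0}}{7}\log(16 e^2 \pi/(81 v_0))$ by the product rule to obtain
\[
L_0'(v_0) = \frac{1}{14\sqrt{v_0}}\log\!\Bigl(\tfrac{16 e^2 \pi}{81 v_0}\Bigr) - \frac{1}{7\sqrt{v_0}} = \frac{1}{14\sqrt{v_0}}\log\!\Bigl(\tfrac{16\pi}{81 v_0}\Bigr),
\]
and then rewrite $16\pi/(81 v_0) = (4\pi/(9\sqrt{\pi v_0}))^2$ to identify this with $D_0(v_0)$. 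The choice of the constant inside the logarithm in the definition of $L_0$ (namely the factor $e^2$) is precisely what absorbs the boundary term from the product rule.

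Finally, for the ``Lipschitz-like'' estimate, take any $0 \leq v_0 \leq v_0' \leq \frac{4\pi}{81}$ and $\frac{1}{\pi} \leq v_1 \leq v_1' \leq 1-\frac{1}{\pi}$, and decompose along a rectangular path:
\[
P(v_0',v_1') - P(v_0,v_1) = \int_{v_1}^{v_1'} \frac{\partial P}{\partial v_1}(v_0, t)\,dt + \int_{v_0}^{v_0'} \frac{\partial P}{\partial v_0}(s, v_1')\,ds.
\]
Applying the pointwise lower bounds and the antiderivative identities yields
\[
P(v_0',v_1') - P(v_0,v_1) \geq -\bigl(L_1(v_1') - L_1(v_1)\bigr) - \bigl(L_0(v_0') - L_0(v_0)\bigr) = -\bigl(L(v_0',v_1') - L(v_0,v_1)\bigr),
\]
as claimed. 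The only mild technicality is the integrability of $D_0$ near $v_0=0$ and $D_1$ near $v_1=1/\pi$; both are integrable (indeed $D_0$ has a $\log/\sqrt{\cdot}$ singularity and $D_1$ has a $1/\sqrt{\cdot}$ singularity), so the integral decomposition is valid all the way to these endpoints and the estimate extends by continuity of $P$ there. This is not really an obstacle, just something to verify. The proposition then serves its intended purpose: combined with a sufficiently fine mesh in $\mathcal{R}$, it will reduce the verification of $P > 0$ on $\mathcal{R}$ to a finite computation controlled by the modulus $L$.
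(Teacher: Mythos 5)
Your proposal is correct and follows exactly the route the paper (implicitly) intends: sum the derivative bounds from the three preceding lemmas, verify $L_0' = D_0$ and $L_1' = D_1$, and integrate along a rectangular path, noting the integrable singularities at $v_0 = 0$ and $v_1 = 1/\pi$. The algebra checking $L_0' = D_0$ (the $e^2$ absorbing the $-1/(7\sqrt{v_0})$ term) is correct, and the decomposition $P(v_0',v_1') - P(v_0,v_1)$ as two one-variable line integrals is valid.
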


Define:
\begin{align*}
A_0 & = L_0\brac{\frac{4 \pi}{81}} = \frac{1}{7} \cdot \frac{2}{9} \sqrt{\pi} \log(4 e^2) \simeq 0.190541, \\
A_1 & = L_1\brac{1 -\frac{1}{\pi}} = \frac{\sqrt{2}}{3 \sqrt{\pi}} \sqrt{1 - \frac{2}{\pi}} \simeq 0.160324 ,
\end{align*}
and set:
\[
\eps := 3 \cdot 10^{-4} ~,~ N_0 := \lceil A_0/\eps \rceil = 636 ~,~ N_1 := \lceil A_1/\eps \rceil = 535 .
\]
We can now define the $636 \times 535$ mesh $\mathcal{M}$ as follows:
\begin{align*}
& s^i_0 := (i-1) \eps ~,~ v^i_0 := L_0^{-1}(s^i_0) ~,~  i = 1,\ldots,N_0 ,\\
& s^j_1 := (j-1)  \eps ~,~ v^j_1 := L_1^{-1}(s^j_1) ~,~ j = 1,\ldots,N_1 , \\
& \mathcal{M} = \mathcal{M}_0 \times \mathcal{M}_1 ~,~ \mathcal{M}_0 = \{ v^i_0 \}_{i=1,\ldots,N_0} ~,~ 
\mathcal{M}_1 = \{ v^j_1 \}_{j=1,\ldots,N_1} . 
\end{align*}
It follows that:
\begin{equation} \label{eq:2eps}
\min_{(v_0,v_1) \in [0, \frac{4\pi}{81}] \times [\frac{1}{\pi} , 1 - \frac{1}{\pi}]} P(v_0,v_1) \geq \min_{(v_0,v_1) \in \mathcal{M}} P(v_0,v_1) - 2 \eps ,
\end{equation}
and it remains to numerically verify that the right-hand-side is positive.  

In practice, since the inverse of $L_0$ does not have a closed form, we prefer to avoid any issues with estimating the numerical precision involved in inverting it, which would then need to be corrected by using (\ref{eq:Holder}) again. Instead, we define $\mathcal{M}_0$ in a different manner than above. We first set $v^1_0 = 0$ and $v^2_0 = 10^{-8}$, since $L_0(v^2_0) \simeq 0.0002849 < \eps$, and this is well within Mathematica or FLINT's default precision. We then recursively define:
\[
v^{i+1}_0 := \textsf{Compute}\brac{ v^i_0 + \frac{0.99 \eps}{D_0(v^i_0)} } ~,~ i \geq 2 ,
\]
until the first time $v^{i+1}_0$ exceeds $\frac{4\pi}{81}$, at which point we stop and set $N_0 = i$. 
The computation of the right-hand-side in FLINT, denoted by $\textsf{Compute}$, is performed using the default 53 bit precision; since
$0.25 \leq D_0(v^i_0) \leq 12817$, $\frac{\eps}{D_0(v^i_0)} \geq 2 \cdot 10^{-8}$ and so this ensures that:
\[
\textsf{Compute}\brac{ v^i_0 + \frac{0.99 \eps}{D_0(v^i_0)} } \leq v^i_0 + \frac{\eps}{D_0(v^i_0)} .
\]
This produces a collection of $N_0=647$ points $\mathcal{M}_0 = \{v^i_0\}_{i=1,\ldots,N_0}$, and we are guaranteed that 
\[
L_0(v_0^{i+1}) - L_0(v_0^{i}) = \int_{v_0^i}^{v_0^{i+1}} D_0(v) dv \leq D_0(v^i_0) (v^{i+1}_0 - v^i_0) \leq \eps , \]
where we've used that $D_0(v_0)$ is decreasing on $[0,\frac{4\pi}{81}]$ (as checked by direct differentiation). 
This simple procedure thus avoids any error accumulation in the computations. 
The latter issue does not appear for the inverse of $L_1$, since $L_1^{-1}(s) =  \frac{1}{\pi} + \frac{9 \pi}{2} s^2$ and both Mathematica and FLINT maintain infinite precision in algebraic computations. We thus obtain a $647 \times 535$ mesh $\mathcal{M} = \mathcal{M}_0 \times \mathcal{M}_1$.

According to FLINT, the minimum of $P$ over our mesh satisfies:
\[
\min_{(v_0,v_1) \in \mathcal{M}} P(v_0,v_1) \geq 0.0008450618226248 ,
\]
with the smallest lower-bound  being attained at $(v_0,v_1) \in \mathcal{M}$ with $v_0 \simeq 0.020145278695067$ 
and $v_1 = \frac{1}{\pi} + \frac{9 \pi}{2} \brac{\frac{39}{2000}}^2$. Recalling (\ref{eq:QP}), (\ref{eq:2eps}) and that $\eps = 0.0003$, 
this rigorously establishes that
$\min_{(v_0,v_1) \in [0, \frac{4\pi}{81}] \times [\frac{1}{\pi} , 1 - \frac{1}{\pi}]} Q(v_0,v_1) > 0$. It remains to verify the positivity of $Q$ when $v_1 > 1 - \frac{1}{\pi}$ and $\lambda \geq 0.8$.

\subsection{Concluding when $v_1 \in (1 - \frac{1}{\pi},1)$}

As already established in (\ref{eq:QP4}), to handle the case when $v_1 \in (1 - \frac{1}{\pi},1)$ it remains to get a good bound on $\min_{v_0 \in [0,\frac{4\pi}{81}]} P_1(v_0)$. Reusing the mesh $\mathcal{M}_0$ from the previous subsection, we are guaranteed:
\[
\min_{v_0 \in [0,\frac{4\pi}{81}]} P_1(v_0) \geq \min_{v_0 \in \mathcal{M}_0} P_1(v_0) - \eps . 
\]
According to FLINT, the minimum of $P_1$ over $\mathcal{M}_0$ satisfies:
\[
\min_{v_0 \in \mathcal{M}_0} P_1(v_0) \geq   -0.04473032245783 ,
\]
with the smallest lower-bound being attained at $v_0 \in \mathcal{M}_0$ with $v_0 \simeq 0.0213752137750056$.
Since $\eps = 0.0003$, it follows that
\[
 \min_{v_0 \in [0,\frac{4\pi}{81}]} P_1(v_0) \geq -0.046 .
\]
Together with (\ref{eq:QP}), (\ref{eq:QP4}) and (\ref{eq:P4}), this confirms that $\min_{(v_0,v_1) \in [0,\frac{4\pi}{81}] \times (1-\frac{1}{\pi},1)} Q(v_0,v_1) > 0$ (whenever $\lambda \geq 0.8$), and therefore concludes the proof of Proposition \ref{prop:Appendix}.

\bibliographystyle{plain}
\bibliography{../../../ConvexBib}

\end{document}